\documentclass[11pt,twoside,leqno]{article}
\usepackage{amssymb}
\usepackage{amsmath}
\usepackage{mathrsfs}
\usepackage{amsthm}
\usepackage{amsfonts}
\usepackage{txfonts}
\usepackage{hyperref}
\usepackage{latexsym,amssymb}
\usepackage{color}
\usepackage{stmaryrd}
\usepackage{makecell}
\usepackage{cleveref}
\usepackage{enumitem}

\usepackage{indentfirst}
\usepackage{graphicx}
\usepackage{float}
\usepackage{pgf,tikz}
\usepackage{mathrsfs}
\usetikzlibrary{arrows}

\definecolor{ttttff}{rgb}{0.2,0.2,1.}
\definecolor{ttffcc}{rgb}{0.2,1.,0.8}
\definecolor{qqqqff}{rgb}{0.,0.,1.}

\definecolor{zzttqq}{rgb}{0.6,0.2,0.}
\definecolor{qqqqff}{rgb}{0.,0.,1.}

\allowdisplaybreaks

\usepackage{hyperref}
\hypersetup{
        colorlinks   = true,
        citecolor    = blue,
        linkcolor    = blue,
        urlcolor     = blue
}

\def\rr{{\mathbb R}}
\def\rn{{\mathbb{R}^n}}

\def\zz{{\mathbb Z}}

\def\nn{{\mathbb N}}

\def\cd{{\mathcal D}}

\def\ls{\lesssim}
\def\gs{\gtrsim}

\def\gfz{\genfrac{}{}{0pt}{}}

\def\loc{{\mathop\mathrm{\,loc\,}}}
\def\supp{\mathop\mathrm{\,supp\,}}

\newtheorem{theorem}{Theorem}[section]
\newtheorem{lemma}[theorem]{Lemma}
\newtheorem{Question}[theorem]{Problem}
\newtheorem{corollary}[theorem]{Corollary}
\newtheorem{proposition}[theorem]{Proposition}

\theoremstyle{definition}
\newtheorem{remark}[theorem]{Remark}
\newtheorem{definition}[theorem]{Definition}
\renewcommand{\appendix}{\par
   \setcounter{section}{0}%
   \setcounter{subsection}{0}%
   \setcounter{subsubsection}{0}%
   \gdef\thesection{\@Alph\c@section}%
   \gdef\thesubsection{\@Alph\c@section.\@arabic\c@subsection}%
   \gdef\theHsection{\@Alph\c@section.}%
   \gdef\theHsubsection{\@Alph\c@section.\@arabic\c@subsection}%
   \csname appendixmore\endcsname
 }

\numberwithin{equation}{section}

\definecolor{qqqqff}{rgb}{0,0,1}
\definecolor{ffqqqq}{rgb}{1,0,0}

\newcounter{rea}
\usepackage{makeidx}
\makeindex

\allowdisplaybreaks

\begin{document}

\arraycolsep=1pt

\title{\vspace{-1.8cm}
A Sharp Capacity Gauge Solution to the Open Problem of Quasisymmetric
Composition on $\mathcal{Q}_\alpha(\mathbb R^n)$}
\footnotetext{\hspace{-0.35cm}
2020 {\it Mathematics Subject Classification}. Primary: 42B35;
		Secondary: 30C65, 47B33, 31B15, 42C40.
\endgraf {\it Key words and phrases}. $\mathcal Q_\alpha(\mathbb R^n)$ space, quasisymmetric mapping, capacity gauge, composition operator,
dyadic decomposition, mean oscillation, wavelets.
\endgraf L. Liu is supported by the National Natural Science Foundation of China
(\# 12371102); J. Xiao is supported by NSERC of Canada (\# 202979).}
\author{Liguang Liu and Jie Xiao}
\date{}
\maketitle
% 42B35: Function spaces arising in harmonic analysis.
% 30C65: Quasiconformal mappings in $\mathbb{R}^n$, quasiregular mappings, and other generalizations.
% 47B33: Composition operators on function spaces.
% 31B15: Capacities and other potential-theoretic quantities in $\mathbb{R}^n$.
% 42C40: Wavelets and other multiscale systems.
%
%A Sharp Capacity Gauge Solution to the Quasisymmetric
%Pullback  Open Problem for
%$\mathcal{Q}_\alpha(\mathbb R^n)$
\begin{center}
\begin{minipage}{13.8cm}
{\small {\bf Abstract.}
 The spaces $\mathcal Q_\alpha(\mathbb R^n)$ for
 the critical index range $\alpha\in(0,\min\{1,n/2\})$
form a scale-invariant family  lying strictly between
the space of constant functions and
 $\mathrm{BMO}(\mathbb R^n)$.  A long-standing open problem,
 posed by Ess\'en, Janson, Peng and Xiao in 2000,
  asks to characterize the quasisymmetric mappings $\varphi:\ \mathbb R^n\to\mathbb R^n$
 for which the composition operator $\mathcal C_\varphi(f)=f\circ\varphi^{-1}$
 is bounded on $\mathcal Q_\alpha(\mathbb R^n)$.
This paper establishes the full intrinsic characterization
by introducing a new geometric quantity,
the capacity gauge $\|\varphi\|_{\mathscr G_\beta}$,
which measures the distortion of $\varphi$ through  pullback volume ratios 
along the dyadic tree.
We prove that for any quasisymmetric mapping $\varphi$
(with a mild Muckenhoupt \(A_\infty(\mathbb R)\) assumption on the Jacobian determinants of \(\varphi\) and \(\varphi^{-1}\) when \(n=1\)),
the composition operator  $\mathcal C_\varphi$ is bounded on $\mathcal Q_\alpha(\mathbb R^n)$ if and only if
$\|\varphi\|_{\mathscr G_{1-\frac{2\alpha}{n}}}<\infty,$
with quantitative equivalence
$$
\|\mathcal C_\varphi\|_{\mathcal Q_\alpha(\mathbb R^n)\to\mathcal Q_\alpha(\mathbb R^n)}^2 \simeq \|\varphi\|_{\mathscr G_{1-\frac{2\alpha}{n}}}.
$$
This sharp, necessary and sufficient characterization
refines earlier sufficient criteria of Koskela, Xiao, Zhang and Zhou in 2017,
which were formulated in terms of local or global self-similar Minkowski dimension
of the exceptional sets of the Jacobian.
As applications, we obtain
the composition stability of the finite capacity gauge, the invariance of $\mathcal Q_\alpha$-removability under quasisymmetric mappings with finite capacity gauge, and the propagation  of $\mathcal Q_\alpha(\mathbb R^n)$-regularity and initial-data stability for transport equations driven by quasisymmetric flows.
}
\end{minipage}
\end{center}

\tableofcontents

\section{Introduction}\label{S1}

\subsection{The real-variable Euclidean $\mathcal Q_\alpha(\mathbb R^n)$ spaces}\label{ss1.1}

Aulaskari, Xiao and Zhao \cite{AulaskarXiaoZhao1995Analysis} introduced
the M\"obius-invariant analytic $Q_p$ spaces on the unit disk $\mathbb D$,
with $\mathrm{BMOA}$ and the Bloch space as endpoint cases.
Ess\'en and Xiao \cite{EssenXiao1997Crelle} later provided a foundational study
of these spaces for the critical range $p\in(0,1)$, establishing them
as proper subspaces of $\mathrm{BMOA}$.
The real-variable Euclidean
$\mathcal Q_\alpha(\mathbb R^n)$ spaces were subsequently introduced by
Ess\'en, Janson, Peng and Xiao \cite{EssenJansonPengXiao2000IUMJ} and
developed in the twin papers of Dafni and Xiao
\cite{DafniXiao2004JFA,DafniXiao2005TMJ}, as the all-dimensional version
of the real counterpart of the analytic $Q_p$ spaces. We now recall the definition of $\mathcal Q_\alpha(\mathbb R^n)$.

\begin{definition}\label{def-Qspace}
Let $\alpha \in \mathbb{R}$.
The space $\mathcal Q_\alpha(\mathbb{R}^n)$ is defined to be the collection of all
 $f \in L^2_{\text{loc}}(\mathbb{R}^n)$ such that
\begin{align}\label{eq-Q-norm}
\|f\|_{\mathcal Q_\alpha(\rn)} :=\sup_{\text{cubes}\, Q\subset\rn} \left(|Q|^{\frac{2\alpha}{n}-1} \int_{Q} \int_{
Q} \frac{|f(x) - f(y)|^2}{|x - y|^{n + 2\alpha}} \, dx \, dy\right)^\frac 12<\infty,
\end{align}
where the supremum is taken over all cubes $Q \subset \mathbb{R}^n$
with sides parallel to the coordinate axes, and $|Q|$ denotes the Lebesgue measure of $Q$.
\end{definition}

Since every cube \(Q\) with centre \(c_Q\) and side length \(\ell(Q)\) satisfies the two-sided ball inclusion
$$
B\left(c_Q,\, \ell(Q)/2\right) \subset Q \subset B\left(c_Q,\, \sqrt{n}\,\ell(Q)/2\right),
$$
the cube \(Q\) in \eqref{eq-Q-norm} can be equivalently replaced by a ball, yielding an equivalent seminorm on the same space \(\mathcal Q_\alpha(\mathbb R^n)\). Modulo constants, the quotient space \(\mathcal Q_\alpha(\mathbb R^n)/\mathbb R\) is a Banach space.

According to \cite{EssenJansonPengXiao2000IUMJ},
the parameter range of interest for $\mathcal Q_\alpha(\rn)$ is
\begin{align}\label{eq-alpha-range}
0<\alpha <\min\left\{1,\ \frac n 2\right\}.
\end{align}
Indeed, for $\alpha\in(-\infty, 0)$, one has the identification
$$\mathcal{Q}_\alpha(\mathbb R^n) = \mathrm{BMO}(\mathbb{R}^n),$$
where $\mathrm{BMO}(\mathbb{R}^n)$ is the classical \emph{space of functions of bounded mean oscillation} introduced by John and Nirenberg \cite{JohnNirenberg1961CPAM}. This space consists of all $f \in L^1_{\text{loc}}(\mathbb{R}^n)$ for which
$$
\|f\|_{\mathrm{BMO}(\rn)} :=\sup_{\text{cubes}\, Q\subset\rn} \frac 1{|Q|} \int_{Q}\left|f(x)-f_Q\right| \, dx<\infty,
$$
where $f_Q:=\frac1{|Q|}\int_Q f(x)\,dx$.
In contrast, for $n\ge 2$ and $\alpha\in[1,\infty)$, or for $n=1$ and $\alpha\in(\tfrac12,\infty)$, one has
$$
\mathcal{Q}_\alpha(\rn)=\{\text{constants}\}.
$$
For $n=1$ and $\alpha=\frac 12$, the space $\mathcal Q_{\frac 12}(\rr)$ is non-trivial and
coincides with the Besov space $\dot \Lambda^{\frac 12}_{2,\, 2}(\rr)$, where for $s\in(0,1)$ and $p, q\in [1,\infty)$, the \emph{Besov space} $\dot \Lambda^{s}_{p,\, q}(\rn)$ is defined via the seminorm
$$
\|f\|_{\dot \Lambda^{s}_{p,\, q}(\rn)}:=\left(\int_{\rn}\left(\int_{\rn}|f(x+h)-f(x)|^p\,dx\right)^\frac qp\,\frac{dh} {|h|^{n+sq}}\right)^\frac 1q.
$$
Furthermore, for $0<\gamma<\alpha<1$, we have the strict chain of inclusions
\begin{align}\label{eq-Q-chain}
\{\text{constants}\}\subsetneqq \dot \Lambda^{\alpha}_{n/\alpha,\,2}(\rn)\subsetneqq \mathcal Q_{\alpha}(\rn)\subsetneqq \mathcal  Q_{\gamma}(\rn)\subsetneqq \mathcal Q_{0}(\rn) \subsetneqq \mathrm{BMO}(\rn).
\end{align}
Note that all spaces in \eqref{eq-Q-chain} are scale-invariant.
For more elementary properties of $\mathcal Q_\alpha(\rn)$ spaces, we refer the reader to \cite{EssenJansonPengXiao2000IUMJ, DafniXiao2004JFA,DafniXiao2005TMJ, XiaoQbook2019, YueDafni2009JMAA}.

As can be seen from \eqref{eq-Q-chain}, the family $\mathcal Q_\alpha(\mathbb R^n)$ with $\alpha$ satisfying \eqref{eq-alpha-range} serves as a fractional oscillation bridge between $\mathrm{BMO}(\mathbb R^n)$ and the Besov space $\dot \Lambda^{\alpha}_{n/\alpha,\,2}(\mathbb R^n)$.
The spaces $\mathcal Q_\alpha(\mathbb R^n)$, Besov spaces and $\mathrm{BMO}(\rn)$ all fall within the same framework of Besov--Triebel--Lizorkin-type spaces (see \cite{YangYuan2008JFA, YangYuan2010MZ, YuanSickelYang2010LNM}). However, the study of $\mathcal Q_\alpha(\mathbb R^n)$ spaces inherently requires a capacity-theoretic perspective that is not essential for Besov or $\mathrm{BMO}$ spaces.
For instance, the predual of $\mathcal Q_\alpha(\mathbb R^n)$ (see \cite{DafniXiao2004JFA}) is characterized in terms of Hausdorff capacity, revealing the deep connections between $\mathcal Q_\alpha(\mathbb R^n)$ spaces and capacity theory. This capacity-theoretic nature is precisely what distinguishes $\mathcal Q_\alpha$ spaces from the other two classes, and it constitutes the central theme of the present article.

Beyond these structural considerations,  $\mathcal Q_\alpha(\rn)$ spaces have been applied to a number of PDE problems, particularly in contexts where classical Sobolev or Lebesgue spaces are not directly applicable due to the oscillatory or rough nature of the data. Typical examples include the transport equation \cite{Xiao2019JDE, XiaoQbook2019}, the Navier--Stokes and related fluid systems \cite{LiZhai2010JFA, XiaoYangZhangZhou2019ADE, XiaoZhang2019JGA}, as well as harmonic maps and liquid crystal models \cite{XiaoZhang2019DPDE}.
In such settings, the scale-invariant structure and the fractional parameter \(\alpha\) serve as flexible tools for quantifying oscillation at the scaling threshold.

\subsection{An open problem on quasisymmetric 
$\mathcal{Q}_\alpha(\mathbb R^n)$-composition}\label{ss1.2}

The spaces $\mathcal Q_\alpha(\rn)$ are natural
candidates for studying function behavior under quasisymmetric
or quasiconformal mappings, owing to their scale-invariant structure.

\begin{definition}\label{def-QS}
Let $n\in\nn$.
A homeomorphism $\varphi:\ \mathbb{R}^n \to \mathbb{R}^n$ is called \emph{$\eta$-quasisymmetric} if there exists an increasing homeomorphism $\eta:\ [0,\infty) \to [0,\infty)$ with $\eta(0)=0$ such that for all distinct $x, y, z \in \mathbb{R}^n$,
\begin{align}\label{eq-QSM}
\frac{|\varphi(x) - \varphi(y)|}{|\varphi(x) - \varphi(z)|} \leq \eta\left( \frac{|x - y|}{|x - z|} \right).
\end{align}
The function \(\eta\) is called the \emph{distortion} function of \(\varphi\).
\end{definition}

By replacing $\eta$ with $\tilde\eta(t):=\sup_{0<s\le t}\eta(s)$ if necessary, one may always assume that $\eta$ is increasing.
Some known facts on quasisymmetric mappings are recalled in Section \ref{sec2},
with particular emphasis on their close relationship with classical Muckenhoupt weights on \(\mathbb R^n\).

For $n\ge 2$, it is classical \cite{Heinonen2001book, Koskela2009note}
that quasisymmetric mappings are exactly quasiconformal mappings.
Recall that a homeomorphism $\varphi$ on $\mathbb R^n$ is called \emph{$K$-quasiconformal}
for some constant $K\in[1,\infty)$ if
$$
|D\varphi(x)|^n \le K J_\varphi(x)\quad \text{for a.e. } x\in\mathbb R^n,
$$
where $J_\varphi$ is the \emph{Jacobian} of $\varphi$, and $$|D\varphi(x)|:=\sup_{|h|\le 1} |D\varphi(x)h|$$ denotes the operator norm of the Jacobian matrix.
To include the case $n=1$ (where the quasiconformal theory is less standard), we adopt the notion of quasisymmetric mappings throughout the paper.

Let $\varphi:\ \mathbb R^n \to \mathbb R^n$ be quasisymmetric. Consider the pullback composition operator $\mathcal C_\varphi$, defined by
$$
\mathcal C_\varphi(f) := f \circ \varphi^{-1}.
$$
For dimension $n\ge 2$, the following open problem was posed in \cite[Problem 8.4]{EssenJansonPengXiao2000IUMJ}:
\begin{center}
\begin{minipage}{13cm}
\emph{Let $\alpha\in(0,1)$. Prove or disprove that $\mathcal C_\varphi$
is bounded on $\mathcal Q_\alpha(\rn)$.
}
\end{minipage}
\end{center}
Invoking the case of dimension $n=1$, this naturally suggests the following refined version of this open problem.

\begin{Question}\label{openProb}
Let $\alpha\in(0,\, \min\{1,\, n/2\})$. What are the necessary and sufficient conditions on the quasisymmetric mapping $\varphi$ for $\mathcal C_\varphi$ to be bounded on $\mathcal Q_\alpha(\rn)$?
\end{Question}

\begin{remark} Let us explain the reason for the parameter restriction $\alpha\in(0,\min\{1,n/2\})$ in Problem \ref{openProb}.
For parameters outside this range, we have the identifications (see Subsection \ref{ss1.1})
$$
\mathcal Q_\alpha(\rn)=
\begin{cases}\mathrm{BMO}(\rn) &\quad\text{for } \alpha\in(-\infty,0);\\
\{\text{constants}\} &\quad\text{for }\, \alpha\in(\min\{1,\,n/2\},\,\infty)\, \text{ or }\, \alpha=1.
\end{cases}
$$
Clearly, nothing needs to be studied when $\mathcal Q_\alpha(\rn)=\{\text{constants}\}$.
For the case $\mathcal Q_\alpha(\rn)=\mathrm{BMO}(\rn)$, the boundedness of $\mathcal C_\varphi$ is already fully understood:
\begin{enumerate}[label=\textup{(\roman*)}]
  \item for $n\ge 2$, it follows from \cite{Reimann1974CMH} that $\mathcal C_\varphi$ is bounded on $\mathrm{BMO}(\rn)$ if and only if $\varphi$ is quasisymmetric;
  \item for $n=1$, by \cite[Theorem]{Jones1983ArkMat}, if $\varphi$ is increasing on $\mathbb R$, then $\mathcal C_\varphi$ is bounded on $\mathrm{BMO}(\rr)$ precisely when $\varphi'$ belongs to the Muckenhoupt class $A_\infty(\mathbb R)$.
\end{enumerate}
Note that the boundedness of \(\mathcal C_\varphi\) on the Besov space \(\dot\Lambda^\alpha_{n/\alpha,\,2}(\mathbb R^n)\) (see \eqref{eq-Q-chain}) is encompassed by the complete study of quasiconformally invariant Besov spaces \(\dot\Lambda^{s}_{n/s,\,q}(\mathbb R^n)\) and Triebel--Lizorkin spaces \(\dot F^{s}_{n/s,\,q}(\mathbb R^n)\) conducted in \cite{KoskelaYangZhou2011AdvMath} (see also \cite{KochKoskelaSaksmanSoto2014JFA}), where \(s\in(0,1)\) and \(q\in(0,\infty)\).
In view of these facts, we restrict $\alpha\in(0,\,\min\{1,\,n/2\})$ in Problem \ref{openProb}, as given in \eqref{eq-alpha-range}.
\end{remark}

Next, we recall the seminal work of Koskela, Xiao, Zhang and Zhou \cite{KoskelaXiaoZhangZhou2017JEMS} on \cite[Problem 8.4]{EssenJansonPengXiao2000IUMJ} or Problem \ref{openProb}, which gives a sufficient sharp condition for  boundedness of $\mathcal C_\varphi$ on $\mathcal Q_\alpha(\rn)$. To state their result, we recall that a nonnegative function $w$ on $\mathbb R^n$ is said to belong to the \emph{local Muckenhoupt class $A_1(\mathbb R^n;\, E)$} for some closed set $E\subset\mathbb R^n$ if  for any ball $B=B(x_0,r)\subset\mathbb R^n$ with $d(x_0,E)>2r$,
$$
\frac{1}{|B|} \int_B w(x)\,dx \le C \,\mathop{\operatorname{ess\,inf}}_{B} w,
$$
where $C$ is a positive constant independent of the balls $B$.

\begin{theorem}[\cite{KoskelaXiaoZhangZhou2017JEMS}]\label{thm-JEMS}
Let $n\ge 2$, $\alpha\in(0,1)$ and $\varphi:\ \mathbb R^n \to \mathbb R^n$ be quasisymmetric.
Then $\mathcal C_\varphi$ is bounded on $\mathcal Q_\alpha(\mathbb R^n)$
provided that $J_{\varphi^{-1}}\in A_1(\mathbb R^n;\, E)$ and
\begin{align}\label{eq-con-dimE}
n-2\alpha>
\begin{cases}
\overline{\dim}_L E & \text{for bounded } E ;\\[2pt]
\overline{\dim}_{LG} E & \text{for unbounded } E.
\end{cases}
\end{align}
Moreover, condition \eqref{eq-con-dimE} is sharp in the following sense: there exist quasisymmetric mappings $\varphi$ and closed sets $E$ such that $J_{\varphi^{-1}}\in A_1(\mathbb R^n;\, E)$ and
\begin{align}\label{eq1-con-dimE}
n-2\alpha<
\begin{cases}
\overline{\dim}_L E & \text{for bounded } E ;\\[2pt]
\overline{\dim}_{LG} E & \text{for unbounded  } E ,
\end{cases}
\end{align}
but \(\mathcal C_\varphi\) is not bounded on \(\mathcal Q_\alpha(\mathbb R^n)\).
\end{theorem}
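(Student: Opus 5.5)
This is a known result cited from \cite{KoskelaXiaoZhangZhou2017JEMS}. My plan reconstructs its proof in two parts: sufficiency, then sharpness.

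\textbf{Sufficiency.} Fix a cube $Q$ and set $Q' = \varphi^{-1}(Q)$; by quasisymmetry $Q'$ is comparable to a ball $B'$. Changing variables $x=\varphi(u)$, $y=\varphi(v)$ in the $\mathcal Q_\alpha$ energy of $f\circ\varphi^{-1}$ over $Q\times Q$ gives
\[
\int_{B'}\!\int_{B'} \frac{|f(u)-f(v)|^2}{|\varphi(u)-\varphi(v)|^{n+2\alpha}}\, J_\varphi(u)J_\varphi(v)\,du\,dv .
\]
Quasisymmetry gives $|\varphi(u)-\varphi(v)|^n \simeq |\varphi(B_{uv})|$, where $B_{uv}=B(u,|u-v|)$. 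Hence the kernel becomes $|\varphi(B_{uv})|^{-1-2\alpha/n}J_\varphi(u)J_\varphi(v)$.

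Next I localise with a Whitney decomposition of $\mathbb R^n\setminus E$ together with a dyadic decomposition of $B'$ relative to $E$.
\begin{itemize}
\item On Whitney cubes $W$ with $d(W,E)\simeq \ell(W)$, the hypothesis $J_{\varphi^{-1}}\in A_1(\mathbb R^n;E)$ makes $J_\varphi$ essentially constant, comparable to $|\varphi(W)|/|W|$. On such pieces the weighted energy is controlled by $|\varphi(W)|^{1-2\alpha/n}\cdot(\text{unweighted }\mathcal Q_\alpha\text{ energy on }W)$, up to the scaling factor that makes the two sides dimensionally consistent.
\item Pairs of points in different Whitney cubes at separated scales are handled by replacing $|f(u)-f(v)|$ with mean differences along chains of cubes. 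Since $f\in\mathcal Q_\alpha\subset \mathrm{BMO}$, telescoping mean differences grow only logarithmically.
\end{itemize}

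Summing over cubes, the total is bounded by $\|f\|^2_{\mathcal Q_\alpha}\,|Q|^{1-2\alpha/n}$ times a factor of the form
\[
\sum_{k}\#\{W\subset B': \ell(W)=2^{-k}\ell(B')\}\;2^{-k(n-2\alpha)}.
\]
The upper (local, resp. local-global) self-similar Minkowski dimension of $E$ controls this count by $2^{k(\dim+\epsilon)}$. Condition \eqref{eq-con-dimE}, i.e. $n-2\alpha>\overline{\dim}$, therefore makes the geometric series converge uniformly in $Q$. For unbounded $E$ the global version handles the large cubes. The volume distortion factors $|\varphi(W)|/|\varphi(B')|$ are controlled via the $A_\infty$ property of $J_\varphi$, which holds for quasisymmetric maps by Gehring's theorem, giving power decay.

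\textbf{Sharpness.} I would build radial-type maps that stretch along a self-similar Cantor set $E$ whose dimension exceeds $n-2\alpha$. Concretely, $\varphi^{-1}$ compresses neighbourhoods of $E$ so that $J_{\varphi^{-1}}$ is in $A_1$ away from $E$. I would then take a test function $f$, a sum of bumps adapted to the Whitney cubes of $E$, chosen to lie in $\mathcal Q_\alpha$. The count of Whitney cubes then makes the energy of $f\circ\varphi^{-1}$ on a fixed cube diverge as a series $\sum 2^{k(\dim E-(n-2\alpha))}=\infty$.

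\textbf{Main obstacle.} The hardest step is the chain or telescoping estimate for pairs of points in distinct Whitney cubes. There the Jacobian is not comparable, and the logarithmic BMO growth has to be absorbed by the strict dimension gap. I would first try the Carleson-measure (capacity) characterisation of $\mathcal Q_\alpha$ via wavelet coefficients. In that language composition moves coefficients between dyadic cubes with controlled distortion, and the sum becomes a Carleson-type bound over the tree.
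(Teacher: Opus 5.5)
The paper does not prove this theorem. It is quoted from \cite{KoskelaXiaoZhangZhou2017JEMS} and only used, in Corollary~\ref{cor-JEMS-gauge} and Remark~\ref{rem-A1-gauge}. So your sketch has to stand on its own, and its sufficiency half fails at a concrete point: it works on the wrong side of $\varphi$. The set $E$, the balls in $A_1(\mathbb R^n;E)$, and the dimension hypothesis all live in the variable of $f\circ\varphi^{-1}$, where $Q$ sits. You instead Whitney-decompose $B'\simeq\varphi^{-1}(Q)$ ``relative to $E$'', index scales by $\ell(W)=2^{-k}\ell(B')$, and then convert $|\varphi(W)|/|\varphi(B')|$ by $A_\infty$ power decay. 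That conversion only gives $(|\varphi(W)|/|\varphi(B')|)^{1-2\alpha/n}\lesssim 2^{-k\delta(n-2\alpha)}$, with $\delta=\beta_2\in(0,1]$ the exponent of Proposition~\ref{prop-varphi-weight}(iii). This is weaker than the $2^{-k(n-2\alpha)}$ in your displayed series. Moreover, the cubes you count in $B'$ are at best Whitney cubes of $\varphi^{-1}(E)$, whose self-similar Minkowski dimension generally differs from that of $E$. What your argument can actually deliver is a condition like $\delta(n-2\alpha)>\overline{\dim}\,\varphi^{-1}(E)$, which is not the sharp threshold \eqref{eq-con-dimE}. The repair is to decompose $Q$ itself. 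For Whitney cubes $V$ of $\mathbb R^n\setminus E$ meeting $Q$ with $\ell(V)=2^{-k}\ell(Q)$, one has exactly $(|V|/|Q|)^{1-2\alpha/n}=2^{-k(n-2\alpha)}$. Their number $N_k$ is bounded by the covering number of $E$ near $Q$ at scale $2^{-k}\ell(Q)$, and no $A_\infty$ conversion enters.

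Three further points. First, $J_{\varphi^{-1}}\in A_1(\mathbb R^n;E)$ does not make $J_\varphi$ essentially constant on the pieces. The condition is one-sided: it gives $J_\varphi\lesssim|B|/|\varphi^{-1}(B)|$ a.e.\ on $\varphi^{-1}(B)$ for admissible $B$, but no lower bound. A lower bound is exactly what you would need to control $|\varphi(u)-\varphi(v)|^{-n-2\alpha}$ by the Whitney-scale density. For example, $\varphi_p$ with $p>0$ has $J_{\varphi_p^{-1}}\in A_1(\mathbb R^n)$, yet $J_{\varphi_p}=(p+1)|x|^{pn}$ vanishes at $0$. The within-piece bound $\lesssim|V|^{1-2\alpha/n}\|f\|^2_{\mathcal Q_\alpha(\mathbb R^n)}$ still holds if you apply $A_1$ at the scale of each pair. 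With $B=B(\varphi(u),2|\varphi(u)-\varphi(v)|)$, the kernel $J_\varphi(u)J_\varphi(v)|\varphi(u)-\varphi(v)|^{-n-2\alpha}$ is $\lesssim(|B|/|\varphi^{-1}(B)|)^{1-2\alpha/n}|u-v|^{-n-2\alpha}$. Then $|B|/|\varphi^{-1}(B)|\lesssim|V|/|\varphi^{-1}(V)|$ by $A_1$ on a ball around $V$, and raising to the power $1-2\alpha/n>0$ preserves the inequality.

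Second, once you work on the correct side, your ``main obstacle'' goes away. Pairs with $|x-y|>c\,d(x,E)$ and $|x-y|\simeq2^{-k}\ell(Q)$ lie in $\lesssim N_k$ products $\tilde B_i\times\tilde B_i$ of balls of radius $\simeq2^{-k}\ell(Q)$ centred on $E$. Each contributes $\lesssim(2^{-k}\ell(Q))^{n-2\alpha}\|f\circ\varphi^{-1}\|^2_{\mathrm{BMO}(\mathbb R^n)}\lesssim(2^{-k}\ell(Q))^{n-2\alpha}\|f\|^2_{\mathcal Q_\alpha(\mathbb R^n)}$ by Reimann's theorem. This gives the same series, with no chains or logarithms. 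With the paper's Theorem~\ref{thm-trace-gauge} you can skip even this. Take $\lambda_I=1$ on $Q$ and on those $I\in\mathcal D(Q)$ with $d(I,E)\lesssim\ell(I)$, and $\lambda_I=0$ otherwise. By $A_1$ on the maximal cubes far from $E$ plus doubling, this satisfies \eqref{eq0-gauge} up to a constant, so $\|\varphi\|_{\mathscr G_{1-2\alpha/n}(Q)}\lesssim1+\sum_kN_k2^{-k(n-2\alpha)}$.

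Third, the sharpness half is only a heuristic. Its substance is an explicit quasisymmetric $\varphi$ with $J_{\varphi^{-1}}\in A_1(\mathbb R^n;E)$ degenerating on a set of dimension $>n-2\alpha$. A Jacobian cannot simply be prescribed, since not every $A_\infty$ weight is comparable to a quasiconformal Jacobian. It also needs a verified test function. Your sketch provides neither; by Theorem~\ref{thm-trace-gauge}, the test-function part does reduce to showing $\|\varphi\|_{\mathscr G_{1-2\alpha/n}}=\infty$.
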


The symbols $\overline{\dim}_L E$ and $\overline{\dim}_{LG} E$ denote, respectively,
the \emph{local self-similar Minkowski dimension} and the \emph{global self-similar Minkowski dimension} of the closed set $E$.
Their precise definitions can be found in \cite{KoskelaXiaoZhangZhou2017JEMS}; we shall not reproduce them here.
Conditions \eqref{eq-con-dimE} and \eqref{eq1-con-dimE} show that the boundedness of $\mathcal C_\varphi$ is governed by the size of the exceptional set $E$ of $J_{\varphi^{-1}}$, as quantified by these dimensions; the exponent $n-2\alpha$ serves as the critical threshold for such exceptional sets.

Still with $n\ge 2$, Xiao and Zhou \cite[Theorems 1.2, 1.4]{XiaoZhou2019ArkMat} (see also \cite{Xiao2019JDE}) further investigated whether boundedness of composition operators can conversely imply the quasisymmetry of a homeomorphism $\varphi$ on $\mathbb R^n$.
 Indeed, for $\alpha\in(0,1/2)$, if both $\mathcal C_\varphi$ and $\mathcal C_{\varphi^{-1}}$ are bounded on $\mathcal Q_\alpha(\rn)$, then $\varphi$ is quasisymmetric, provided that $\varphi$ is ACL (absolutely continuous on almost all lines parallel to the coordinate axes of $\mathbb R^n$) and differentiable a.e. on $\mathbb R^n$. For $\alpha\in(1/2,1)$, boundedness of $\mathcal C_\varphi$ on $\mathcal Q_\alpha(\mathbb R^2)$ alone is sufficient for $\varphi$ to be quasisymmetric.

The complex analogue of Problem \ref{openProb} was first posed by Xiao in his foundational monograph \cite[p.~22]{XiaoQbook2001} and later highlighted by Zhao \cite{Zhao2009Qquestion} as a central open problem in the field. Recently, Hu and Zhou \cite{HuZhou2026Arxiv1} have fully resolved this complex version by characterizing those analytic self-maps $\varphi:\ \mathbb D \to \mathbb D$ for which $\mathcal C_\varphi$ is bounded on the analytic $Q_p(\mathbb D)$ space for all $p\in(0,1)$. Their approach employs a novel dyadic trace formulation over Carleson tents, expressed in terms of generalized $p$-Nevanlinna counting functions.

In view of the classical fact that the behavior of $Q_p(\mathbb D)$ on the unit circle is governed by the $2\pi$-periodic analogue of $\mathcal Q_\alpha(\mathbb R)$ with $\alpha\in(0,1/2)$, the work of Hu and Zhou \cite{HuZhou2026Arxiv1} provides the key insight that enables us to completely settle the original Problem \ref{openProb} for the full range $\alpha\in(0,\min\{1,n/2\})$. The precise statement of our main result will be given in the next subsection.

\subsection{Statement of the main theorem}\label{ss1.3}

Let $\mathcal{D}(\mathbb R^n)$ denote the standard dyadic grid on $\mathbb R^n$:
\begin{align}\label{eq-dyadic-rn}
\mathcal{D}(\rn) := \left\{ Q_{k,m} = 2^{-k}(m + [0,1)^n) :\  k \in \mathbb{Z},\; m \in \mathbb{Z}^n \right\}.
\end{align}
For each $k \in \mathbb{Z}$, the family
$\mathcal{D}_k(\rn) := \{ Q_{k,m} :\  m \in \zz^n \}$  is called the \emph{$k$-th generation of dyadic cubes};
each $Q \in \mathcal{D}_k(\mathbb R^n)$ has side length $\ell(Q) = 2^{-k}$.

For an arbitrary cube $Q \subset \mathbb R^n$ (open, closed, or half-open, with sides parallel to the coordinate axes), we define a relative dyadic system as follows.
Set $\mathcal D_0(Q):=\{Q\}.$
For $k\in\mathbb N$, let $\mathcal D_k(Q)$ be the family of $2^{kn}$ subcubes of $Q$
obtained by $k$ successive bisections of each side of $Q$; each such subcube has side length $2^{-k}\ell(Q)$.
Define
\begin{align*}
\mathcal D(Q):=\bigcup_{k=0}^\infty \mathcal D_k(Q).
\end{align*}
The family $\mathcal D_k(Q)$ is called the \emph{$k$-th generation dyadic subfamily of $Q$}.

For any cube $Q \subset \mathbb R^n$,
the subcubes in $\mathcal D(Q)$ need not belong to the standard dyadic grid $\mathcal D(\mathbb R^n)$.
However, if $Q \in \mathcal D(\mathbb R^n)$, then $\mathcal D(Q) \subset \mathcal D(\mathbb R^n)$.

\begin{definition}\label{def-gauge}
Let $\beta\in \mathbb R$ and
$\varphi:\ \mathbb{R}^n \to \mathbb{R}^n$ be a homeomorphism.
For any cube $Q\subset\rn$, define
\begin{align}\label{eq0-gauge}
\mathscr T_\beta(Q):=
\left\{
\{\lambda_I\}_{I\in\cd(Q)}\subset [0,\infty):\
\sum_{\gfz{I\in \cd(Q)}{I\supset J}} \lambda_I \left(
\frac{|I|}{|J|}\cdot\frac{|\varphi^{-1}(J)|}{|\varphi^{-1}(I)|}\right)^{\beta}
\ge 1 \
\text{for all}\ J\in \cd(Q)\right\}
\end{align}
and
\begin{align}\label{eq1-gauge}
\|\varphi\|_{\mathscr G_\beta(Q)}
:=\inf_{\{\lambda_I\}\in \mathscr T_\beta(Q)}\left\{
\sum_{I\in \cd(Q)} \lambda_I \left(\frac{|I|}{|Q|}\right)^{\beta}
\right\}.
\end{align}
Define the \emph{$\beta$-capacity gauge} of $\varphi$ by
\begin{align}\label{eq2-gauge}
\|\varphi\|_{\mathscr G_\beta}
:=\displaystyle\sup_{\text{cubes}\, Q\subset\rn} \|\varphi\|_{\mathscr G_\beta(Q)}.
\end{align}
\end{definition}

A detailed study of $\|\varphi\|_{\mathscr G_\beta}$ is presented in Section \ref{sec3} below.
In particular, two equivalent characterizations of \(\|\varphi\|_{\mathscr G_\beta}\)
are established in Theorems \ref{thm:upward-duality} and \ref{thm-dyadic-gauge}:
Theorem \ref{thm:upward-duality} gives a dual form representation
of \(\|\varphi\|_{\mathscr G_\beta(Q)}\)
via linear programming duality;
Theorem \ref{thm-dyadic-gauge} shows that the supremum in
\eqref{eq2-gauge} can be restricted to standard
dyadic cubes of the form \(2^{-k}(m + [0,1)^n)\)
with \(k \in \mathbb Z\) and \(m \in \mathbb Z^n\).
In addition,
we provide an explicit \emph{quantitative} upper bound for \(\|\varphi\|_{\mathscr G_\beta(Q)}\)
in terms of the pullback-volume ratios \(|\varphi^{-1}(J)|/|\varphi^{-1}(I)|\)
for dyadic cubes \(J\) and their dyadic ancestors \(I\supset J\); see Theorem \ref{thm-gauge-calc} below.

\begin{remark}\label{rem-gauge}
The terminology ``capacity gauge'' is adopted from \cite{HuZhou2026Arxiv1}. To clarify the geometric meaning of the \(\beta\)-capacity gauge in Definition~\ref{def-gauge}, we make four comments below.

\begin{enumerate}[label=\textup{(\roman*)}]
\item Note that $\mathscr T_\beta(Q)\neq\emptyset$
(for instance, by taking $\lambda_I\equiv 1$).
Since $\varphi$ is a homeomorphism, it maps open sets to open sets and compact sets to compact sets.
Thus, for any cube $I\subset\mathbb R^n$, the set $\varphi^{-1}(I)$ is bounded and has nonempty interior,
which implies $|\varphi^{-1}(I)|\in(0,\infty)$. Hence, in \eqref{eq0-gauge}, the ratio
$$
\frac{|I|}{|J|}\cdot \frac{|\varphi^{-1}(J)|}{|\varphi^{-1}(I)|}
$$
is well-defined and quantifies the scale distortion of $\varphi$ across dyadic levels.
Moreover, the condition
$$
\sum_{\gfz{I\in \cd(Q)}{I\supset J}} \lambda_I \left(
\frac{|I|}{|J|}\cdot\frac{|\varphi^{-1}(J)|}{|\varphi^{-1}(I)|}\right)^{\beta}
\ge 1
\quad \text{for all } J\in\mathcal D(Q)
$$
requires that every dyadic cube \(J\subset Q\) receive enough mass from its dyadic ancestors \(I\supset J\). The coefficients \(\lambda_I\) may therefore be viewed as weights assigned to dyadic cubes, forming a sufficiently strong covering of every smaller dyadic cube \(J\). The quantity \(\|\varphi\|_{\mathscr G_\beta(Q)}\) in \eqref{eq1-gauge} is then the least cost of an admissible weighted dyadic covering of \(Q\), where assigning weight \(\lambda_I\) to a cube \(I\) incurs cost
$$
\lambda_I\left(\frac{|I|}{|Q|}\right)^\beta.
$$
In other words, among all admissible distributions of mass,
\(\|\varphi\|_{\mathscr G_\beta(Q)}\) selects the one with minimal total energy.

\item Now, we give a precise formulation of $\|\varphi\|_{\mathscr G_\beta(Q)}$ as a \emph{tree capacity}. For \(\lambda=\{\lambda_I\}_{I\in\mathcal D(Q)}\) with each \(\lambda_I\ge 0\), define the \emph{weighted energy}
$$
\mathcal E_\beta(\lambda;\,Q):=\sum_{I\in\mathcal D(Q)}
\lambda_I\left(\frac{|I|}{|Q|}\right)^\beta,
$$
and the \emph{distortion potential kernel}
$$
K_\varphi(I,\,J):=
\left(
\frac{|I|}{|J|}\cdot
\frac{|\varphi^{-1}(J)|}{|\varphi^{-1}(I)|}
\right)^\beta
\quad\text{for } \, I,J\in\mathcal D(Q),\ \, I\supset J.
$$
Then
$$
\|\varphi\|_{\mathscr G_\beta(Q)}
=
\inf\left\{\mathcal E_\beta(\lambda;\,Q):\
\lambda_I\ge 0,\
\sum_{\gfz{I\in\mathcal D(Q)}{I\supset J}}\lambda_I K_\varphi(I,\,J)\ge 1
\ \text{for all } J\in\mathcal D(Q)
\right\}.
$$
This is a discrete tree capacity.
When \(\varphi=\mathrm{id}\), we have \(K_{\mathrm{id}}\equiv1\), and the gauge reduces to the purely dyadic model where the potential at \(J\) is the total mass on its ancestors.

\item
The gauge \(\|\varphi\|_{\mathscr G_\beta(Q)}\) can be viewed heuristically
as a generalized weighted dyadic Hausdorff content,
induced by the pull-back geometry of \(\varphi\).
To see the analogy, normalize \(|Q|=1\) and set
$$
\gamma_I := \lambda_I \left( \frac{|I|}{|\varphi^{-1}(I)|} \right)^\beta \ge 0.
$$
Then the constraint in \eqref{eq0-gauge} becomes
$$
\sum_{\gfz{I\in \cd(Q)}{I\supset J}}\gamma_I \ge \left( \frac{|J|}{|\varphi^{-1}(J)|} \right)^\beta
\qquad \text{for all }\, J\in\mathcal D(Q),
$$
and the capacity gauge in \eqref{eq1-gauge} reduces to
$$
\|\varphi\|_{\mathscr G_\beta(Q)}
=
\inf\left\{
\sum_{I\in\mathcal D(Q)}\gamma_I |\varphi^{-1}(I)|^\beta:
\ \,
\sum_{\gfz{I\in \cd(Q)}{I\supset J}}\gamma_I \ge \left( \frac{|J|}{|\varphi^{-1}(J)|} \right)^\beta
\ \; \text{for all } J\in\mathcal D(Q)
\right\}.
$$
This is structurally analogous to the mass-distribution formulation of Hausdorff content.
Indeed, \(\gamma_I\) plays the role of the  mass
assigned to  \(I\),
the constraint requires that
every subcube \(J\) receives at least
\((|J|/|\varphi^{-1}(J)|)^\beta\) mass from its ancestors,
and the cost of assigning mass \(\gamma_I\) to \(I\) is \(\gamma_I |\varphi^{-1}(I)|^\beta\).
\end{enumerate}
\end{remark}

\begin{remark}\label{rem-gauge-eg}
To complement the abstract formulation in Remark \ref{rem-gauge}, we now provide concrete examples
 from Section \ref{sec3} showing when the gauge is finite or infinite.
  \begin{enumerate}[label=\textup{(\roman*)}]

\item  Bi-Lipschitz maps have finite capacity gauge (see Corollary \ref{cor-bilip}).  By
  Proposition \ref{prop-beta=0>1} below, we know that
  $$
  \begin{cases}
   \|\mathrm{id}\|_{\mathscr G_\beta}=1\quad &\text{for all}\ \,\beta\in\rr;\\
    \|\varphi\|_{\mathscr G_\beta}\ge 1 \quad &\text{for all}\ \, \beta\in \rr.
  \end{cases}
  $$

\item Again, by   Proposition \ref{prop-beta=0>1} below, we have
    $$
  \begin{cases}
   \|\varphi\|_{\mathscr G_0}=1\quad &\text{as}\ \, \beta=0;\\
    \|\varphi\|_{\mathscr G_\beta}<\infty \quad &\text{as}\ \, \beta\in (1,\infty).
  \end{cases}
  $$
  For \(\beta\in(-\infty,0)\), the gauge \(\|\varphi\|_{\mathscr G_\beta}\) may be finite or infinite.
  For example, the radial stretching
$$
\varphi_p(x):= x |x|^p\quad \text{for}\ \, x\in\mathbb R^n.
$$
satisfies that (see  Proposition~\ref{prop-gauge-finite-infity} below)
\begin{align*}
\begin{cases}
\|\varphi_p\|_{\mathscr G_\beta}<\infty\quad\
& \text{as}\ \ p\in(-1, 0] \ \,\text{and}\ \,  \beta\in (-\infty, 0);\\
 \|\varphi_p\|_{\mathscr G_\beta}=\infty\quad\
 &\text{as}\ \ p\in(0, \infty) \ \,\text{and}\  \,  \beta\in (-\infty, 0).
\end{cases}
\end{align*}
The range \(\beta\in((1-\frac 2n)_+,\,1)\) is the one relevant to our setting, as it corresponds to
$$
\beta=1-\frac{2\alpha}{n}
\qquad\text{for}\ \,\alpha\in\left(0, \, \min\left\{1,\,\frac n2\right\}\right).
$$

\item
By Proposition \ref{prop-A1toGauge} below,
      if $J_{\varphi^{-1}}\in A_1(\mathbb R^n)$,
      then $\|\varphi\|_{\mathscr G_\beta}<\infty$ for all $\beta\in(0,\infty)$;
      if $J_{\varphi}\in A_1(\mathbb R^n)$,
      then $\|\varphi\|_{\mathscr G_\beta}<\infty$ for all $\beta\in(-\infty,0)$.
  \end{enumerate}
\end{remark}

We now state the main result of this paper, which establishes an intrinsic geometric necessary and sufficient condition for the boundedness of quasisymmetric composition operators $\mathcal C_\varphi$ on the spaces $\mathcal Q_\alpha(\mathbb R^n)$.

\begin{theorem}\label{thm-trace-gauge}
 Let \(\alpha \in (0,\,\min\{1,\,n/2\})\)
 and \(\varphi :\ \mathbb R^n \to \mathbb R^n\)
 be  $\eta$-quasisymmetric,
 with the additional assumption that \(J_\varphi, J_{\varphi^{-1}} \in A_\infty(\mathbb R)\)
 when \(n = 1\). Then \(\mathcal C_\varphi\) is bounded on \(\mathcal Q_\alpha(\mathbb R^n)\)
 if and only if
 $$\|\varphi\|_{\mathscr G_{1-\frac{2\alpha}{n}}} < \infty.$$
 Moreover, there is the equivalence
\begin{align}\label{eq-norm-gauge}
 \|\mathcal C_\varphi\|_{\mathcal Q_\alpha(\mathbb R^n)\to \mathcal Q_\alpha(\mathbb R^n)}^2
 \simeq \|\varphi\|_{\mathscr G_{1-\frac{2\alpha}{n}}},
\end{align}
 where the implicit constants depend only on \(n,\alpha,\eta, [J_{\varphi}]_{A_\infty(\rn)}\) and \([J_{\varphi^{-1}}]_{A_\infty(\rn)}\).
\end{theorem}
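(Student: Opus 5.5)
The plan is to pass to a wavelet (Haar/Daubechies) description of $\mathcal Q_\alpha(\mathbb R^n)$ and reduce boundedness of $\mathcal C_\varphi$ to a weighted dyadic Carleson-type inequality whose best constant is exactly a tree capacity, and then identify that capacity with $\|\varphi\|_{\mathscr G_\beta}$, $\beta=1-\frac{2\alpha}{n}$. Recall the standard characterization
$$\|f\|_{\mathcal Q_\alpha}^2\simeq\sup_{Q\in\mathcal D(\mathbb R^n)}|Q|^{\frac{2\alpha}{n}-1}\sum_{I\in\mathcal D(Q)}|I|^{-\frac{2\alpha}{n}}\sum_\epsilon|\langle f,\psi^\epsilon_I\rangle|^2 .$$
An equivalent, more geometric form is
$$\|f\|_{\mathcal Q_\alpha}^2\simeq\sup_Q|Q|^{-\beta}\sum_{I\in\mathcal D(Q)}|I|^{\beta}\,\mathrm{osc}_2(f,I)^2,$$
with local mean oscillations $\mathrm{osc}_2(f,I)^2=|I|^{-1}\int_I|f-f_I|^2$ (up to passing to children). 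By Theorem \ref{thm-dyadic-gauge} it suffices to test on standard dyadic $Q$.

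\textbf{Step 1: transferring oscillations under $\varphi$.} Quasisymmetry gives that $\varphi^{-1}(I)$ is comparable to a ball, and that $\varphi^{-1}$ of a dyadic tree is a quasi-dyadic tree of ``quasicubes''. By Gehring's theorem ($n\ge2$), or by the $A_\infty$ hypothesis ($n=1$), $J_{\varphi^{-1}}\in A_\infty$. Hence oscillation of $f\circ\varphi^{-1}$ on $I$ is controlled by averaged oscillation of $f$ on $\varphi^{-1}(I)$, in the sense of Reimann's BMO argument. Decomposing $\mathrm{osc}(f\circ\varphi^{-1},J)$ telescopically along the ancestors $I\supset J$ within $Q$, the contribution of a scale of $f$ living at size $|\varphi^{-1}(I)|$ is redistributed to $J$ with weight governed by the ratio $\frac{|I|}{|J|}\frac{|\varphi^{-1}(J)|}{|\varphi^{-1}(I)|}$. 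The $\beta$-power arises from the normalization $|I|^\beta$ versus $|\varphi^{-1}(I)|^\beta$ (cf. Remark \ref{rem-gauge}(iii)). This yields
$$\|\mathcal C_\varphi f\|^2_{\mathcal Q_\alpha}\lesssim\sup_Q\sup_{\mu}\ldots,$$
that is, a bilinear form $\sum_{J\subset Q}\sum_{I\supset J}a_I\,K_\varphi(I,J)\,(\cdot)$, where $a_I$ is a nonnegative sequence satisfying the $\mathcal Q_\alpha$-Carleson packing condition inherited from $f$.

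\textbf{Step 2: sufficiency via duality.} Use Theorem \ref{thm:upward-duality}, the LP-dual form of $\|\varphi\|_{\mathscr G_\beta(Q)}$. I expect it to say that $\|\varphi\|_{\mathscr G_\beta(Q)}$ equals the supremum of $\sum_J\mu_J$ over measures $\mu\ge0$ on $\mathcal D(Q)$ with $\sum_{J\subset I}\mu_JK_\varphi(I,J)\le(|I|/|Q|)^\beta$. For such $\mu$, the Carleson packing of $a$ then bounds the Step 1 form by $\|f\|_{\mathcal Q_\alpha}^2\,\|\varphi\|_{\mathscr G_\beta}$. This gives $\|\mathcal C_\varphi\|^2\lesssim\|\varphi\|_{\mathscr G_\beta}$, with constants depending on $\eta$ and the $A_\infty$ data through the quasicube comparability and reverse Hölder.

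\textbf{Step 3: necessity.} Fix a dyadic $Q$ and an extremal dual $\mu$ from Theorem \ref{thm:upward-duality}. Build test functions $f=\sum_{I}c_I\,h_{\varphi^{-1}(I)}$, adapted bumps on the pulled-back quasicubes, with coefficients chosen from $\mu$ so that the following hold:

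(a) $\|f\|_{\mathcal Q_\alpha}\lesssim1$. This is checked by the primal constraint structure, using that the $\varphi^{-1}(I)$ form a quasi-dyadic family on which the $\mathcal Q_\alpha$ Carleson condition is computed with $|\varphi^{-1}(I)|$.

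(b) $\mathcal C_\varphi f$ has $\mathcal Q_\alpha(Q)$-energy $\gtrsim\sum_J\mu_J\simeq\|\varphi\|_{\mathscr G_\beta(Q)}$.

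Lower bounds for oscillation of $h_{\varphi^{-1}(I)}\circ\varphi^{-1}$ on $J$ come from quasisymmetry, since images of concentric quasiballs are separated. Random signs (Khintchine) can decouple cross terms.

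\textbf{Main obstacle.} The hard step is Step 3, and to a lesser extent the matching in Step 1. Pullback quasicubes are not dyadic, so the wavelet or oscillation bookkeeping on $\varphi^{-1}(I)$ must be done with a Christ-type dyadic system adapted to the doubling measure $J_{\varphi^{-1}}dx$. One must also verify that no loss occurs in exchanging $|\varphi^{-1}(\cdot)|$ with that measure. I would first try to prove both directions on this adapted tree, with $\mathcal D(Q)$ mapped bijectively to quasicubes, and only then compare with Euclidean dyadic cubes, using $A_\infty$ doubling and Theorem \ref{thm-gauge-calc}-type ratio control.
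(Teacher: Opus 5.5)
Your skeleton matches the paper's: LP duality for sufficiency, and for necessity test functions built from a packing sequence in $\mathscr T^*_\beta(Q_0)$. But both load-bearing estimates are asserted rather than derived, and where you name a mechanism it is not one that works.

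\textbf{Sufficiency.} You do not need telescoping along ancestors or a bilinear kernel form; the ancestor structure enters only through the linear program. What you need are the two $\varphi$-adapted oscillation bounds of Theorems~\ref{thm2-varphi-EJPX} and~\ref{thm1-varphi-EJPX}:
\[
\|f\circ\varphi^{-1}\|_{\mathcal Q_\alpha(\rn)}^2\ls\sup_{Q}\sum_{I\in\cd(Q)}\Big(\frac{|I|}{|Q|}\Big)^{\beta}a_{\varphi^{-1}(2I)}(f),
\qquad
\sum_{I\in\cd(I')}\Big(\frac{|\varphi^{-1}(I)|}{|\varphi^{-1}(I')|}\Big)^{\beta}a_{\varphi^{-1}(2I)}(f)\ls\|f\|_{\mathcal Q_\alpha(\rn)}^2 .
\]
Once these hold, $\{a_{\varphi^{-1}(2I)}(f)/(C\|f\|^2_{\mathcal Q_\alpha(\rn)})\}_I$ is itself an element of $\mathscr T^*_\beta(Q)$, and the dual formula finishes. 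The paper equivalently inserts the primal constraint and swaps sums.

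The first bound is where your Reimann-type idea needs a new input. The quantity $|I|^{-1}\int_I|f\circ\varphi^{-1}-c|^2$ equals $|I|^{-1}\int_{\varphi^{-1}(I)}|f-c|^2J_\varphi$, which $L^2$ oscillation of $f$ on $\varphi^{-1}(I)$ does not control. The BMO John--Nirenberg step that raises exponents is not available for these Carleson sums. The paper's substitute is the Koskela--Xiao--Zhang--Zhou characterization (Proposition~\ref{prop-Q-equiChar}) with local $L^q$ oscillations, $q=2(1-1/r_\varphi)<2$, so that H\"older and reverse H\"older for $J_\varphi$ can absorb the Jacobian. The second bound needs the annulus construction of Lemma~\ref{lem-varphiQI} and a bounded-overlap count. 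Neither appears in your plan.

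\textbf{Necessity is where the real gap is.} Take your bumps to be $h_{\varphi^{-1}(I)}=\psi_I^\nu\circ\varphi$, with Euclidean wavelets on the target and coefficients $\sqrt{|I|d_I}$; your $f$ is then the paper's $f_N\circ\varphi$. With this choice, (b) is immediate from orthonormality and Proposition~\ref{prop-waveletsQ}, with no separation argument or Khintchine. Everything rests on (a).

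Your justification of (a) (``checked by the primal constraint structure \dots\ computed with $|\varphi^{-1}(I)|$'') presupposes that the Euclidean $\mathcal Q_\alpha$ norm on the domain is characterized by coefficients on the non-Euclidean family $\{\varphi^{-1}(I)\}$. That is not a known tool; establishing the needed synthesis bound is exactly the content of Theorem~\ref{thm-upperf}. Also, the constraint in play there is the dual packing constraint, not the primal one.

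A Christ-type system for $J_{\varphi^{-1}}dx$ does not help. The pulled-back tree is already a perfectly good quasi-dyadic system, and Haar-type functions on such sets are not even in $\mathcal Q_\alpha$ once $\alpha\ge\frac12$. The real issue is almost-orthogonality between pulled-back bumps and a Euclidean wavelet basis on the domain.

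The missing idea is the off-diagonal estimate of Lemma~\ref{lem-off-diag},
\[
|\langle\psi_I^\nu\circ\varphi,\psi_J^\lambda\rangle|\ls\Big(\frac{|J|}{|I|}\Big)^{1/2}\min\Big\{\frac{|\varphi^{-1}(I)|}{|J|},\Big(\frac{|\varphi(J)|}{|I|}\Big)^{1/n}\Big\}.
\]
Its two regimes come from the size of $\psi_I^\nu\circ\varphi$, and from the smoothness of $\psi_I^\nu$ paired against the cancellation of $\psi_J^\lambda$. It must be followed by an $\varepsilon$-split Cauchy--Schwarz and the Carleson-type summation Lemmas~\ref{lem-QS-est1}--\ref{lem-QS-est2}, which use reverse H\"older for both $J_\varphi$ and $J_{\varphi^{-1}}$. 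Together these show that the Euclidean wavelet coefficients of $f_N\circ\varphi$ satisfy the $\mathcal Q_\alpha$ Carleson condition uniformly in the truncation level $N$.

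The truncation also replaces your ``extremal dual $\mu$''. Such a maximizer need not exist, and the gauge may a priori be infinite.
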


Let \(\varphi :\ \mathbb R^n \to \mathbb R^n\) be quasisymmetric.
For \(n\ge 2\), Gehring \cite{Gehring1973Acta} proved that the Jacobian
\(J_\varphi\) satisfies the reverse H\"older inequality (see \eqref{eq-RHn} below).
Since \(\varphi^{-1}\) is also quasisymmetric, the same inequality holds
for \(J_{\varphi^{-1}}\).
Consequently, when \(n\ge 2\), we have \(J_\varphi, J_{\varphi^{-1}} \in A_\infty(\mathbb R^n)\).

For $n=1$, Gehring's lemma is not applicable,
and there is no automatic reverse H\"older inequality for \(J_\varphi\) or
\(J_{\varphi^{-1}}\).
We therefore impose the additional assumption
that \(J_\varphi, J_{\varphi^{-1}} \in A_\infty(\mathbb R)\),
which, by the classical theory of Muckenhoupt weights,
is equivalent to these Jacobians satisfying the reverse H\"older inequality.

Throughout the paper, we shall freely use, without further comment, that
\(J_\varphi, J_{\varphi^{-1}} \in A_\infty(\mathbb R^n)\)
and that both satisfy the reverse H\"older inequality.
The constants appearing in the reverse H\"older inequality for \(J_\varphi\) are determined by
\([J_\varphi]_{A_\infty(\mathbb R^n)}\), while those for \(J_{\varphi^{-1}}\) are determined by
\([J_{\varphi^{-1}}]_{A_\infty(\mathbb R^n)}\).

\begin{remark}\label{rem-mainthm}
Theorem \ref{thm-trace-gauge}  resolves the classical long-standing
Problem \ref{openProb}, posed in \cite{EssenJansonPengXiao2000IUMJ}
(see also \cite{XiaoQbook2001,Zhao2009Qquestion}),
of characterizing the quasisymmetric mappings $\varphi$ for
which the composition operator $f\mapsto f\circ\varphi^{-1}$
preserves the real $Q_\alpha(\mathbb R^n)$ space. It refines
the sharp sufficient criteria of \cite{KoskelaXiaoZhangZhou2017JEMS} ---
formulated in terms of the local and global self-similar Minkowski
dimension of the exceptional sets of the Jacobian --- with the intrinsic
 quantity $\|\varphi\|_{\mathscr G_\beta}$.
This capacity gauge $\|\varphi\|_{\mathscr G_\beta}$ is not merely technical;
it possesses a clear geometric interpretation (see Remarks \ref{rem-gauge}
and \ref{rem-gauge-eg}). Moreover, the  quantitative equivalence in \eqref{eq-norm-gauge}
captures the optimal dependence of the operator norm on the geometry of $\varphi$,
providing sharp control that is essential for applications in harmonic analysis and beyond.
\end{remark}

\begin{remark}
We now describe the main strategy behind the proof of Theorem \ref{thm-trace-gauge}.

\begin{enumerate}[label=\textup{(\roman*)}]
\item
The sufficiency part of Theorem \ref{thm-trace-gauge} is established in Section \ref{sec4},
where we prove that finiteness of the capacity gauge implies boundedness of $\mathcal C_\varphi$.
The argument uses mean oscillation characterizations of \(\mathcal Q_\alpha(\mathbb R^n)\)
from \cite{EssenJansonPengXiao2000IUMJ, KoskelaXiaoZhangZhou2017JEMS}.
The core estimates are carried out in Theorems \ref{thm2-varphi-EJPX} and \ref{thm1-varphi-EJPX},
where we respectively bound the \(\mathcal Q_\alpha(\mathbb R^n)\)
seminorm from above and below
via mean oscillation estimates adapted to \(\varphi\).

\item
The necessity direction of Theorem \ref{thm-trace-gauge} is
established in Section \ref{sec5}, where we show that boundedness of $\mathcal C_\varphi$
forces finiteness of the capacity gauge.
This part relies on the wavelet characterization
of $\mathcal Q_\alpha(\mathbb R^n)$ from \cite{EssenJansonPengXiao2000IUMJ}
and the dyadic reduction of the dual form of the capacity gauge
from Theorems \ref{thm:upward-duality} and \ref{thm-dyadic-gauge}.
The main difficulty and novelty lie in Theorem \ref{thm-upperf},
where wavelets are used to construct an explicit sequence $\{f_N\}_{N\in\nn}$
of functions in
$\mathcal Q_\alpha(\mathbb R^n)$
such that $\|f_N\|_{\mathcal Q_\alpha(\mathbb R^n)}$
is uniformly bounded below by the weighted cost
defining the dual gauge (see \eqref{eq-dual-gauge}),
while $\|f_N\circ\varphi\|_{\mathcal Q_\alpha(\mathbb R^n)}$
is uniformly controlled above via the dual constraint (see \eqref{eq0-dual-gauge}).
This construction is the technical heart of the necessity proof.
\end{enumerate}
\end{remark}

As an immediate application of Theorem \ref{thm-trace-gauge},
we combine it with the earlier result of Koskela, Xiao, Zhang and Zhou \cite{KoskelaXiaoZhangZhou2017JEMS}
(Theorem \ref{thm-JEMS}) to obtain the following dimension criterion
for the capacity gauge.

\begin{corollary}\label{cor-JEMS-gauge}
Let \(n\ge 2\) and \(\alpha\in(0,1)\).
\begin{enumerate}[label=\textup{(\roman*)}]
    \item If \(\varphi:\mathbb R^n \to \mathbb R^n\) is quasisymmetric and \(J_{\varphi^{-1}}\in A_1(\mathbb R^n; E)\) for some closed set \(E\subset\mathbb R^n\) whose local or global self-similar Minkowski dimension satisfies \eqref{eq-con-dimE}, then
$$
\|\varphi\|_{\mathscr G_{1-\frac{2\alpha}{n}}}<\infty.
$$

\item Conversely, there exist a quasisymmetric mapping \(\varphi:\mathbb R^n \to \mathbb R^n\) and a closed set \(E\subset\mathbb R^n\) such that \(J_{\varphi^{-1}}\in A_1(\mathbb R^n; E)\) and the local or global self-similar Minkowski dimension of \(E\) satisfies \eqref{eq1-con-dimE}. For any such \(\varphi\),
$$
\|\varphi\|_{\mathscr G_{1-\frac{2\alpha}{n}}}=\infty.
$$
\end{enumerate}
\end{corollary}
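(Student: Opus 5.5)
This corollary follows by combining Theorem \ref{thm-JEMS} with the equivalence in Theorem \ref{thm-trace-gauge}. For \(n\ge 2\), Gehring's reverse H\"older inequality puts \(J_\varphi, J_{\varphi^{-1}}\) in \(A_\infty(\mathbb R^n)\). So the extra hypothesis of Theorem \ref{thm-trace-gauge}, needed only when \(n=1\), is not required here. Also, \(\alpha\in(0,1)\) with \(n\ge2\) gives \(\alpha<\min\{1,n/2\}\), so \(\alpha\) lies in the admissible range \eqref{eq-alpha-range}. Hence, for every quasisymmetric \(\varphi\) on \(\mathbb R^n\), Theorem \ref{thm-trace-gauge} yields
$$
\mathcal C_\varphi \text{ bounded on } \mathcal Q_\alpha(\mathbb R^n)\iff \|\varphi\|_{\mathscr G_{1-\frac{2\alpha}{n}}}<\infty .
$$

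For part (i), take \(\varphi\) and \(E\) as in the hypothesis. The sufficiency half of Theorem \ref{thm-JEMS} applies with \(J_{\varphi^{-1}}\in A_1(\mathbb R^n;E)\) and \eqref{eq-con-dimE}, and gives that \(\mathcal C_\varphi\) is bounded on \(\mathcal Q_\alpha(\mathbb R^n)\). The necessity direction of Theorem \ref{thm-trace-gauge} then gives \(\|\varphi\|_{\mathscr G_{1-\frac{2\alpha}{n}}}\lesssim \|\mathcal C_\varphi\|^2<\infty\).

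For part (ii), existence of such a pair \((\varphi,E)\) is exactly the sharpness statement in Theorem \ref{thm-JEMS}. That statement also gives that \(\mathcal C_\varphi\) is unbounded for these examples. The sufficiency direction of Theorem \ref{thm-trace-gauge}, read contrapositively, then gives \(\|\varphi\|_{\mathscr G_{1-\frac{2\alpha}{n}}}=\infty\).

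One point needs care. The phrase ``for any such \(\varphi\)'' could be read as claiming the gauge is infinite for every \(\varphi\) satisfying \(J_{\varphi^{-1}}\in A_1(\mathbb R^n;E)\) and \eqref{eq1-con-dimE}. Theorem \ref{thm-JEMS} only guarantees unboundedness for the examples it constructs. I would therefore state the conclusion for the constructed examples, which is all that the sharpness part of Theorem \ref{thm-JEMS} provides. Once this scope is fixed, no real obstacle remains: the corollary is a direct transfer through the equivalence \eqref{eq-norm-gauge}.
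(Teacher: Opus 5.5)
Your proposal is correct and matches the paper, which gives no separate proof. The paper obtains the corollary exactly as you do: it combines Theorem \ref{thm-JEMS} with the two directions of Theorem \ref{thm-trace-gauge}, and the $A_\infty$ hypothesis is automatic for $n\ge 2$ by Gehring's lemma. Your caveat on the scope of (ii) is also well taken. Under the broad reading the claim fails: $\varphi=\mathrm{id}$, paired with the set $E$ from the sharpness example, has $J_{\varphi^{-1}}\equiv 1\in A_1(\mathbb R^n;E)$ yet $\|\mathrm{id}\|_{\mathscr G_{1-\frac{2\alpha}{n}}}=1$. So (ii) must be read for the constructed examples, where $\mathcal C_\varphi$ is unbounded.
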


This dimension criterion is one instance of how Theorem \ref{thm-trace-gauge} can be applied.
Further applications are presented in Section \ref{sec6}, where we consider three contexts:
composition stability, removability of sets, and transport equations.
In Subsection \ref{ss6.1}, we prove that finiteness of the capacity gauge
is stable under composition of quasisymmetric mappings.
Subsection \ref{ss6.2} shows that finite capacity gauge
preserves \(\mathcal Q_\alpha\)-removability under quasisymmetric mappings.
Subsection \ref{ss6.3} proves that, for quasisymmetric flows with finite gauge,
the transport equation propagates the \(\mathcal Q_\alpha\)-regularity of initial data,
and the corresponding estimates are uniform in time.

\bigskip

\noindent{\bf Notation.}
Throughout the paper, we adopt the following notation.

\begin{itemize}

\item Let \(\mathbb N:=\{1,2,\dots\}\), \(\mathbb Z_+:=\{0\}\cup\mathbb N\) and \(\mathbb Z:=\{0,\pm1,\pm2,\dots\}\).

\item For any $a,b\in\rr$,  set $a\vee b:=\max\{a,b\}$, $a\wedge b:=\min\{a,b\}$, $a_+:=\max\{a,0\}$ and $a_-:=-\min\{a,0\}$.

\item By a cube \(Q \subset \mathbb R^n\), we always mean one whose sides are parallel to the coordinate axes. Denote by \(\ell(Q)\) its side-length and by \(c_Q\) its center. The symbols \(I,J,R,Q\) are used generically for cubes.

\item \(C_c^\infty(\mathbb R^n)\) denotes the space of infinitely differentiable functions on \(\mathbb R^n\) with compact support.

\item For a set \(E\subset\mathbb R^n\), we write \(|E|\) for its \(n\)-dimensional Lebesgue measure and \(\mathbf 1_E\) for its characteristic function.
     For locally integrable \(f\) on \(\mathbb R^n\), set
$$
\fint_E f(x)\,dx:=\frac1{|E|}\int_E f(x)\,dx.
$$

\item The notation \(U\lesssim V\) or \(V\gtrsim U\) means that \(U\le C V\) for some positive constant \(C\). We occasionally write \(C(\alpha,\beta,\dots)\) to indicate that the constant depends on the parameters \(\alpha,\beta,\dots\). The symbol \(U\simeq V\) means that
    \(U\lesssim V\lesssim U\).
\end{itemize}

\section{Quasisymmetric mappings}\label{sec2}

\subsection{Basic properties}\label{ss2.1}

The following basic properties of quasisymmetric mappings can be found in
 \cite{Heinonen2001book, Koskela2009note, Vaisala1971LNM}.

\begin{proposition}\label{prop-dia-doubl}
Let $\varphi :\ \rn \to \rn$ be $\eta$-quasisymmetric.

\begin{enumerate}[label=\textup{(\roman*)}]
  \item  The inverse $\varphi^{-1}$ of $\varphi$ is $\eta'$-quasisymmetric with
\begin{align}\label{eq-eta'}
\eta'(t):=\frac{1}{\eta^{-1}(t^{-1})}\qquad \text{for all} \ t\in(0,\infty).
\end{align}
   \item  If $E \subset F \subset \rn$ are such that $0 < \operatorname{diam} E \leq \operatorname{diam} F < \infty$, then $\operatorname{diam} \varphi(F)$ is finite and
\begin{align}\label{eq-diamEF}
\frac{1}{2}\left[\eta\left(\frac{\operatorname{diam} F}{\operatorname{diam} E}\right)\right]^{-1}
\leq
\frac{\operatorname{diam} \varphi(E)}{\operatorname{diam} \varphi(F)}
\leq
\eta\left(\frac{2\operatorname{diam} E}{\operatorname{diam} F}\right).
\end{align}
 \item  Given any \(c\in(0,1)\), let \(c_{\eta}:=2^{-1}(\eta')^{-1}(c)\) and \(c_{\eta,n}:=c_{\eta}(2\eta(\sqrt n))^{-1}\).
 Then, for any ball \(B\subset\mathbb R^n\) with center \(c_B\),  we have the two-sided ball inclusions
\begin{align}\label{eq-ball-varphi}
  B\big(\varphi(c_B),\, c_{\eta} \operatorname{diam}\varphi(B)\big)
  \subset \varphi(B)\subset
  B\big(\varphi(c_B),\, \operatorname{diam}\varphi(B)\big),
\end{align}
and, similarly, for any cube \(I\subset\mathbb R^n\) with center \(c_I\),
\begin{align}\label{eq-cube-varphi}
  B\big(\varphi(c_I),\, c_{\eta,n} \operatorname{diam}\varphi(I)\big)
  \subset \varphi(I)\subset
  B\big(\varphi(c_I),\, \operatorname{diam}\varphi(I)\big).
\end{align}
Moreover, it holds  that
\begin{align}\label{eq3-ball-varphi}
|\varphi(B)| \simeq [\operatorname{diam}\varphi(B)]^n
\quad\text{and}\quad
|\varphi(I)| \simeq [\operatorname{diam}\varphi(I)]^n,
\end{align}
with implicit constants independent of \(B\) and \(I\).

\end{enumerate}
\end{proposition}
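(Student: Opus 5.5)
The plan is to treat the three items of Proposition~\ref{prop-dia-doubl} in order, using only the defining inequality \eqref{eq-QSM}.

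For (i), we show that $\varphi^{-1}$ is $\eta'$-quasisymmetric. Take distinct $x,y,z$ and set $a=\varphi^{-1}(x)$, $b=\varphi^{-1}(y)$, $c=\varphi^{-1}(z)$, so that we must bound $t:=|a-b|/|a-c|$. Applying \eqref{eq-QSM} to the triple $(a,c,b)$ gives
$$
\frac{|x-z|}{|x-y|}\le \eta\left(\frac{|a-c|}{|a-b|}\right)=\eta(t^{-1}).
$$
Since $\eta$ is an increasing homeomorphism, this rearranges to $t\le 1/\eta^{-1}(|x-z|/|x-y|)=\eta'(|x-y|/|x-z|)$. It remains to check that $\eta'$ is an increasing homeomorphism of $[0,\infty)$ with $\eta'(0)=0$. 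This holds because $s\mapsto \eta^{-1}(s^{-1})$ is decreasing, tends to $\infty$ as $s\to0$, and tends to $0$ as $s\to\infty$.

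For (ii), first note that $\varphi(F)$ is bounded. Indeed, $F$ is contained in a ball $B(x_0,R)$, and for any $z$ with $|x_0-z|=R$ (so $\varphi(z)\neq\varphi(x_0)$), \eqref{eq-QSM} bounds $|\varphi(x_0)-\varphi(y)|$ by $\eta(1)\,|\varphi(x_0)-\varphi(z)|$ for all $y\in F$.

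For the upper bound in \eqref{eq-diamEF}, fix $x,y\in E$ and pick $z\in F$ with $|x-z|\ge \operatorname{diam}F/2$; such a $z$ exists by the triangle inequality applied to a nearly diametral pair. Then
$$
|\varphi(x)-\varphi(y)|\le \eta\left(\frac{2\operatorname{diam}E}{\operatorname{diam}F}\right)|\varphi(x)-\varphi(z)|\le \eta\left(\frac{2\operatorname{diam}E}{\operatorname{diam}F}\right)\operatorname{diam}\varphi(F).
$$
Taking the supremum over $x,y\in E$ gives the right-hand inequality.

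For the lower bound, choose $x,y\in E$ with $|x-y|\ge \operatorname{diam}E/2$ (any distinct pair if needed). For arbitrary $z\in F$, \eqref{eq-QSM} with the roles of $y$ and $z$ swapped gives
$$
|\varphi(x)-\varphi(z)|\le \eta\left(\frac{|x-z|}{|x-y|}\right)|\varphi(x)-\varphi(y)|\le \eta\left(\frac{2\operatorname{diam}F}{\operatorname{diam}E}\right)\operatorname{diam}\varphi(E).
$$
Hence $\operatorname{diam}\varphi(F)\le 2\eta(2\operatorname{diam}F/\operatorname{diam}E)\operatorname{diam}\varphi(E)$. The factor $2$ inside $\eta$ can then be absorbed by choosing $x,y$ more carefully, or simply accepted as a harmless constant change in the stated form.

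For (iii), the outer inclusions in \eqref{eq-ball-varphi} and \eqref{eq-cube-varphi} are trivial, since $\varphi(c_B)\in\varphi(B)$. For the inner inclusion, let $w\notin\varphi(B)$; we must show $|w-\varphi(c_B)|\ge c_\eta\operatorname{diam}\varphi(B)$. Since $\varphi$ is a homeomorphism of $\mathbb R^n$, $\varphi(B)$ is open, so $\partial\varphi(B)=\varphi(\partial B)$. It therefore suffices to bound $|\varphi(z)-\varphi(c_B)|$ from below for $z\in\partial B$.

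Pick $u,v\in\overline{B}$ with $|\varphi(u)-\varphi(v)|\ge \operatorname{diam}\varphi(B)/2$. Then one of them, say $u$, satisfies $|\varphi(u)-\varphi(c_B)|\ge \operatorname{diam}\varphi(B)/4$. Apply (i) to $\varphi^{-1}$ on the triple $\varphi(c_B),\varphi(z),\varphi(u)$:
$$
1\le \frac{|c_B-z|}{|c_B-u|}\le \eta'\left(\frac{|\varphi(c_B)-\varphi(z)|}{|\varphi(c_B)-\varphi(u)|}\right).
$$
This forces $|\varphi(c_B)-\varphi(z)|\ge (\eta')^{-1}(1)\operatorname{diam}\varphi(B)/4$. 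The constant in the statement, $c_\eta=2^{-1}(\eta')^{-1}(c)$, is obtained in the same way with $c$ in place of $1$, by using the radius ratio between $B$ and a comparable inner ball. For cubes, we pass to the inscribed ball $B(c_I,\ell(I)/2)$ and apply (ii) to compare the diameters of its image and of $\varphi(I)$; the ratio of diameters is $\sqrt n$, which produces the factor $(2\eta(\sqrt n))^{-1}$.

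Finally, \eqref{eq3-ball-varphi} follows from the two-sided inclusions, since $\varphi(B)$ is sandwiched between balls of radii comparable to $\operatorname{diam}\varphi(B)$.

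The main obstacle is the bookkeeping in (iii): matching the exact constants $c_\eta$ and $c_{\eta,n}$ requires choosing the comparison points carefully. The qualitative two-sided inclusions themselves follow directly from the argument above.
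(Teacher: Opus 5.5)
Your (i) and (ii) follow the standard arguments from Heinonen's book, which the paper simply cites. Your reduction of \eqref{eq3-ball-varphi} and of the cube case to the ball case, via the inscribed ball $B(c_I,\ell(I)/2)$ and the lower bound in \eqref{eq-diamEF} with ratio $\sqrt n$, is exactly the paper's.

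For the inner inclusion in \eqref{eq-ball-varphi} you take a different route. The paper argues directly on preimages. For $w=\varphi(y)$ in the small ball, it picks $z$ with $|z-c_B|\le r$ nearly realizing $L_\varphi(B)=\sup_{|z-c_B|\le r}|\varphi(z)-\varphi(c_B)|\ge\frac12\operatorname{diam}\varphi(B)$. Then $\eta'$ gives $|y-c_B|\le c|z-c_B|\le cr<r$. The parameter $c<1$ is exactly what produces the strict inequality, and only surjectivity of $\varphi$ is used.

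You instead use $\partial\varphi(B)=\varphi(\partial B)$ and the fact that the segment from $\varphi(c_B)$ to any $w\notin\varphi(B)$ meets $\varphi(\partial B)$. Since $|z-c_B|=r$ for $z\in\partial B$, the ratio you feed into $\eta'$ is at least $1$. So, once constants are tracked properly, your argument gives the radius $\frac12(\eta')^{-1}(1)\operatorname{diam}\varphi(B)$. This exceeds $c_\eta\operatorname{diam}\varphi(B)$ for every $c\in(0,1)$, so the detour through a comparable inner ball is unnecessary. The price is a little topology: boundaries and connectedness of a segment.

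As written, though, the constants are not right, and one of your proposed escapes is not acceptable.

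\textbf{Constants in (ii).} A pair $x,y\in E$ with $|x-y|\ge\frac12\operatorname{diam}E$ only gives $\frac12[\eta(2\operatorname{diam}F/\operatorname{diam}E)]^{-1}$. You must take $|x-y|>(1-\varepsilon)\operatorname{diam}E$ and let $\varepsilon\to0$, using continuity of $\eta$. The same limiting device is needed in your upper bound, because a point $z\in F$ with $|x-z|\ge\frac12\operatorname{diam}F$ need not exist (e.g.\ $F=(0,1)$, $x=\frac12$). Settling for the weaker form is not harmless, since the stated $c_{\eta,n}=c_\eta(2\eta(\sqrt n))^{-1}$ comes from the sharp lower bound.

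\textbf{Constants in (iii).} The pair $u,v$ costs a factor $4$. The resulting radius $\frac14(\eta')^{-1}(1)\operatorname{diam}\varphi(B)$ does not dominate $c_\eta\operatorname{diam}\varphi(B)=\frac12(\eta')^{-1}(c)\operatorname{diam}\varphi(B)$ when $c$ is close to $1$. Replace the pair by a single $u\in B$ with $|\varphi(u)-\varphi(c_B)|>(1-\varepsilon)\frac12\operatorname{diam}\varphi(B)$. Such a $u$ exists because $\operatorname{diam}\varphi(B)\le 2\sup_{u\in B}|\varphi(u)-\varphi(c_B)|$; this is the paper's $L_\varphi(B)$. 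Your boundary argument then yields the radius $(1-\varepsilon)\frac12(\eta')^{-1}(1)\operatorname{diam}\varphi(B)$. Choose $\varepsilon$ with $(1-\varepsilon)(\eta')^{-1}(1)\ge(\eta')^{-1}(c)$, which is possible since $(\eta')^{-1}$ is strictly increasing and $c<1$. This gives \eqref{eq-ball-varphi} with the stated $c_\eta$, and the cube case then follows as you describe.
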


\begin{proof}
Items (i) and (ii) are from \cite[Proposition~10.6 and 10.8]{Heinonen2001book}, respectively.

Item (iii) is also a well-known property of quasisymmetric mappings. We now provide some details.
The second inclusions in \eqref{eq-ball-varphi} and \eqref{eq-cube-varphi} are obvious.

By (ii), we have $\operatorname{diam}\varphi(B)\in(0,\infty)$ for all balls $B$, and similarly for cubes $I$.
For the first inclusion in \eqref{eq-ball-varphi}, we set
 $$L_\varphi(B):=\sup\left\{|\varphi(z)-\varphi(c_B)|:\,  |z-c_B|\le r\right\},$$
 where $r$ denotes the radius of the ball $B$.
 By the triangle inequality, we have
 $$
 \operatorname{diam}\varphi(B)\le 2 L_\varphi(B).
 $$
Suppose that
$
|u-\varphi(c_B)|< c_\eta \operatorname{diam}\varphi(B),
$
which gives
$$
|u-\varphi(c_B)|< 2c_\eta L_\varphi(B).
$$
By the supremum in defining $L_\varphi(B)$,
there exists some point \(z\) such that $|z-c_B|\le r$ and
$$
|u-\varphi(c_B)|< (\eta')^{-1}(c)|\varphi(y)-\varphi(c_B)|.
$$
Since \(\varphi\) is a homeomorphism on \(\rn\), there is some \(y\in\rn\) with \(u=\varphi(y)\). By the increasing property of \(\eta\) (and hence of \(\eta'\)), we obtain
$$
\frac{|y-c_B|}{|z-c_B|}
\le \eta'\left(\frac{|u-\varphi(c_B)|}{|\varphi(z)-\varphi(c_B)|}\right)
\le \eta'\left((\eta')^{-1}(c)\right)=c.
$$
Thus \(|y-c_B|\le c|z-c_B|\le cr<r\), which shows that $y\in B$
and, hence, \(u=\varphi(y)\in\varphi(B)\). This proves the first inclusion in \eqref{eq-ball-varphi}, as desired.

For the first inclusion in \eqref{eq-cube-varphi}, consider the ball \(B_I:=B(c_I,\, \ell(I)/2)\). Note that \(B_I\subset I\). From the first inequality in \eqref{eq-diamEF}, it follows that
$$
\frac{\operatorname{diam}\varphi(B_I)}{\operatorname{diam}\varphi(I)}
\ge \frac{1}{2\eta(\sqrt n)}.
$$
Combining this with the first inclusion in \eqref{eq-ball-varphi} yields
$$
\varphi(I)\supset \varphi(B_I)
\supset B\bigl(\varphi(c_I),\, c_{\eta}\operatorname{diam}\varphi(B_I)\bigr)
\supset
B\left(\varphi(c_I),\, \frac{c_{\eta}}{2\eta(\sqrt n)}\operatorname{diam}\varphi(I)\right).
$$
Thus the first inclusion in \eqref{eq-cube-varphi} follows with \(c_{\eta,n}=c_{\eta}(2\eta(\sqrt n))^{-1}\).

Finally, the two equivalences in \eqref{eq3-ball-varphi} follow directly from \eqref{eq-ball-varphi} and \eqref{eq-cube-varphi}.
\end{proof}

\subsection{Muckenhoupt weights}\label{ss2.2}

By a \emph{weight} $w$ on $\mathbb R^n$, we mean a nonnegative locally integrable function.
We say that a weight $w$ belongs to the \emph{Muckenhoupt class $A_p(\mathbb R^n)$} with $1<p<\infty$ if$$
[w]_{A_p(\mathbb{R}^n)} = \sup_{\text{cubes } Q \subset \mathbb{R}^n} \left( \frac{1}{|Q|} \int_Q w(x) \, dx \right) \left( \frac{1}{|Q|} \int_Q (w(x))^{\frac{1}{1-p}} \, dx \right)^{p-1} < \infty,
$$
and that \(w \in A_1(\mathbb{R}^n)\) if
$$
[w]_{A_1(\mathbb{R}^n)} = \sup_{\text{cubes } Q \subset \mathbb{R}^n} \left( \frac{1}{|Q|} \int_Q w(x) \, dx \right) \left( \mathop{\mathrm{ess\,inf}\,}_{Q} w \right)^{-1} < \infty.
$$
Define
$$
A_\infty(\mathbb{R}^n) := \bigcup_{1 \le p < \infty} A_p(\mathbb{R}^n).
$$
According to Grafakos \cite[Section~7]{Grafakos-CFAbook}, \(w \in A_\infty(\mathbb{R}^n)\) if and only if
$$
[w]_{A_\infty(\mathbb{R}^n)}
:= \sup_{\text{cubes } Q \subset \mathbb{R}^n} \left( \frac{1}{|Q|} \int_Q w(x) \, dx \right)
 \exp \left( -\frac{1}{|Q|} \int_Q \log w(x) \, dx \right) < \infty,
$$
and for any $p\in[1,\infty)$ we have the comparison
$$[w]_{A_\infty(\rn)}\le [w]_{A_p(\rn)}.$$

Given a weight $w$ on $\mathbb R^n$ and a measurable set $E\subset\mathbb R^n$, we denote $w(E):=\int_E w(x)\,dx$.
Let $w\in A_p(\mathbb R^n)$ for some $p\in[1,\infty)$. Then, by  \cite[(7.2.1)]{Grafakos-CFAbook},
for any cube $Q\subset\mathbb{R}^n$ and any measurable subset $S\subset Q$,
\begin{align}\label{eq-Apweight}
 \frac{w(S)}{w(Q)}\ge \frac1{[w]_{A_p(\rn)}}\left(\frac{|S|}{|Q|}\right)^p.
\end{align}
In particular,  for any cube \(Q\subset\mathbb{R}^n\) and $\lambda\in(1,\infty)$,
\begin{align}\label{eq-vd}
w(\lambda Q)\le [w]_{A_p(\rn)}\lambda^{np} w(Q).
\end{align}
In other words, the measure $w(x)\,dx$ satisfies the \emph{volume doubling } property.

There are two well-known equivalent characterizations of Muckenhoupt $A_\infty(\rn)$ weights
(see, for example, Grafakos
\cite[Theorem~7.3.3]{Grafakos-CFAbook}). On the one hand,
\(w\in A_\infty(\rn)\) if and only if there exist constants \(C,\delta>0\) such that for every cube \(Q\) and measurable \(S\subset Q\),
\begin{align}\label{eq-Afzweight}
    \frac{w(S)}{w(Q)} \le C \left(\frac{|S|}{|Q|}\right)^\delta.
\end{align}
 On the other hand,  \(w \in A_\infty(\mathbb{R}^n)\) if and only if it satisfies the
 the following \emph{reverse H\"older inequality}:
 there exist  constants \(C\in(0,\infty)\) and $r_w\in(1,\infty)$ such that for every cube \(Q \subset \mathbb R^n\),
\begin{align}\label{eq-RH}
\left(\fint_Q w(x)^{r_w} \, dx\right)^{\frac{1}{r_w}}
\le C \fint_Q w(x) \, dx.
\end{align}
The number $r_w$ is called the \emph{reverse H\"older exponent}. Define
 $$[w]_{\mathrm{RH}_{r_w}}=\inf\left\{C:\
 C\ \text{ satisfies }\ \eqref{eq-RH}\right\}.$$
The value $[w]_{\mathrm{RH}_{r_w}}$ depend on $[w]_{A_\infty(\rn)}$ and vice versa.

\begin{proposition}\label{prop-varphi-weight}
Let \(\varphi :\ \mathbb R^n \to \mathbb R^n\) be an \(\eta\)-quasisymmetric mapping.

\begin{enumerate}[label=\textup{(\roman*)}]
  \item  For \(n \ge 2\), the Jacobian \(J_\varphi\in L_\loc^1(\rn)\) and satisfies the reverse H\"older inequality:
  there exist constants \(r_\varphi \in (1,\infty)\) and \(C \in (1,\infty)\), depending only on \(\eta\) and \(n\), such that for every cube \(Q \subset \mathbb R^n\),
  \begin{align}\label{eq-RHn}
  \left(\fint_Q J_\varphi(x)^{r_\varphi} \, dx\right)^{\frac{1}{r_\varphi}}
  \le C \fint_Q J_\varphi(x) \, dx.
  \end{align}
  In particular, \(J_\varphi \in A_\infty(\mathbb R^n)\) whenever \(n \ge 2\).

  \item For \(n = 1\), the reverse H\"older inequality \eqref{eq-RHn} remains valid under
  the additional assumption \(J_\varphi  \in A_\infty(\mathbb R)\).

  \item  If \(J_\varphi \in A_\infty(\mathbb{R}^n)\), then for every cube \(Q \subset \mathbb{R}^n\)
  and measurable subset \(S \subset Q\),
\begin{align}\label{eq-varphi-weight}
C^{-1}\left(\frac{|S|}{|Q|}\right)^{\beta_1}\le \frac{|\varphi(S)|}{|\varphi(Q)|}
\le C\left(\frac{|S|}{|Q|}\right)^{\beta_2},
\end{align}
where \(C \in (1,\infty)\), \(\beta_1 \in [1,\infty)\), \(\beta_2 \in (0,1]\),
  and these constants depend only on \(n\) and   \([J_\varphi]_{A_\infty(\mathbb{R}^n)}\).

  \item If \(J_\varphi \in A_\infty(\mathbb{R}^n)\), then there exists a constant
  \(C = C(n, \, [J_\varphi]_{A_\infty(\mathbb{R}^n)})\) such that
for any cube $Q\subset\mathbb{R}^n$,
$$
|\varphi(2Q)|\le C|\varphi(Q)|.
$$

\item Items (i) through (iv) also hold for the inverse mapping \(\varphi^{-1}\),
  with the analogous additional assumption that \(J_{\varphi^{-1}} \in A_\infty(\mathbb R)\)
  whenever $n=1$.
\end{enumerate}
\end{proposition}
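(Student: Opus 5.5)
The proof of Proposition \ref{prop-varphi-weight} proceeds item by item, relying on classical results and on the $A_\infty$ machinery recalled in Subsection \ref{ss2.2}.

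For item (i), we invoke Gehring's theorem \cite{Gehring1973Acta}. When $n\ge2$, a quasisymmetric map is $K$-quasiconformal with $K$ depending only on $\eta$ and $n$. It therefore belongs to $W^{1,n}_{\loc}$, and $J_\varphi\in L^1_{\loc}$. Gehring's lemma then yields higher integrability of $|D\varphi|$ together with a reverse Hölder inequality for $|D\varphi|^n$ on cubes. Combining this with $|D\varphi|^n\le KJ_\varphi\le K|D\varphi|^n$ transfers the inequality to $J_\varphi$, giving \eqref{eq-RHn}. The membership $J_\varphi\in A_\infty$ is then the standard equivalence between reverse Hölder and $A_\infty$ recalled before \eqref{eq-RH}.

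Item (ii) follows from the same equivalence \eqref{eq-RH} in one dimension. The only subtlety is that \eqref{eq-RHn} is stated with constants depending on $\eta$; for $n=1$ these constants instead depend on $[J_\varphi]_{A_\infty(\mathbb R)}$.

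For item (iii), we first use that a quasisymmetric map is absolutely continuous in measure (Lusin (N) property, which holds for quasiconformal maps when $n\ge2$; for $n=1$ it follows from $J_\varphi\in L^1_{\loc}$ being the derivative of the increasing homeomorphism). This gives $|\varphi(S)|=\int_S J_\varphi$ for measurable $S$. Applying \eqref{eq-Afzweight} with $w=J_\varphi$ yields the upper bound with $\beta_2=\delta$, capped at $1$ if necessary since $|S|/|Q|\le1$. Applying \eqref{eq-Apweight} with $w\in A_p$ for some $p$ determined by $[J_\varphi]_{A_\infty}$ yields the lower bound with $\beta_1=p$.

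Item (iv) is then immediate from the doubling estimate \eqref{eq-vd}: $|\varphi(2Q)|=J_\varphi(2Q)\le [J_\varphi]_{A_p}2^{np}J_\varphi(Q)$. Alternatively, it follows from Proposition \ref{prop-dia-doubl}(ii)–(iii) via $|\varphi(I)|\simeq(\operatorname{diam}\varphi(I))^n$.

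Item (v) holds because $\varphi^{-1}$ is $\eta'$-quasisymmetric by Proposition \ref{prop-dia-doubl}(i), so the same arguments apply.

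The main technical point is justifying $|\varphi(S)|=\int_S J_\varphi$, that is, the absolute continuity of $\varphi$. This is handled by citing the classical Lusin (N) property of quasiconformal maps, and its elementary one-dimensional analogue.
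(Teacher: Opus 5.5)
Your route is the same as the paper's, which gives no argument beyond citing Gehring for (i) and ``standard $A_\infty$ theory'' for the rest. For $n\ge 2$ your write-up is fine, since Lusin's condition (N) for quasiconformal maps does give $|\varphi(S)|=\int_S J_\varphi$.

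The gap is in the step you yourself single out as the main technical point, in the case $n=1$. Local integrability of $\varphi'$ holds for every increasing function (think of the Cantor function) and implies nothing about absolute continuity. Quasisymmetric maps of $\mathbb R$ can be singular (Beurling--Ahlfors). Even the hypothesis $J_\varphi\in A_\infty(\mathbb R)$, read as a condition on the pointwise derivative, does not help, as the following example shows.
\begin{itemize}
\item Take a doubling measure $\mu_s$ on $\mathbb R$ that is singular with respect to Lebesgue measure. Let $\varphi$ be the increasing homeomorphism with $\varphi(0)=0$ and $d\varphi=dx+\mu_s$.
\item A sum of doubling measures is doubling, so $\varphi$ is quasisymmetric. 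Also $\varphi'=1$ a.e., so $J_\varphi\equiv 1\in A_\infty(\mathbb R)$.
\item Let $S\subset[0,1]$ be a Lebesgue-null Borel set with $\mu_s([0,1]\setminus S)=0$. Then $|\varphi(S)|=\mu_s([0,1])>0=\int_S J_\varphi$, and the upper bound in \eqref{eq-varphi-weight} fails for $Q=[0,1]$.
\end{itemize}

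So for $n=1$ the identity $|\varphi(E)|=\int_E J_\varphi$ cannot be derived; it must be part of the hypothesis. Read $J_\varphi\in A_\infty(\mathbb R)$ as ``$\varphi$ is locally absolutely continuous and $\varphi'\in A_\infty(\mathbb R)$''. Equivalently, $E\mapsto|\varphi(E)|$ is an absolutely continuous measure with $A_\infty$ density. This is how the paper tacitly uses it, e.g.\ when it writes $|\varphi(I)|=\int_I J_\varphi$ in Lemma \ref{lem-QS-est1}. The same reading is needed for $\varphi^{-1}$ in (v). With that reading, your derivation of (iii) goes through unchanged.

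For (iv) you can avoid the issue altogether with your alternative argument through \eqref{eq-diamEF} and $|\varphi(I)|\simeq(\operatorname{diam}\varphi(I))^n$. The price is that the constant then depends on $\eta$.

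One smaller point on (i): the abstract Gehring lemma, as opposed to Gehring's theorem for global quasiconformal maps, gives $2Q$ on the right-hand side. To get \eqref{eq-RHn} on the same cube you then also need $|\varphi(2Q)|\lesssim|\varphi(Q)|$, which again follows from quasisymmetry.
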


The reverse H\"older inequality in Proposition \ref{prop-varphi-weight}(i)
for quasisymmetric mappings in dimension $n\ge 2$ is a classical result of Gehring \cite{Gehring1973Acta}.
The remaining properties in Proposition \ref{prop-varphi-weight}
follow from the standard theory of $A_\infty(\mathbb R^n)$ weights.

\section{Capacity gauge}\label{sec3}

\subsection{Dual form of the $\beta$-capacity gauge}\label{ss3.1}

In this subsection, we establish a dual representation for the $\beta$-capacity gauge
defined in Definition~\ref{def-gauge}.
Our proof follows the standard finite-dimensional linear programming duality framework,
combined with a limiting argument
over finite dyadic truncations, as used in \cite[Lemma~2.1]{HuZhou2026Arxiv1}.

\begin{theorem}
\label{thm:upward-duality}
Let $\beta\in\rr$ and $\varphi:\ \mathbb{R}^n \to \mathbb{R}^n$ be a homeomorphism. Then,
\begin{align}\label{eq-dual-gauge}
\|\varphi\|_{{\mathscr G}_\beta(Q)}
=\sup_{\{\mu_J\}\in \mathscr T_\beta^*(Q)}\left\{
\sum_{J\in \cd(Q) } \mu_J \left(\frac{|J|}{|Q|}\right)^{\beta }
\right\},
\end{align}
where the dual test sequence space $\mathscr T_\beta^*(Q)$ is  given by
\begin{align}\label{eq0-dual-gauge}
\mathscr T_\beta^*(Q):=
\left\{
\{\mu_J\}_{J\in\cd(Q)}\subset [0,\infty):\
\sum_{\gfz{J\in \cd(Q)}{J\subset I}} \mu_J \left(
\frac{|\varphi^{-1}(J)|}{|\varphi^{-1}(I)|}\right)^{\beta }
\le 1 \ \,
\text{for all}\ I\in \cd(Q)
\right\}.
\end{align}
\end{theorem}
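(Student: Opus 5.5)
The plan is to prove \eqref{eq-dual-gauge} by finite-dimensional LP duality on truncated trees, followed by a limit in the truncation depth. Write $a_{IJ}:=\bigl(\frac{|I|}{|J|}\cdot\frac{|\varphi^{-1}(J)|}{|\varphi^{-1}(I)|}\bigr)^\beta$ for $I\supset J$ in $\cd(Q)$, and $c_I:=(|I|/|Q|)^\beta$. The primal constraint is $\sum_{I\supset J}\lambda_I a_{IJ}\ge 1$ with cost $\sum\lambda_I c_I$. Rescale the dual by setting $\nu_J:=\mu_J c_J$. Then the objective in \eqref{eq-dual-gauge} becomes $\sum_J\nu_J$. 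The constraint in \eqref{eq0-dual-gauge}, after multiplying by $c_I$, becomes $\sum_{J\subset I}\nu_J\,\frac{c_I}{c_J}\bigl(\frac{|\varphi^{-1}(J)|}{|\varphi^{-1}(I)|}\bigr)^\beta\le c_I$. Since $\frac{c_I}{c_J}=(|I|/|J|)^\beta$, this reads $\sum_{J\subset I}\nu_J a_{IJ}\le c_I$. This is exactly the LP dual of the primal: $\max\sum\nu_J$ subject to $A^T\nu\le c$, $\nu\ge0$, against $\min c\cdot\lambda$ subject to $A\lambda\ge \mathbf 1$, $\lambda\ge0$.

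For $N\in\nn$, let $\cd^N(Q):=\bigcup_{k\le N}\cd_k(Q)$. Let $P_N$ be the primal value with constraints and variables restricted to $\cd^N(Q)$, and $D_N$ the dual value likewise restricted. Both finite programs are feasible: $\lambda\equiv 1$ works since $a_{JJ}=1$, and $\nu\equiv0$ works. Each value is finite, because $D_N\le P_N$ by weak duality and $P_N<\infty$. Finite LP strong duality then gives $P_N=D_N$.

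Next I pass to the limit $N\to\infty$. \emph{Dual side.} A feasible $\{\mu_J\}\in\mathscr T_\beta^*(Q)$ truncated to $\cd^N(Q)$ stays feasible, since all terms are nonnegative. Conversely, a feasible $\nu$ on $\cd^N(Q)$ extended by zero is feasible for the full problem. By monotone convergence, $\sup_N D_N$ equals the right side of \eqref{eq-dual-gauge}. \emph{Primal side.} $P_N$ is nondecreasing in $N$: restricting a feasible $\lambda$ for depth $N+1$ to $\cd^N(Q)$ remains feasible at depth $N$, because the constraints at $J$ only involve ancestors of $J$. The same restriction argument gives $P_N\le\|\varphi\|_{\mathscr G_\beta(Q)}$. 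Combining the two sides with weak duality in infinite form (for any feasible $\lambda,\mu$, Tonelli gives $\sum_J\mu_Jc_J\le\sum_J\mu_Jc_J\sum_{I\supset J}\lambda_Ia_{IJ}=\sum_I\lambda_I\sum_{J\subset I}\mu_Jc_Ja_{IJ}\le\sum_I\lambda_Ic_I$), we obtain
\[
\sup_N D_N=\sup_N P_N\le\|\varphi\|_{\mathscr G_\beta(Q)}\quad\text{and}\quad \text{RHS of \eqref{eq-dual-gauge}}\le\|\varphi\|_{\mathscr G_\beta(Q)}.
\]

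The main obstacle is the reverse inequality $\|\varphi\|_{\mathscr G_\beta(Q)}\le\sup_N P_N=:P$, which is only needed when $P<\infty$. I would use a compactness argument. Take optimal (or $1/N$-optimal) $\lambda^{(N)}$ for $P_N$. For each fixed $I$ we have $\lambda^{(N)}_I c_I\le P_N\le P$, so the coordinates are uniformly bounded. A diagonal subsequence then converges coordinatewise to some $\lambda$. Fatou gives $\sum_I\lambda_Ic_I\le P$.

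Feasibility of $\lambda$ at each fixed $J$ is where care is needed. The constraint at $J$ is a finite sum over the ancestors of $J$ inside $Q$: there are $k+1$ of them if $J\in\cd_k(Q)$. So the constraint passes to the limit directly, with no tail issue. This finiteness of the ancestor chain is the key point; it holds because only cubes in $\cd(Q)$ are used, and it is exactly why the upward orientation is chosen as in \cite[Lemma~2.1]{HuZhou2026Arxiv1}. Hence $\lambda\in\mathscr T_\beta(Q)$, so $\|\varphi\|_{\mathscr G_\beta(Q)}\le P$. Equality then holds throughout, giving \eqref{eq-dual-gauge}, including the case where both sides are infinite.
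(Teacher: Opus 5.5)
Your proposal is correct and follows essentially the same route as the paper. Both arguments use finite LP strong duality on the truncated trees $\cd_{\le N}(Q)$, pass to the limit on the dual side by truncation and zero-extension, and pass to the limit on the primal side by a diagonal compactness and Fatou argument that relies on the finiteness of each ancestor chain. The differences are cosmetic: you perform the rescaling $\nu_J=\mu_J(|J|/|Q|)^\beta$ at the outset instead of at the end, and your infinite-form weak duality step is harmless but redundant.
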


\begin{proof}
To simplify the notation, for any $I,J\in\cd(Q)$, we define
$$
b_I:=\left(\frac{|I|}{|Q|}\right)^{\beta }
$$
and the weight matrix
$$
A_{I,J}:=\begin{cases}
\displaystyle\left(\frac{|I|}{|J|}\cdot \frac{|\varphi^{-1}(J)|}{|\varphi^{-1}(I)|}\right)^{\beta }
&\quad\text{if}\ \, J\subset I\subset Q;\vspace{0.1cm}\\
0 &\quad\text{otherwise}.
\end{cases}
$$
For $N\in\nn$, consider the finite dyadic tree
$$
\cd_{\le N}(Q):=\left\{I\subset Q:\ I\in\cd_k(Q)\ \, \text{for some}\ \, k=0,1,\dots, N\right\}.
$$
As $N\to\infty$, the finite trees $\mathcal D_{\le N}(Q)$ increase to the full tree $\mathcal D(Q)$.
 We split the arguments into four steps.

\medskip

{\bf Step 1:\, Applying finite-dimensional linear programming duality on a finite dyadic tree.}
For any $N\in\nn$, we define the truncated $\beta$-capacity gauge on $Q$ by
\begin{align}\label{eq-gaugeN}
\|\varphi\|_{\mathscr G_\beta^{\le N}(Q)}
&:=\inf\Bigg\{
\sum_{I\in \cd_{\le N}(Q)} \lambda_I b_I:\ \  \{\lambda_I\}_{I\in\cd(Q)}\subset [0,\infty)\ \text{such that} \\
&\hspace{3cm}\ \sum_{\gfz{I\in \cd_{\le N}(Q)}{I\supset J}} \lambda_I A_{I,J}
\ge 1 \
\text{for all}\ J\in \cd_{\le N}(Q)
\Bigg\}.\notag
\end{align}
Set $\lambda:=\{\lambda_I\}_{I\in \cd_{\le N}(Q)}$, which is a finite dimensional vector.
Consider the following finite-dimensional linear programming primal problem:
\begin{align}
\begin{cases}
&\text{minimize}\ {\mathcal J}_N\left( \lambda\right):=
\displaystyle\sum_{I\in \cd_{\le N}(Q)} \lambda_I b_I \vspace{0.1cm} \\
&\text{subject to}\
\eta_J(\lambda):= 1-\displaystyle\sum_{\gfz{I\in \cd_{\le N}(Q)}{I\supset J}} \lambda_I A_{I,\,J} \le 0\ \ \text{for all}\ J\in\cd_{\le N}(Q)
\end{cases}. \tag{$P_N$}
\end{align}
By the definition of $\|\varphi\|_{\mathscr G_\beta^{\le N}(Q)}$, the optimal value of $(P_N)$, denoted by $\mathrm{val}(P_N)$, is precisely
\begin{align}\label{eq-vanPN}
\mathrm{val}(P_N)=\|\varphi\|_{\mathscr G_\beta^{\le N}(Q)}.
\end{align}

Now, we introduce Lagrange multipliers $\mu_J\ge 0$ for the constraints indexed by $J\in\mathcal D_{\le N}(Q)$.
The associated Lagrangian function is given by
$$
L(\lambda,\mu)= {\mathcal J}_N\left( \lambda\right) + \sum_{J\in\mathcal D_{\le N}(Q)} \mu_J \eta_J(\lambda).
$$
A direct rearrangement gives
$$
\begin{aligned}
L(\lambda,\mu)
&=\sum_{I\in\mathcal D_{\le N}(Q)}\lambda_I  b_I
+\sum_{J\in\mathcal D_{\le N}(Q)}\mu_J\left(1-\sum_{\gfz{I\in\mathcal D_{\le N}(Q)}{I\supset J}}A_{I,\,J}\lambda_I\right)\\
&=\sum_{J\in\mathcal D_{\le N}(Q)}\mu_J
+\sum_{I\in\mathcal D_{\le N}(Q)}\lambda_I\left(b_I-
\sum_{\gfz{J\in\mathcal D_{\le N}(Q)}{J\subset I}}A_{I,\,J}\mu_J\right).
\end{aligned}
$$
For a fixed nonnegative sequence $\mu$, by taking the infimum over all $\lambda_I\ge 0$, we observe that $\inf_{\lambda\ge0}L(\lambda,\mu)$ is finite if and only if
$$
\sum_{\gfz{J\subset I}{J\in\mathcal D_{\le N}(Q)}}A_{I,\,J}\mu_J \le b_I\qquad \text{for all}\ \, I\in\mathcal D_{\le N}(Q).
$$
Under this latter condition, we have
$$
\inf_{\lambda\ge 0} L(\lambda,\mu)=\sum_{J\in\mathcal D_{\le N}(Q)} \mu_J.
$$
Thus, the dual problem of $(P_N)$ is given by
\begin{align}
\begin{cases}
&\text{maximize}\ {\mathcal K}_N\left( \mu\right):= \displaystyle\sum_{J\in \cd_{\le N}(Q)} \mu_J \vspace{0.1cm} \\
&\text{subject to}\
\zeta_I(\mu):= b_I-\displaystyle\sum_{\gfz{J\in \cd_{\le N}(Q)}{J\subset I}} A_{I,\,J} \mu_J \ge 0\ \ \text{for all}\ I\in\cd_{\le N}(Q)
\end{cases}. \tag{$D_N$}
\end{align}

Note that $(P_N)$ is a finite-dimensional linear program.
It is feasible. Indeed, taking $\lambda_I=1$ for all $I\in\cd(Q)$ gives that, for every $J\in\cd(Q)$,
$$
\sum_{I\supset J} \lambda_I A_{I,J}
\ge A_{J,J}
=
\left(
\frac{|J|}{|J|}\cdot
\frac{|\varphi^{-1}(J)|}{|\varphi^{-1}(J)|}
\right)^{\beta }
=1,
$$
so all constraints in  $(P_N)$ are satisfied. Moreover, the objective is bounded below by $0$.

Therefore, by the finite-dimensional strong duality theorem for linear programming
(see, e.g., \cite[Theorem~5.2]{Vanderbei2020book}), the optimal value of $(P_N)$ equals the optimal value of its dual $(D_N)$; that is,
\begin{align}\label{eq-vanPNDN}
\mathrm{val}(P_N)=\mathrm{val}(D_N).
\end{align}

\medskip

{\bf Step 2:\, Passing the primal problem $(P_N)$ to infinite dyadic tree.}
The aim of this step is to prove
\begin{align}\label{eq-limPN}
 \lim_{N\to\infty}\mathrm{val}(P_N)=\|\varphi\|_{\mathscr G_\beta(Q)}.
\end{align}

For every \(J\in\cd_{\le N}(Q)\), if \(I\in\cd(Q)\) satisfies \(I\supset J\),
then necessarily \(I\in \cd_{\le N}(Q)\), which implies
$$
\sum_{\gfz{I\in \cd_{\le N}(Q)}{I\supset J}} \lambda_I A_{I,J}
=
\sum_{\gfz{I\in \cd(Q)}{I\supset J}} \lambda_I A_{I,J}.
$$
Consequently, any feasible sequence \(\lambda=\{\lambda_I\}_{I\in\cd(Q)}\subset [0,\infty)\) for \((P_{N+1})\) is also feasible for \((P_N)\). Furthermore, comparing the objective functions in \((P_N)\) and \((P_{N+1})\), we get
$$
\sum_{I\in \cd_{\le N}(Q)} \lambda_I b_I
\le \sum_{I\in \cd_{\le N+1}(Q)} \lambda_I b_I.
$$
Thus, we obtain
$$
\mathrm{val}(P_N)\le \mathrm{val}(P_{N+1}).
$$
In particular, the sequence \(\{\mathrm{val}(P_N)\}_{N\in\nn}\) is nondecreasing and, hence,
$$
\lim_{N\to\infty}\mathrm{val}(P_N)=\sup_{N\in\nn}\mathrm{val}(P_N).
$$

In a similar manner, the preceding argument also yields
\begin{align*}
\|\varphi\|_{\mathscr G_\beta^{\le N}(Q)}
&=\inf\Bigg\{
\sum_{I\in \cd_{\le N}(Q)} \lambda_I b_I:\ \  \{\lambda_I\}_{I\in\cd(Q)}\subset [0,\infty)\ \text{such that} \\
&\hspace{3cm}\ \sum_{\gfz{I\in \cd(Q)}{I\supset J}} \lambda_I A_{I,J}
\ge 1 \
\text{for all}\ J\in \cd_{\le N}(Q)
\Bigg\}\\
&\le \inf\Bigg\{
\sum_{I\in \cd(Q)} \lambda_I b_I:\ \  \{\lambda_I\}_{I\in\cd(Q)}\subset [0,\infty)\ \text{such that} \\
&\hspace{3cm}\ \sum_{\gfz{I\in \cd(Q)}{I\supset J}} \lambda_I A_{I,J}
\ge 1 \
\text{for all}\ J\in \cd(Q)
\Bigg\}\\
&=\|\varphi\|_{\mathscr G_\beta(Q)}.
\end{align*}
Invoking \eqref{eq-vanPN}, we therefore obtain
\begin{align}\label{eq-PN<=}
\lim_{N\to\infty}\mathrm{val}(P_N)=\lim_{N\to\infty}\|\varphi\|_{\mathscr G_\beta^{\le N}(Q)}
\le \|\varphi\|_{\mathscr G_\beta(Q)}.
\end{align}

If \(\lim_{N\to\infty}\mathrm{val}(P_N)=\infty\), then \eqref{eq-PN<=} forces \(\|\varphi\|_{\mathscr G_\beta(Q)}=\infty\), so \eqref{eq-limPN} holds automatically. Assume henceforth that
$$
L:=\lim_{N\to\infty}\mathrm{val}(P_N)=\sup_{N\in\nn}\mathrm{val}(P_N)<\infty.
$$
For each \(N\in\nn\), choose a feasible sequence
$
\lambda^{(N)}
=
\{\lambda_I^{(N)}\}_{I\in\cd_{\le N}(Q)}
$
for the primal problem \((P_N)\) such that
\begin{align}\label{eq-lambdaI}
\sum_{I\in \cd_{\le N}(Q)} \lambda_I^{(N)} b_I
\le \mathrm{val}(P_N)+\frac1N.
\end{align}
Extend \(\lambda^{(N)}\) to the entire dyadic tree by declaring
$$
\lambda_I^{(N)}:=0
\qquad
\text{for }\ I\notin\cd_{\le N}(Q).
$$
Now, fix \(I\in\mathcal D(Q)\). For every \(N\in\nn\), since \(\mathrm{val}(P_N)+\frac1N\le L+1\), it follows from \eqref{eq-lambdaI} that
$$
0\le \lambda_I^{(N)}
\le \frac{L+1}{b_I}.
$$
As the dyadic tree is countable, a standard diagonal argument allows us to extract a subsequence, denoted by \(\{\lambda^{(N_j)}\}_{j\in\nn}\), and obtain a nonnegative sequence \(\lambda^*=\{\lambda_I^*\}_{I\in\mathcal D(Q)}\) such that
\begin{align}\label{eq-lambdaI-lim}
\lim_{j\to\infty}\lambda_I^{(N_j)}=\lambda_I^*
\qquad
\text{for every fixed }I\in\mathcal D(Q).
\end{align}

Fix \(J\in\mathcal D(Q)\). Without loss of generality,
assume that \(J\in \mathcal D_{\le N_{j_0}}(Q)\) for some fixed \(j_0\in\nn\).
For any \(j\ge j_0\), since \(\lambda^{(N_j)}\) is feasible for \((P_{N_j})\)
and \(J\in
 \mathcal D_{\le N_j}(Q)\), we have
$$
\sum_{\gfz{I\in \cd(Q)}{I\supset J}} \lambda_I^{(N_j)} A_{I,J}
=\sum_{\gfz{I\in \cd_{\le N_j}(Q)}{I\supset J}} \lambda_I^{(N_j)} A_{I,J}
\ge 1.
$$
Note that the set of dyadic ancestors
$
\{I\in\mathcal D(Q):\ I\supset J\}
$
is finite. Hence, by \eqref{eq-lambdaI-lim},
$$
\sum_{\gfz{I\in \cd(Q)}{I\supset J}}
\lambda_I^* A_{I,J}
=\lim_{j\to\infty} \sum_{\gfz{I\in \cd(Q)}{I\supset J}}
\lambda_I^{(N_j)} A_{I,J}
\ge1.
$$
In view of the definition of \(A_{I,J}\), this implies that
\(\lambda^*=\{\lambda_I^*\}_{I\in\mathcal D(Q)}\) belongs to \(\mathscr T_\beta(Q)\)
as defined in \eqref{eq0-gauge}.
Moreover, applying \eqref{eq-lambdaI-lim},
the Fatou lemma for nonnegative series, and \eqref{eq-lambdaI}, we obtain
$$
\sum_{I\in\mathcal D(Q)}b_I\lambda_I^*
\le
\liminf_{j\to\infty}
\sum_{I\in \mathcal D(Q)}
b_I\lambda_I^{(N_j)}
\le
\liminf_{j\to\infty} \left(\mathrm{val}(P_{N_j})+\frac1{N_j}\right)\le L.
$$
This, combined with \eqref{eq1-gauge} and the definition of \(L\), gives
\begin{align}\label{eq-PN>=}
\|\varphi\|_{\mathscr G_\beta(Q)}
\le L =\lim_{N\to\infty}\mathrm{val}(P_N).
\end{align}
Combining \eqref{eq-PN<=} and \eqref{eq-PN>=} yields \eqref{eq-limPN}.

\medskip

{\bf Step 3:\, Passing the dual problem $(D_N)$ to infinite dyadic tree.}
Now, we show
\begin{align}\label{eq-limDN}
 \lim_{N\to\infty}\mathrm{val}(D_N)
 &=\sup\Bigg\{
\sum_{J\in\mathcal D(Q)}\mu_J:\ \  \{\mu_J\}_{J\in\cd(Q)}\subset [0,\infty)\ \,
 \text{such that}\ \\
&\hspace{1.5cm}
\sum_{\gfz{J\in \cd(Q)}{J\subset I}}A_{I, J} \mu_J
\le b_I \
\text{for all}\ I\in \cd(Q)
\Bigg\}.\notag
\end{align}

From $(D_N)$ we have
\begin{align}\label{eq-gaudeN-dual}
\mathrm{val}(D_N)
&=\sup\Bigg\{
\sum_{J\in\mathcal D_{\le N}(Q)}\mu_J:\ \  \{\mu_J\}_{J\in\cd_{\le N}(Q)}\subset [0,\infty)\ \text{such that}\ \\
&\hspace{2.5cm}
\sum_{\gfz{J\in \cd_{\le N}(Q)}{J\subset I}}A_{I,J} \mu_J
\le b_I \
\text{for all}\ I\in \cd_{\le N}(Q)
\Bigg\}.\notag
\end{align}
Suppose that \(\{\mu_J\}_{J\in\cd_{\le N}(Q)}\) satisfies
the constraint condition in the set on the right-hand side of \eqref{eq-gaudeN-dual}.
Extend this sequence to the full dyadic tree by setting \(\mu_J=0\) for \(J\notin\cd_{\le N}(Q)\).
For this extended sequence, we have
\begin{align}\label{eq-ext}
\sum_{\gfz{J\in \cd(Q)}{J\subset I}}A_{I,J} \mu_J
\le b_I \quad
\text{for all } I\in \cd(Q).
\end{align}
Indeed, if \(I\notin \cd_{\le N}(Q)\), then every \(J\in\cd(Q)\) with \(J\subset I\) also satisfies \(J\notin \cd_{\le N}(Q)\); hence \(\mu_J=0\) for all such \(J\), and therefore
$$
\sum_{\gfz{J\in \cd(Q)}{J\subset I}}A_{I,J} \mu_J=0\le b_I,
$$
which verifies \eqref{eq-ext}. Consequently, it follows from \eqref{eq-ext} and \eqref{eq-gaudeN-dual} that
\begin{align}\label{eq-gaudeN-dual2}
\mathrm{val}(D_N)
&\le \sup\Bigg\{
\sum_{J\in\mathcal D(Q)}\mu_J:\ \ \{\mu_J\}_{J\in\cd(Q)}\subset [0,\infty)\ \,
 \text{such that}\ \\
&\hspace{2.5cm}
\sum_{\gfz{J\in \cd(Q)}{J\subset I}}A_{I,J} \mu_J
\le b_I \
\text{for all}\ I\in \cd(Q)
\Bigg\}.\notag
\end{align}
Letting \(N\to\infty\) yields the \(\le\) inequality in \eqref{eq-limDN}.

Conversely, suppose that \(\{\mu_J\}_{J\in\cd(Q)}\) satisfies the constraint condition in the set on the right-hand side of \eqref{eq-gaudeN-dual2}. Define its truncation \(\mu^{(N)}\) by
$$
\mu_J^{(N)} :=
\begin{cases}
\mu_J & \text{if } J\in\cd_{\le N}(Q);\\
0 & \text{otherwise}.
\end{cases}
$$
Clearly, for any \(I\in \cd_{\le N}(Q)\),
$$
\sum_{\gfz{J\in \cd_{\le N}(Q)}{J\subset I}}A_{I,J} \mu_J^{(N)}
\le \sum_{\gfz{J\in \cd(Q)}{J\subset I}}A_{I,J} \mu_J
\le b_I.
$$
Hence \(\{\mu_J^{(N)}\}_{J\in\cd_{\le N}(Q)}\) satisfies the constraint condition in the set on the right-hand side of \eqref{eq-gaudeN-dual}. By this, together with \eqref{eq-gaudeN-dual} and the monotone convergence theorem,
$$
\sum_{J\in\mathcal D(Q)}\mu_J
=\lim_{N\to\infty}\sum_{J\in\mathcal D(Q)}\mu_J^{(N)}
\le \lim_{N\to\infty}\mathrm{val}(D_N).
$$
Taking the supremum over all such \(\{\mu_J\}_{J\in\cd(Q)}\),
 we obtain
\begin{align}\label{eq-gaudeN-dual3}
& \sup\Bigg\{
\sum_{J\in\mathcal D(Q)}\mu_J:\ \ \{\mu_J\}_{J\in\cd(Q)}\subset [0,\infty)\ \,
 \text{such that}\ \\
&\hspace{2.5cm}
\sum_{\gfz{J\in \cd(Q)}{J\subset I}}A_{I,J} \mu_J
\le b_I \
\text{for all}\ I\in \cd(Q)
\Bigg\}\le \lim_{N\to\infty}\mathrm{val}(D_N).\notag
\end{align}
Combining \eqref{eq-gaudeN-dual2} and \eqref{eq-gaudeN-dual3} gives the desired identity \eqref{eq-limDN}.

\medskip

{\bf Step 4: Passing to the limit as \(N\to\infty\) in both sides of \eqref{eq-vanPNDN}.}
Having established \eqref{eq-limPN} and \eqref{eq-limDN}, we pass to the limit in \eqref{eq-vanPNDN}
and substitute the expressions for \(A_{I,J}\) and \(b_I\) to obtain
\begin{align*}
\|\varphi\|_{\mathscr G_\beta(Q)}
 &=\sup\Bigg\{
\sum_{J\in\mathcal D(Q)}\mu_J:\ \  \{\mu_J\}_{J\in\cd(Q)}\subset [0,\infty)\ \,
 \text{such that}\ \\
&\hspace{1.5cm}   \sum_{\gfz{J\in \cd(Q)}{J\subset I}} \mu_J \left(\frac{|I|}{|J|}\cdot \frac{|\varphi^{-1}(J)|}{|\varphi^{-1}(I)|}\right)^{\beta }
\le \left(\frac{|I|}{|Q|}\right)^{\beta } \
\text{for all}\ I\in \cd(Q)
\Bigg\}.\notag
\end{align*}
Now, making the change of variables
$$
{\mu}_J := \widetilde\mu_J \left(\frac{|J|}{|Q|}\right)^{\beta },
$$
and substituting this into the preceding equality yields
$$
\|\varphi\|_{\mathscr G_\beta(Q)}
=
\sup_{\{\widetilde{\mu}_J\}_J\in \mathscr T_\beta^*(Q)}
\sum_{J\subset Q} \widetilde{\mu}_J \left(\frac{|J|}{|Q|}\right)^{\beta },
$$
where \(\mathscr T_\beta^*(Q)\) is defined in \eqref{eq0-dual-gauge}.
This is precisely the desired duality formula \eqref{eq-dual-gauge}.
\end{proof}

\subsection{A dyadic reduction characterization of the $\beta$-capacity gauge}\label{ss3.2}

The main aim of this subsection is to establish the following identity for the capacity gauge, which considers the supremum of the gauge $\|\varphi\|_{{\mathscr G}_\beta(Q)}$ over the family of standard dyadic cubes $Q \in \mathcal D(\rn)$.

\begin{theorem}
\label{thm-dyadic-gauge}
Let $\beta\in\rr$ and $\varphi:\ \mathbb{R}^n \to \mathbb{R}^n$ be a homeomorphism
such that $J_{\varphi^{-1}}\in A_\infty(\rn)$.
Then, there exists a constant $C=C(n, \beta,\, [J_{\varphi^{-1}}]_{A_\infty(\rn)})$ such that
\begin{align}\label{eq-dyadic-gauge}
\sup_{Q\in\cd(\rn)} \|\varphi\|_{{\mathscr G}_\beta(Q)}\le \|\varphi\|_{{\mathscr G}_\beta}
\le C \sup_{Q\in\cd(\rn)} \|\varphi\|_{{\mathscr G}_\beta(Q)}.
\end{align}
\end{theorem}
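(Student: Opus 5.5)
The first inequality in \eqref{eq-dyadic-gauge} is immediate: every $Q\in\cd(\rn)$ is an admissible cube in \eqref{eq2-gauge}. So the plan is aimed at the second inequality. Given an arbitrary cube $Q$, I want to bound $\|\varphi\|_{\mathscr G_\beta(Q)}$ by a constant multiple of $\|\varphi\|_{\mathscr G_\beta(R)}$ summed over boundedly many standard dyadic cubes $R$. I will work with the dual form from Theorem~\ref{thm:upward-duality}. Fix $\{\mu_J\}\in\mathscr T^*_\beta(Q)$. The goal is a transfer of this test sequence to dyadic cubes $R$ that is feasible up to a constant and preserves the objective up to a constant.

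\textbf{Geometric matching.} Choose $k$ with $2^{-k-1}<\ell(Q)\le 2^{-k}$. Then $Q$ meets at most $2^n$ cubes $R\in\cd_k(\rn)$, each with $|R|\simeq|Q|$. For each $J\in\cd_j(Q)$, the cubes of $\cd_{k+j}(\rn)$ have side comparable to $\ell(J)$ (within a factor $2$), and $J$ meets at most $2^n$ of them. Fix one such cube $J'$ with $|J\cap J'|\ge 2^{-n}|J|$; there is always one with this property. The map $J\mapsto J'$ is at most $C(n)$-to-one. Moreover $J'\subset R_J$ for a unique top cube $R_J\in\cd_k(\rn)$, and $R_J$ meets $Q$. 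Now define, for each top cube $R$,
\[
\nu^R_{J'}:=\sum_{J:\,J'=J',\,R_J=R}\mu_J .
\]
The total objective is preserved up to constants: $|J'|\simeq|J|$ and $|R|\simeq|Q|$ give $\sum_R\sum_{J'}\nu^R_{J'}(|J'|/|R|)^\beta\simeq\sum_J\mu_J(|J|/|Q|)^\beta$, with constants depending on $n$ and $\beta$ (either sign of $\beta$).

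\textbf{Feasibility.} This is the main obstacle. Fix a dyadic $I'\subset R$ of generation $k+i$. I need
\[
\sum_{J'\subset I'}\nu^R_{J'}\left(\frac{|\varphi^{-1}(J')|}{|\varphi^{-1}(I')|}\right)^\beta\le C .
\]
The $J$'s contributing have $J'\subset I'$, and each such $J$ lies in a cube $I\in\cd_i(Q)$ with $I\cap I'\neq\emptyset$ (and for $i=0$, simply $I=Q$). There are at most $C(n)$ such $I$.

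For each pair, the ratio can be compared using $J_{\varphi^{-1}}\in A_\infty(\rn)$ through Proposition~\ref{prop-varphi-weight}(iii)--(v). First, $J\cap J'$ is a subset of both $J$ and $J'$ of proportional measure, so $|\varphi^{-1}(J)|\simeq|\varphi^{-1}(J\cap J')|\simeq|\varphi^{-1}(J')|$. Second, $I$ and $I'$ are cubes of comparable size that intersect, hence both lie in a cube $3I$ comparable to each. Doubling (item (iv)) then gives $|\varphi^{-1}(I)|\simeq|\varphi^{-1}(3I)|\simeq|\varphi^{-1}(I')|$.

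There is one subtlety: $J\subset I$ need not hold for an arbitrary $I$ meeting $I'$. I will handle this by grouping each $J$ under its true ancestor $I_J\in\cd_i(Q)$, which automatically meets $I'$ because $J\cap J'\subset I_J\cap I'$. The sum over $J'\subset I'$ is then bounded by
\[
C\sum_{I\in\cd_i(Q),\,I\cap I'\ne\emptyset}\ \sum_{J\subset I}\mu_J\left(\frac{|\varphi^{-1}(J)|}{|\varphi^{-1}(I)|}\right)^\beta\le C(n)\,C,
\]
using the dual constraint for $\mu$ on each $I$. The powers $\beta$ of comparability constants only change $C$ by a factor depending on $\beta$. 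Hence $\nu^R/C\in\mathscr T^*_\beta(R)$ for each $R$.

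\textbf{Conclusion.} Combining the two steps gives
\[
\sum_J\mu_J(|J|/|Q|)^\beta\lesssim\sum_R\|\varphi\|_{\mathscr G_\beta(R)}\le 2^n\sup_{R\in\cd(\rn)}\|\varphi\|_{\mathscr G_\beta(R)}.
\]
Taking the supremum over $\mu\in\mathscr T^*_\beta(Q)$ and then over all cubes $Q$ yields the second inequality in \eqref{eq-dyadic-gauge}. The delicate points are tracking that the matching $J\mapsto(J',R_J)$ has bounded multiplicity, and that all measure comparisons use only the $A_\infty$ constants of $J_{\varphi^{-1}}$. This bookkeeping is where I expect most of the work.
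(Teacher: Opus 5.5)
Your proposal is correct and follows essentially the paper's route. Both arguments pass to the dual form of Theorem~\ref{thm:upward-duality}, transfer an admissible $\{\mu_J\}$ onto the at most $2^n$ standard dyadic cubes of the matching generation that meet $Q$, and verify the packing condition on each of them. That verification groups each $J$ under its ancestor in $\mathcal D(Q)$ of the same generation as $I'$ (the paper does this by covering $I$ with cubes $R_\nu\in\mathcal D_k(Q)$), and then uses two-sided comparability of pullback volumes from $J_{\varphi^{-1}}\in A_\infty(\mathbb R^n)$. The only difference is bookkeeping: the paper charges $\mu_J$ to every comparable dyadic cube meeting $J$ (the quantity $d_I$ in \eqref{eq-dI}) and so gets only an inequality in the objective, whereas your single-representative assignment $J\mapsto J'$ gives equality there.
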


To prove Theorem~\ref{thm-dyadic-gauge}, we introduce the notion of a \( (\beta,\,\varphi)\)-packing condition on \(Q\), which is exactly the constraint defining the test sequence space \(\mathscr T_\beta^*(Q)\) in \eqref{eq0-dual-gauge}.

\begin{definition}\label{def-packing}
Let $Q\subset\mathbb R^n$ be a cube. A nonnegative sequence $\{\mu_J\}_{J\in\mathcal D(Q)}$ is said to satisfy the \emph{$(\beta,\,\varphi)$-packing condition on $Q$} if
\begin{align}\label{eq-packing}
\sup_{R\in\mathcal D(Q)}\sum_{\gfz{J\in\cd(Q)}{J\subset R} } \mu_J\left(\frac{|\varphi^{-1}(J)|}{|\varphi^{-1}(R)|}\right)^{\beta} \le 1.
\end{align}
\end{definition}

\begin{remark}\label{rem-packing}
If \(\{\mu_J\}_{J\in\mathcal D(Q)}\) satisfies the \((\beta,\varphi)\)-packing condition on \(Q\),
then taking only the term \(J=R\) in the sum of \eqref{eq-packing} yields
$$
0\le \mu_R\le 1\quad\,\text{for all }\, R\in\mathcal D(Q).
$$
\end{remark}

The following lemma is the main ingredient in the proof of Theorem~\ref{thm-dyadic-gauge}.

\begin{lemma}\label{lem-dyadic-packing}
Let $\beta\in\mathbb R$ and  $\varphi:\ \mathbb{R}^n \to \mathbb{R}^n$ be a homeomorphism
such that $J_{\varphi^{-1}}\in A_\infty(\rn)$.
For any cube $Q\subset\mathbb R^n$, suppose that
$\{\mu_J\}_{J\in\mathcal D(Q)}$ is a nonnegative sequence satisfying the $(\beta,\,\varphi)$-packing condition on $Q$.
Then, the following statements hold:
\begin{enumerate}[label=\textup{(\roman*)}]
  \item There exist standard dyadic cubes $\{Q_1,\dots, Q_N\}\subset\cd(\rn)$ of the same side length, with $N\le 2^n$, such that for each $i\in\{1,2,\dots, N\}$,
      $$
      \ell(Q)< \ell(Q_i)\le 2\ell(Q),\qquad Q_i\cap Q\neq\emptyset,\qquad  Q\subset \bigcup_{i=1}^N Q_i.
      $$
  \item For any dyadic cube $I\in\cd(\rn)$, define
  \begin{align}\label{eq-dI}
d_I:=\sum_{\gfz{J\in\cd(Q),\ J\cap I\neq\emptyset}{ 2^{-1}\ell(I)\le \ell(J)< \ell(I)}} \mu_J.
\end{align}
Then,
\begin{align}\label{eq-dya-packing}
\sum_{J\in\cd(Q)} \mu_J\left(\frac{|J|}{|Q|}\right)^{\beta} \le  \sum_{i=1}^N
\sum_{I\in\mathcal D(Q_i)} d_I\left(\frac{|I|}{|Q_i|}\right)^{\beta}.
\end{align}

\item There exist a constant $C=C([J_{\varphi^{-1}}]_{A_\infty(\rn)},\, n, \beta)$, such that
 \begin{align}\label{eq-dya-packing-varphi}
\sup_{I\in\mathcal D(Q_i)}\sum_{I'\in\cd(I)} d_{I'}\left(\frac{|\varphi^{-1}(I')|}{|\varphi^{-1}(I)|}\right)^{\beta} \le C.
\end{align}
\end{enumerate}
\end{lemma}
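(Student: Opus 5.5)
The plan is to handle the three items in order; (i) is elementary, (ii) is a bookkeeping inequality, and (iii) carries the real content.

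For (i), let $k\in\mathbb Z$ be the unique integer with $\ell(Q)<2^{-k}\le 2\ell(Q)$. Take $Q_1,\dots,Q_N$ to be the cubes of $\mathcal D_k(\mathbb R^n)$ that meet $Q$. In each coordinate direction, the projection of $Q$ is an interval of length $\ell(Q)<2^{-k}$, so it meets at most two dyadic intervals of length $2^{-k}$. Hence $N\le 2^n$, and the covering and side-length properties are immediate.

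For (ii), fix $J\in\mathcal D(Q)$. Let $k_J$ be the integer with $\ell(J)<2^{-k_J}\le 2\ell(J)$; since $\ell(J)=2^{-m}\ell(Q)$, this is $k_J=k+m$. Pick any $Q_i$ meeting $J$ (one exists because $J\subset Q\subset\bigcup Q_i$), and inside $\mathcal D(Q_i)$ pick a cube $I$ of side $2^{-k_J}$ meeting $J$. Then $2^{-1}\ell(I)\le\ell(J)<\ell(I)$ and $J\cap I\neq\emptyset$, so $\mu_J$ is one of the summands of $d_I$. Moreover
$$
\frac{|I|}{|Q_i|}=\Bigl(\frac{2^{-m}2^{-k}}{2^{-k}}\Bigr)^n=\frac{|J|}{|Q|},
$$
so the weights $(|J|/|Q|)^\beta$ and $(|I|/|Q_i|)^\beta$ coincide exactly, whatever the sign of $\beta$. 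Assigning each $J$ to one such pair $(i,I)$ and using $\mu_J\ge0$ yields \eqref{eq-dya-packing}.

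For (iii), fix $I\in\mathcal D(Q_i)$ and expand
$$
\sum_{I'\in\mathcal D(I)}d_{I'}\Bigl(\frac{|\varphi^{-1}(I')|}{|\varphi^{-1}(I)|}\Bigr)^\beta=\sum_{I'}\sum_{J}\mu_J\Bigl(\frac{|\varphi^{-1}(I')|}{|\varphi^{-1}(I)|}\Bigr)^\beta .
$$
The first step is to interchange the sums. A given $J$ appears for at most $2^n$ cubes $I'$, namely those of side in $(\ell(J),2\ell(J)]$ meeting $J$. Each such $I'$ meets $J$ and has comparable size, so $I'\subset 3J$ and $J\subset 3I'$.

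The key tool is Proposition \ref{prop-varphi-weight}(iii)--(iv) applied to $\varphi^{-1}$, which gives a comparison of pullback volumes of nearby comparable cubes. Combining doubling with \eqref{eq-varphi-weight},
$$
|\varphi^{-1}(I')|\simeq|\varphi^{-1}(3J)|\simeq|\varphi^{-1}(J)|,
$$
with constants depending on $n$ and $[J_{\varphi^{-1}}]_{A_\infty}$. Hence $(|\varphi^{-1}(I')|/|\varphi^{-1}(J)|)^\beta\le C$ for $\beta$ of either sign. So it suffices to bound
$$
\sum_{J}\mu_J\Bigl(\frac{|\varphi^{-1}(J)|}{|\varphi^{-1}(I)|}\Bigr)^\beta ,
$$
where $J$ ranges over cubes of $\mathcal D(Q)$ with $\ell(J)<\ell(I)$ that meet some $I'\subset I$, hence meet $I$.

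These $J$ are not nested inside a single cube of $\mathcal D(Q)$, so the packing condition cannot be applied directly. Instead, I would group them by the cubes $R\in\mathcal D(Q)$ with $\ell(I)/2\le\ell(R)<\ell(I)$ that meet $I$. There are at most $C(n)$ such $R$, and every $J$ in the sum lies in one of them. For each such $R$, the packing condition \eqref{eq-packing} gives
$$
\sum_{J\subset R}\mu_J\Bigl(\frac{|\varphi^{-1}(J)|}{|\varphi^{-1}(R)|}\Bigr)^\beta\le1 .
$$
Finally, $R\subset 3I$ and $I\subset 5R$, so again $|\varphi^{-1}(R)|\simeq|\varphi^{-1}(I)|$, and the ratio can be switched from $R$ to $I$ at the cost of a constant.

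The main obstacle is this last comparison step. Exchanging denominators costs a power $\beta$ of a comparability constant, and for $\beta<0$ the comparison must run in both directions. That is why two-sided comparability $|\varphi^{-1}(R)|\simeq|\varphi^{-1}(I)|$ is needed, and it comes from the lower bound in \eqref{eq-varphi-weight} together with doubling. The constant therefore depends on $n$, $\beta$ and $[J_{\varphi^{-1}}]_{A_\infty(\rn)}$, as claimed.
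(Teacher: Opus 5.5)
Your proposal is correct and follows essentially the paper's route. It uses the same covering of $Q$ by at most $2^n$ cubes of a single standard generation, and the same exact generation matching $|I|/|Q_i|=|J|/|Q|$ for (ii). For (iii) it likewise interchanges the sums, uses the doubling comparisons $|\varphi^{-1}(I')|\simeq|\varphi^{-1}(J)|$ and $|\varphi^{-1}(R)|\simeq|\varphi^{-1}(I)|$, and applies the packing condition on the boundedly many $R\in\mathcal D(Q)$ with $\ell(I)/2\le\ell(R)<\ell(I)$ that meet $I$.

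One harmless slip: since $\ell(I')$ may be nearly $2\ell(J)$, you only get $I'\subset 5J$, not $I'\subset 3J$. Also, because the containments go both ways, doubling alone already gives the two-sided comparabilities, so the lower bound in \eqref{eq-varphi-weight} is not needed.
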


\begin{proof}
Fix the cube $Q\subset \rn$. Then there exists a unique integer $k_0\in\zz$ such that
$2^{-k_0-1}\le \ell(Q)<2^{-k_0}.$
The whole proof is split into four steps.

\medskip

{\bf Step 1:\, proof of (i).\,}
Let $\{Q_1,\dots,Q_N\}$ be the standard dyadic cubes in $\mathcal D_{k_0}(\rn)$ that intersect $Q$. These cubes are mutually disjoint, each with side length $2^{-k_0}$. Since
$\ell(Q)<2^{-k_0}$,
it follows that $N\le 2^n$.
Moreover,  by construction, we have $Q\subset \cup_{i=1}^N Q_i$. This proves (i).

\medskip

{\bf Step 2: An alternating expression for $d_I$ in \eqref{eq-dI}.}
Since each $Q_i$ is a standard dyadic cube in $\cd(\rn)$, the same holds for $I$ whenever $I\in\cd(Q_i)$. Without loss of generality, assume that $I\in\cd_k(Q_i)$ for some $k\in\zz_+$. This implies that $I\in \cd_{k+k_0}(\rn)$ and
$$\ell(I)=2^{-k}\ell(Q_i)=2^{-(k+k_0)}.$$
For such $I$, we claim that the definition of $d_I$ in \eqref{eq-dI} can be rewritten as
\begin{align}\label{eq-claim-dI}
d_I
=\sum_{\gfz{J\in\cd_k(Q)}{J\cap I\neq\emptyset}} \mu_J.
\end{align}
From either \eqref{eq-dI} or \eqref{eq-claim-dI}, it is clear that $d_I=0$ whenever $I\cap Q=\emptyset$.

Let us prove \eqref{eq-claim-dI}.
Take any $J\in\cd(Q)$ such that $J\cap I\neq\emptyset$. Then $J\in\cd_m(Q)$ for some $m\in\zz_+$. The condition
$
2^{-1}\ell(I)\le \ell(J)< \ell(I)
$
is equivalent to
$$
2^{-(k+k_0)-1}\le \ell(J)< 2^{-(k+k_0)}.
$$
Recalling that $2^{-k_0-1}\le \ell(Q)<2^{-k_0}$ and using $\ell(J)=2^{-m}\ell(Q)$, we obtain
$$
2^{-k-1}\ell(Q)<2^{-(k+k_0)-1}\le \ell(J)=2^{-m}\ell(Q)
$$
and
$$
2^{-m}\ell(Q)=\ell(J)< 2^{-(k+k_0)}\le 2^{-k+1}\ell(Q).
$$
These inequalities yield
$2^{-k-1}<2^{-m}<2^{-k+1}$,
so the only possible value is $m=k$.

Conversely,  if $J\in\cd_k(Q)$ and $J\cap I\neq\emptyset$, then by $\ell(J)=2^{-k}\ell(Q)$ and $2^{-k_0-1}\le \ell(Q)<2^{-k_0}$, we have
$$
2^{-1}\ell(I)=2^{-(k+k_0)-1}\le \ell(J)<2^{-(k+k_0)}=\ell(I).
$$
Thus, \eqref{eq-claim-dI} is established.

\medskip

{\bf Step 3:\, Proof of \eqref{eq-dya-packing} in (ii).\,}
 Write
\begin{align*}
  \sum_{J\in\cd(Q)} \mu_J\left(\frac{|J|}{|Q|}\right)^{\beta}
  &= \sum_{k=0}^\infty 2^{-kn\beta}\left( \sum_{J\in\cd_k(Q)} \mu_J\right).
\end{align*}
For each $J\in\cd_k(Q)$, observe that $J$ must intersect some collection of standard dyadic cubes $I\in \cd_k(Q_i)$, where $i\in\{1,2,\dots, N\}$.
 Thus,
$$
\sum_{J\in\cd_k(Q)} \mu_J
\le  \sum_{i=1}^N\sum_{I\in\mathcal D_k(Q_i)}
\sum_{\gfz{J\in\cd_k(Q)}{J\cap I\neq\emptyset}} \mu_J
=\sum_{i=1}^N\sum_{I\in\mathcal D_k(Q_i)} d_I,
$$
where the equality is a consequence of \eqref{eq-claim-dI}. It follows that
$$
\sum_{J\in\cd(Q)} \mu_J\left(\frac{|J|}{|Q|}\right)^{\beta}
\le \sum_{k=0}^\infty 2^{-kn\beta}\left( \sum_{i=1}^N\sum_{I\in\mathcal D_k(Q_i)} d_I\right)
=
 \sum_{i=1}^N
\sum_{I\in\mathcal D(Q_i)} d_I\left(\frac{|I|}{|Q_i|}\right)^{\beta}.$$
This completes the proof of \eqref{eq-dya-packing}.

\medskip

{\bf Step 4:\, Proof of \eqref{eq-dya-packing-varphi} in (iii).\,}
Fix a standard dyadic cube $I\in\cd(Q_i)$ that intersects $Q$; otherwise, all $d_{I'}=0$ for dyadic cubes $I'\in\cd(I)$, and \eqref{eq-dya-packing-varphi} holds trivially.

Since $I\cap Q\neq\emptyset$, we still assume that $I\in\cd_k(Q_i)$ for some $k\in\zz_+$ and $i\in\{1,2,\dots, N\}$. This implies that $I\in\cd_{k+k_0}(\rn)$. Then, by \eqref{eq-claim-dI}, we may write
\begin{align}\label{eq1-dI}
\sum_{I'\in\cd(I)} d_{I'}\left(\frac{|\varphi^{-1}(I')|}{|\varphi^{-1}(I)|}\right)^{\beta}
   &=\sum_{j=0}^\infty \sum_{I'\in\cd_j(I)}d_{I'}\left(\frac{|\varphi^{-1}(I')|}{|\varphi^{-1}(I)|}\right)^{\beta}\\
   &=\sum_{j=0}^\infty\, \sum_{\gfz{I'\subset I}{I'\in\cd_{j+k+k_0}(\rn)}}d_{I'}\left(\frac{|\varphi^{-1}(I')|}{|\varphi^{-1}(I)|}\right)^{\beta}\notag\\
   &=\sum_{j=0}^\infty \sum_{\gfz{I'\subset I}{I'\in\cd_{j+k+k_0}(\rn)}}
   \left(
   \sum_{\gfz{J\in\cd_{k+j}(Q)}{J\cap I'\neq\emptyset}} \mu_{J}
   \right)\left(\frac{|\varphi^{-1}(I')|}{|\varphi^{-1}(I)|}\right)^{\beta}\notag\\
   &=
   \sum_{m=k}^\infty\sum_{\gfz{J\in\cd_{m}(Q)}{J\cap I\neq\emptyset}} \mu_{J}\left(
      \sum_{\gfz{I'\in\cd_{m+k_0}(\rn)}{I'\subset I,\ I'\cap J\neq\emptyset}}
   \left(\frac{|\varphi^{-1}(I')|}{|\varphi^{-1}(I)|}\right)^{\beta}\right).  \notag
   \end{align}

Since \(I\in\mathcal D_k(Q_i)\) for some \(k\in\mathbb Z_+\) and \(I\cap Q\neq\emptyset\), we have
$
\ell(I)=2^{-k}\ell(Q_i)=2^{-k-k_0}.
$
Note that each dyadic cube \(R\in\mathcal D_k(Q)\) has side length $\ell(R)$ satisfying
$$
2^{-1}\ell(I)=2^{-k-k_0-1}\le \ell(R)=2^{-k}\ell(Q)<2^{-k-k_0}=\ell(I).
$$
Thus, by an argument similar to that used in {\bf Step 1}, there exist dyadic cubes \(\{R_1,\dots,R_L\}\subset\mathcal D_k(Q)\), with \(L\le 4^n\), such that each \(R_\nu\) intersects \(I\) and
$$
I\subset \bigcup_{\nu=1}^L R_\nu.
$$
Using the facts that
$2^{-k-k_0-1}\le \ell(R_\nu)<2^{-k-k_0}$ and $\ell(I)=2^{-k-k_0}$, we deduce
that
$$
R_\nu\subset CI\quad \text{and}\quad I\subset CR_\nu
$$
for some dimensional constant $C$. Further, by the doubling property of the \(\varphi^{-1}\)-images (see Proposition \ref{prop-varphi-weight}), we obtain
\begin{align}\label{eq-IRnu}
|\varphi^{-1}(I)| \simeq |\varphi^{-1}(R_\nu)|.
\end{align}
Moreover, if $J\in\cd_{m}(Q)$ with $m\ge k$ and $J\cap I\neq\emptyset$,
then $J$ must intersect at least one of these $R_\nu$.

For any $J\in\cd_{m}(Q)$, we have $\ell(J)=2^{-m}\ell(Q)$ and, hence,
$2^{-m-k_0-1}\le\ell(J)<2^{-m-k_0}$. Recall that $I'\in \cd_{m+k_0}(\rn)$ means that $\ell(I')=2^{-(m+k_0)}$.
Thus, for those $I'$ satisfying $I'\cap J\neq\emptyset$, by an argument analogous to the proof of \eqref{eq-IRnu}, we have
\begin{align}\label{eq-I'J}
|\varphi^{-1}(I')| \simeq |\varphi^{-1}(J)|.
\end{align}
Meanwhile, we observe that
\begin{align}\label{eq-card}
\#(\{
I'\in\cd_{m+k_0}(\rn):\ I'\subset I,\ I'\cap J\neq\emptyset
\}) \le 2^n.
\end{align}
Combining the above estimates with \eqref{eq-IRnu}, \eqref{eq-I'J}, and \eqref{eq-card}, we obtain
\begin{align}\label{eq2-dI}
  \sum_{m=k}^\infty\sum_{\gfz{J\in\cd_{m}(Q)}{J\cap I\neq\emptyset}} \mu_{J}\left(
      \sum_{\gfz{I'\in\cd_{m+k_0}(\rn)}{I'\subset I,\ I'\cap J\neq\emptyset}}
   \left(\frac{|\varphi^{-1}(I')|}{|\varphi^{-1}(I)|}\right)^{\beta}\right)
   &\ls \sum_{\nu=1}^L \sum_{m=k}^\infty\sum_{\gfz{J\in\cd_{m}(Q)}{J\cap R_\nu\neq\emptyset}} \mu_{J}
   \left(\frac{|\varphi^{-1}(J)|}{|\varphi^{-1}(R_\nu)|}\right)^{\beta}\\
   &\simeq
   \sum_{\nu=1}^L \sum_{J\in\cd(R_\nu)}\mu_{J}
   \left(\frac{|\varphi^{-1}(J)|}{|\varphi^{-1}(R_\nu)|}\right)^{\beta}\notag\\
   &\ls \sup_{R\in\cd_k(Q)} \sum_{J\in\cd(R)} \mu_{J}
   \left(\frac{|\varphi^{-1}(J)|}{|\varphi^{-1}(R)|}\right)^{\beta}\notag\\
   &\ls 1.\notag
\end{align}
The desired inequality \eqref{eq-dya-packing-varphi} follows immediately by combining \eqref{eq1-dI} and \eqref{eq2-dI}.
\end{proof}

Applying Lemma \ref{lem-dyadic-packing}, we now prove Theorem \ref{thm-dyadic-gauge}.

\begin{proof}[Proof of Theorem \ref{thm-dyadic-gauge}]
By \eqref{eq2-gauge}, the first inequality in \eqref{eq-dyadic-gauge} is immediate.

Now fix a cube $Q\subset\rn$ and suppose that $\{\mu_J\}_{J\in\mathcal D(Q)}$ belongs to the test sequence space
$\mathscr T_\beta^*(Q)$ defined in \eqref{eq0-dual-gauge}. Equivalently,
$\{\mu_J\}_{J\in\mathcal D(Q)}$ is a nonnegative sequence satisfying the $(\beta,\,\varphi)$-packing condition on $Q$.
Applying Lemma \ref{lem-dyadic-packing}, we obtain a family of dyadic cubes
$\{Q_1,\dots, Q_N\}\subset\cd(\rn)$ satisfying \eqref{eq-dya-packing} and \eqref{eq-dya-packing-varphi}.
In particular, for the sequence $\{d_I\}_{I\in\cd(\rn)}$ defined in \eqref{eq-dI}, it follows from \eqref{eq-dya-packing-varphi} that
$$
\left\{C^{-1} d_I\right\}_{I\in\cd(Q_i)} \in \mathscr T_\beta^*(Q_i)
$$
for some constant $C\in(0,\infty)$.
Combining this with \eqref{eq-dya-packing} and \eqref{eq-dual-gauge}, we  obtain
\begin{align*}
\sum_{J\in\cd(Q)} \mu_J\left(\frac{|J|}{|Q|}\right)^{\beta}
&\le  \sum_{i=1}^N
\sum_{I\in\mathcal D(Q_i)} d_I\left(\frac{|I|}{|Q_i|}\right)^{\beta} \\
&\le C \sum_{i=1}^N \|\varphi\|_{\mathscr G_\beta(Q_i)} \\
&\le 2^n C\sup_{R\in \cd(\rn)}\|\varphi\|_{\mathscr G_\beta(R)}.
\end{align*}
Since $\{\mu_J\}_{J\in\mathcal D(Q)}$ is arbitrary, it follows from
\eqref{eq-dual-gauge} that
$$
\|\varphi\|_{\mathscr G_\beta(Q)}
\le C \sup_{R\in\cd(\rn)} \|\varphi\|_{\mathscr G_\beta(R)}.
$$
Taking supremum over all cubes \(Q\subset\mathbb R^n\) gives precisely the desired second inequality in \eqref{eq-dyadic-gauge}.
\end{proof}

\subsection{Finite gauge under volume comparability}\label{ss3.3}

As can be seen from the following proposition, the definition of the capacity gauge  depends only on
the pullback-volume ratios
$
|\varphi^{-1}(J)|/|\varphi^{-1}(I)|
$ for dyadic cubes $J\subset I$.

\begin{proposition}
\label{prop:gauge-pullback-measure}
Let \(\beta\in\mathbb R\) and
\(\varphi,\,\psi:\ \mathbb R^n\to\mathbb R^n\) be homeomorphisms. Suppose that there
exists a constant \(c\in(0,\infty)\) such that for every cube \(I\subset\mathbb R^n\),
$$
0<|\varphi^{-1}(I)|
=
c\,|\psi^{-1}(I)|<\infty.
$$
Then
$\|\varphi\|_{\mathscr G_\beta} = \|\psi\|_{\mathscr G_\beta}.$
\end{proposition}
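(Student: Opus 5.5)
The gauge $\|\varphi\|_{\mathscr G_\beta(Q)}$ depends on $\varphi$ only through the ratios $|\varphi^{-1}(J)|/|\varphi^{-1}(I)|$ with $J\subset I$ dyadic subcubes of $Q$. The plan is to show these ratios are the same for $\varphi$ and $\psi$, so that the two optimisation problems coincide cube by cube.

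Fix an arbitrary cube $Q\subset\mathbb R^n$ and any $I,J\in\mathcal D(Q)$ with $J\subset I$. Both $I$ and $J$ are cubes in $\mathbb R^n$, with sides parallel to the axes. They may be half-open, but the hypothesis is stated for every cube. Applying the hypothesis to $I$ and to $J$ gives $|\varphi^{-1}(J)|=c\,|\psi^{-1}(J)|$ and $|\varphi^{-1}(I)|=c\,|\psi^{-1}(I)|$, with all quantities in $(0,\infty)$. Dividing, the constant $c$ cancels:
$$
\frac{|I|}{|J|}\cdot\frac{|\varphi^{-1}(J)|}{|\varphi^{-1}(I)|}
=\frac{|I|}{|J|}\cdot\frac{|\psi^{-1}(J)|}{|\psi^{-1}(I)|}.
$$
Raising both sides to the power $\beta$ is harmless for every real $\beta$, because the quantities are strictly positive and finite.

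It follows that the kernel $K_\varphi(I,J)$ of Remark \ref{rem-gauge}(ii) equals $K_\psi(I,J)$ for all admissible pairs. Hence the admissible classes coincide: $\mathscr T_\beta(Q)$ defined in \eqref{eq0-gauge} with $\varphi$ equals the same class defined with $\psi$. The cost functional $\sum_{I}\lambda_I(|I|/|Q|)^\beta$ in \eqref{eq1-gauge} does not involve the map at all. The two infima are therefore identical, so $\|\varphi\|_{\mathscr G_\beta(Q)}=\|\psi\|_{\mathscr G_\beta(Q)}$ for every cube $Q$. Taking the supremum over all cubes in \eqref{eq2-gauge} gives $\|\varphi\|_{\mathscr G_\beta}=\|\psi\|_{\mathscr G_\beta}$.

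There is no genuine obstacle here. The only point to check is that the hypothesis is used for the relative dyadic subcubes in $\mathcal D(Q)$, which need not belong to the standard grid. This is covered because the hypothesis holds for all cubes. Alternatively, one could run the same argument through the dual form in Theorem \ref{thm:upward-duality}: the packing constraint \eqref{eq0-dual-gauge} involves only the ratios $|\varphi^{-1}(J)|/|\varphi^{-1}(I)|$, so it too is unchanged when $\varphi$ is replaced by $\psi$.
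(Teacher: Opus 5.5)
Your proof is correct and matches the paper's argument: the constant $c$ cancels in the ratios $|\varphi^{-1}(J)|/|\varphi^{-1}(I)|$, so the admissible classes $\mathscr T_\beta(Q)$ coincide. Since the cost does not depend on the map, this gives $\|\varphi\|_{\mathscr G_\beta(Q)}=\|\psi\|_{\mathscr G_\beta(Q)}$ for every cube $Q$, and taking the supremum finishes; your extra checks (strict positivity so that any real power $\beta$ is harmless, and that relative dyadic subcubes are covered by the hypothesis on all cubes) are valid details the paper leaves implicit.
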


\begin{proof}
For every pair of nested cubes \(J\subset I\), the assumption gives
$$
\frac{|\varphi^{-1}(J)|}{|\varphi^{-1}(I)|}
=
\frac{c|\psi^{-1}(J)|}{c|\psi^{-1}(I)|}
=
\frac{|\psi^{-1}(J)|}{|\psi^{-1}(I)|}.
$$
Thus, the test sequence spaces
\(\mathscr T_\beta(Q)\) in \eqref{eq0-gauge} associated with \(\varphi\) and \(\psi\) coincide. It follows directly from
\eqref{eq1-gauge} that
$$
\|\varphi\|_{\mathscr G_\beta(Q)}
=
\|\psi\|_{\mathscr G_\beta(Q)}.
$$
Taking the supremum over all cubes \(Q\) proves the global identity.
\end{proof}

Next, we show that a uniform two-sided relative-volume comparison with Lebesgue volume is sufficient for finiteness of the gauge.

\begin{proposition}
\label{prop-vol-comp}
Let \(\beta\in\mathbb R\) and
\(\varphi:\mathbb R^n\to\mathbb R^n\) be a homeomorphism.
Suppose there exists \(L\in(0,\infty)\) such that for any cube \(I\subset\mathbb R^n\) and \(J\in\mathcal D(I)\),
\begin{align}\label{eq-comp}
\begin{cases}
\dfrac{|\varphi^{-1}(J)|}{|\varphi^{-1}(I)|}
\le
L\dfrac{|J|}{|I|} &\quad\,\text{for }\, \beta\le 0;\\[2mm]
\dfrac{|\varphi^{-1}(J)|}{|\varphi^{-1}(I)|}
\ge
L^{-1}\dfrac{|J|}{|I|} &\quad\,\text{for }\, \beta\ge 0.
\end{cases}
\end{align}
Then $\|\varphi\|_{\mathscr G_\beta}
\le
L^{|\beta|}.$
\end{proposition}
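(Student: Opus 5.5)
The plan is to exhibit, for each cube $Q$, one explicit admissible sequence in $\mathscr T_\beta(Q)$ of cost at most $L^{|\beta|}$. We put all the mass on the top cube: $\lambda_Q:=L^{|\beta|}$ and $\lambda_I:=0$ for every $I\in\mathcal D(Q)$ with $I\neq Q$. The cost in \eqref{eq1-gauge} is then exactly
$$
\lambda_Q\left(\frac{|Q|}{|Q|}\right)^\beta=L^{|\beta|}.
$$
So it only remains to verify the constraint in \eqref{eq0-gauge}. Once that is done, $\|\varphi\|_{\mathscr G_\beta(Q)}\le L^{|\beta|}$ for every cube $Q$, and taking the supremum in \eqref{eq2-gauge} gives the claim.

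For a fixed $J\in\mathcal D(Q)$, the only ancestor carrying mass is $I=Q$. The constraint therefore reduces to
$$
L^{|\beta|}\left(\frac{|Q|}{|J|}\cdot\frac{|\varphi^{-1}(J)|}{|\varphi^{-1}(Q)|}\right)^{\beta}\ge 1,
$$
and this is exactly where \eqref{eq-comp} enters, applied with $I=Q$. Note that $J\in\mathcal D(Q)$ is the relative dyadic system, which is precisely the setting of \eqref{eq-comp}. We split by the sign of $\beta$.

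\emph{Case $\beta\ge0$.} The second line of \eqref{eq-comp} gives that the bracket is at least $L^{-1}$. Raising to the nonnegative power $\beta$ yields bracket$^\beta\ge L^{-\beta}=L^{-|\beta|}$, which is what we need.

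\emph{Case $\beta\le0$.} The first line gives bracket $\le L$. Since $t\mapsto t^\beta$ is nonincreasing on $(0,\infty)$, we get bracket$^\beta\ge L^{\beta}=L^{-|\beta|}$.

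In both cases the constraint holds. For $\beta=0$ both lines apply and the statement is trivial anyway.

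There is no real obstacle here. The only points to check are the following:
\begin{enumerate}[label=\textup{(\alph*)}]
\item the ratios are well defined and positive, which holds by Remark~\ref{rem-gauge}(i);
\item the direction of the inequality after raising to a negative power is handled correctly;
\item hypothesis \eqref{eq-comp} is applied to $(I,J)=(Q,J)$ with $J$ in the relative grid $\mathcal D(Q)$ of an arbitrary (not necessarily standard dyadic) cube $Q$, which is exactly how the hypothesis is phrased.
\end{enumerate}
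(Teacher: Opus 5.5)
Your proposal is correct and matches the paper's proof: for each cube $Q$ you put $\lambda_Q=L^{|\beta|}$ and $\lambda_I=0$ for $I\neq Q$, and you verify the constraint in \eqref{eq0-gauge} by applying \eqref{eq-comp} with $I=Q$. Your sign-by-sign handling of the power $\beta$ is right, and it gives cost exactly $L^{|\beta|}$ before taking the supremum over $Q$.
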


\begin{proof}
Fix a cube \(Q\subset\mathbb R^n\). For  \(I,J\in\mathcal D(Q)\) with \(J\subset I\), the volume comparison condition \eqref{eq-comp} implies
$$
\left(
\frac{|I|}{|J|}
\frac{|\varphi^{-1}(J)|}{|\varphi^{-1}(I)|}
\right)^\beta
\ge
L^{-|\beta|}.
$$
Define \(\{\lambda_I\}_{I\in\mathcal D(Q)}\) by setting \(\lambda_Q=L^{|\beta|}\) and \(\lambda_I=0\) for all \(I\neq Q\). Then, for every \(J\in\mathcal D(Q)\), we have
$$
\sum_{\gfz{I\in\mathcal D(Q)}{I\supset J}}
\lambda_I
\left(
\frac{|I|}{|J|}
\frac{|\varphi^{-1}(J)|}{|\varphi^{-1}(I)|}
\right)^\beta
\ge
\lambda_Q L^{-|\beta|}
=1.
$$
This proves that \(\{\lambda_I\}_{I\in\mathcal D(Q)}\in \mathscr T_\beta(Q)\).
Applying \eqref{eq1-gauge} now yields
$$
\|\varphi\|_{\mathscr G_\beta(Q)}
\le
\sum_{I\in\mathcal D(Q)} \lambda_I \left(\frac{|I|}{|Q|}\right)^{\beta}
=
L^{|\beta|}.
$$
Taking the supremum over all cubes \(Q\) yields the desired estimate.
\end{proof}

The following corollary provides a sufficient condition ensuring \eqref{eq-comp}
under bi-Lipschitz control, whereas general quasisymmetry does not imply this volume comparability.

\begin{corollary}\label{cor-bilip}
Let \(\varphi:\mathbb R^n\to\mathbb R^n\) be \(L\)-bi-Lipschitz, that is, for all \(x,\, y\in\mathbb R^n\),
$$L^{-1}|x-y| \le |\varphi(x)-\varphi(y)| \le L|x-y|. $$
Then
$\|\varphi\|_{\mathscr G_\beta}
\le
L^{2n|\beta|}.$
\end{corollary}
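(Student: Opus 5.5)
The plan is to reduce Corollary~\ref{cor-bilip} to Proposition~\ref{prop-vol-comp}, which needs only a uniform comparison between the relative pullback volume $|\varphi^{-1}(J)|/|\varphi^{-1}(I)|$ and the relative Lebesgue volume $|J|/|I|$. Since $\varphi$ is $L$-bi-Lipschitz, so is $\varphi^{-1}$ with the same constant: the defining inequalities are symmetric under exchanging $x,y$ with $\varphi(x),\varphi(y)$. The standard Lebesgue measure distortion for Lipschitz maps then gives, for every measurable set $E\subset\mathbb R^n$,
\begin{align*}
L^{-n}|E|\le |\varphi^{-1}(E)|\le L^{n}|E|.
\end{align*}
I would justify this cleanly through the Jacobian rather than by a covering argument. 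By Rademacher's theorem $\varphi^{-1}$ is differentiable a.e. and satisfies the area formula. Its differential obeys $L^{-1}|h|\le |D\varphi^{-1}(x)h|\le L|h|$, so every singular value lies in $[L^{-1},L]$ and hence $L^{-n}\le J_{\varphi^{-1}}\le L^n$ a.e.

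The comparison itself is then immediate. For a cube $I$ and $J\in\mathcal D(I)$, apply the two-sided bound to $E=J$ and to $E=I$ to get
\begin{align*}
L^{-2n}\frac{|J|}{|I|}\le \frac{|\varphi^{-1}(J)|}{|\varphi^{-1}(I)|}\le L^{2n}\frac{|J|}{|I|}.
\end{align*}
Both alternatives in \eqref{eq-comp} therefore hold with constant $L^{2n}$ in place of $L$. Proposition~\ref{prop-vol-comp} then yields $\|\varphi\|_{\mathscr G_\beta}\le (L^{2n})^{|\beta|}=L^{2n|\beta|}$ for every $\beta\in\mathbb R$.

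The only step with any content is the measure distortion estimate $|\varphi^{-1}(E)|\le L^n|E|$. If I wanted to avoid the area formula, I would cover $E$ by countably many small cubes whose total measure is close to $|E|$. An $L$-Lipschitz map sends a cube of side $s$ into a cube of side $Ls\sqrt n$, which gives the estimate only up to the dimensional factor $n^{n/2}$. That loss is why the Jacobian route is preferable for the sharp exponent $2n|\beta|$ stated in the corollary. The lower bound follows by applying the upper bound to $\varphi$ itself, which is also $L$-Lipschitz.
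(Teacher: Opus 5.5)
Your proposal is correct and follows the paper's proof exactly: the two-sided bound $L^{-n}|E|\le|\varphi^{-1}(E)|\le L^n|E|$ gives the comparison \eqref{eq-comp} with constant $L^{2n}$, and Proposition~\ref{prop-vol-comp} then yields $\|\varphi\|_{\mathscr G_\beta}\le L^{2n|\beta|}$. Your Rademacher/area-formula justification of the measure distortion fills in the ``bi-Lipschitz change of variables'' step that the paper states without detail.
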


\begin{proof}
For any Lebesgue measurable set \(E\subset\mathbb R^n\), the bi-Lipschitz change-of-variables yields
$$
L^{-n}|E|\le |\varphi^{-1}(E)|\le L^n|E|,
$$
 which implies the comparability condition \eqref{eq-comp}. An application of Proposition~\ref{prop-vol-comp} then yields the desired upper bound for the capacity gauge.
\end{proof}

\subsection{Finite gauge when   $\beta=0$ and $\beta\in(1,\infty)$}\label{ss3.4}
We begin with the following elementary identity for dyadic cubes, which is simple but useful.

\begin{lemma}\label{lem-sum-cube}
Let $p\in(1,\infty)$ and $Q\subset\rn$ be a cube. Then,
\begin{align}\label{eq-sum-cubes}
  \sum_{I\in \cd(Q)}  \left(\frac{|I|}{|Q|}\right)^p =\frac1{1-2^{-n(p-1)}}.
  \end{align}
\end{lemma}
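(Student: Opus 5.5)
The plan is to group the sum by generation, using the fact that all cubes in a fixed generation of the relative dyadic system $\mathcal D(Q)$ have the same size. For each $k\in\mathbb Z_+$, the family $\mathcal D_k(Q)$ consists of exactly $2^{kn}$ subcubes of $Q$. Each of them has side length $2^{-k}\ell(Q)$, so each satisfies $|I|/|Q|=2^{-kn}$. Since $\mathcal D(Q)$ is the disjoint union of the $\mathcal D_k(Q)$, and the summands are nonnegative, the series may be rearranged freely. This gives
\begin{align*}
\sum_{I\in\mathcal D(Q)}\left(\frac{|I|}{|Q|}\right)^p
=\sum_{k=0}^\infty \#\mathcal D_k(Q)\cdot 2^{-knp}
=\sum_{k=0}^\infty 2^{kn}\,2^{-knp}
=\sum_{k=0}^\infty 2^{-kn(p-1)}.
\end{align*}

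The last expression is a geometric series with ratio $2^{-n(p-1)}$. Because $p>1$ and $n\ge1$, this ratio lies in $(0,1)$, so the series converges. Its sum is $\bigl(1-2^{-n(p-1)}\bigr)^{-1}$, which is exactly \eqref{eq-sum-cubes}.

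There is no real obstacle. The only points to check are the count $\#\mathcal D_k(Q)=2^{kn}$, which holds because $k$ successive bisections of each of the $n$ sides produce $2^{kn}$ subcubes, and the justification for regrouping the series, which is nonnegativity of the terms. The identity does not depend on whether $Q$ is open, closed or half-open, since only side lengths and cardinalities enter the computation.
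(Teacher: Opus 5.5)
Your proof is correct and follows essentially the same approach as the paper. Both arguments group $\mathcal D(Q)$ by generation, use that $\mathcal D_k(Q)$ consists of $2^{kn}$ cubes with $|I|/|Q|=2^{-kn}$, and sum the resulting geometric series with ratio $2^{-n(p-1)}\in(0,1)$.
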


\begin{proof}
For  any \(p > 1\), by a direct calculation, we obtain
 \begin{align*}
  \sum_{\gfz{I\in \cd(Q)}{I\subset Q}}  \left(\frac{|I|}{|Q|}\right)^p
  &=\sum_{k=0}^\infty \sum_{\gfz{I\in\cd(Q)}{\ell(I)=2^{-k}\ell(Q)}}  \left(\frac{|I|}{|Q|}\right)^p\\
  &= \sum_{k=0}^\infty \sum_{\gfz{I\in\cd(Q)}{\ell(I)=2^{-k}\ell(Q)}}  2^{-knp} \notag\\
  &=\sum_{k=0}^\infty 2^{kn}\cdot  2^{-knp} \notag\\
  &=\frac1{1-2^{-n(p-1)}}, \notag
  \end{align*}
as desired.
\end{proof}

Next, we prove the following universal estimate for finite capacity gauge.

\begin{proposition}\label{prop-beta=0>1}
Let \(\beta\in\mathbb R\) and
\(\varphi:\ \mathbb R^n\to\mathbb R^n\) be a homeomorphism.

\begin{enumerate}[label=\textup{(\roman*)}]
    \item      $    \|\varphi\|_{\mathscr G_\beta}\ge1. $

    \item If
    \(\varphi(x)=Ax+b\) for some \(A\in GL(n,\mathbb R)\) and \(b\in\mathbb R^n\), then
 $\|\varphi\|_{\mathscr G_\beta}=1$. In particular,
  $\|\operatorname{id}\|_{\mathscr G_\beta}=1.$

\item If $\beta=0$, then $\|\varphi\|_{\mathscr G_0}=1$.

\item If $\beta\in(1,\infty)$, then
$1\le \|\varphi\|_{\mathscr G_\beta} \le (1-2^{-n(\beta-1)})^{-1}.$
\end{enumerate}
\end{proposition}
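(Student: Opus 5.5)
We prove the four items separately, using the primal definition \eqref{eq1-gauge} for upper bounds and the dual formula \eqref{eq-dual-gauge} of Theorem~\ref{thm:upward-duality} (or the primal constraint at a single cube) for lower bounds.

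\emph{Item (i).} Fix any cube $Q$ and test the primal constraint in \eqref{eq0-gauge} at $J=Q$. The only ancestor of $Q$ in $\mathcal D(Q)$ is $Q$ itself, and the kernel there equals $1$. Hence every admissible $\{\lambda_I\}\in\mathscr T_\beta(Q)$ has $\lambda_Q\ge1$. This gives
$$\sum_{I\in\mathcal D(Q)}\lambda_I(|I|/|Q|)^\beta\ge \lambda_Q\ge1,$$
so $\|\varphi\|_{\mathscr G_\beta(Q)}\ge1$ for every $Q$. Taking the supremum yields (i).

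\emph{Item (ii).} For affine $\varphi$ we have $\varphi^{-1}(E)=A^{-1}(E-b)$, so $|\varphi^{-1}(E)|=|\det A|^{-1}|E|$. Thus $\varphi$ satisfies the hypothesis of Proposition~\ref{prop:gauge-pullback-measure} with $\psi=\mathrm{id}$, which gives $\|\varphi\|_{\mathscr G_\beta}=\|\mathrm{id}\|_{\mathscr G_\beta}$. For the identity, \eqref{eq-comp} holds with $L=1$ for either sign of $\beta$. Proposition~\ref{prop-vol-comp} then gives $\|\mathrm{id}\|_{\mathscr G_\beta}\le1$, and (i) gives the reverse inequality.

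\emph{Item (iii).} When $\beta=0$ the kernel is identically $1$ and the cost is $\sum_I\lambda_I$. The choice $\lambda_Q=1$, $\lambda_I=0$ otherwise, is admissible, because every $J\in\mathcal D(Q)$ has $Q$ as an ancestor. Its cost is $1$, so the gauge is at most $1$, and (i) gives equality. Equivalently, \eqref{eq-comp} holds trivially for $\beta=0$, and Proposition~\ref{prop-vol-comp} gives the bound $L^0=1$.

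\emph{Item (iv).} The lower bound is (i). For the upper bound I would use the dual formula \eqref{eq-dual-gauge}. Take any $\{\mu_J\}\in\mathscr T^*_\beta(Q)$. By Remark~\ref{rem-packing}, $0\le\mu_J\le1$. Therefore
$$\sum_J\mu_J(|J|/|Q|)^\beta\le\sum_{J\in\mathcal D(Q)}(|J|/|Q|)^\beta=(1-2^{-n(\beta-1)})^{-1}$$
by Lemma~\ref{lem-sum-cube} with $p=\beta>1$. Taking the supremum over admissible $\mu$ and then over $Q$ finishes the proof.

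The argument can also be done on the primal side with $\lambda_I\equiv1$, but then bounding the constraint requires controlling the kernel, whereas the dual approach avoids this. The main subtlety is that Remark~\ref{rem-packing} is valid for all $\beta$, since the $J=R$ term has ratio $1$. No quasisymmetry is needed, only that $\varphi$ is a homeomorphism, which is what makes the pullback volumes finite and positive.
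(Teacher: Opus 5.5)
Your proof is correct. Items (i) and (ii) are essentially the paper's argument. The paper applies Proposition~\ref{prop-vol-comp} with $L=1$ directly to the affine map, since $|\varphi^{-1}(J)|/|\varphi^{-1}(I)|=|J|/|I|$ exactly, so your detour through Proposition~\ref{prop:gauge-pullback-measure} is harmless but unnecessary.

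For (iii) and (iv) you have swapped the paper's tools.
\begin{itemize}
\item In (iii) the paper obtains the upper bound from the dual formula \eqref{eq-dual-gauge}: when $\beta=0$, the dual constraint at $I=Q$ gives $\sum_J\mu_J\le1$. You instead use the primal test sequence $\lambda_Q=1$, $\lambda_I=0$ for $I\neq Q$, which is more elementary.
\item In (iv) the paper stays on the primal side with $\lambda_I\equiv1$, while you pass through Theorem~\ref{thm:upward-duality} and Remark~\ref{rem-packing}.
\end{itemize}

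Your stated reason for preferring the dual in (iv) is mistaken. With $\lambda_I\equiv1$, the constraint at $J$ contains the diagonal term $I=J$, whose kernel equals $1$, and every other term is nonnegative. So feasibility requires no control of the kernel at all. This is the same diagonal observation that yields $\mu_R\le1$ in Remark~\ref{rem-packing}, so the two arguments are mirror images. The primal one, however, avoids the limiting linear-programming duality. In fact, combining your (iii) with the paper's (iv) proves the whole proposition directly from Definition~\ref{def-gauge}, with no appeal to Theorem~\ref{thm:upward-duality}.

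One further small inaccuracy concerns your ``equivalently'' aside in (iii). At $\beta=0$, the hypothesis \eqref{eq-comp} of Proposition~\ref{prop-vol-comp} is still a genuine volume-comparability requirement on $\varphi$, which a general homeomorphism need not satisfy. Only the kernel lower bound $L^{-|\beta|}$ used inside that proposition's proof degenerates to the trivial statement $1\ge1$. Your main argument for (iii) does not rely on this aside, so the proof stands.
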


\begin{proof}
To prove (i), fix a cube \(Q\subset\mathbb R^n\). If \(\mathscr T_\beta(Q)=\emptyset\), then \(\|\varphi\|_{\mathscr G_\beta(Q)}=\infty\) and, hence, \(\|\varphi\|_{\mathscr G_\beta}=\infty\ge 1\). Now assume \(\mathscr T_\beta(Q)\neq\emptyset\). Then, for any \(\{\lambda_I\}_{I\in\mathcal D(Q)}\in\mathscr T_\beta(Q)\), by \eqref{eq0-gauge}, we have
$$
\sum_{\gfz{I\in\mathcal D(Q)}{I\supset J}}
\lambda_I
\left(
\frac{|I|}{|J|}
\frac{|\varphi^{-1}(J)|}{|\varphi^{-1}(I)|}
\right)^\beta
\ge 1\quad\text{for all } J\in\mathcal D(Q).
$$
In particular, taking \(J=Q\), the sum reduces to the single term \(I=Q\), which gives \(\lambda_Q\ge 1\). Therefore,
$$
\sum_{\gfz{I\in\mathcal D(Q)}{I\subset Q}}
\lambda_I
\left(\frac{|I|}{|Q|}\right)^\beta
\ge
\lambda_Q
\ge 1.
$$
Taking the infimum over all \(\{\lambda_I\}_I\in\mathscr T_\beta(Q)\)
yields \(\|\varphi\|_{\mathscr G_\beta(Q)}\ge 1\).
Thus, \(\|\varphi\|_{\mathscr G_\beta}\ge 1\). This proves (i).

We now prove (ii). If \(\varphi(x)=Ax+b\), then for any cubes $I$ and $J$, we have
$$
\frac{|\varphi^{-1}(J)|}{|\varphi^{-1}(I)|}
=
\frac{|J|}{|I|},
$$
which implies that \(\varphi\) satisfies \eqref{eq-comp} with constant \(L=1\).
Applying Proposition \ref{prop-vol-comp} therefore gives $\|\varphi\|_{\mathscr G_\beta}\le 1$. Combined with (i), this yields \(\|\varphi\|_{\mathscr G_\beta}=1\). In particular, taking \(\varphi(x)=x\) gives
$
\|\mathrm{id}\|_{\mathscr G_\beta}=1.
$

To prove (iii), note that the lower bound \(\|\varphi\|_{\mathscr G_0}\ge 1\) follows from (i), while the upper bound \(\|\varphi\|_{\mathscr G_0}\le 1\) is a consequence of Theorem \ref{thm:upward-duality}. Thus, we obtain \(\|\varphi\|_{\mathscr G_0}=1\).

Finally, for (iv), we let $\beta\in(1,\infty)$ and fix a cube \(Q\subset\mathbb R^n\). For each \(I\in\mathcal D(Q)\), set \(\lambda_I:=1\). This sequence $\{\lambda_I\}_{I \in \mathcal{D}(Q)}$ belongs to the test sequence space $\mathscr T_\beta(Q)$. Indeed, for any $J \in \mathcal{D}(Q)$, we have
$$
\sum_{\gfz{I\in \cd(Q), I\subset Q}{I\supset J}} \lambda_I \left(
\frac{|I|}{|J|}\cdot\frac{|\varphi^{-1}(J)|}{|\varphi^{-1}(I)|}\right)^{\beta }
\ge \lambda_J \left(
\frac{|J|}{|J|}\cdot\frac{|\varphi^{-1}(J)|}{|\varphi^{-1}(J)|}\right)^{\beta }
=\lambda_J=1.
$$
By  \eqref{eq-sum-cubes} and \eqref{eq1-gauge}, together with the assumption $\beta\in(1,\infty)$, we obtain
that for any cube $Q\subset\rn$,
  \begin{align*}
\|\varphi\|_{\mathscr G_\beta(Q)}
&\le
\sum_{I\in \cd(Q)} \lambda_I \left(\frac{|I|}{|Q|}\right)^{\beta }
 =\sum_{I\in \cd(Q)}  \left(\frac{|I|}{|Q|}\right)^{\beta }
=\frac1{1-2^{-n(\beta-1)}}.
\end{align*}
This proves the desired estimate in (iv).
\end{proof}

The following result shows that the capacity gauge is invariant under two types of transformations: invertible affine changes of variables in the domain, and signed coordinate permutations combined with uniform scalings in the range.

\begin{proposition}\label{prop:gauge-affine-invariance}
Let \(\beta\in\mathbb R\) and
\(\varphi:\ \mathbb R^n\to\mathbb R^n\) be a homeomorphism.

\begin{enumerate}[label=\textup{(\roman*)}]
    \item If
    \(T(x)=Ax+b\) for some \(A\in GL(n,\mathbb R)\) and \(b\in\mathbb R^n\), then
    $$
    \|\varphi\circ T\|_{\mathscr G_\beta}
    =
    \|\varphi\|_{\mathscr G_\beta}.
    $$

    \item If
    \(S(x)=aUx+b\) for some \(a\in(0,\infty)\), \(b\in\mathbb R^n\), and a signed coordinate permutation \(U\), then
    $$
    \|\varphi\circ S\|_{\mathscr G_\beta}
    =
    \|\varphi\|_{\mathscr G_\beta}
    =
    \|S\circ\varphi\|_{\mathscr G_\beta}.
    $$
\end{enumerate}
\end{proposition}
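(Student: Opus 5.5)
For (i), I would not work with the gauge directly. Instead I reduce to Proposition \ref{prop:gauge-pullback-measure}, which says the gauge depends only on pullback volumes up to a common multiplicative constant. Take any cube $I\subset\mathbb R^n$. Then
$$
(\varphi\circ T)^{-1}(I)=T^{-1}\big(\varphi^{-1}(I)\big),
$$
and $T^{-1}(y)=A^{-1}(y-b)$ is affine. The change of variables formula therefore gives
$$
|(\varphi\circ T)^{-1}(I)|=|\det A|^{-1}\,|\varphi^{-1}(I)|.
$$
Both sides are positive and finite, by Remark \ref{rem-gauge}(i) applied to the homeomorphisms $\varphi$ and $\varphi\circ T$. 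So the hypothesis of Proposition \ref{prop:gauge-pullback-measure} holds for the pair $\varphi\circ T$ and $\varphi$ with $c=|\det A|^{-1}$. This yields $\|\varphi\circ T\|_{\mathscr G_\beta}=\|\varphi\|_{\mathscr G_\beta}$. It uses only that $T$ is affine and invertible; $T$ need not map cubes to cubes.

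For (ii), the first equality $\|\varphi\circ S\|_{\mathscr G_\beta}=\|\varphi\|_{\mathscr G_\beta}$ is the special case $A=aU$ of (i). The real content is $\|S\circ\varphi\|_{\mathscr G_\beta}=\|\varphi\|_{\mathscr G_\beta}$. Here the transformation acts on the range, so it moves the cubes themselves, and I would argue by transport of structure.

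First I check that $S^{-1}(x)=a^{-1}U^{-1}(x-b)$ maps axis-parallel cubes to axis-parallel cubes, because $U^{-1}$ is again a signed coordinate permutation. Moreover, for every cube $Q$, $S^{-1}$ gives an inclusion-preserving bijection from $\mathcal D(Q)$ onto $\mathcal D(S^{-1}(Q))$ that preserves generations. Indeed, $S^{-1}$ carries the $k$-fold bisection of $Q$ onto the $k$-fold bisection of $S^{-1}(Q)$: a permutation and reflection of coordinates followed by a uniform dilation and translation sends the midpoint hyperplanes of a cube to those of its image. Since $S^{-1}$ scales all volumes by the same factor $a^{-n}$, ratios are preserved:
$$
\frac{|S^{-1}(I)|}{|S^{-1}(J)|}=\frac{|I|}{|J|},\qquad \frac{|S^{-1}(I)|}{|S^{-1}(Q)|}=\frac{|I|}{|Q|}.
$$
Also $(S\circ\varphi)^{-1}(J)=\varphi^{-1}(S^{-1}(J))$.

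Combining these facts, the kernel in \eqref{eq0-gauge} for $S\circ\varphi$ at the pair $(I,J)$ in $\mathcal D(Q)$ equals the kernel for $\varphi$ at the pair $(S^{-1}I,S^{-1}J)$ in $\mathcal D(S^{-1}Q)$. The cost weights $(|I|/|Q|)^\beta$ agree in the same way. Hence the map $\{\lambda_I\}\mapsto\{\lambda_{S(I')}\}_{I'\in\mathcal D(S^{-1}Q)}$ is a cost-preserving bijection from $\mathscr T_\beta(Q)$ for $S\circ\varphi$ onto $\mathscr T_\beta(S^{-1}Q)$ for $\varphi$. This gives $\|S\circ\varphi\|_{\mathscr G_\beta(Q)}=\|\varphi\|_{\mathscr G_\beta(S^{-1}Q)}$. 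Since $Q\mapsto S^{-1}(Q)$ is a bijection on the family of all cubes, taking suprema over $Q$ yields the claim.

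The main obstacle is the bookkeeping in the previous step: checking that $S^{-1}$ really intertwines the relative dyadic systems $\mathcal D(Q)$ and $\mathcal D(S^{-1}Q)$, including generation and the ancestor relation $I\supset J$. The reflections inside $U$ can turn half-open cubes into cubes open on the other side. This is harmless, because the definition allows open, closed or half-open cubes, and boundaries are Lebesgue-null. Their $\varphi^{-1}$-images are null as well, since the volumes $|\varphi^{-1}(I)|$ are insensitive to boundaries once $J_{\varphi^{-1}}\in A_\infty$. Otherwise it suffices to note that the gauge depends only on these volumes.
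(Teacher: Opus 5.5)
Your proposal is correct and follows the paper's proof essentially step by step. For (i), the paper checks directly that $|(\varphi\circ T)^{-1}(E)|=|\det A|^{-1}|\varphi^{-1}(E)|$ leaves every ratio $|\varphi^{-1}(J)|/|\varphi^{-1}(I)|$ unchanged; you package the same computation through Proposition~\ref{prop:gauge-pullback-measure}. For (ii), the paper uses exactly your tree isomorphism $I\mapsto S^{-1}(I)$ to obtain $\|S\circ\varphi\|_{\mathscr G_\beta(Q)}=\|\varphi\|_{\mathscr G_\beta(S^{-1}(Q))}$ and then takes the supremum over $Q$.

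The only slip is in your boundary remark. The statement assumes merely that $\varphi$ is a homeomorphism, so you cannot appeal to $J_{\varphi^{-1}}\in A_\infty$; for $n\ge 2$ a homeomorphism can map a cube face onto a set of positive measure. Settle the point instead, as the paper tacitly does, by fixing the bisection convention equivariantly (e.g.\ children inherit the parent's open/closed pattern in each coordinate). Then $S^{-1}(\mathcal D(Q))=\mathcal D(S^{-1}(Q))$ holds as an identity of sets, and no null-set argument is needed.
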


\begin{proof}
For item~(i), observe that
$
(\varphi\circ T)^{-1}=T^{-1}\circ\varphi^{-1}.
$
Thus, for every measurable set \(E\subset\mathbb R^n\),
$$
|(\varphi\circ T)^{-1}(E)|
=
|T^{-1}(\varphi^{-1}(E))|
=
|\det A|^{-1}|\varphi^{-1}(E)|.
$$
Consequently, for any \(I,J\in\mathcal D(Q)\) with \(J\subset I\),
$$
\frac{|(\varphi\circ T)^{-1}(J)|}
{|(\varphi\circ T)^{-1}(I)|}
=
\frac{|\varphi^{-1}(J)|}
{|\varphi^{-1}(I)|}.
$$
It follows from \eqref{eq0-gauge} that the test sequence spaces for \(\varphi\) and \(\varphi\circ T\) coincide. Then, applying \eqref{eq1-gauge} and \eqref{eq2-gauge} yield the desired identity in (i).

For item~(ii), since \(S\) is an invertible affine map, the equality \(\|\varphi\circ S\|_{\mathscr G_\beta}=\|\varphi\|_{\mathscr G_\beta}\) follows directly from (i). It remains to prove \(\|S\circ\varphi\|_{\mathscr G_\beta}=\|\varphi\|_{\mathscr G_\beta}\). Note that \(S^{-1}\) sends axis-parallel cubes to axis-parallel cubes and induces a tree isomorphism
$
I\mapsto S^{-1}(I)
$
from \(\mathcal D(Q)\) onto \(\mathcal D(S^{-1}Q)\). Moreover,
$$
\frac{|S^{-1}(I)|}{|S^{-1}(Q)|}
=
\frac{|I|}{|Q|}.
$$
Since
$(S\circ\varphi)^{-1}=\varphi^{-1}\circ S^{-1}$,
we have
$$
|(S\circ\varphi)^{-1}(I)|
=
|\varphi^{-1}(S^{-1}(I))|.
$$
Thus, after the change of indices \(I\mapsto S^{-1}(I)\), the constraints and objective functionals for \(\varphi\) and \(S\circ\varphi\) coincide. Again, using \eqref{eq0-gauge} and \eqref{eq1-gauge} yields
$$
\|S\circ\varphi\|_{\mathscr G_\beta(Q)}
=
\|\varphi\|_{\mathscr G_\beta(S^{-1}(Q))}.
$$
Then taking the supremum over all cubes \(Q\subset\mathbb R^n\) yields \(\|S\circ\varphi\|_{\mathscr G_\beta}=\|\varphi\|_{\mathscr G_\beta}\). This proves (ii).
\end{proof}

\subsection{Finite gauge for $A_1(\mathbb R^n)$ weights}\label{ss3.5}

The following proposition provides a sufficient condition for \(\varphi\) to have finite gauge; as we shall see in Remark~\ref{rem-A1-gauge} below, this condition is not necessary.

\begin{proposition}\label{prop-A1toGauge}
  Let $\beta\in \mathbb R$ and $\varphi:\ \mathbb{R}^n \to \mathbb{R}^n$ be $\eta$-quasisymmetric.
Then
\begin{align}\label{eq-A1gauge}
\|\varphi\|_{\mathscr G_\beta} \le
\begin{cases}
(1-2^{-n\beta})^{-1}[J_{\varphi^{-1}}]_{A_1(\rn)}^{\beta }\quad
&\ \text{as}\ \,\beta> 0\ \,\text{and}\ \, J_{\varphi^{-1}}\in A_1(\rn);\vspace{0.15cm}\\
\left(C(\eta,n)[J_\varphi]_{A_1(\rn)}\right)^{-\beta}\quad
&\ \text{as}\ \,\beta\le 0\ \,\text{and}\ \, J_{\varphi}\in A_1(\rn),
\end{cases}
\end{align}
where $C(\eta,n)$ is a positive constant depending only on $n$ and $\eta$.
\end{proposition}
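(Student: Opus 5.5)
The plan is to reduce both cases of \eqref{eq-A1gauge} to Proposition~\ref{prop-vol-comp}. That proposition gives $\|\varphi\|_{\mathscr G_\beta}\le L^{|\beta|}$ once the one-sided volume comparison \eqref{eq-comp} holds with constant $L$. So it suffices to obtain \eqref{eq-comp} with $L=[J_{\varphi^{-1}}]_{A_1(\rn)}$ when $\beta>0$, and with $L=C(\eta,n)[J_\varphi]_{A_1(\rn)}$ when $\beta\le 0$.

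Throughout I use the change of variables $|\varphi^{\pm1}(E)|=\int_E J_{\varphi^{\pm1}}(x)\,dx$ for measurable $E$. For $n\ge2$ this holds because quasisymmetric maps are quasiconformal, hence absolutely continuous with Lusin's property (N). For $n=1$ it is implicit in the standing assumption that the Jacobians are weights.

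\textbf{Case $\beta>0$.} Write $w:=J_{\varphi^{-1}}\in A_1(\rn)$ and let $J\subset I$ be cubes. Then
$$
\frac{|\varphi^{-1}(J)|}{|\varphi^{-1}(I)|}=\frac{w(J)}{w(I)}\ge \frac{|J|\,\mathop{\mathrm{ess\,inf}}_I w}{|I|\fint_I w}\ge [w]_{A_1(\rn)}^{-1}\frac{|J|}{|I|}.
$$
This is the second line of \eqref{eq-comp} with $L=[w]_{A_1(\rn)}$. Proposition~\ref{prop-vol-comp} then gives $\|\varphi\|_{\mathscr G_\beta}\le [w]_{A_1(\rn)}^{\beta}$. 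Since $(1-2^{-n\beta})^{-1}>1$, this is at least as strong as the stated bound.

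\textbf{Case $\beta\le0$.} The case $\beta=0$ is Proposition~\ref{prop-beta=0>1}(iii), so assume $\beta<0$ and write $v:=J_\varphi\in A_1(\rn)$. Set $E:=\varphi^{-1}(J)\subset F:=\varphi^{-1}(I)$. The key geometric input is that $F$ is a quasiball. Applying \eqref{eq-cube-varphi} to the $\eta'$-quasisymmetric map $\varphi^{-1}$ (Proposition~\ref{prop-dia-doubl}(i)), $F$ contains a ball of radius $c_{\eta',n}\operatorname{diam}F$ and is contained in $B:=B(\varphi^{-1}(c_I),\operatorname{diam}F)$. Let $P$ be the smallest cube containing $B$. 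Then $|P|\le C(\eta,n)|F|$.

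Since $I=\varphi(F)\subset\varphi(P)$, I estimate
$$
|I|\le |\varphi(P)|=v(P)\le [v]_{A_1(\rn)}|P|\mathop{\mathrm{ess\,inf}}_P v \quad\text{and}\quad |J|=v(E)\ge |E|\mathop{\mathrm{ess\,inf}}_P v .
$$
Dividing,
$$
\frac{|J|}{|I|}\ge \frac{|E|}{[v]_{A_1(\rn)}|P|}\ge \frac{1}{C(\eta,n)[v]_{A_1(\rn)}}\cdot\frac{|\varphi^{-1}(J)|}{|\varphi^{-1}(I)|}.
$$
This is the first line of \eqref{eq-comp} with $L=C(\eta,n)[v]_{A_1(\rn)}$. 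Proposition~\ref{prop-vol-comp} then yields $\|\varphi\|_{\mathscr G_\beta}\le (C(\eta,n)[J_\varphi]_{A_1(\rn)})^{-\beta}$.

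The only nontrivial point is this second case. There the $A_1$ condition lives on the domain side of $\varphi$, while the comparison must be made on the preimages $\varphi^{-1}(I)$, which are not cubes. The quasiball inclusion from Proposition~\ref{prop-dia-doubl}(iii) is exactly what lets me replace $\varphi^{-1}(I)$ by a comparable cube at a cost depending only on $\eta$ and $n$. I expect everything else to be routine.
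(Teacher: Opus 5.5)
Your argument is correct. It uses the same two ingredients as the paper. The first is the $A_1$ condition turned into a one-sided comparison of volume ratios. The second, for $\beta\le 0$, is the quasiball inclusion \eqref{eq-cube-varphi} for $\varphi^{-1}$, which moves the $A_1$ condition of $J_\varphi$ onto a set comparable to $\varphi^{-1}(I)$. Where you differ is in how that comparison is converted into a gauge bound.

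For $\beta>0$, the paper also checks admissibility using only the term $I=Q$. It nevertheless takes $\lambda_I=[J_{\varphi^{-1}}]_{A_1(\rn)}^{\beta}|I|/|Q|$ on every $I\in\mathcal D(Q)$. Summing this superfluous mass with Lemma~\ref{lem-sum-cube} is exactly where the factor $(1-2^{-n\beta})^{-1}$ comes from. By routing through Proposition~\ref{prop-vol-comp}, with mass only on $Q$, you obtain the sharper bound $[J_{\varphi^{-1}}]_{A_1(\rn)}^{\beta}$, which does not blow up as $\beta\to0^+$.

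For $\beta\le 0$, the paper works on the dual side. It bounds $\sum_J\mu_J(|J|/|Q|)^\beta$ for $\{\mu_J\}\in\mathscr T_\beta^*(Q)$ through Theorem~\ref{thm:upward-duality}. This is the mirror image of your primal choice $\lambda_Q=L^{-\beta}$, so your route does not need the duality theorem. Your comparison with $L=C(\eta,n)[J_\varphi]_{A_1(\rn)}$ also already covers $\beta=0$, giving the bound $1$ directly, so the separate appeal for that case is unnecessary.

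One further small advantage of your version: you apply the cube-defined $A_1$ condition on the enclosing cube $P$. The paper instead applies it directly on the balls $B(\varphi^{-1}(c_I),r_I)$, which tacitly costs a dimensional factor between the ball and cube $A_1$ constants. In both cases that factor is absorbed into $C(\eta,n)$.
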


\begin{proof}
Consider first the case $\beta>0$ and $J_{\varphi^{-1}}\in A_1(\mathbb R^n)$.
For any cube $Q\subset\mathbb R^n$ and $I\in\mathcal D(Q)$, set
  $$\lambda_I:=\left([J_{\varphi^{-1}}]_{A_1(\rn)}\right)^{\beta }\,\frac{|I|}{|Q|}.$$
Now, we verify that such $\{\lambda_I\}_{I\in\mathcal D(Q)}\in\mathscr T_\beta(Q)$.
For any $J\in\mathcal D(Q)$, since $J_{\varphi^{-1}}\in A_1(\mathbb R^n)$, we have  $$
 \mathop{\mathrm{ess\,inf}}_J J_{\varphi^{-1}}\le \frac{|\varphi^{-1}(J)|}{|J|}
 =\fint_J J_{\varphi^{-1}}\,dx
 \le[J_{\varphi^{-1}}]_{A_1(\rn)}\mathop{\mathrm{ess\,inf}}_Q J_{\varphi^{-1}},
  $$
which,
combined with $\beta>0$, gives
   \begin{align*}
  \sum_{\gfz{I\in \cd(Q)}{J\subset I}} \lambda_I \left(
\frac{|I|}{|J|}\cdot\frac{|\varphi^{-1}(J)|}{|\varphi^{-1}(I)|}\right)^{\beta }
&\ge \lambda_Q \left(
\frac{|Q|}{|J|}\cdot\frac{|\varphi^{-1}(J)|}{|\varphi^{-1}(Q)|}\right)^{\beta }\\
&  \ge\lambda_Q
\left(\frac{\mathop{\mathrm{ess\,inf}\,}_{J}  J_{\varphi^{-1}}}
{[J_{\varphi^{-1}}]_{A_1(\rn)}\mathop{\mathrm{ess\,inf}\,}_{Q}  J_{\varphi^{-1}}}\right)^{\beta } \\
&    \ge\lambda_Q \left([J_{\varphi^{-1}}]_{A_1(\rn)}\right)^{-\beta }\\
&=1.
  \end{align*}
Thus $\{\lambda_I\}_{I\in\mathcal D(Q)}\in\mathscr T_\beta(Q)$.
Then, by \eqref{eq1-gauge} and \eqref{eq-sum-cubes}, together with $\beta>0$, we obtain
  \begin{align*}
    \|\varphi\|_{\mathscr G_\beta(Q)}
     \le \sum_{\gfz{I\in \cd(Q)}{I\subset Q}} \lambda_I \left(\frac{|I|}{|Q|}\right)^{\beta }
    = [J_{\varphi^{-1}}]_{A_1(\rn)}^{\beta } \sum_{\gfz{I\in \cd(Q)}{I\subset Q}}
    \left(\frac{|I|}{|Q|}\right)^{\beta+1}
   = (1-2^{-n\beta})^{-1}[J_{\varphi^{-1}}]_{A_1(\rn)}^{\beta }.
  \end{align*}
Taking the supremum over all cubes $Q$ yields the desired estimate in \eqref{eq-A1gauge}
 for the case $\beta>0$.

Now, we consider the case $\beta\le 0$ and \(J_\varphi\in A_1(\mathbb R^n)\).
Since $\varphi$ is $\eta$-quasisymmetric, it follows from \eqref{eq-cube-varphi} that
 there exists a constant $c=c(\eta,n)>0$ such that for any cube $I\subset\mathbb R^n$,
  $$
  B\left(\varphi^{-1}(c_I),\, c r_I\right)
  \subset \varphi^{-1}(I)\subset
  B\left(\varphi^{-1}(c_I),\,  r_I\right),
 $$
 where $c_I$ denotes the center of $I$ and $r_I:=\operatorname{diam}\varphi^{-1}(I)$.
 On the one hand, we have
\begin{align}\label{eq1-Iphi-1I}
  \frac{|I|}{|\varphi^{-1}(I)| }=
  \frac1{|\varphi^{-1}(I)|}\int_{\varphi^{-1}(I)} J_\varphi(y)\,dy\ge \mathop{\mathrm{ess\,inf}\,}_{\varphi^{-1}(I)}  J_\varphi.
\end{align}
On the other hand,
\begin{align}\label{eq2-Iphi-1I}
  \frac{|I|}{|\varphi^{-1}(I)| }
  &\le \frac1{|B(\varphi^{-1}(c_I),\, c r_I)|}\int_{B(\varphi^{-1}(c_I),\, r_I)} J_\varphi(y)\,dy\\
  &
  \le c^{-n} [J_\varphi]_{A_1(\rn)}\,\mathop{\mathrm{ess\,inf}\,}_{B(\varphi^{-1}(c_I),\,  r_I)}  J_\varphi\notag\\
  &
  \le c^{-n} [J_\varphi]_{A_1(\rn)}\,\mathop{\mathrm{ess\,inf}\,}_{\varphi^{-1}(I)}  J_\varphi. \notag
  \end{align}
We now use the dual formulation from Theorem \ref{thm:upward-duality}.
Fix a cube $Q\subset\rn$ and take $\{\mu_J\}_{J}\in \mathscr T_\beta^*(Q)$, that is,
$$
\sum_{\gfz{J\in \cd(Q)}{J\subset I}} \mu_J \left(
\frac{|\varphi^{-1}(J)|}{|\varphi^{-1}(I)|}\right)^{\beta }
\le 1 \quad\, \text{for all } \, I\in\mathcal D(Q).$$
 Applying \eqref{eq1-Iphi-1I}, \eqref{eq2-Iphi-1I}  and using  $\beta \le 0$, we obtain that for any $J\subset I$,
\begin{align*}
\left(
\frac{|\varphi^{-1}(J)|}{|\varphi^{-1}(I)|}\right)^{\beta }
&=
\left(\frac{|J|}{|I|}\right)^{\beta } \left(
\frac{\frac{|J|}{|\varphi^{-1}(J)|}}{\frac{|I|}{|\varphi^{-1}(I)|}}\right)
^{-\beta}\\
&\ge  c^{-n\beta}
\left(\frac 1{ [J_\varphi]_{A_1(\rn)}}\right)^{-\beta}
 \left(\frac{|J|}{|I|}\right)^{\beta }
\left(
\frac{\mathop{\mathrm{ess\,inf}\,}_{\varphi^{-1}(J)}  J_\varphi}{ \mathop{\mathrm{ess\,inf}\,}_{\varphi^{-1}(I)}  J_\varphi}\right)^{-\beta}\\
&\ge c^{-n\beta}
\left(\frac 1{ [J_\varphi]_{A_1(\rn)}}\right)^{-\beta}
 \left(\frac{|J|}{|I|}\right)^{\beta }.
\end{align*}
Thus, for every $I\in\mathcal D(Q)$,
$$
\sum_{\gfz{J\in \cd(Q)}{J\subset I}} \mu_J \left(
\frac{|J|}{|I|}\right)^{\beta }
\le c^{n\beta}
\left([J_\varphi]_{A_1(\rn)}\right)^{-\beta}.$$
Taking $I=Q$ and then the supremum over all $\{\mu_J\}\in\mathscr T_\beta^*(Q)$,
we obtain from \eqref{eq-dual-gauge} that
$$
\|\varphi\|_{\mathscr G_\beta(Q)}
\le
c^{n\beta}
\left([J_\varphi]_{A_1(\mathbb R^n)}\right)^{-\beta}.
$$
Taking  this with  \eqref{eq0-dual-gauge} yields that \eqref{eq-A1gauge} holds in the case $\beta\le0$.
\end{proof}

\subsection{The distortion-dependent case  $\beta\in(-\infty, 0)$}\label{ss3.6}

Let $n\in\nn$. Fix $p\in(-1,\infty)$ and consider the radial power-type function
\begin{align}\label{eq-varphip}
\varphi_p(x):= x |x|^p\quad \ \text{for }\ x\in\rn,
\end{align}
which is a homeomorphism of $\mathbb R^n$. Its inverse is given by
\begin{align}\label{eq-psip}
\psi_p(y):= \varphi_p^{-1}(y)=y |y|^{-\frac{p}{p+1}}\quad \text{for } y\in\mathbb R^n.
\end{align}
Furthermore, $\varphi_p$ is quasisymmetric, with distortion function
$$
\eta(t)= C\max\left\{t^{p+1},\, t^{\frac{1}{p+1}}\right\},\qquad t\in(0,\infty),
$$
where $C=C(p,n)$ is a constant depending only on $p$ and $n$. A direct calculation yields
$$
J_{\varphi_p}(x)= (p+1)|x|^{pn}
$$
and
$$
J_{\varphi_p^{-1}}(y)=J_{\psi_p}(y)= (p+1)^{-1}|y|^{-\frac{pn}{p+1}}.
$$
Now, we consider $\|\varphi_p\|_{\mathscr G_\beta}$ for $\beta\in(-\infty,0)$.

\begin{proposition}\label{prop-gauge-finite-infity}
Let $\beta\in(-\infty, 0)$.
Then, for the radial stretching function $\varphi_p$ defined in \eqref{eq-varphip}, we have
\begin{align}\label{eq-varphigaugefinite}
\|\varphi_p\|_{\mathscr G_\beta}<\infty\quad\  \text{for all}\ \ p\in (-1,0]
\end{align}
whereas
\begin{align}\label{eq-varphigauge-infinite}
\|\varphi_p\|_{\mathscr G_\beta}=\infty\quad\  \text{for all}\ \ p\in (0,\infty).
\end{align}
\end{proposition}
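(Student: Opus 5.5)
For $p\in(-1,0]$ we reduce to Proposition \ref{prop-A1toGauge}. For $p\in(0,\infty)$ we test the dual formula of Theorem \ref{thm:upward-duality} with a sequence concentrated on a single dyadic cube at the origin. In both cases the key input is the exact homogeneity of $\varphi_p$ and $\psi_p=\varphi_p^{-1}$.

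\emph{Finiteness for $p\in(-1,0]$.} We have $J_{\varphi_p}(x)=(p+1)|x|^{pn}$ with exponent $a:=pn\in(-n,0]$. It is classical that the power weight $|x|^{a}$ belongs to $A_1(\mathbb R^n)$ exactly when $-n<a\le 0$. One checks that averages of $|x|^a$ over cubes far from the origin are comparable to the value at the centre, while for cubes near the origin the average is $\lesssim \ell(Q)^a$ and the essential infimum is $\gtrsim \ell(Q)^a$. Hence $J_{\varphi_p}\in A_1(\mathbb R^n)$. Since $\varphi_p$ is quasisymmetric and $\beta<0$, the second case of Proposition \ref{prop-A1toGauge} gives $\|\varphi_p\|_{\mathscr G_\beta}\le (C(\eta,n)[J_{\varphi_p}]_{A_1(\rn)})^{-\beta}<\infty$, which is \eqref{eq-varphigaugefinite}. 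The only thing to verify here is the standard $A_1$ property of the power weight.

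\emph{Infiniteness for $p>0$.} Take $Q=J_0:=[0,1)^n$ and $J_k:=[0,2^{-k})^n\in\mathcal D_k(Q)$. Since $\psi_p(ty)=t^{1/(p+1)}\psi_p(y)$ for $t>0$, we get the exact identity $|\psi_p(J_k)|=2^{-kn/(p+1)}|\psi_p(J_0)|$. Fix $K\in\mathbb N$ and set $\mu_{J_K}:=2^{-Kn|\beta|/(p+1)}$, with $\mu_J:=0$ for all other $J\in\mathcal D(Q)$.

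We check that this sequence lies in $\mathscr T^*_\beta(Q)$. The only $I\in\mathcal D(Q)$ with $I\supset J_K$ are the ancestors $J_j$, $0\le j\le K$, so the constraint in \eqref{eq0-dual-gauge} reduces to
$$
\mu_{J_K}\left(\frac{|\psi_p(J_K)|}{|\psi_p(J_j)|}\right)^{\beta}
=\mu_{J_K}\,2^{(K-j)n|\beta|/(p+1)}\le 1 .
$$
This holds for every $0\le j\le K$, with equality at $j=0$. By \eqref{eq-dual-gauge},
$$
\|\varphi_p\|_{\mathscr G_\beta}\ge\|\varphi_p\|_{\mathscr G_\beta(Q)}\ge \mu_{J_K}\left(\frac{|J_K|}{|Q|}\right)^{\beta}
=2^{Kn|\beta|\left(1-\frac{1}{p+1}\right)}.
$$
Since $p>0$, the exponent $n|\beta|\frac{p}{p+1}$ is positive, so letting $K\to\infty$ gives \eqref{eq-varphigauge-infinite}.

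The argument has no real obstacle, because homogeneity makes the volume ratios exact. The only points to be careful about are that the admissible test sequence must be checked against all ancestors of $J_K$, which are exactly the chain $\{J_j\}_{j\le K}$, and that the sign of $\beta$ turns volume shrinkage into growth. The step that needs a citation or a short computation is the $A_1$ membership of $|x|^{pn}$ in the finite case.
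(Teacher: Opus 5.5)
Your proof is correct. The finite case $p\in(-1,0]$ is exactly the paper's argument: $|x|^{pn}\in A_1(\mathbb R^n)$, combined with the $\beta\le 0$ case of Proposition \ref{prop-A1toGauge}.

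For $p>0$ you test the same corner chain $[0,2^{-k})^n\subset[0,1)^n$ as the paper, but from the dual side.

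\textbf{The paper's route.} It works with the primal definition and an arbitrary admissible $\{\lambda_I\}$. It first proves, by splitting into cubes near and far from the origin, the two-sided estimate \eqref{eq-vol-varphiInvI}, namely $|\varphi_p^{-1}(I)|\simeq(|c_I|+\ell_I)^{-np/(p+1)}|I|$. This turns the constraint at $J_j$ into $2^{-j\beta np/(p+1)}\lesssim\sum_{k\le j}\lambda_{I_k}2^{-k\beta np/(p+1)}$. It then bounds the cost from below using $2^{-kn\beta}\ge 2^{-kn\beta p/(p+1)}$.

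\textbf{Your route.} Your single-atom sequence $\mu_{J_K}=2^{-Kn|\beta|/(p+1)}$ is precisely the certificate implicit in that primal computation. The exact scaling $\psi_p(ty)=t^{1/(p+1)}\psi_p(y)$ replaces \eqref{eq-vol-varphiInvI} by an identity. You therefore get the explicit lower bound $2^{Kn|\beta|p/(p+1)}$ with no implicit constants and no near/far case analysis.

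You also need only the easy half of Theorem \ref{thm:upward-duality}. For any $\lambda\in\mathscr T_\beta(Q)$ and $\mu\in\mathscr T^*_\beta(Q)$, insert the primal constraint and interchange the order of summation (all terms are nonnegative). Using $(|J|/|Q|)^\beta(|I|/|J|)^\beta=(|I|/|Q|)^\beta$, this gives
\[
\sum_{J}\mu_J\Bigl(\frac{|J|}{|Q|}\Bigr)^{\beta}\le\sum_{I}\lambda_I\Bigl(\frac{|I|}{|Q|}\Bigr)^{\beta}.
\]
So neither LP strong duality nor the truncation and limiting argument is needed.

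\textbf{Trade-off.} The paper's route yields the estimate \eqref{eq-vol-varphiInvI} for arbitrary cubes, which it reuses in Remark \ref{rem-A1-gauge}(iii) to show that $J_{\varphi_p^{-1}}$ is a local $A_1(\mathbb R^n;\{0\})$ weight. Your argument uses only cubes anchored at the origin, where homogeneity is exact; that is why it is shorter but gives nothing beyond this proposition.
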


\begin{proof}
It is a classical fact from Muckenhoupt weight theory (see, e.g., \cite[Example~7.1.7]{Grafakos-CFAbook})
that
\begin{align}\label{eq-power-Afz}
|x|^a\in A_\infty(\mathbb R^n) \quad\Leftrightarrow\quad a>-n
\end{align}
and
\begin{align}\label{eq-power-A1}
|x|^a\in A_1(\mathbb R^n) \quad\Leftrightarrow\quad -n<a\le 0.
\end{align}
In view of the explicit forms of $J_{\varphi_p}$, we have
$$J_{\varphi_p}\in A_1(\mathbb R^n)\quad\Leftrightarrow\quad  p\in(-1,0]$$
Consequently, Proposition \ref{prop-A1toGauge} for the case $\beta\in(-\infty, 0)$ implies that \eqref{eq-varphigaugefinite}
holds.

It remains to prove \eqref{eq-varphigauge-infinite} in the case $p\in(0,\infty)$.
 We claim that for every cube $I\subset\mathbb R^n$, with center $c_I$ and side length $\ell_I$,
\begin{align}\label{eq-vol-varphiInvI}
|\varphi_p^{-1}(I)|= |\psi_p(I)| \simeq \left(|c_I|+\ell_I\right)^{-\frac{np}{p+1}} |I|.
\end{align}
where $\psi_p=\varphi_p^{-1}$ is as in \eqref{eq-psip}, and the implicit constants are independent of $I$.

Now, we prove the claim \eqref{eq-vol-varphiInvI}. If $|c_I|\ge 2n\ell_I$, then $I$ is far from the origin, so that
$|y|\simeq |c_I|$ for every $y\in I$,
with implicit constants independent of $I$. Consequently,
\begin{align*}
 |\psi_p(I)|
 =\int_I J_{\psi_p}(y)\,dy
  = \frac1{p+1} \int_I |y|^{-\frac{pn}{p+1}}  \,dy
 \simeq |c_I|^{-\frac{pn}{p+1}} |I|,
\end{align*}
which proves  \eqref{eq-vol-varphiInvI} in the case $|c_I|\ge 2n\ell_I$.

If $|c_I|<2n\ell_I$, then $I$ lies near the origin and $I\subset B(0,3n\ell_I)$. Thus, we obtain
\begin{align*}
 |\psi_p(I)|
  = \frac1{p+1} \int_I |y|^{-\frac{pn}{p+1}}  \,dy
  \ge \frac1{p+1} \int_I (3n\ell_I)^{-\frac{pn}{p+1}}  \,dy
  \simeq \ell_I^{-\frac{pn}{p+1}+n}
\end{align*}
and
\begin{align*}
 |\psi_p(I)|
 \le \frac1{p+1} \int_{B(0, 3n\ell_I)} |y|^{-\frac{pn}{p+1}}  \,dy
 \simeq \int_0^{3n\ell_I} \rho^{-\frac{pn}{p+1}+n-1}\,d\rho
 \simeq \ell_I^{-\frac{pn}{p+1}+n}.
\end{align*}
Combining these two estimates yields \eqref{eq-vol-varphiInvI} in the case $|c_I|<2n\ell_I$.
This proves the claim \eqref{eq-vol-varphiInvI}.

With \eqref{eq-vol-varphiInvI} at hand, we now apply Definition \ref{def-gauge}
to prove that $\|\varphi_p\|_{\mathscr G_\beta}=\infty$ for $p\in(0,\infty)$ and $\beta\in(-\infty,0)$.
By \eqref{eq0-gauge}, we take an arbitrary sequence
$\{\lambda_I\}_{I\in\mathcal D(Q)}\subset[0,\infty)$ satisfying
$$
\sum_{\gfz{I\in \cd(Q)}{I\supset J}} \lambda_I \left(
\frac{|I|}{|J|}\cdot\frac{|\varphi_p^{-1}(J)|}{|\varphi_p^{-1}(I)|}\right)^{\beta }
\ge 1 \quad\,\text{for all }\, J\in\mathcal D(Q).
$$ Consider the cube $Q_0=[0,1)^n$ and its dyadic subcubes $I_k=[0,2^{-k})^n$. Note that the center $c_{I_k}$ of $I_k$  satisfies
$$
|c_{I_k}|=\frac{\sqrt n}{2} \ell_{I_k}= \sqrt n 2^{-k-1}.
$$
For $J_j:=[0,2^{-j})^n$ with $j\in\mathbb N$,
using the above identity together with \eqref{eq-vol-varphiInvI}, we obtain
 \begin{align*}
   1 &\le  \sum_{\gfz{I\in \cd(Q_0)}{I\supset J_j}} \lambda_I \left(
\frac{|I|}{|J_j|}\cdot\frac{|\varphi_p^{-1}(J_j)|}{|\varphi_p^{-1}(I)|}\right)^{\beta }\\
&= \sum_{k=0}^j \lambda_{I_k} \left(
\frac{|I_k|}{|J_j|}\cdot\frac{|\varphi_p^{-1}(J_j)|}{|\varphi_p^{-1}(I_k)|}\right)^{\beta }\\
   &\simeq \sum_{k=0}^j \lambda_{I_k}
   \left(
\frac{|c_{J_j}|+\ell_{J_j}}{|c_{I_k}|+\ell_{I_k}}\right)^{-\frac{\beta np}{p+1}} \\
 &\simeq \sum_{k=0}^j \lambda_{I_k}
   \left(
\frac{\ell_{J_j}}{\ell_{I_k}}\right)^{-\frac{\beta n p}{p+1}} \\
&\simeq  \sum_{k=0}^j \lambda_{I_k} 2^{-(k-j)\frac{\beta n p}{p+1}},
 \end{align*}
 which implies
 $$
 2^{-j\frac{\beta n p}{p+1}} \ls \sum_{k=0}^j \lambda_{I_k} 2^{-k\frac{\beta n p}{p+1}}.
 $$
  Consequently, for any $j\in\nn$, we have
\begin{align*}
  \sum_{I\in \cd(Q_0)} \lambda_I \left(\frac{|I|}{|Q_0|}\right)^{\beta }
  &\ge \sum_{k=0}^\infty \sum_{I=I_k}\lambda_I \left(\frac{|I|}{|Q_0|}\right)^{\beta }\\
  &= \sum_{k=0}^\infty \lambda_{I_k}
  2^{-kn\beta}\\
  &\ge \sum_{k=0}^\infty \lambda_{I_k}
  2^{-kn\beta \cdot \frac{p}{p+1}}\\
  &\gs 2^{-j\frac{\beta n p}{p+1}},
 \end{align*}
where in the penultimate step the inequality
$2^{-kn\beta}\ge 2^{-kn\beta p/(p+1)}$ holds because $\beta<0$ and $p>0$.
Then, letting $j\to\infty$ and using $\beta<0$, we conclude that
$$
\sum_{I\in\mathcal D(Q_0)} \lambda_I \left(\frac{|I|}{|Q_0|}\right)^\beta = \infty.
$$
 From this and \eqref{eq1-gauge}, it follows that
 $ \|\varphi_p\|_{\mathscr G_\beta}
\ge \|\varphi_p\|_{\mathscr G_\beta(Q_0)}=\infty,$
which proves \eqref{eq-varphigauge-infinite}.
\end{proof}

\begin{remark}\label{rem-A1-gauge}
Now, we examine the finiteness of $\|\varphi_p\|_{\mathscr G_\beta}$
in the range $\beta\in[0,\infty)$.

\begin{enumerate}[label=\textup{(\roman*)}]
  \item If $\beta=0$ or $\beta\in(1,\infty)$, then Proposition \ref{prop-beta=0>1} yields
 $\|\varphi_p\|_{\mathscr G_\beta}<\infty$ for every $p\in(-1,\infty)$.

  \item If $\beta\in(0,1)$ and $p\in[0,\infty)$, then by \eqref{eq-power-A1} we have
  $J_{\varphi_p^{-1}}\in A_1(\mathbb R^n)$, which implies from Proposition \ref{prop-A1toGauge} that
   $\|\varphi_p\|_{\mathscr G_\beta}<\infty$.

  \item If $\beta\in(0,1)$ and $p\in(-1,0)$, then by \eqref{eq-power-Afz} and \eqref{eq-power-A1},
we have $J_{\varphi_p^{-1}}\in A_\infty(\mathbb R^n)$ but $J_{\varphi_p^{-1}}\notin A_1(\mathbb R^n)$.
Thus Proposition \ref{prop-A1toGauge} cannot be applied.
Instead, we apply Corollary \ref{cor-JEMS-gauge}
(or equivalently, Theorems \ref{thm-JEMS} and \ref{thm-trace-gauge})
to conclude that
$$
\|\varphi_p\|_{\mathscr G_\beta}<\infty
\quad\text{when } n\ge 2 \text{ and } \beta\in(1-2/n, 1).
$$
To see this, note that the proof of \eqref{eq-vol-varphiInvI}
---particularly in the case where the cube $I$ is far from the origin---shows that
$J_{\varphi_p^{-1}}$ is a local $A_1(\mathbb R^n;\{0\})$ weight.
The degeneracy set is the single point $\{0\}$, whose local self-similar Minkowski dimension is $0$.
Consequently, the finiteness of $\|\varphi_p\|_{\mathscr G_\beta}$
follows from the boundedness of
$\mathcal C_{\varphi_p}$ on $\mathcal Q_\alpha(\mathbb R^n)$, where $\beta=1-2\alpha/n$.
\end{enumerate}

\end{remark}

\subsection{Explicit quantitative upper bound for the capacity gauge}\label{ss3.7}

Let $Q\subset\rn$ be a cube. Suppose that $J\in\mathcal D_k(Q)$ for some $k\in\mathbb Z_+$.
Then there exists a unique chain of dyadic cubes
\begin{align}\label{eq-chainJQ}
J=I_k^J \subset I_{k-1}^J\subset\cdots\subset I_1^J \subset I_0^J=Q
\quad\, \text{with}\ \, I_i^J\in\mathcal D_i(Q)\ \, \text{for  each}\ \, 0\le i\le k.
\end{align}
In the following theorem,
we propose an allocation of weights $\lambda_I$ on dyadic subcubes $I\in\mathcal D(Q)$,
which derives an explicit quantitative upper bound for the capacity gauge on $Q$.

\begin{theorem}\label{thm-gauge-calc}
Let $\beta\in\rr$ and $\varphi:\ \mathbb{R}^n \to \mathbb{R}^n$
be a homeomorphism such that $|\varphi^{-1}(I)|\in(0,\infty)$ for all cubes $I\subset\mathbb R^n$.
Given a cube $Q\subset\mathbb R^n$,  define
\begin{align}\label{eq-lambdaJ-optimal}
\lambda_J^*:=
\begin{cases}
1 &\quad \text{if}\ \, J=Q;\\
\displaystyle \left(1-\max_{0\le i\le k-1}
\left(\frac{|I_i^J|}{|J|}\cdot\frac{|\varphi^{-1}(J)|}{|\varphi^{-1}(I_i^J)|}\right)^\beta \right)_+
&\quad \text{if}\ \, J\in\mathcal D_k(Q)\ \, \text{for some}\ k\in\nn,
\end{cases}
\end{align}
where $\{I_i^J\}_{i=0}^k$ is the unique dyadic chain from $J$ to $Q$ as in  \eqref{eq-chainJQ}.
Then $\{\lambda_J^*\}_{J\in\mathcal D(Q)}\in \mathscr T_\beta(Q)$
and
\begin{align}\label{eq-gauge-optimal}
\|\varphi\|_{\mathscr G_\beta(Q)}
&\le 
1+\sum_{k\in\nn} 2^{-kn\beta} \left(\sum_{J\in\cd_k(Q)}\left(1-\max_{0\le i\le k-1}
\left(\frac{|I_i^J|}{|J|}\cdot\frac{|\varphi^{-1}(J)|}{|\varphi^{-1}(I_i^J)|}\right)^\beta \right)_+ \right).
\end{align}
\end{theorem}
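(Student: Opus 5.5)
The plan is to verify directly that the explicit sequence $\{\lambda_J^*\}$ is admissible, i.e.\ lies in $\mathscr T_\beta(Q)$ as defined in \eqref{eq0-gauge}, and then read off \eqref{eq-gauge-optimal} from the definition \eqref{eq1-gauge}. To shorten notation, for $J\in\mathcal D_k(Q)$ and $0\le i\le k$ put
$$
a_i^J:=\left(\frac{|I_i^J|}{|J|}\cdot\frac{|\varphi^{-1}(J)|}{|\varphi^{-1}(I_i^J)|}\right)^\beta ,
$$
which is a well-defined positive number because $|\varphi^{-1}(I)|\in(0,\infty)$. Note that $a_k^J=1$. The ancestors of $J$ in $\mathcal D(Q)$ are exactly the cubes $I_0^J,\dots,I_k^J$ of the chain \eqref{eq-chainJQ}. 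Hence the constraint in \eqref{eq0-gauge} at $J$ reads
$$
\sum_{i=0}^{k}\lambda_{I_i^J}^*\,a_i^J\ge 1 .
$$

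For $J=Q$ (so $k=0$) the sum is $\lambda_Q^* a_0^Q=1$, and the constraint holds. For $k\ge1$, I will keep only two terms, both nonnegative since every $\lambda^*\ge0$. The first is the $i=k$ term, which equals $\lambda_J^*=(1-M_J)_+$ with $M_J:=\max_{0\le i\le k-1}a_i^J$. The second is the term with an index $i_0\le k-1$ achieving the maximum, which contributes $\lambda^*_{I_{i_0}^J}M_J$.

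The difficulty is that $\lambda^*_{I_{i_0}^J}$ need not be $1$ unless $i_0=0$, so one cannot simply conclude $M_J+(1-M_J)_+\ge 1$. I will therefore first try the cruder lower bound using only $I_0^J=Q$, namely $\sum_i\lambda^*_{I_i^J}a_i^J\ge a_0^J+(1-M_J)_+$. This closes if $M_J\ge1$ is attained at $i=0$, or if $M_J\le 1$ and $a_0^J=M_J$; in general it does not. The genuinely hardest step is thus the admissibility check when the maximum sits at an intermediate ancestor $I_{i_0}^J$.

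For that case I would argue by induction on the generation $k$, with the inductive hypothesis that every ancestor $I$ of $J$ already satisfies its own constraint. The key tool is the multiplicative chain identity
$$
\frac{|I|}{|J|}\frac{|\varphi^{-1}(J)|}{|\varphi^{-1}(I)|}=\frac{|I|}{|I'|}\frac{|\varphi^{-1}(I')|}{|\varphi^{-1}(I)|}\cdot\frac{|I'|}{|J|}\frac{|\varphi^{-1}(J)|}{|\varphi^{-1}(I')|}\qquad(J\subset I'\subset I),
$$
which I would use to transfer the constraint at $I_{i_0}^J$ down to $J$ and compare it with the term at $J$. If this cocycle argument fails to close, the fallback is to read the definition of $\lambda^*_J$ as a $\max$ over the chain that already absorbs the intermediate weights. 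Either way, the check reduces to the inequality $M+(1-M)_+\ge1$ along the maximizing index, once the weights on the maximizing ancestors are controlled.

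Once admissibility holds, the bound is immediate from \eqref{eq1-gauge}:
$$
\|\varphi\|_{\mathscr G_\beta(Q)}\le\sum_{J\in\mathcal D(Q)}\lambda^*_J\left(\frac{|J|}{|Q|}\right)^\beta = 1+\sum_{k\in\mathbb N}2^{-kn\beta}\sum_{J\in\mathcal D_k(Q)}\lambda^*_J ,
$$
using $|J|/|Q|=2^{-kn}$ for $J\in\mathcal D_k(Q)$. This is exactly \eqref{eq-gauge-optimal}.
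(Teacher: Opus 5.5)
Your outline is correct, and the step you flag as hardest closes along exactly the line you announce. As written, however, you only announce it (with a hedge and a ``fallback''), so here is the argument that makes it work.

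You do not need to control the individual weight $\lambda^*_{I_{i_0}^J}$, which can indeed vanish. What you transfer is the \emph{whole} constraint at $I_{i_0}^J$. Let $i_0\le k-1$ be an index with $a_{i_0}^J=M_J$. For $0\le l\le i_0$ the chain of $I_{i_0}^J$ is $I_0^J,\dots,I_{i_0}^J$, and your cocycle identity gives $a_l^J=a_l^{I_{i_0}^J}\,a_{i_0}^J$. The inductive hypothesis at $I_{i_0}^J$ reads $\sum_{l=0}^{i_0}\lambda^*_{I_l^J}a_l^{I_{i_0}^J}\ge1$. Multiplying it by $a_{i_0}^J=M_J>0$ yields $\sum_{l=0}^{i_0}\lambda^*_{I_l^J}a_l^J\ge M_J$. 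Since $i_0<k$ and all terms are nonnegative,
\[
\sum_{i=0}^{k}\lambda^*_{I_i^J}a_i^J\ \ge\ \sum_{l=0}^{i_0}\lambda^*_{I_l^J}a_l^J+\lambda^*_J\ \ge\ M_J+(1-M_J)_+\ \ge\ 1 .
\]
This uses strong induction over all generations $<k$, which your hypothesis (``every ancestor of $J$ satisfies its own constraint'') provides. The fallback is unnecessary.

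The paper's proof takes a different route. Although phrased as an induction, it never uses the inductive hypothesis. It applies the same cocycle identity to rewrite each intermediate term exactly as $\lambda^*_{I_i^J}a_i^J=(a_i^J-\max_{l<i}a_l^J)_+$ for $1\le i\le k-1$. It then uses the telescoping Lemma~\ref{lem-max+} to evaluate the constraint sum in closed form:
\[
\sum_{i=0}^k\lambda^*_{I_i^J}a_i^J=M_J+(1-M_J)_+=\max\{1,M_J\}.
\]

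Your route is shorter and dispenses with Lemma~\ref{lem-max+}, but it only yields an inequality. The paper's exact evaluation also shows that $J$ inherits exactly $M_J$ from its strict ancestors. Hence $\lambda^*_J$ is the least admissible value at $J$ given the values above it; that is, $\{\lambda^*_J\}$ is the greedy top-down allocation, and every constraint with $M_J\le1$ is saturated. Neither fact is needed for the theorem itself. The final bound via \eqref{eq1-gauge} is the same in both.
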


The geometric interpretation of \eqref{eq-gauge-optimal} is given in
Remark \ref{rem-gauge-nearly-optimal} below.  However,
we remark  that the test sequence $\{\lambda_J^*\}_{J\in\cd(Q)}$
in \eqref{eq-lambdaJ-optimal} may not be optimal
and the series in \eqref{eq-gauge-optimal} may diverge to $\infty$.
To prove Theorem \ref{thm-gauge-calc}, we need the following elementary lemma.

\begin{lemma}\label{lem-max+}
Let  $k\in\mathbb N$ and $x_0,x_1,\dots,x_k\in(0,\infty)$. Then,
\begin{align*}
1-x_0-\sum_{i=1}^k (x_i-x_0\vee x_1\vee\cdots\vee x_{i-1})_+
=1-x_0\vee x_1\vee\cdots\vee x_k.
\end{align*}
\end{lemma}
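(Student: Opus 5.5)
The identity is a telescoping statement about running maxima. Set $M_i:=x_0\vee x_1\vee\cdots\vee x_i$ for $0\le i\le k$, so $M_0=x_0$ and $M_i=M_{i-1}\vee x_i$. The key observation is the elementary identity
$$
(x_i-M_{i-1})_+=M_{i-1}\vee x_i-M_{i-1}=M_i-M_{i-1}\qquad\text{for } 1\le i\le k.
$$
To check it, split into two cases. If $x_i\le M_{i-1}$, both sides vanish. If $x_i>M_{i-1}$, both sides equal $x_i-M_{i-1}$.

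Substituting this into the left-hand side gives a telescoping sum:
$$
1-x_0-\sum_{i=1}^k (M_i-M_{i-1})=1-M_0-(M_k-M_0)=1-M_k,
$$
which is exactly $1-x_0\vee x_1\vee\cdots\vee x_k$.

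An alternative is induction on $k$. The case $k=1$ is the identity above with $i=1$, and passing from $k-1$ to $k$ amounts to verifying $1-M_{k-1}-(x_k-M_{k-1})_+=1-M_k$, which is again that identity. There is no real obstacle here; the only step needing care is the two-case check that $(x_i-M_{i-1})_+=M_i-M_{i-1}$.

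Positivity of the $x_i$ is not used. The lemma holds for all real $x_i$.
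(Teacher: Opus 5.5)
Your proof is correct and is essentially the paper's argument: the paper also introduces the running maxima $M_i$, verifies $(x_i-M_{i-1})_+=M_i-M_{i-1}$ by the same two-case check, and telescopes to $1-M_k$. Your remark that positivity of the $x_i$ is never used is also accurate.
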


\begin{proof}
For each $i=0,\dots,k$, set $M_i:=x_0\vee x_1\vee\cdots\vee x_i$.
We claim that for every $i=1,\dots,k$,
$$
(x_i-M_{i-1})_+ = M_i-M_{i-1}.
$$
Indeed, if $x_i\le M_{i-1}$, then $M_i=M_{i-1}$ and both sides are $0$; if $x_i>M_{i-1}$, then $M_i=x_i$ and both sides equal $x_i-M_{i-1}$.
Summing this identity over $i=1,\dots,k$, we obtain
$$
\sum_{i=1}^k (x_i-M_{i-1})_+
=
\sum_{i=1}^k (M_i-M_{i-1})
=
M_k-M_0
=
M_k-x_0.
$$
Therefore,
$$
1-x_0-\sum_{i=1}^k (x_i-x_0\vee x_1\vee\cdots\vee x_{i-1})_+
=
1-x_0-(M_k-x_0)
=
1-M_k
=
1-x_0\vee x_1\vee\cdots\vee x_k,
$$
as desired.
\end{proof}

\begin{proof}[Proof of Theorem \ref{thm-gauge-calc}]
Let $\{\lambda_J^*\}_{J\in\mathcal D(Q)}$ be defined as in \eqref{eq-lambdaJ-optimal}.
By \eqref{eq0-gauge}, we have $\{\lambda_J^*\}_{J\in\mathcal D(Q)}\in\mathscr T_\beta(Q)$ if and only if
\begin{align}\label{eq0-TQ}
\sum_{\gfz{I\in \cd(Q)}{I\supset J}} \lambda_I^* \left(
\frac{|I|}{|J|}\cdot\frac{|\varphi^{-1}(J)|}{|\varphi^{-1}(I)|}\right)^{\beta}
\ge 1 \quad \,
\text{for all}\ \, J\in \cd(Q).
\end{align}
To prove this, we proceed by induction on the scale $k\in\mathbb Z_+$, showing that for each $k$,
the family $\{\lambda_J^*\}_{J\in\mathcal D(Q)}$ satisfies \eqref{eq0-TQ} for all $J\in\mathcal D_k(Q)$.

For $k=0$, we have $\mathcal D_0(Q)=\{Q\}$.
For $J=Q$, the inequality in \eqref{eq0-TQ} reduces to $\lambda_Q^*\ge1$, which clearly holds.

For $k=1$ and $J\in\mathcal D_1(Q)$, the sum in \eqref{eq0-TQ} contains exactly two terms,
corresponding to $I=J$ and $I=Q$. Since $\lambda_Q^*=1$, the inequality in \eqref{eq0-TQ} becomes
$$
\lambda_J^* + \left(
\frac{|Q|}{|J|}\cdot\frac{|\varphi^{-1}(J)|}{|\varphi^{-1}(Q)|}
\right)^\beta \ge 1,
$$
which is precisely satisfied provided that
\begin{align*}
\lambda_J^*=\left(1- \left( \frac{|I|}{|J|}\cdot
\frac{|\varphi^{-1}(J)|}{|\varphi^{-1}(Q)|}\right)^{\beta}\right)_+.
\end{align*}
This matches the value given in \eqref{eq-lambdaJ-optimal} for $J\in\mathcal D_1(Q)$.

Now let $k\ge 2$. Assume that we have proved that
the family $\{\lambda_J^*\}_{J\in\mathcal D(Q)}$ satisfies \eqref{eq0-TQ}
for all $J\in\mathcal D_m(Q)$ with $m=0,1,\dots,k-1$.
We will show that it also satisfies \eqref{eq0-TQ} for all $J\in\mathcal D_k(Q)$.

Fix $J\in\mathcal D_k(Q)$ with $k\ge 2$.
Let $\{I_i^J\}_{i=0}^k$ be the unique dyadic chain from $J$ to $Q$
as in \eqref{eq-chainJQ}. Then the inequality in \eqref{eq0-TQ} becomes
$$
\sum_{i=0}^k \lambda_{I_i^J}^*
\left(
\frac{|I_i^J|}{|J|}\cdot\frac{|\varphi^{-1}(J)|}{|\varphi^{-1}(I_i^J)|}
\right)^{\beta}
\ge 1.
$$
Singling out the terms $i=0$ and $i=k$ (i.e., $I_0^J=Q$ and $I_k^J=J$),
and using $\lambda_Q^*=1$, this is equivalent to
\begin{align}\label{eq1-TQ}
\lambda_{J}^*
\ge 1-
\left(
\frac{|Q|}{|J|}\cdot\frac{|\varphi^{-1}(J)|}{|\varphi^{-1}(Q)|}
\right)^{\beta}
-
\sum_{i=1}^{k-1} \lambda_{I_i^J}^*
\left(
\frac{|I_i^J|}{|J|}\cdot\frac{|\varphi^{-1}(J)|}{|\varphi^{-1}(I_i^J)|}
\right)^{\beta}.
\end{align}
By the expression of $\lambda_{I_i^J}^*$  in \eqref{eq-lambdaJ-optimal}, we have
\begin{align*}
\lambda_{I_i^J}^*
=\left(1-\max_{0\le l\le i-1}
\left(\frac{|I_l^J|}{|I_i^J|}\cdot\frac{|\varphi^{-1}(I_i^J)|}{|\varphi^{-1}(I_l^J)|}\right)^\beta \right)_+.
\end{align*}
Hence,
\begin{align*}
\sum_{i=1}^{k-1} \lambda_{I_i^J}^*
\left(
\frac{|I_i^J|}{|J|}\cdot\frac{|\varphi^{-1}(J)|}{|\varphi^{-1}(I_i^J)|}
\right)^{\beta}
&= \sum_{i=1}^{k-1} \left(\left(
\frac{|I_i^J|}{|J|}\cdot\frac{|\varphi^{-1}(J)|}{|\varphi^{-1}(I_i^J)|}
\right)^{\beta}-\max_{0\le l\le i-1}
\left(\frac{|I_l^J|}{|J|}\cdot\frac{|\varphi^{-1}(J)|}{|\varphi^{-1}(I_l^J)|}\right)^\beta \right)_+.
\end{align*}
For $i=0,1,\dots, k-1$, set
$$
X_i:=\left(
\frac{|I_i^J|}{|J|}\cdot\frac{|\varphi^{-1}(J)|}{|\varphi^{-1}(I_i^J)|}
\right)^{\beta}.
$$
Then, \eqref{eq1-TQ} is equivalent to
\begin{align}\label{eq11-TQ}
\lambda_{J}^* \ge 1- X_0-
\sum_{i=1}^{k-1} \left(X_i-\max_{0\le l\le i-1}X_l \right)_+.
\end{align}
By Lemma \ref{lem-max+}, the right-hand side equals
$$
\left(1-\max_{0\le i\le k-1}X_i \right)_+.
$$
Therefore, inequality \eqref{eq11-TQ} is satisfied if
$$
\lambda_J^* = \left(1-\max_{0\le i\le k-1} X_i \right)_+.
$$
In view of the definition of $X_i$, we see that this expression coincides
with the definition of $\lambda_J^*$ in \eqref{eq-lambdaJ-optimal}.
This proves that \eqref{eq0-TQ} holds for all $J\in\mathcal D_k(Q)$.

Thus, by induction, $\{\lambda_J^*\}_{J\in\mathcal D(Q)}$ satisfies \eqref{eq0-TQ}
for all $J\in\mathcal D_k(Q)$ and all $k\in\mathbb Z_+$.
In other words, we have proved that $\{\lambda_J^*\}_{J\in\mathcal D(Q)}\in\mathscr T_\beta(Q)$.

Finally, since  we have obtained
$\{\lambda_J^*\}_{J\in\mathcal D(Q)}\in\mathscr T_\beta(Q)$,
it follows from  \eqref{eq1-gauge} that
$$
\|\varphi\|_{\mathscr G_\beta(Q)}
\le \sum_{J\in\mathcal D(Q)} \lambda_J^* \left(\frac{|J|}{|Q|}\right)^\beta
= \sum_{k\in\zz_+} 2^{-kn\beta} \left(\sum_{J\in\cd_k(Q)} \lambda_J^\ast\right).
$$
Substituting the explicit expression for $\lambda_J^*$ yields \eqref{eq-gauge-optimal}.
\end{proof}

\begin{remark}\label{rem-gauge-nearly-optimal}
Let all the assumptions be as in Theorem \ref{thm-gauge-calc}. We make the following observations.

\begin{enumerate}[label=\textup{(\roman*)}]
  \item Considering only the term $i=0$ (that is, $I_0^J=Q$) in the maximal quantity of \eqref{eq-gauge-optimal}, we obtain
  \begin{align}\label{eq-gauge-optimal-est}
1\le \|\varphi\|_{\mathscr G_\beta(Q)}\le
1+\sum_{k\in\nn} 2^{-kn\beta} \left(\sum_{J\in\cd_k(Q)}\left(1-
2^{kn\beta}\left(\frac{|\varphi^{-1}(J)|}{|\varphi^{-1}(Q)|}\right)^\beta \right)_+ \right).
\end{align}

  \item If $\beta=0$, then the series in \eqref{eq-gauge-optimal-est} vanishes, so $\|\varphi\|_{\mathscr G_0(Q)}=1$.
If $\beta\in(1,\infty)$, then \eqref{eq-gauge-optimal-est} implies
$$
1\le \|\varphi\|_{\mathscr G_\beta(Q)}
\le 1+\sum_{k=1}^{\infty} 2^{-kn\beta} \left(\sum_{J\in\mathcal D_k(Q)} 1 \right)
= 1+\sum_{k=1}^{\infty} 2^{-kn(\beta-1)}
= \frac{1}{1-2^{-n(\beta-1)}}.
$$
These two facts recover Proposition \ref{prop-beta=0>1}(iii) and (iv), respectively.

  \item Let $\beta\in(0,1)$. In this case, the following elementary inequality is known:
$$
\beta(1-t)\le 1-t^\beta \le 1-t \qquad \text{for all }\, t\in[0,1].
$$
Thus, by \eqref{eq-gauge-optimal}, we have
\begin{align}\label{eq-gauge-geom1}
\|\varphi\|_{\mathscr G_\beta(Q)}
\ls
1+\sum_{k\in\nn} 2^{-kn\beta} \left(\sum_{J\in\cd_k(Q)}\left(1-\max_{0\le i\le k-1}
\frac{|I_i^J|}{|J|}\cdot\frac{|\varphi^{-1}(J)|}{|\varphi^{-1}(I_i^J)|} \right)_+ \right).
\end{align}
To interpret this bound  geometrically,
we define the average density of $\varphi^{-1}$ on a dyadic cube $I$ by
$$
\rho(I) := \frac{|\varphi^{-1}(I)|}{|I|}.
$$
Then \eqref{eq-gauge-geom1} is equivalent to
\begin{align}\label{eq-gauge-geom2}
\|\varphi\|_{\mathscr G_\beta(Q)}
\ls
1+\sum_{k\in\nn} 2^{-kn\beta} \left(\sum_{J\in\cd_k(Q)}\left(1-\max_{0\le i\le k-1}
\frac{|\rho(J)|}{|\rho(I_i^J)|} \right)_+ \right).
\end{align}
Geometrically, the quantity
$$
\max_{0\le i\le k-1} \frac{\rho(J)}{\rho(I_i^J)}
$$
compares the local density of the preimage measure on $J$
with the densities on all its dyadic ancestors $I_i^J$.
The positive part $(1-\cdot)_+$ in  \eqref{eq-gauge-geom2} is nonzero exactly when
$\rho(J) < \rho(I_i^J)$ for every $i=0,\dots,k-1$, that is, when $J$ has strictly
lower density than all of its dyadic ancestors. This corresponds precisely to
\emph{stretching} (density decrease) of the map $\varphi$.
In contrast, if $\rho(J) \ge \rho(I_i^J)$ for some ancestor $I_i^J$, then the ratio exceeds $1$
and the positive part vanishes, so
 \emph{compression} (density increase) contributes nothing to the gauge.
Thus, although \eqref{eq-gauge-geom2} is only an upper bound,
it reveals that the gauge is sensitive to stretching regions; the bound is controlled by the cumulative
stretching defect of $\varphi$ across all dyadic scales, with each scale $k$ weighted by $2^{-kn\beta}$.

%\item
%If the summation in the upper bounds \eqref{eq-gauge-optimal} or \eqref{eq-gauge-geom1} at scale $k$
%were taken over all $J\in\mathcal D_k(Q)$, that is,
%\begin{align*}
%\sum_{J\in\mathcal D_k(Q)}
%\left(
%1-\max_{0\le i\le k-1}
%\frac{|I_i^J|}{|J|}\cdot\frac{|\varphi^{-1}(J)|}{|\varphi^{-1}(I_i^J)|}
%\right)_+
%=
%\sum_{J\in\mathcal D_k(Q)}
%\left(
%1-\max_{0\le i\le k-1}
%\frac{|I_i^J|}{|J|}\cdot\frac{|\varphi^{-1}(J)|}{|\varphi^{-1}(I_i^J)|}
%\right),
%\end{align*}
%then by retaining
%only the term $i=0$ in the maximum, we would obtain
%\begin{align*}
%\sum_{J\in\mathcal D_k(Q)}
%\left(
%1-\max_{0\le i\le k-1}
%\frac{|I_i^J|}{|J|}\cdot\frac{|\varphi^{-1}(J)|}{|\varphi^{-1}(I_i^J)|}
%\right)_+
%&\le
%\sum_{J\in\mathcal D_k(Q)}
%\left(
%1-
%2^{kn}\frac{|\varphi^{-1}(J)|}{|\varphi^{-1}(Q)|}
%\right)\\
%&=
%2^{kn}-2^{kn}
%\frac{|\varphi^{-1}\left(\cup_{J\in\mathcal D_k(Q)}J\right)|}
%{|\varphi^{-1}(Q)|}\\
%&=0,
%\end{align*}
%which forces
%$$
%|\varphi^{-1}(J)| = 2^{-kn}|\varphi^{-1}(Q)|
%\quad\, \text{for all }\, J\in\mathcal D_k(Q),
%$$
%meaning that the preimage measure is uniformly distributed at scale $k$,
%with no stretching or compression relative to the global average.
%In general, however, the summation in the upper bound \eqref{eq-gauge-geom1} at scale $k$
%cannot simply range over all $J\in\mathcal D_k(Q)$;
%stretching and compression may coexist among different cubes of the same scale.
\end{enumerate}
\end{remark}

\section{From finite capacity gauge to boundedness of composition operators}\label{sec4}

\subsection{Two mean oscillation
characterizations of $\mathcal Q_\alpha(\mathbb R^n)$}\label{ss4.1}

The following characterization of the space \(\mathcal Q_\alpha(\mathbb R^n)\), in terms of the square mean oscillation, is from  \cite[Theorem~5.5]{EssenJansonPengXiao2000IUMJ}.

\begin{proposition}[\cite{EssenJansonPengXiao2000IUMJ}]
\label{prop-EJPX}
Let \(\alpha \in \mathbb R\). Then there exists a positive constant \(C = C(n,\alpha)\) such that for any \(f \in L_{\mathrm{loc}}^2(\mathbb R^n)\),
\begin{align}
\label{eq-MeanOsi}
C^{-1} \| f \|_{\mathcal Q_\alpha(\mathbb R^n)}^2
\le \sup_{\textup{cubes } Q \subset \mathbb R^n}
\sum_{I \in \mathcal D(Q)}
\left( \frac{|I|}{|Q|} \right)^{1-\frac{2\alpha}{n}} a_I(f)
\le C \| f \|_{\mathcal Q_\alpha(\mathbb R^n)}^2,
\end{align}
where
\begin{align}
\label{eq-MeanOsi-aIf}
a_I(f) := \frac{1}{|I|} \int_I |f - f_I|^2 \, dx,
\end{align}
and \(f_I := \frac{1}{|I|} \int_I f(x) \, dx\) denotes the integral average mean of \(f\) over \(I\).
\end{proposition}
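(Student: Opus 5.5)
We need to prove Proposition~\ref{prop-EJPX}: for $f\in L^2_{\loc}(\rn)$, the $\mathcal Q_\alpha$ seminorm squared is comparable to $\sup_Q\sum_{I\in\cd(Q)}(|I|/|Q|)^{1-\frac{2\alpha}{n}}a_I(f)$. Write $\beta:=1-\frac{2\alpha}{n}$ and fix a cube $Q$. The plan is to compare the double integral over $Q\times Q$ with the sum of the local variances $a_I(f)$ over the dyadic tree of $Q$. For this, split the off-diagonal region by scale. For $x\ne y$ in $Q$, let $I$ be the smallest cube in $\cd(Q)$ containing both points, so that $|x-y|\lesssim \ell(I)$.

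\textbf{Upper bound of the sum by the double integral.} For each $I\in\cd(Q)$ we have
$$
a_I(f)=\frac1{2|I|^2}\int_I\int_I|f(x)-f(y)|^2\,dx\,dy,
$$
and $|x-y|\le\sqrt n\,\ell(I)$ on $I\times I$. Hence
$$
a_I(f)\lesssim |I|^{-1+\frac{2\alpha}{n}}\int_I\int_I\frac{|f(x)-f(y)|^2}{|x-y|^{n+2\alpha}}\,dx\,dy .
$$
Multiply by $(|I|/|Q|)^{\beta}$, so the power of $|I|$ becomes $|I|^{\beta-1+\frac{2\alpha}{n}}=1$, and the weight becomes $|Q|^{-\beta}=|Q|^{\frac{2\alpha}{n}-1}$. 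The sum over $I$ alone overcounts, so I will do the estimate level by level. At level $k$, a pair $(x,y)$ lies in a common $I\in\cd_k(Q)$ only if $|x-y|\le\sqrt n 2^{-k}\ell(Q)$. To exploit this, keep part of the kernel: bound $a_I(f)\lesssim \ell(I)^{-n+2\alpha+\epsilon}\int_I\int_I|f(x)-f(y)|^2|x-y|^{-n-2\alpha-\epsilon}\cdots$. This is unpleasant, so I will instead compare directly:
$$
\sum_k 2^{-kn\beta}\sum_{I\in\cd_k(Q)}a_I(f)\simeq |Q|^{\frac{2\alpha}{n}-1}\int_Q\int_Q|f(x)-f(y)|^2\,W(x,y)\,dx\,dy,
$$
where
$$
W(x,y)=\sum_{k:\,x,y\text{ in same }I\in\cd_k(Q)}|Q|^{1-\frac{2\alpha}n}2^{-kn\beta}|I|^{-2}\simeq \ell(I_{xy})^{-n-2\alpha}
$$
(a geometric sum dominated by its last term, since $2^{-kn\beta}2^{2kn}$ grows in $k$ because $\beta<2$). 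Since $\ell(I_{xy})\gtrsim|x-y|$, we get $W\lesssim|x-y|^{-n-2\alpha}$, which gives $\sum\lesssim\|f\|^2_{\mathcal Q_\alpha}$.

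\textbf{Reverse bound.} Here $W(x,y)\simeq\ell(I_{xy})^{-n-2\alpha}$ can be much smaller than $|x-y|^{-n-2\alpha}$ when $x,y$ are close but separated by a dyadic boundary; this is the main obstacle. I would remedy it by the standard shifted-grid trick. Use $3^n$ (or $n+1$) translates of the dyadic grid, or enlarge to $3Q$ with its adapted translated subgrids, so that every pair $x,y$ with $|x-y|\simeq 2^{-k}\ell(Q)$ lies in a common cube of side $\lesssim 2^{-k}\ell(Q)$ in one of the shifted families. Each shifted cube is contained in a boundedly many cubes of $\cd(Q')$ for a fixed enlarged cube $Q'$ (comparable to $Q$) of a comparable level, and $a_{I}(f)\lesssim a_{I'}(f)$ whenever $I\subset I'$ with $|I'|\le C|I|$. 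This yields
$$
|Q|^{\frac{2\alpha}n-1}\int_Q\int_Q\frac{|f(x)-f(y)|^2}{|x-y|^{n+2\alpha}}\lesssim\sum_{j}\sum_{I\in\cd(Q_j)}\Big(\frac{|I|}{|Q_j|}\Big)^{\beta}a_I(f)
$$
over finitely many cubes $Q_j$ with $|Q_j|\simeq|Q|$. Taking suprema over $Q$ finishes the argument, with constants depending only on $n,\alpha$. For $\alpha\le0$ the same computation works, as long as the geometric sum in $W$ still converges at the top; if $\beta\ge2$ it is dominated by $k=0$, which is harmless for the upper bound.
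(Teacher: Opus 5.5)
The paper does not prove this proposition; it is quoted from \cite{EssenJansonPengXiao2000IUMJ}, so your argument has to stand on its own. Your first half does. The identity $a_I(f)=\frac1{2|I|^2}\int_I\int_I|f(x)-f(y)|^2\,dx\,dy$, summed level by level, rewrites the dyadic sum exactly as $|Q|^{\frac{2\alpha}{n}-1}\int_Q\int_Q|f(x)-f(y)|^2\,W(x,y)\,dx\,dy$ with $W\simeq\ell(I_{xy})^{-n-2\alpha}$ when $\alpha>-n/2$. The bound $\ell(I_{xy})\ge|x-y|/\sqrt n$ then controls the dyadic sum by $\|f\|_{\mathcal Q_\alpha(\mathbb R^n)}^2$. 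This covers the range $\alpha\in(0,\min\{1,n/2\})$ used in the paper.

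\textbf{The gap in the reverse bound.} It sits at the step where you push the shifted cubes back into $\mathcal D(Q')$ for a fixed enlarged cube $Q'$ and invoke $a_I(f)\lesssim a_{I'}(f)$ for $I\subset I'$. That step fails for two reasons.
\begin{enumerate}
\item A small shifted cube $S$ straddling a dyadic hyperplane of $\mathcal D(Q')$ (e.g.\ the midplane of $Q'$) lies in no cube of $\mathcal D(Q')$ of comparable size.
\item A cover of $S$ by boundedly many such cubes does not control $a_S(f)$. Take $f=\mathbf 1_{\{x_1>c\}}$ with $\{x_1=c\}$ a dyadic hyperplane of $\mathcal D(Q')$. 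All sufficiently small cubes of $\mathcal D(Q')$ have $a=0$, while $a_S(f)=p(1-p)$, where $p$ is the proportion of $S$ on one side.
\end{enumerate}
In fact, if the transfer worked, every pair $x,y$ would lie in a common cube of $\mathcal D(Q')$ of side $\lesssim|x-y|$. That is precisely the property you correctly observed fails.

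\textbf{The repair.} Drop the transfer. The supremum in the middle term runs over \emph{all} cubes, so the translated systems are themselves admissible. Let $Q_v:=3Q+v$ with $v\in\{0,\ell(Q)\}^n$.
\begin{itemize}
\item Fix a direction $e_i$. The level-$k$ hyperplanes orthogonal to $e_i$ of $\mathcal D(3Q)$ and of $\mathcal D(3Q+\ell(Q)e_i)$ are at mutual distances $3\cdot2^{-k}\ell(Q)\,|m-2^k/3|\ge2^{-k}\ell(Q)$ with $m\in\mathbb Z$. Within each system they are $3\cdot 2^{-k}\ell(Q)$ apart.
\item Hence, whenever $2^{-k-1}\ell(Q)\le\|x-y\|_\infty<2^{-k}\ell(Q)$, you can choose $v$ coordinatewise so that $x,y$ lie in a common $S\in\mathcal D_k(Q_v)$ with $\ell(S)\simeq|x-y|$.
\end{itemize}
Splitting $Q\times Q$ according to $k$ gives
\begin{align*}
|Q|^{\frac{2\alpha}{n}-1}\int_Q\int_Q\frac{|f(x)-f(y)|^2}{|x-y|^{n+2\alpha}}\,dx\,dy
\lesssim\sum_{v\in\{0,\ell(Q)\}^n}\ \sum_{S\in\mathcal D(Q_v)}\left(\frac{|S|}{|Q_v|}\right)^{1-\frac{2\alpha}{n}}a_S(f).
\end{align*}
This is your final display with $Q_j=Q_v$, and no monotonicity step is needed. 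Since $\ell(S)\simeq|x-y|$ holds in both directions, this bound is valid for every $\alpha\in\mathbb R$.

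\textbf{The case $\alpha\le-n/2$.} Your closing remark here is wrong as stated.
\begin{itemize}
\item At $\beta=2$ the geometric series has ratio $1$, so $W\simeq 1+k_{xy}$ is not dominated by the $k=0$ term.
\item For $\beta>2$ one gets $W\simeq\ell(Q)^{-n-2\alpha}$, while the kernel $|x-y|^{-n-2\alpha}$ vanishes on the diagonal. So $W\lesssim|x-y|^{-n-2\alpha}$ fails pointwise.
\end{itemize}
For these $\alpha$, use $\beta>1$ instead. The dyadic sum is $\lesssim\sup_I a_I(f)$ by Lemma~\ref{lem-sum-cube}. Moreover,
\begin{align*}
a_I(f)\lesssim|I|^{\frac{2\alpha}{n}-1}\int_I\int_I\frac{|f(x)-f(y)|^2}{|x-y|^{n+2\alpha}}\,dx\,dy .
\end{align*}
This follows from $|f(x)-f(y)|^2\le2|f(x)-f(z)|^2+2|f(z)-f(y)|^2$, averaged over those $z\in I$ at distance at least $c\,\ell(I)$ from both $x$ and $y$; for small $c$ this set has measure $\gtrsim|I|$.
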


To prove the sufficiency part of Theorem \ref{thm-trace-gauge}, we shall refine Proposition \ref{prop-EJPX} and adapt it to the setting of quasisymmetric mappings. This adaptation will be carried out in Subsection \ref{ss4.3} (see Theorems \ref{thm2-varphi-EJPX} and \ref{thm1-varphi-EJPX} below). As a key ingredient in the proof of Theorem \ref{thm2-varphi-EJPX}, we will use the following equivalent characterization of \(\mathcal Q_\alpha(\mathbb R^n)\) spaces, established in \cite[Proposition~3.1]{KoskelaXiaoZhangZhou2017JEMS}.

\begin{proposition}[\cite{KoskelaXiaoZhangZhou2017JEMS}] \label{prop-Q-equiChar}
 Let $\alpha \in (0,\infty)$ and $q\in (0,2]$. For any $f\in L_\loc^2(\rn)$ and any ball $B = B(x_0, r) \subset \mathbb{R}^n$, set
\begin{align}\label{eq-PsifB}
\Psi_{\alpha,\,q}(f,\,B) := \sum_{k\geq 0} 2^{2k\alpha}
\fint_{B(x_0,\, r)}
\inf_{c\in\mathbb{R}}
\left(
\fint_{B(x, \, 2^{-k}r)} |f(z) - c|^q \, dz
\right)^{\frac 2 q}
dx
\end{align}
and
$$
\Phi_{\alpha}(f,\,B) := |B|^{\frac{2\alpha}{n}-1}
\int_{B}
\int_{B}
\frac{|f(x)-f(y)|^2}{|x-y|^{n+2\alpha}}
\,dy\,
dx.
$$
Then, there exists a positive constant $C=C(\alpha,n,q)$ such that, for any $f\in L_{\rm loc}^2(\rn)$ and any ball $B \subset \mathbb{R}^n$,
\begin{align*}
C^{-1} \Phi_\alpha\left(f,\, \frac 1 {16}B\right) \leq \Psi_{\alpha,\,q}(f, B) \leq C \Phi_\alpha\left(f,\, 16B\right)
\end{align*}
and
\begin{align}\label{eq2-KXZZ}
C^{-1}\| f \|_{\mathcal Q_\alpha(\mathbb{R}^n)}^2\le  \sup_{\textup{balls}\, B\subset\rn} \Psi_{\alpha,\,q}(f,\,B) \le C \| f \|_{\mathcal Q_\alpha(\mathbb{R}^n)}^2.
\end{align}

\end{proposition}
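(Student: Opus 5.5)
The plan is to compare the two quantities scale by scale. Throughout, write $B_k(x):=B(x,2^{-k}r)$ and
$$
\omega_k(x):=\inf_{c\in\mathbb R}\Bigl(\fint_{B_k(x)}|f-c|^q\Bigr)^{1/q},
$$
so that $\Psi_{\alpha,q}(f,B)=\sum_{k\ge0}2^{2k\alpha}\fint_B\omega_k(x)^2\,dx$. The norm equivalence \eqref{eq2-KXZZ} will then follow from the two pointwise-in-$B$ inequalities. To get it, take suprema over balls, note that $16B$ and $\frac1{16}B$ range over all balls, and use the ball/cube equivalence of the $\mathcal Q_\alpha$ seminorm recorded after Definition \ref{def-Qspace}.

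\emph{Upper bound $\Psi_{\alpha,q}(f,B)\lesssim\Phi_\alpha(f,16B)$.} Since $q\le2$, Hölder's inequality with the choice $c=f_{B_k(x)}$ gives
$$
\omega_k(x)^2\le\fint_{B_k(x)}|f-f_{B_k(x)}|^2\le\fint_{B_k(x)}\fint_{B_k(x)}|f(z)-f(w)|^2\,dz\,dw .
$$
Whenever $z,w\in B_k(x)$ we have $|z-w|<2^{1-k}r$, and all points lie in $2B$. Integrate over $x\in B$ and apply Fubini: for a fixed pair $(z,w)$, the set of admissible $x$ has measure at most $|B_k|$. This yields
$$
\fint_B\omega_k^2\lesssim r^{-n}(2^{-k}r)^{-n}\iint_{2B\times 2B,\ |z-w|<2^{1-k}r}|f(z)-f(w)|^2 .
$$
Multiply by $2^{2k\alpha}$ and sum over $k$. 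For a fixed pair, only those $k$ with $2^{-k}r\gtrsim|z-w|$ contribute. Because $n+2\alpha>0$, the series $\sum 2^{k(n+2\alpha)}$ over these $k$ is geometric and dominated by its largest term, which is $\simeq(r/|z-w|)^{n+2\alpha}$. Collecting powers of $r$, we obtain $\lesssim r^{2\alpha-n}\iint_{2B\times2B}\frac{|f(z)-f(w)|^2}{|z-w|^{n+2\alpha}}\simeq\Phi_\alpha(f,2B)\le\Phi_\alpha(f,16B)$.

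\emph{Lower bound $\Phi_\alpha(f,\frac1{16}B)\lesssim\Psi_{\alpha,q}(f,B)$.} This is the main obstacle, because $\Phi_\alpha$ involves pointwise differences while $\Psi_{\alpha,q}$ only sees $L^q$-best constants. Let $c_j(x)$ be a (near-)minimizing constant on $B_j(x)$.

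1. Since $B_{j+1}(x)\subset B_j(x)$ with comparable measures, $|c_{j+1}(x)-c_j(x)|\lesssim\omega_j(x)+\omega_{j+1}(x)$. By Lebesgue differentiation ($f\in L^2_{\rm loc}$), $c_j(x)\to f(x)$ for a.e.\ $x$, so
$$
|f(x)-c_k(x)|\lesssim\sum_{j\ge k}\omega_j(x).
$$

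2. Split $\frac1{16}B\times\frac1{16}B$ into annuli $|x-y|\simeq2^{-k}r$ with $k\ge4$. In the annulus $k$ write $|f(x)-f(y)|\le|f(x)-c_{k-2}(x)|+|c_{k-2}(x)-c_{k-2}(y)|+|f(y)-c_{k-2}(y)|$. The middle term is controlled by $\omega_{k-3}$ at the midpoint of $x$ and $y$, because both $B_{k-2}(x)$ and $B_{k-2}(y)$ sit inside a ball of comparable size around that midpoint. After averaging over the annulus, this midpoint term becomes an average of $\omega_{k-3}^2$ over $B$.

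3. The annulus $k$ carries kernel weight $\simeq(2^{-k}r)^{-n-2\alpha}$ and $y$-measure $\simeq(2^{-k}r)^n$. Hence
$$
\Phi_\alpha(f,\tfrac1{16}B)\lesssim\sum_k2^{2k\alpha}\fint_B\Bigl(\sum_{j\ge k-3}\omega_j\Bigr)^2 .
$$

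4. Close with a discrete Hardy inequality, which is where $\alpha>0$ is used. Cauchy--Schwarz with weights $2^{-(j-k)\alpha}$ gives
$$
\Bigl(\sum_{j\ge k}\omega_j\Bigr)^2\lesssim_\alpha\sum_{j\ge k}2^{(j-k)\alpha}\omega_j^2 .
$$
Then $\sum_k2^{2k\alpha}\sum_{j\ge k}2^{(j-k)\alpha}\omega_j^2=\sum_j2^{j\alpha}\omega_j^2\sum_{k\le j}2^{k\alpha}\lesssim\sum_j2^{2j\alpha}\omega_j^2$. Terms with $j<0$ (from the shift by $3$) are bounded by $\omega_0$ using doubling. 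The result is $\Phi_\alpha(f,\frac1{16}B)\lesssim\Psi_{\alpha,q}(f,B)$.

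The care needed is in step 2: the midpoint bookkeeping, and checking that every ball that appears stays inside $B$. This is why the shrinking factor $\frac1{16}$ is imposed.
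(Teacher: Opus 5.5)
Your proof is correct, but it does not follow the paper, because the paper gives no argument for this proposition. It imports the result from Koskela--Xiao--Zhang--Zhou, where it is stated for $n\ge 2$ and $\alpha\in(0,1)$. The subsequent remark then only asserts that their proof works for all $n\in\mathbb N$ and $\alpha\in(0,\infty)$.

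Your self-contained route has two halves:
\begin{itemize}
\item Jensen plus Fubini for $\Psi_{\alpha,q}(f,B)\lesssim\Phi_\alpha(f,2B)$; this is even slightly sharper than $16B$.
\item For the reverse inequality, telescoping the best $L^q$-constants $c_j(x)$ via Lebesgue differentiation, followed by a discrete Hardy inequality.
\end{itemize}
This is the standard mechanism behind such oscillation characterizations; it is essentially the fractional Haj\l asz-gradient argument. Its advantage is that it actually verifies the paper's remark. The only properties of $\alpha$ you use are $n+2\alpha>0$, in the geometric sum of the upper bound, and $\alpha>0$, in the Hardy step. Nothing depends on $n\ge 2$ or $\alpha<1$.

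A few details should be tidied.
\begin{itemize}
\item The sentence saying that indices $j<0$ are ``bounded by $\omega_0$ using doubling'' is false as stated. Oscillation over a larger ball is not controlled by oscillation over a smaller one; the comparison goes the other way, $\omega_{j+1}\lesssim\omega_j$.
\item That sentence is, however, vacuous. For $x,y\in\frac1{16}B$ you have $|x-y|<r/8$, hence $k\ge 4$ and every index satisfies $j\ge k-3\ge 1$.
\item This is the real role of the factor $\frac1{16}$: it keeps all scale indices nonnegative and keeps the centres $x$, $y$, $\frac{x+y}{2}$ inside $B$. The balls $B_j(x)$ themselves need not lie in $B$, since $\Psi_{\alpha,q}(f,B)$ only constrains the centres.
\item The middle term actually satisfies $|c_{k-2}(x)-c_{k-2}(y)|\lesssim\omega_{k-2}(x)+\omega_{k-2}(y)+\omega_{k-3}(\frac{x+y}{2})$. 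The first two terms are harmlessly absorbed into the outer sums.
\item You can avoid the midpoint, and the change of variables $(x,y)\mapsto(\frac{x+y}{2},x-y)$, by comparing both constants with $c_{k-3}(x)$. This works because $B_{k-2}(y)\subset B_{k-3}(x)$ whenever $|x-y|<2^{1-k}r$.
\item The step $\Phi_\alpha(f,2B)\le\Phi_\alpha(f,16B)$ holds only up to the factor $8^{\,n-2\alpha}$ coming from the normalisations, so it should read $\lesssim$.
\item Only pointwise bounds in terms of the functions $\omega_j$ are integrated, and these are upper semicontinuous, hence measurable. So no measurable selection of $x\mapsto c_j(x)$ is needed.
\end{itemize}
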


\begin{remark}
Indeed, \cite[Proposition~1.3]{KoskelaXiaoZhangZhou2017JEMS} stated Proposition \ref{prop-Q-equiChar} only for \(n\ge 2\) and \(\alpha\in(0,1)\), but their proof actually works for all \(n\in\mathbb N\) and \(\alpha\in(0,\infty)\). Thus, we state Proposition \ref{prop-Q-equiChar} in this full generality.
\end{remark}

\subsection{A technical estimate for quasisymmetric mappings}\label{ss4.2}

The following lemma shows that the image under $\varphi$ of an annulus centered at $\varphi^{-1}(c_I)$ is contained in a dyadic annulus centered at $c_I$, with inner and outer radii controlled by the side length of $I$. This lemma will be used in the proof of Theorem \ref{thm1-varphi-EJPX} below.

\begin{lemma}\label{lem-varphiQI}
Let \(\varphi :\ \mathbb{R}^n \to \mathbb{R}^n\) be \(\eta\)-quasisymmetric and \(\varphi^{-1}\) be \(\eta'\)-quasisymmetric,  where \(\eta'\) is given in terms of \(\eta\) as in \eqref{eq-eta'}.   Choose arbitrary constant \(C_0 \in (10^{10}n, \infty)\) and  integers \(N_1, N_2 \ge 10^{10}n\) satisfying
\begin{align}\label{eq-C0N1N2}
C_0 > \eta(2^{N_1+2}) \quad\text{and}\quad 2^{N_2}>n \eta'\left(16C_0\right).
\end{align}
For any cube \(Q \subset \mathbb{R}^n\), any \(k \in \mathbb{Z}_+\), and any \(I \in \mathcal{D}_k(Q)\), if
\begin{align}\label{eq1-AnnualsAI}
z\in \mathcal A(I):=\left\{z\in \rn:\, C_0 \operatorname{diam}(\varphi^{-1}(2I)) \le
|z - \varphi^{-1}(c_I)| \le 4 C_0 \operatorname{diam}(\varphi^{-1}(2I))\right\}
\end{align}
then
\begin{align}\label{eq2-Annuals-varphiAI}
2^{-k+N_1}\ell(Q) \le |\varphi(z) - c_I| < 2^{-k+N_2}\ell(Q),
\end{align}
where \(c_I\) denotes the center of  \(I\).
\end{lemma}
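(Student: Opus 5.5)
The plan is to compare $z$ with a reference point of $\varphi^{-1}(2I)$, and then use quasisymmetry with base point $x_0:=\varphi^{-1}(c_I)$ (for $\varphi$), or equivalently with base point $c_I=\varphi(x_0)$ (for $\varphi^{-1}$). Throughout I write $D:=\operatorname{diam}(\varphi^{-1}(2I))$ and use $\ell(I)=2^{-k}\ell(Q)$, so \eqref{eq2-Annuals-varphiAI} becomes
$$2^{N_1}\ell(I)\le|\varphi(z)-c_I|<2^{N_2}\ell(I).$$
Two families of reference points are available. Since $x_0\in\varphi^{-1}(2I)$, every $y\in\varphi^{-1}(2I)$ satisfies $|y-x_0|\le D$. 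Since $D$ is the diameter, there is also $y_1\in\varphi^{-1}(2I)$ with $|y_1-x_0|\ge D/2$. For such points $\varphi(y)\in 2I$, so $|\varphi(y)-c_I|\le\sqrt n\,\ell(I)$. Moreover $x_0\in\varphi^{-1}(2I)$ is an interior point, so I may also choose $y_2=\varphi^{-1}(w)$ with $w\in\partial(\tfrac32 I)$, hence $|w-c_I|\ge\tfrac34\ell(I)$; for this point $|y_2-x_0|\le D$. Finally, $z\in\mathcal A(I)$ gives $C_0D\le|z-x_0|\le4C_0D$.

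For the lower bound I use $y_2$. The ratio $|z-x_0|/|y_2-x_0|\ge C_0$ is large while $|\varphi(y_2)-c_I|\gtrsim\ell(I)$. Applying \eqref{eq-QSM} for $\varphi$ with base $x_0$ and the triple $x_0,y_2,z$ gives
$$\frac{|\varphi(y_2)-c_I|}{|\varphi(z)-c_I|}\le\eta\Big(\frac{|y_2-x_0|}{|z-x_0|}\Big)\le\eta(1/C_0).$$
Alternatively, by contradiction: if $|\varphi(z)-c_I|<2^{N_1}\ell(I)$, then $\varphi^{-1}$-quasisymmetry with base $c_I$ forces
$$|z-x_0|\le|y_2-x_0|\,\eta'\big(|\varphi(z)-c_I|/|\varphi(y_2)-c_I|\big)\le D\,\eta'(2^{N_1+2}).$$
Translating $\eta'$ back into $\eta$ via \eqref{eq-eta'} (that is, $\eta'(t)<C_0$ iff $t<1/\eta^{-1}(1/C_0)$), and using the hypothesis $C_0>\eta(2^{N_1+2})$ in \eqref{eq-C0N1N2}, should contradict $|z-x_0|\ge C_0D$.

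For the upper bound I use $y_1$. Applying \eqref{eq-QSM} for $\varphi^{-1}$ with base $c_I$ to the triple $c_I,\varphi(z),\varphi(y_1)$ should bound $|\varphi(z)-c_I|/|\varphi(y_1)-c_I|$ from above. The relevant input ratio is $|z-x_0|/|y_1-x_0|\le 8C_0$, to be inverted appropriately. This yields $|\varphi(z)-c_I|\le\sqrt n\,\ell(I)\,\eta'(16C_0)$ up to the bookkeeping factor $2$, which is $<2^{N_2}\ell(I)$ by \eqref{eq-C0N1N2}. One could equally use \eqref{eq-QSM} for $\varphi$ directly to get $\eta(8C_0)$; I would choose whichever form matches the stated constant.

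The main obstacle is purely the bookkeeping: choosing, for each inequality, the right base point and the right one of the two distortion functions $\eta,\eta'$, so that the stated thresholds $\eta(2^{N_1+2})$ and $n\eta'(16C_0)$ come out exactly. I would first try the contradiction arguments above, converting between $\eta$ and $\eta'$ through \eqref{eq-eta'}. If needed, I would also use Proposition~\ref{prop-dia-doubl}(ii)--(iii) to replace $D$ by comparable radii of balls inside $\varphi^{-1}(2I)$. The assumption $C_0,N_1,N_2\ge10^{10}n$ leaves ample room for the resulting factors of $2$ and $\sqrt n$.
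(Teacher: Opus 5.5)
There is a genuine gap, and it sits in the step you defer as bookkeeping. Your reference points are the paper's: a point of $\varphi^{-1}(2I)$ at distance $\gtrsim D$ from $x_0$ for the upper bound, and a point $w\in 2I$ with $|w-c_I|\gtrsim\ell(I)$ for the lower bound. The two inequalities you actually derive are correct. The $\eta$-quasisymmetry of $\varphi$ at $x_0$ gives $|\varphi(z)-c_I|\ge \tfrac34\ell(I)/\eta(1/C_0)$ and $|\varphi(z)-c_I|\le \sqrt n\,\eta(8C_0)\,\ell(I)$.

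To conclude, you need $\eta(1/C_0)\le \tfrac34\cdot 2^{-N_1}$, i.e.\ $C_0\ge \eta'(\tfrac43\cdot 2^{N_1})$, and $\sqrt n\,\eta(8C_0)<2^{N_2}$. But \eqref{eq-C0N1N2} provides $C_0>\eta(2^{N_1+2})$ and $2^{N_2}>n\,\eta'(16C_0)$, with $\eta$ and $\eta'$ in the opposite places.

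Formula \eqref{eq-eta'} does not convert one into the other. It gives $(\eta')^{-1}(s)=1/\eta(1/s)$, so inverting the $\eta'$-quasisymmetry of $\varphi^{-1}$ at $c_I$ returns exactly $\eta(8C_0)$. Likewise, your contradiction argument returns exactly the requirement $\eta'(2^{N_1+2})<C_0$. So your claim that the $\varphi^{-1}$ route yields $\eta'(16C_0)$ is wrong; there is no version with the stated constant to choose.

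Nor are $\eta$ and $\eta'$ comparable in general. For $\varphi(x)=x|x|^{\kappa-1}$ with $\kappa\in(0,1)$, the function $\eta(t)=C\max\{t,t^\kappa\}$ is an admissible distortion, and $\eta'(s)=(Cs)^{1/\kappa}$ for large $s$. Take $Q=I$ centred at $0$ and $C_0=8C\,2^{N_1}$, which \eqref{eq-C0N1N2} allows. Since $D\le 2(\sqrt n\,\ell(I))^{1/\kappa}$, every $z\in\mathcal A(I)$ satisfies $|\varphi(z)|=|z|^\kappa\le (8C_0)^\kappa\sqrt n\,\ell(I)<2^{N_1}\ell(I)$ once $N_1$ is large. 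So the lower bound in \eqref{eq2-Annuals-varphiAI} is false under the stated hypotheses. With $\kappa>1$ the upper bound fails in the same way.

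The paper's proof contains the same interchange. Its upper-bound display compares $\varphi$-images in terms of preimage ratios, i.e.\ the quasisymmetry of $\varphi$ at $\varphi^{-1}(c_I)$, so the function there should be $\eta$, not $\eta'$. Its lower-bound display is the quasisymmetry of $\varphi^{-1}$ at $c_I$, so it should be $\eta'$, not $\eta$.

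The lemma is true, by exactly your argument, once the hypotheses read $C_0>\eta'(2^{N_1+2})$ and $2^{N_2}>n\,\eta(16C_0)$. Alternatively, replace $\eta$ by $\tilde\eta:=\max\{\eta,\eta'\}$. This is still a distortion function for $\varphi$ and satisfies $\tilde\eta'=\tilde\eta$, after which the statement holds as written. Theorem~\ref{thm1-varphi-EJPX} only needs $C_0,N_1,N_2$ sufficiently large, so nothing downstream is affected.

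A minor point: a point $y_1$ with $|y_1-x_0|\ge D/2$ need not exist if the supremum defining $D$ is not attained. Use $|y_1-x_0|>D/4$ instead, as the paper does; this gives $\eta(16C_0)$.
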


\begin{proof}
Since \(I \in \mathcal{D}_k(Q)\), we have \(\ell(I) = 2^{-k}\ell(Q)\). Observe that there exists some \(u \in 2I\) such that
\begin{align}\label{eq-u}
|\varphi^{-1}(u) - \varphi^{-1}(c_I)| > \frac{1}{4}\operatorname{diam}(\varphi^{-1}(2I)).
\end{align}
Indeed, if no such point existed, then for all \(u, v \in 2I\),
$$
|\varphi^{-1}(u) - \varphi^{-1}(v)|
\le |\varphi^{-1}(u) - \varphi^{-1}(c_I)| + |\varphi^{-1}(c_I) - \varphi^{-1}(v)|
< \frac{1}{2}\operatorname{diam}(\varphi^{-1}(2I)).
$$
Taking the supremum over all \(u, v \in 2I\) would yield
$$
\operatorname{diam}(\varphi^{-1}(2I)) < \frac{1}{2}\operatorname{diam}(\varphi^{-1}(2I)),
$$
which is a contradiction. Thus, there must exist some \(u \in 2I\) satisfying \eqref{eq-u}.

Now suppose \(z \in \mathcal A(I)\). For the point \(u \in 2I\) satisfying \eqref{eq-u}, the monotonicity of \(\eta'\) gives
$$
\frac{|\varphi(z) - c_I|}{|u - c_I|}
\le \eta'\left(
\frac{|z - \varphi^{-1}(c_I)|}{|\varphi^{-1}(u) - \varphi^{-1}(c_I)|}
\right)
\le \eta'(16C_0).
$$
Consequently,
$$
|\varphi(z) - c_I|
\le \eta'(16C_0)|u - c_I|
< n\,\eta'(16C_0)\ell(I)
= n\,\eta'(16C_0)2^{-k}\ell(Q)
< 2^{-k+N_2}\ell(Q),
$$
provided that \(2^{N_2} > n\,\eta'(16C_0)\). This establishes the upper bound in \eqref{eq2-Annuals-varphiAI}.

For the lower bound in \eqref{eq2-Annuals-varphiAI}, we choose a point \(w \in 2I\) such that
$
\frac{1}{4}\ell(I) \le |w - c_I| \le \frac{1}{3}\ell(I).
$
Clearly,
$$
|\varphi^{-1}(w) - \varphi^{-1}(c_I)|
\le \operatorname{diam}(\varphi^{-1}(2I)).
$$
Then, for any \(z \in \mathcal A(I)\), we have
$$
C_0 \le \frac{|z - \varphi^{-1}(c_I)|}{|\varphi^{-1}(w) - \varphi^{-1}(c_I)|}
\le \eta\left(
\frac{|\varphi(z) - c_I|}{|w - c_I|}
\right),
$$
which implies
$$
\eta^{-1}(C_0) \le \frac{|\varphi(z) - c_I|}{|w - c_I|}
\le \frac{4|\varphi(z) - c_I|}{\ell(I)}
= \frac{4|\varphi(z) - c_I|}{2^{-k}\ell(Q)}.
$$
Hence,
$$
|\varphi(z) - c_I| \ge \eta^{-1}(C_0) 2^{-k-2}\ell(Q)
> 2^{-k+N_1}\ell(Q),
$$
provided that \(\eta^{-1}(C_0) > 2^{N_1+2}\), that is, \(C_0 > \eta(2^{N_1+2})\). This proves the lower bound in \eqref{eq2-Annuals-varphiAI}.
\end{proof}

\subsection{Mean oscillation characterizations
of ${\mathcal Q}_\alpha(\rn)$ adapted to quasisymmetric mappings}\label{ss4.3}

Applying Proposition \ref{prop-Q-equiChar}, we establish the following theorem, which sharpens the lower estimate in \eqref{eq-MeanOsi} by adapting it to the quasisymmetric setting. In the special case \(\varphi=\mathrm{id}\), it reduces exactly to the lower bound in \eqref{eq-MeanOsi}.

\begin{theorem}
 \label{thm2-varphi-EJPX}
 Let \(\alpha \in (0,\infty)\) and \(\varphi : \mathbb R^n \to \mathbb R^n\) be $\eta$-quasisymmetric, with the additional assumption that \(J_{\varphi},\,J_{\varphi^{-1}} \in A_\infty(\mathbb R)\) when \(n = 1\).
 Then, for any $f\in L_\loc^2(\rn)$,
 \begin{align}\label{eq-lowerfQ}
 \|f\circ \varphi^{-1}\|_{\mathcal Q_\alpha(\rn)}^2
 \le C \sup_{\textup{cubes}\,Q\subset\rn}\sum_{I\in\cd(Q)} \left(\frac{|I|}{|Q|}\right)^{1-\frac{2\alpha}{n}}
 a_{\varphi^{-1}(2I)}(f),
 \end{align}
 where \(a_{\varphi^{-1}(2I)}(f)\) is defined as in \eqref{eq-MeanOsi-aIf}, and \(C=C(n,\alpha,\eta, [J_\varphi]_{A_\infty(\rn)}, [J_{\varphi^{-1}}]_{A_\infty(\rn)})\) is a positive constant independent of \(f\).

\end{theorem}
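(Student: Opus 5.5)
The plan is to apply the ball characterization \eqref{eq2-KXZZ} of Proposition~\ref{prop-Q-equiChar} to $g:=f\circ\varphi^{-1}$ with $q=2$, and to transport each local oscillation back through $\varphi^{-1}$. Fix a ball $B=B(x_0,r)$ and let $Q$ be the cube centred at $x_0$ with $\ell(Q)=2r$, so $B\subset Q$. For $k\ge0$ and $x\in B$, the ball $B(x,2^{-k}r)$ is contained in $2I$ for the cube $I\in\mathcal D_k(Q)$ containing $x$ (since $\ell(I)=2^{-k}\cdot 2r$, any point within $2^{-k}r$ of $I$ lies in $2I$). Hence, choosing $c=f_{\varphi^{-1}(2I)}$,
$$
\inf_{c}\fint_{B(x,2^{-k}r)}|g-c|^2
\le \frac{|2I|}{|B(x,2^{-k}r)|}\fint_{2I}|f\circ\varphi^{-1}-f_{\varphi^{-1}(2I)}|^2\,dz .
$$
Since $|2I|/|B(x,2^{-k}r)|$ is a dimensional constant, it remains to change variables $z=\varphi(y)$, $dz=J_\varphi(y)\,dy$. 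This turns the average over $2I$ into $\frac1{|2I|}\int_{\varphi^{-1}(2I)}|f-f_{\varphi^{-1}(2I)}|^2J_\varphi\,dy$.

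The main obstacle is removing the weight $J_\varphi$: a mean oscillation in $L^2(J_\varphi\,dy)$ is not directly comparable to the unweighted $a_{\varphi^{-1}(2I)}(f)$. My first attempt is to use the flexibility $q<2$ in Proposition~\ref{prop-Q-equiChar}. Take $q$ close to $1$ and apply Hölder with the reverse Hölder exponent $r_\varphi$ of $J_\varphi$, which exists by Proposition~\ref{prop-varphi-weight}(i),(ii) since $J_\varphi\in A_\infty$. Using $\fint_{2I}|g-c|^q=\frac1{|2I|}\int_{\varphi^{-1}(2I)}|f-c|^qJ_\varphi$ and Hölder with exponents $2/q$ and $(2/q)'$, we need $(2/q)'\le r_\varphi$, i.e.\ $q\le 2/r_\varphi'$, which is an admissible choice $q\in(0,2]$. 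The reverse Hölder inequality should be applied on a cube containing $\varphi^{-1}(2I)$ of comparable size, which is possible by \eqref{eq-cube-varphi}. Since also $|\varphi^{-1}(2I)|\cdot\fint J_\varphi\simeq|2I|$, this gives
$$
\Big(\fint_{2I}|g-c|^q\Big)^{2/q}\lesssim \fint_{\varphi^{-1}(2I)}|f-c|^2\,dy ,
$$
and $c=f_{\varphi^{-1}(2I)}$ then yields $a_{\varphi^{-1}(2I)}(f)$. The constants depend on $\eta$ through the quasisymmetric containment, and on $[J_\varphi]_{A_\infty}$ through the reverse Hölder constant and doubling.

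For the summation, the integrand is now constant on each $I\in\mathcal D_k(Q)$ (up to the pieces of $I$ meeting $B$), so
$$
\fint_{B}\inf_c(\cdots)^{2/q}dx\lesssim \frac1{|B|}\sum_{I\in\mathcal D_k(Q)}|I|\,a_{\varphi^{-1}(2I)}(f)\simeq \sum_{I\in\mathcal D_k(Q)}\frac{|I|}{|Q|}a_{\varphi^{-1}(2I)}(f).
$$
Multiplying by $2^{2k\alpha}=(|I|/|Q|)^{-2\alpha/n}$ and summing over $k$ gives $\Psi_{\alpha,q}(g,B)\lesssim\sum_{I\in\mathcal D(Q)}(|I|/|Q|)^{1-2\alpha/n}a_{\varphi^{-1}(2I)}(f)$. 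Taking the supremum over $B$ and applying \eqref{eq2-KXZZ} then gives \eqref{eq-lowerfQ}.

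A remaining check is that $J_{\varphi^{-1}}\in A_\infty$ is not even needed here, only $J_\varphi$ is. The stated dependence on both constants is harmless.
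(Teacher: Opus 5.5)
Your proposal is correct and follows essentially the paper's argument: the same characterization \eqref{eq2-KXZZ} with $q=2/r_\varphi'=2(1-1/r_\varphi)$ (this need not be ``close to $1$''; what matters is your precise condition $q\le 2/r_\varphi'$), H\"older together with the reverse H\"older inequality for $J_\varphi$ to remove the Jacobian weight, and the inclusion $B(x,2^{-k}r)\subset 2I$ for $I\in\mathcal D_k(Q)$. The only difference is the order of the steps. You enlarge to $2I$ on the image side before changing variables; the paper changes variables on $\varphi^{-1}(B(x,2^{-k}r))$ first and then enlarges on the preimage side using doubling of $|\varphi^{-1}(\cdot)|$. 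That is why your version visibly needs only $J_\varphi\in A_\infty$, together with quasisymmetry.
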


\begin{proof}
If \(n\ge 2\), then \(J_\varphi,\, J_{\varphi^{-1}}\in A_\infty(\mathbb R^n)\) follows from quasisymmetry via Proposition \ref{prop-varphi-weight}; if \(n=1\), it is assumed. Denote by \(r_\varphi\in(1,\infty)\) the reverse H\"older exponent of \(J_\varphi\) from \eqref{eq-RHn} and, set
$$
q := 2\left(1 - \frac{1}{r_\varphi}\right).
$$
Let \(\Psi_{\alpha,q}\) be  as in \eqref{eq-PsifB}.
Our aim is to prove that, for any \(f \in  L_\loc^2(\mathbb R^n)\) and any ball \(B=B(x_0, r)\),
\begin{align}\label{eq1-aim}
\Psi_{\alpha,\,q}(f\circ \varphi^{-1},\,B)
\ls  \sum_{I\in\cd(Q_B)} \left(\frac{|I|}{|Q_B|}\right)^{1-\frac{2\alpha}{n}}
 a_{\varphi^{-1}(2I)}(f),
\end{align}
where
\(Q_B := Q(x_0, 2r)\) is the cube centered at \(x_0\) and of side length \(2r\).
Having established \eqref{eq1-aim}, we apply \eqref{eq2-KXZZ} and immediately obtain \eqref{eq-lowerfQ}.
The proof of \eqref{eq1-aim} is divided into two steps.

\medskip

{\bf Step 1:\, A preliminary estimate for $\Psi_{\alpha,\,q}(f\circ \varphi^{-1},\,B)$.\,}
For any $c\in\rr$, $k\in\zz_+$ and $x\in B=B(x_0, r)$,   by a change of variables $z=\varphi(y)$ and the H\"older inequality, we obtain
\begin{align}\label{eq-PsiBf-1}
  &\left(\fint_{B(x, \, 2^{-k}r)} |f\circ \varphi^{-1}(z) - c|^q \, dz\right)^\frac 1q \\
  &\quad = \left(\frac1{|B(x, \, 2^{-k}r)|} \int_{\varphi^{-1}\left(B(x, \, 2^{-k}r)\right)} |f(y) - c|^q  J_\varphi(y)\, dy\right)^\frac 1q \notag\\
   &\quad  \le \left(\frac{|\varphi^{-1}(B(x, \, 2^{-k}r))|}{|B(x, \, 2^{-k}r)|}\right)^\frac 1q
   \left(\fint_{\varphi^{-1}(B(x, \, 2^{-k}r))} |f(y) - c|^{2} \, dy\right)^\frac 1{2}
   \left(\fint_{\varphi^{-1}(B(x, \, 2^{-k}r))}  J_\varphi(y)^{r_\varphi}\, dy\right)^\frac 1{q r_\varphi}.\notag
\end{align}

By \eqref{eq-ball-varphi}, there exists a constant \(c_\eta \in(0,\infty)\), independent of \(x\), \(k\) and \(r\), such that
\begin{align}\label{eq-ball-Q}
B\left(x_{k,r},\, c_\eta R_{k,r}\right)
\subset \varphi^{-1}\bigl(B(x,\, 2^{-k}r)\bigr)
\subset B\left(x_{k,r},\,  R_{k,r}\right),
\end{align}
where
$
x_{k,r} := \varphi^{-1}(x)$ and  $R_{k,r} := \operatorname{diam}\varphi^{-1}(B(x,\, 2^{-k}r)).$
From these inclusions and the reverse H\"older inequality in \eqref{eq-RHn}, we deduce that
 \begin{align*}
  \left(\fint_{\varphi^{-1}\left(B(x, \, 2^{-k}r)\right)}  J_\varphi(y)^{r_\varphi}\, dy\right)^\frac 1{r_\varphi}
  &\le \left((c_\eta^{-1})^n\fint_{B(x_{k,r},\, R_{k,r})}  J_\varphi(y)^{r_\varphi}\, dy\right)^\frac 1{r_\varphi}\\
  &\le  c_\eta^{-\frac n {r_\varphi}} [J_\varphi]_{\mathrm{RH}_{r_\varphi}}\fint_{B(x_{k,r},\,  R_{k,r})}  J_\varphi(y)\, dy.
\end{align*}
By \(J_{\varphi}\in A_\infty(\mathbb R^n)\) and the volume doubling property in Proposition \ref{prop-varphi-weight}(iv), together with \eqref{eq-ball-Q}, we obtain
$$
\int_{B(x_{k,r},\,  R_{k,r})} J_\varphi(y)\, dy
=
\left|\varphi\left(B(x_{k,r},\,  R_{k,r})\right)\right|
\lesssim
\left|\varphi\left(B(x_{k,r},\, c_\eta R_{k,r})\right)\right|
\lesssim
\left|\varphi\left(\varphi^{-1}\bigl(B(x,\, 2^{-k}r)\bigr)\right)\right|
\simeq
|B(x,\, 2^{-k}r)|.
$$
Moreover, by \eqref{eq3-ball-varphi} and the definition of \(R_{k,r}\), we have
$$
|B(x_{k,r},\,  R_{k,r})| \simeq R_{k,r}^n \simeq
\left(\operatorname{diam}\varphi^{-1}(B(x,\, 2^{-k}r))\right)^n \simeq
|\varphi^{-1}(B(x,\, 2^{-k}r))|.
$$
Combining the preceding three estimates gives
\begin{align}\label{eq-RH-PsiBf}
\left(\fint_{\varphi^{-1}\left(B(x, \, 2^{-k}r)\right)}  J_\varphi(y)^{r_\varphi}\, dy\right)^\frac 1{r_\varphi} \ls \frac{|B(x, \, 2^{-k}r)|}{|\varphi^{-1}(B(x, \, 2^{-k}r))|}.
\end{align}

Next, substituting \eqref{eq-RH-PsiBf} into \eqref{eq-PsiBf-1} yields
\begin{align*}
 \left(\fint_{B(x, \, 2^{-k}r)} |f\circ \varphi^{-1}(z) - c|^q \, dz\right)^\frac 1q \ls
   \left(\fint_{\varphi^{-1}(B(x, \, 2^{-k}r))} |f(y) - c|^{2} \, dy\right)^\frac 1{2}.
\end{align*}
This immediately implies
\begin{align}\label{eq-PsiBf-2}
  \Psi_{\alpha,\,q}( f\circ \varphi^{-1},\, B) & \ls \sum_{k\ge 0}\fint_{B(x_0, r)}
  2^{2k\alpha}\left(\inf_{c\in\rr}\fint_{\varphi^{-1}\left(B(x, \, 2^{-k}r)\right)} |f(y) - c|^{2} \, dy\right)\,dx.
\end{align}
The next step is to replace the Euclidean balls \(B(x_0, r)\) and \(B(x, 2^{-k}r)\) by their cube counterparts.

\medskip

{\bf Step 2:\, Replacing balls by cubes.}
Fix $k\in\zz_+$. Note that \(B=B(x_0, r)\) is contained in the cube \(Q_B\), which is centered at \(x_0\) and has side length \(2r\). For each \(k \in \mathbb{Z}_+\), we consider the dyadic decomposition
$$
Q_B=\bigcup_{I\in\cd_k(Q_B)} I.
$$
Note that each  subcube \(I\) has side length \(\ell(I) = 2^{-k}\ell(Q_B) = 2^{-k+1}r\).
Let \(\nu_n\) denote the Lebesgue measure of the unit ball in \(\mathbb{R}^n\). Then,
\begin{align}\label{eq-PsiBf-3}
  &\fint_{B(x_0, r)}
  2^{2k\alpha}\left(\inf_{c\in\rr}\fint_{\varphi^{-1}\left(B(x, \, 2^{-k}r)\right)} |f(y) - c|^{2} \, dy\right)\,dx\\
  &\quad\le \sum_{I\in\cd_k(Q_B)}\frac1{\nu_n r^n} \int_{I}
  2^{2k\alpha}\left(\inf_{c\in\rr}\fint_{\varphi^{-1}\left(B(x, \, 2^{-k}r)\right)} |f(y) - c|^{2} \, dy\right)\,dx\notag\\
  &\quad = \frac{2^n}{\nu_n} \sum_{I\in\cd_k(Q_B)} \left(\frac{|I|}{|Q_B|}\right)^{1-\frac{2\alpha}{n}} \fint_{I}
\left(\inf_{c\in\rr}\fint_{\varphi^{-1}\left(B(x, \, 2^{-k}r)\right)} |f(y) - c|^{2} \, dy\right)\,dx.\notag
\end{align}

If $x \in I$ for some $I \in \mathcal{D}_k(Q_B)$, then using the fact that $\ell(I) = 2^{-k}\ell(Q_B) = 2^{-k+1}r$, we claim that
\begin{align}\label{eq-inclu}
B(x, 2^{-k}r)\subset 2I\subset B(x, \operatorname{diam}(2I))\subset
B(x, 2^{-k+2}nr).
\end{align}
Indeed, let $c_I = (c_1, \dots, c_n)$ denote  the center of $I$, and write $x = (x_1, \dots, x_n)$. For any $z = (z_1, \dots, z_n) \in B(x, 2^{-k}r)$ and each $j \in \{1, \dots, n\}$, we have
$$
|z_j-c_j|\le |z_j-x_j|+|x_j-c_j| \le |z-x|+|x_j-c_j| < 2^{-k}r+2^{-1}\ell(I)=2^{-k+1}r=\ell(I),
$$
which shows that $z\in 2I$. This proves the first inclusion in \eqref{eq-inclu}; the remaining inclusions are immediate.

By \(J_{\varphi^{-1}} \in A_\infty(\mathbb R^n)\), together with the doubling property in Proposition \ref{prop-varphi-weight}(iv) and  \eqref{eq-inclu}, we obtain
\begin{align}\label{eq-inclu-mea}
|\varphi^{-1}(2I)|
&\le \left|\varphi^{-1}\left(B(x,\, 2^{-k+2}nr)\right)\right|
\ls \left|\varphi^{-1}\left(B(x, \, 2^{-k}r)\right)\right|.
\end{align}
Combining \eqref{eq-inclu} with \eqref{eq-inclu-mea} then gives
\begin{align}\label{eq-PsiBf-4}
\inf_{c\in\rr}\fint_{\varphi^{-1}\left(B(x, \, 2^{-k}r)\right)} |f(y) - c|^{2} \, dy
&\ls \inf_{c\in\rr}\fint_{\varphi^{-1}(2I)} |f(y) - c|^{2} \, dy\\
&
\ls \fint_{\varphi^{-1}(2I)} |f(y) - f_{\varphi^{-1}(2I)}|^{2} \, dy. \notag
\end{align}

Now, inserting \eqref{eq-PsiBf-4} into \eqref{eq-PsiBf-3} yields
\begin{align*}
  &\fint_{B(x_0, r)}
  2^{2k\alpha}\left(\inf_{c\in\rr}\fint_{\varphi^{-1}\left(B(x, \, 2^{-k}r)\right)} |f(y) - c|^{2} \, dy\right)\,dx\\
  &\quad\ls \sum_{I\in\cd_k(Q_B)} \left(\frac{|I|}{|Q_B|}\right)^{1-\frac{2\alpha}{n}}
\left(\fint_{\varphi^{-1}(2I)} |f(y) - f_{\varphi^{-1}(2I)}|^{2} \, dy\right).
\end{align*}
Combining this with \eqref{eq-PsiBf-2}, we obtain
\begin{align*}
  \Psi_{\alpha,\,q}( f\circ \varphi^{-1},\, B)
  & \ls \sum_{k=0}^\infty\sum_{I\in\cd_k(Q_B)} \left(\frac{|I|}{|Q_B|}\right)^{1-\frac{2\alpha}{n}}
\left(\fint_{\varphi^{-1}(2I)} |f(y) - f_{\varphi^{-1}(2I)}|^{2} \, dy\right)\\
&\simeq \sum_{I\in\cd(Q_B)} \left(\frac{|I|}{|Q_B|}\right)^{1-\frac{2\alpha}{n}}
a_{\varphi^{-1}(2I)}(f).\notag
\end{align*}
This establishes the desired estimate in \eqref{eq1-aim}.
Consequently, \eqref{eq-lowerfQ} holds.
\end{proof}

In contrast to Theorem \ref{thm2-varphi-EJPX}, the following theorem provides a \(\varphi\)-adapted version of the upper estimate in \eqref{eq-MeanOsi}. As before, it reduces to the upper bound in \eqref{eq-MeanOsi} when \(\varphi=\mathrm{id}\). Its proof relies on Lemma \ref{lem-varphiQI}.

\begin{theorem}
 \label{thm1-varphi-EJPX}
 Let \(\alpha \in (0,\infty)\) and \(\varphi :\ \mathbb R^n \to \mathbb R^n\) be $\eta$-quasisymmetric, with the additional assumption that \(J_{\varphi^{-1}} \in A_\infty(\mathbb R)\) when \(n = 1\).
 Then, for any $f\in\mathcal Q_\alpha(\rn)$,
 \begin{align}\label{eq-upperfQ}
 \sup_{\textup{cubes}\,Q\subset\rn}\sum_{I\in\cd(Q)} \left(\frac{|\varphi^{-1}(I)|}{|\varphi^{-1}(Q)|}\right)^{1-\frac{2\alpha}{n}}
 a_{\varphi^{-1}(2I)}(f)\le C \|f\|_{\mathcal Q_\alpha(\rn)}^2,
 \end{align}
 where \(a_{\varphi^{-1}(2I)}(f)\) is defined as in \eqref{eq-MeanOsi-aIf}, and \(C=C(n,\alpha,\eta, [J_{\varphi^{-1}}]_{A_\infty(\rn)})\) is a positive constant independent of \(f\).
\end{theorem}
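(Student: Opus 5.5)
Fix a cube $Q$ and write $\beta:=1-\frac{2\alpha}{n}$. The plan is to bound each oscillation $a_{\varphi^{-1}(2I)}(f)$, $I\in\mathcal D(Q)$, by a Gagliardo-type double integral of $f$ over a region comparable to the ball $B_I:=B\bigl(\varphi^{-1}(c_I),\,C\operatorname{diam}\varphi^{-1}(2I)\bigr)$. We then sum over $I$ with the weights $\bigl(|\varphi^{-1}(I)|/|\varphi^{-1}(Q)|\bigr)^{\beta}$. Since $|\varphi^{-1}(I)|\simeq(\operatorname{diam}\varphi^{-1}(2I))^n$ by \eqref{eq3-ball-varphi} and the doubling property in Proposition \ref{prop-varphi-weight}(iv) applied to $\varphi^{-1}$, these weights are exactly of the form $(r_I/R_Q)^{n-2\alpha}$. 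Here $r_I:=\operatorname{diam}\varphi^{-1}(2I)$ and $R_Q:=\operatorname{diam}\varphi^{-1}(2Q)$, so the sum has the scaling of the $\mathcal Q_\alpha$ seminorm on the ball $B^*:=B(\varphi^{-1}(c_Q),CR_Q)$ in the domain.

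The elementary estimate is as follows. Since $\varphi^{-1}(2I)\subset B(\varphi^{-1}(c_I),r_I)$ and $|\varphi^{-1}(2I)|\simeq r_I^n$, we get
\[
a_{\varphi^{-1}(2I)}(f)\lesssim \frac{1}{r_I^{2n}}\int_{\varphi^{-1}(2I)}\int_{\varphi^{-1}(2I)}|f(x)-f(y)|^2\,dx\,dy\lesssim r_I^{2\alpha-n}\int_{\varphi^{-1}(2I)}\int_{B_I}\frac{|f(x)-f(y)|^2}{|x-y|^{n+2\alpha}}\,dy\,dx .
\]
The second inequality uses $|x-y|\le 2r_I$. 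Doing this for every $I$ would overcount badly, because the sets $\varphi^{-1}(2I)$ at a fixed scale overlap boundedly, but across scales they nest. To fix this I would instead use the annular decomposition from Lemma \ref{lem-varphiQI}. Compare $f$ on $\varphi^{-1}(2I)$ with its average over the annulus $\mathcal A(I)$, and write $a_{\varphi^{-1}(2I)}(f)\lesssim \fint_{\varphi^{-1}(2I)}\fint_{\mathcal A(I)}|f(x)-f(y)|^2\,dy\,dx$. For $x\in\varphi^{-1}(2I)$ and $y\in\mathcal A(I)$ we have $|x-y|\simeq C_0r_I$, and $|\mathcal A(I)|\simeq r_I^n$. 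This gives
\[
r_I^{n-2\alpha}\,a_{\varphi^{-1}(2I)}(f)\lesssim \int_{\varphi^{-1}(2I)}\int_{\mathcal A(I)}\frac{|f(x)-f(y)|^2}{|x-y|^{n+2\alpha}}\,dy\,dx .
\]
The key point is that, by \eqref{eq2-Annuals-varphiAI}, $\varphi(\mathcal A(I))$ lies in the Euclidean annulus $\{2^{-k+N_1}\ell(Q)\le|w-c_I|<2^{-k+N_2}\ell(Q)\}$ when $I\in\mathcal D_k(Q)$. So the pairs $(x,y)$ counted for $I$ satisfy $|\varphi(x)-\varphi(y)|\simeq 2^{-k}\ell(Q)$ up to factors $2^{N_1},2^{N_2}$. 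Hence a fixed pair $(x,y)$ is counted only for levels $k$ in a window of bounded length $\sim N_2-N_1$. Within a level, it is counted only for the boundedly many $I$ with $\varphi(x)\in 2I$.

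Summing over $I\in\mathcal D(Q)$ then gives
\[
\sum_{I\in\mathcal D(Q)}\Bigl(\frac{r_I}{R_Q}\Bigr)^{n-2\alpha}a_{\varphi^{-1}(2I)}(f)\lesssim R_Q^{2\alpha-n}\int_{B^*}\int_{B^*}\frac{|f(x)-f(y)|^2}{|x-y|^{n+2\alpha}}\,dx\,dy\lesssim\|f\|_{\mathcal Q_\alpha(\mathbb R^n)}^2 .
\]
All the sets $\mathcal A(I)$ and $\varphi^{-1}(2I)$ lie in the ball $B^*$ of radius comparable to $C_0R_Q$, by quasisymmetry (Proposition \ref{prop-dia-doubl}(ii)). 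The final inequality is the ball version of Definition \ref{def-Qspace}. Taking the supremum over $Q$ yields \eqref{eq-upperfQ}.

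The main obstacle is the bounded-overlap count in the middle step. We must check that the bounded window in $k$ really follows from \eqref{eq2-Annuals-varphiAI} together with $\varphi(x)\in 2I$. For $\varphi(x)\in 2I$ we have $|\varphi(x)-c_I|\le n2^{-k}\ell(Q)\ll 2^{-k+N_1}\ell(Q)$, so $|\varphi(x)-\varphi(y)|\simeq 2^{-k}\ell(Q)$ with constants depending only on $N_1,N_2,n$. This uses the largeness conditions \eqref{eq-C0N1N2}, and it should go through. A secondary point is the step $a_{\varphi^{-1}(2I)}(f)\lesssim\fint\fint|f(x)-f(y)|^2$ with $y$ ranging over $\mathcal A(I)$ rather than over $\varphi^{-1}(2I)$. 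This is just $\fint_E|f-f_E|^2\le\fint_E|f-c|^2$ with $c=f_{\mathcal A(I)}$, followed by Jensen's inequality.
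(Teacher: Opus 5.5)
Your proposal is correct and follows the paper's proof essentially step for step: you compare $f$ on $\varphi^{-1}(2I)$ with its average over the annulus $\mathcal A(I)$, use $|x-y|\simeq C_0\operatorname{diam}\varphi^{-1}(2I)$, bound the overlap kernel $\sum_{I}\mathbf 1_{\varphi^{-1}(2I)}(x)\mathbf 1_{\mathcal A(I)}(y)$ uniformly, and conclude with the ball form of the $\mathcal Q_\alpha$ seminorm. The one small variation is that you get the bounded window in $k$ directly from \eqref{eq2-Annuals-varphiAI} together with $|\varphi(x)-c_I|\le\sqrt n\,2^{-k}\ell(Q)\ll 2^{-k+N_1}\ell(Q)$, whereas the paper derives it from \eqref{eq-diamEF} via $\operatorname{diam}(2I)\simeq\operatorname{diam}\varphi(B(x,|x-y|))$ and uses Lemma \ref{lem-varphiQI} only for the containment $\mathcal A(I)\subset\varphi^{-1}(2^{N_2+2}Q)$; both routes work.
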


\begin{proof}
Fix large constants $C_0, N_1, N_2$ satisfying \eqref{eq-C0N1N2}. For any cube $Q\subset\mathbb R^n$ and $I\in\mathcal D_k(Q)$ with $k\in\mathbb Z_+$, define $\mathcal A(I)$ as in \eqref{eq1-AnnualsAI}. Then every $z\in \mathcal A(I)$ satisfies  \eqref{eq2-Annuals-varphiAI}. Further, applying \eqref{eq3-ball-varphi} and \eqref{eq-diamEF} yields
\begin{align}\label{eq-Annuals-varphiAI-meas}
|\mathcal A(I)|=\nu_n C_0^n (4^n-1) \bigl( \operatorname{diam}(\varphi^{-1}(2I))\bigr)^n \simeq |\varphi^{-1}(2I)|.
\end{align}
Using the elementary inequality
$$
(a+b)^2 \le 2(a^2+b^2) \quad \text{for all }\, a,\, b\in(0,\infty),
$$
we write
\begin{align*}
  a_{\varphi^{-1}(2I)}(f)
  &= \fint_{\varphi^{-1}(2I)} |f-f_{\varphi^{-1}(2I)}|^2\,dx\\
  &\le 2 \fint_{\varphi^{-1}(2I)} |f-f_{\mathcal A(I)}|^2\,dx
  + 2|f_{\mathcal A(I)}-f_{\varphi^{-1}(2I)}|^2.
\end{align*}
Note that the H\"older inequality implies
\begin{align*}
|f_{\mathcal A(I)}-f_{\varphi^{-1}(2I)}|^2
&\le \left( \fint_{\varphi^{-1}(2I)} |f-f_{\mathcal A(I)}|\,dx\right)^2
\le  \fint_{\varphi^{-1}(2I)} |f-f_{\mathcal A(I)}|^2\,dx
\end{align*}
and, hence,
\begin{align*}
  a_{\varphi^{-1}(2I)}(f)
  &\le 4 \fint_{\varphi^{-1}(2I)} |f-f_{\mathcal A(I)}|^2\,dx\\
  &\le 4  \fint_{\varphi^{-1}(2I)} \left|\fint_{\mathcal A(I)} (f(x)-f(y))\,dy\right|^2\,dx\notag\\
  &\le 4 \fint_{\varphi^{-1}(2I)} \fint_{\mathcal A(I)}\left| f(x)-f(y)\right|^2\,dy\,dx. \notag
\end{align*}
For any $x\in \varphi^{-1}(2I)$ and $y\in \mathcal A(I)$, it follows from \eqref{eq1-AnnualsAI}  that
$$
|x-y|\ge |y-\varphi^{-1}(c_I)|-|x-\varphi^{-1}(c_I)| \ge (C_0-1) \operatorname{diam}(\varphi^{-1}(2I)).
$$
Combining this with the measure estimate \eqref{eq-Annuals-varphiAI-meas}, we obtain
\begin{align*}
\fint_{\varphi^{-1}(2I)} \fint_{\mathcal A(I)}\left| f(x)-f(y)\right|^2\,dy\,dx
&\simeq \frac{\operatorname{diam}(\varphi^{-1}(2I))^{n+2\alpha}}{|\varphi^{-1}(2I)|^2}
  \int_{\varphi^{-1}(2I)} \int_{\mathcal A(I)}\frac{| f(x)-f(y)|^2}{|x-y|^{n+2\alpha}}\,dy\,dx\\
  &\simeq |\varphi^{-1}(2I)|^{\frac{2\alpha}{n}-1}
  \int_{\varphi^{-1}(2I)} \int_{\mathcal A(I)}\frac{| f(x)-f(y)|^2}{|x-y|^{n+2\alpha}}\,dy\,dx.
\end{align*}
Therefore, we arrive at the estimate
\begin{align}\label{eq-afAI}
  a_{\varphi^{-1}(2I)}(f)
  &\ls |\varphi^{-1}(2I)|^{\frac{2\alpha}{n}-1}
  \int_{\varphi^{-1}(2I)} \int_{\mathcal A(I)}\frac{| f(x)-f(y)|^2}{|x-y|^{n+2\alpha}}\,dy\,dx.
\end{align}
For any $y\in \mathcal A(I)$, by \eqref{eq2-Annuals-varphiAI}, we have
$$\varphi(y)\in B\left(c_I,\, 2^{N_2}\ell(I)\right)\subset 2^{N_2+2}I\subset 2^{N_2+2}Q,$$
which implies
$$\mathcal A(I)\subset\varphi^{-1}\left(2^{N_2+2}Q\right).$$
Now, we substitute \eqref{eq-afAI} into the left-hand side of \eqref{eq-upperfQ} and apply the volume doubling property of the Jacobian $J_{\varphi^{-1}}$ (see \eqref{eq-vd}). This gives
 \begin{align}\label{eq-afAI-sum}
& \sum_{I\in\cd(Q)} \left(\frac{|\varphi^{-1}(I)|}{|\varphi^{-1}(Q)|}\right)^{1-\frac{2\alpha}{n}}
 a_{\varphi^{-1}(2I)}(f)\\
 &\quad \ls
 \sum_{I\in\cd(Q)} |\varphi^{-1}(2Q)|^{\frac{2\alpha}{n}-1}
  \int_{\varphi^{-1}(2I)} \int_{\mathcal A(I)}\frac{| f(x)-f(y)|^2}{|x-y|^{n+2\alpha}}\,dy\,dx\notag\\
  &\quad\simeq
   |\varphi^{-1}(2Q)|^{\frac{2\alpha}{n}-1}
   \int_{\varphi^{-1}(2Q)} \int_{\varphi^{-1}(2^{N_2+2}Q)}
  \left(\sum_{I\in\cd(Q)}
  {\mathbf 1}_{\varphi^{-1}(2I)}(x) {\mathbf 1}_{\mathcal A(I)}(y)\right)\frac{| f(x)-f(y)|^2}{|x-y|^{n+2\alpha}}\,dy\,dx.\notag
 \end{align}
   For any $x\in \varphi^{-1}(2Q)$ and $y\in\varphi^{-1}(2^{N_2+2}Q)$, we introduce the kernel
$$K_Q(x,y):=  \sum_{I\in\cd(Q)}
  {\mathbf 1}_{\varphi^{-1}(2I)}(x) {\mathbf 1}_{\mathcal A(I)}(y)
  =\sum_{k\ge 0}\sum_{I\in\cd_k(Q)}
  {\mathbf 1}_{\varphi^{-1}(2I)}(x) {\mathbf 1}_{\mathcal A(I)}(y).$$
  Then, we claim the uniform bound
\begin{align}\label{eq-claimKQ}
K_Q(x,y) \lesssim 1,
\end{align}
with implicit constant independent of $x,y$, and $Q$.

To prove the claim \eqref{eq-claimKQ}, we suppose that $k\in\zz_+$ and $I\in\cd_k(Q)$ such that  $\varphi^{-1}(2I)\ni x$ and $\mathcal A(I )\ni y$. Then, by the definition of $\mathcal A(I )$ in \eqref{eq1-AnnualsAI} and the triangle inequality, we have
$$
(C_0-1) \operatorname{diam}(\varphi^{-1}(2I ))
\le |x-y|\le (4C_0+1) \operatorname{diam}(\varphi^{-1}(2I )).
$$
Combining this with \eqref{eq-diamEF}, the condition $C_0\ge 4$, and the monotonic increasing property of $\eta$, we obtain the two-sided estimate
$$
\frac{1}{2\eta(8C_0+2)}
\le
\frac{\operatorname{diam}(2I )}{\operatorname{diam}(\varphi(B(x, |y-x|)))}
\le\eta(1).
$$
Indeed, the upper bound follows from
$$
\frac{\operatorname{diam}(2I )}{\operatorname{diam}(\varphi(B(x, |y-x|)))}
= \frac{\operatorname{diam}\bigl(\varphi(\varphi^{-1}(2I ))\bigr)}{\operatorname{diam}(\varphi(B(x, |y-x|)))}
\le \eta\!\left(
\frac{2\operatorname{diam}(\varphi^{-1}(2I ))}{2|y-x|}
\right)
\le \eta(1),
$$
while the lower bound is a consequence of
$$
\frac{\operatorname{diam}(2I )}{\operatorname{diam}(\varphi(B(x, |y-x|)))}
= \frac{\operatorname{diam}\bigl(\varphi(\varphi^{-1}(2I ))\bigr)}{\operatorname{diam}(\varphi(B(x, |y-x|)))}
\ge \frac{1}{2\eta\!\left(
\frac{2|y-x|}{\operatorname{diam}(\varphi^{-1}(2I ))}
\right)}
\ge \frac{1}{2\eta(8C_0+2)}.
$$
Set $R_{x,y}:=\operatorname{diam}(\varphi(B(x, |y-x|)))$. Since
$$
\operatorname{diam}(2I )=2\sqrt n\,\ell(I )=2^{-k+1}\sqrt n\,\ell(Q),
$$
it follows that the index $k$ satisfying $\mathcal A(I )\ni y$ must satisfy
$$
\frac{R_{x,y}}{2\eta(8C_0+2)}\le 2^{-k+1}\sqrt n\,\ell(Q)\le \eta(1)R_{x,y}.
$$
Clearly, the number of integers \(k\) satisfying this two-sided inequality is at most
$$
\log_2\bigl(2\eta(1)\eta(8C_0+2)\bigr)+1.
$$
Moreover, for each \(k\in\mathbb Z_+\), we have
\begin{align*}
\#\left\{I\in\mathcal D_k(Q):\, \varphi^{-1}(2I)\ni x\,\right\}
= \#\left\{I\in\mathcal D_k(Q):\, 2I\ni \varphi(x)\,\right\} \le 2^n,
\end{align*}
since the cubes \(I\in\mathcal D_k(Q)\) partition \(Q\), and any point in \(\mathbb R^n\) lies in at most \(2^n\) of the dilated cubes \(2I\) with \(I\in\mathcal D_k(Q)\).
From these facts, it follows that
\begin{align*}
K_Q(x,y)
     \le 2^n\left( \log_2\bigl(2\eta(1)\eta(8C_0+2)\bigr)+1\right).
\end{align*}
This establishes the claim \eqref{eq-claimKQ}.

Now, with \eqref{eq-claimKQ} at hand, we return to \eqref{eq-afAI-sum} and obtain
\begin{align}\label{eq1-afAI-sum}
& \sum_{I\in\mathcal D(Q)} \left(\frac{|\varphi^{-1}(I)|}{|\varphi^{-1}(Q)|}\right)^{1-\frac{2\alpha}{n}}
 a_{\varphi^{-1}(2I)}(f)\\
 &\quad \lesssim
   |\varphi^{-1}(2Q)|^{\frac{2\alpha}{n}-1}
   \int_{\varphi^{-1}(2Q)} \int_{\varphi^{-1}(2^{N_2+2}Q)}
 \frac{|f(x)-f(y)|^2}{|x-y|^{n+2\alpha}}\,dy\,dx.\notag
 \end{align}
For each cube \(Q\), we define the ball
$$
B_{\varphi^{-1}}(Q):=B\Bigl(\varphi^{-1}(c_Q),\, \operatorname{diam}\left(\varphi^{-1}(2^{N_2+2}Q)\right)\Bigr),
$$
where $c_Q$ denotes the center of $Q$. Since \(N_2\) is chosen sufficiently large (see \eqref{eq-C0N1N2}), we have \(\varphi^{-1}(2Q)\subset \varphi^{-1}(2^{N_2+2}Q)\). Consequently, both \(\varphi^{-1}(2Q)\) and \(\varphi^{-1}(2^{N_2+2}Q)\) are contained in the ball \(B_{\varphi^{-1}}(Q)\). Moreover, by \eqref{eq-ball-varphi}--\eqref{eq-cube-varphi}--\eqref{eq3-ball-varphi} and the volume doubling property of \(J_{\varphi^{-1}}\) (see Proposition \ref{prop-varphi-weight}), we have
$$
|\varphi^{-1}(2Q)| \simeq |B_{\varphi^{-1}}(Q)|.
$$
Therefore, we obtain
\begin{align}\label{eq2-afAI-sum}
& |\varphi^{-1}(2Q)|^{\frac{2\alpha}{n}-1}
   \int_{\varphi^{-1}(2Q)} \int_{\varphi^{-1}(2^{N_2+2}Q)}
 \frac{|f(x)-f(y)|^2}{|x-y|^{n+2\alpha}}\,dy\,dx\\
 &\quad
 \lesssim
 |B_{\varphi^{-1}}(Q)|^{\frac{2\alpha}{n}-1}
   \int_{B_{\varphi^{-1}}(Q)} \int_{B_{\varphi^{-1}}(Q)}
 \frac{|f(x)-f(y)|^2}{|x-y|^{n+2\alpha}}\,dy\,dx\notag \\
 &\quad \lesssim \|f\|_{\mathcal Q_\alpha(\mathbb R^n)}^2. \notag
 \end{align}
Finally, combining \eqref{eq1-afAI-sum} and \eqref{eq2-afAI-sum} yields that for every cube $Q\subset\mathbb R^n$,
$$
\sum_{I\in\mathcal D(Q)} \left(\frac{|\varphi^{-1}(I)|}{|\varphi^{-1}(Q)|}\right)^{1-\frac{2\alpha}{n}}
 a_{\varphi^{-1}(2I)}(f)
 \lesssim \|f\|_{\mathcal Q_\alpha(\mathbb R^n)}^2,
$$
which proves the desired estimate in \eqref{eq-upperfQ}.
\end{proof}

\subsection{Proof of the sufficiency part of Theorem \ref{thm-trace-gauge}}\label{ss4.4}

The idea is to apply  Theorems \ref{thm2-varphi-EJPX} and \ref{thm1-varphi-EJPX} together with the definition of $\mathscr G_{1-\frac{2\alpha}{n}}$ given in Definition \ref{def-gauge}.

\begin{proof}[Proof of Theorem \ref{thm-trace-gauge}: the  sufficiency part]

Let $f\in\mathcal Q_\alpha(\rn)$.
Applying \eqref{eq-lowerfQ} in Theorem \ref{thm2-varphi-EJPX} yields
\begin{align}\label{eq1-tracebdd}
 \|f\circ \varphi^{-1}\|_{\mathcal Q_\alpha(\rn)}^2
 \ls \sup_{\text{cubes}\,Q\subset\rn}\sum_{I\in\cd(Q)} \left(\frac{|I|}{|Q|}\right)^{1-\frac{2\alpha}{n}}
 a_{\varphi^{-1}(2I)}(f),
 \end{align}
 where \(a_{\varphi^{-1}(2I)}(f)\) is defined as in \eqref{eq-MeanOsi-aIf}.

For any cube $Q\subset\rn$, take an arbitrary sequence \(\{\lambda_{I'}\}_{I'\in\mathcal D(Q)}\in \mathscr T_\beta(Q)\). Then, for any \(I\in\mathcal D(Q)\),
$$
\sum_{\gfz{I'\in\mathcal D(Q)}{I'\supset I}} \lambda_{I'} \left(
\frac{|I'|}{|I|}\cdot\frac{|\varphi^{-1}(I)|}{|\varphi^{-1}(I')|}\right)^{1-\frac{2\alpha}{n}}
\ge 1.
$$
Consequently, we have
\begin{align*}
&\sum_{I\in\cd(Q)} \left(\frac{|I|}{|Q|}\right)^{1-\frac{2\alpha}{n}}
a_{\varphi^{-1}(2I)}(f)\\
&\quad\le \sum_{I\in\cd(Q)}  \left(
\sum_{\gfz{I'\in \cd(Q)}{I'\supset I}} \lambda_{I'} \left(
\frac{|I'|}{|I|}\cdot\frac{|\varphi^{-1}(I)|}{|\varphi^{-1}(I')|}\right)^{1-\frac{2\alpha}{n}}\right)
\left(\frac{|I|}{|Q|}\right)^{1-\frac{2\alpha}{n}}
a_{\varphi^{-1}(2I)}(f)\\
&\quad = \sum_{I'\in \cd(Q)}
 \lambda_{I'}  \left(\frac{|I'|}{|Q|}\right)^{1-\frac{2\alpha}{n}}
 \left(\sum_{I\in\cd(I')}
\left(\frac{|\varphi^{-1}(I)|}{|\varphi^{-1}(I')|}\right)^{1-\frac{2\alpha}{n}}
a_{\varphi^{-1}(2I)}(f)\right),
\end{align*}
where we used the fact that, for \(I\subset I'\in\mathcal D(Q)\), the condition \(I\in\mathcal D(Q)\) is equivalent to \(I\in\mathcal D(I')\).
Applying \eqref{eq-upperfQ} from Theorem \ref{thm1-varphi-EJPX} gives the uniform estimate
\begin{align*}
 \sum_{I\in\cd(I')}
\left(\frac{|\varphi^{-1}(I)|}{|\varphi^{-1}(I')|}\right)^{1-\frac{2\alpha}{n}}
a_{\varphi^{-1}(2I)}(f)\ls \|f\|_{\mathcal Q_\alpha(\rn)}^2
 \end{align*}
with implicit constant independent of \(I'\). Substituting this into the preceding inequality, we deduce that
 \begin{align*}
\sum_{I\in\cd(Q)} \left(\frac{|I|}{|Q|}\right)^{1-\frac{2\alpha}{n}}
a_{\varphi^{-1}(2I)}(f)
\ls \|f\|_{\mathcal Q_\alpha(\rn)}^2 \sum_{I'\in \cd(Q)}
 \lambda_{I'}  \left(\frac{|I'|}{|Q|}\right)^{1-\frac{2\alpha}{n}}.
\end{align*}
Taking the infimum over all sequences \(\{\lambda_{I'}\}\) in \(\mathscr T_\beta(Q)\), we obtain
\begin{align*}
\sum_{I\in\cd(Q)} \left(\frac{|I|}{|Q|}\right)^{1-\frac{2\alpha}{n}}
a_{\varphi^{-1}(2I)}(f)
\lesssim \|f\|_{\mathcal Q_\alpha(\mathbb R^n)}^2 \|\varphi\|_{\mathscr G_{1-\frac{2\alpha}{n}}(Q)}.
\end{align*}
Further, taking the supremum over all cubes \(Q\subset\mathbb R^n\) on both sides and using \eqref{eq1-tracebdd} then gives
\begin{align}\label{eq-operNorm<gauge}
\|f\circ \varphi^{-1}\|_{\mathcal Q_\alpha(\rn)}^2
 \lesssim \|f\|_{\mathcal Q_\alpha(\mathbb R^n)}^2 \sup_{\text{cubes}\, Q\subset\rn}\|\varphi\|_{\mathscr G_{1-\frac{2\alpha}{n}}(Q)}
 \simeq \|f\|_{\mathcal Q_\alpha(\mathbb R^n)}^2 \|\varphi\|_{\mathscr G_{1-\frac{2\alpha}{n}}}.
\end{align}
Thus, \(\mathcal C_\varphi\) is bounded on \(\mathcal Q_\alpha(\rn)\), with operator norm bounded by a constant multiple of \(\|\varphi\|_{\mathscr G_{1-\frac{2\alpha}{n}}}^{1/2}\).
\end{proof}

\section{From boundedness of composition operators to finite capacity gauge}\label{sec5}

This section is devoted to the proof of the necessity part of Theorem \ref{thm-trace-gauge}.
The proof relies on constructing an explicit sequence $\{f_N\}_{N\in\nn}$ of functions in \(\mathcal Q_\alpha(\rn)\)
(see Theorem \ref{thm-upperf} below)
via the wavelet characterization (Proposition \ref{prop-waveletsQ}),
with the core difficulty of estimating $\|f_N\circ\varphi\|_{\mathcal Q_\alpha(\rn)}$.

\subsection{Three technical lemmas}\label{ss5.1}

The following lemma generalizes Lemma \ref{lem-sum-cube} from the identity map to arbitrary quasisymmetric mappings.

\begin{lemma}\label{lem-QS-est1}
  Let $p\in(1,\infty)$ and \(\varphi :\ \mathbb R^n \to \mathbb R^n\) be
  quasisymmetric, with the additional assumption that
  \(J_{\varphi} \in A_\infty(\mathbb R)\) when \(n = 1\).
  Then, there exists a positive constant $C=C(n, p,\, [J_{\varphi}]_{ A_\infty(\mathbb R^n)})$ such that for any cube $Q\subset\rn$,
  \begin{align}\label{eq-sum-varphiI}
  \sum_{I\in\cd(Q)} \left(\frac{|\varphi(I)|}{|\varphi(Q)|}\right)^p
  \le C.
  \end{align}
\end{lemma}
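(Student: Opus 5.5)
The plan is to organize the sum by dyadic generation and exploit the $A_\infty$ decay of relative $\varphi$-volumes from Proposition \ref{prop-varphi-weight}(iii). Write
$$
\sum_{I\in\cd(Q)} \left(\frac{|\varphi(I)|}{|\varphi(Q)|}\right)^p
=\sum_{k=0}^\infty \sum_{I\in\cd_k(Q)} \left(\frac{|\varphi(I)|}{|\varphi(Q)|}\right)^{p}.
$$
For fixed $k$, the cubes $I\in\cd_k(Q)$ are pairwise disjoint (up to null boundaries) and their union is $Q$. Hence $\sum_{I\in\cd_k(Q)}|\varphi(I)|=|\varphi(Q)|$, since $\varphi$ is a homeomorphism and $\varphi(\partial I)$ has measure zero: $J_\varphi\in L^1_\loc$, so $|\varphi(E)|=\int_E J_\varphi$ for null sets $E$ by absolute continuity.

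Next I use $p>1$ to peel off one factor:
$$
\sum_{I\in\cd_k(Q)} \left(\frac{|\varphi(I)|}{|\varphi(Q)|}\right)^{p}
\le \Big(\max_{I\in\cd_k(Q)}\frac{|\varphi(I)|}{|\varphi(Q)|}\Big)^{p-1}\sum_{I\in\cd_k(Q)}\frac{|\varphi(I)|}{|\varphi(Q)|}
=\Big(\max_{I\in\cd_k(Q)}\frac{|\varphi(I)|}{|\varphi(Q)|}\Big)^{p-1}.
$$
The right inequality in \eqref{eq-varphi-weight}, with $S=I\subset Q$ and $|I|/|Q|=2^{-kn}$, gives
$$
\frac{|\varphi(I)|}{|\varphi(Q)|}\le C\,2^{-kn\beta_2},
$$
where $\beta_2\in(0,1]$ and $C$ depend only on $n$ and $[J_\varphi]_{A_\infty(\rn)}$. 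Summing over $k$ gives a geometric series:
$$
\sum_{I\in\cd(Q)} \left(\frac{|\varphi(I)|}{|\varphi(Q)|}\right)^p\le C^{p-1}\sum_{k=0}^\infty 2^{-kn\beta_2(p-1)}=\frac{C^{p-1}}{1-2^{-n\beta_2(p-1)}}.
$$
This bound is uniform in $Q$ and depends only on $n$, $p$ and $[J_\varphi]_{A_\infty(\rn)}$, as required.

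The only point needing care is the justification of \eqref{eq-varphi-weight}, i.e.\ that $|\varphi(S)|=\int_S J_\varphi$ with $J_\varphi\in A_\infty$. For $n\ge2$ this follows because quasisymmetric maps are quasiconformal, hence absolutely continuous in measure, and Gehring's theorem gives $J_\varphi\in A_\infty$. For $n=1$, $\varphi$ is monotone, and the assumed $A_\infty$ property of its derivative together with $\varphi(x)-\varphi(y)=\int_y^x\varphi'$ is standard. I take this from Proposition \ref{prop-varphi-weight}, so no real obstacle remains beyond the bookkeeping of the disjoint generation sums.
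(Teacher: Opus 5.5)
Your argument is correct, but it is organized differently from the paper's.

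The paper works with the Jacobian density. It first reduces to $1<p\le r_\varphi$, using $|\varphi(I)|\le|\varphi(Q)|$. It then applies H\"older's inequality $|\varphi(I)|^p\le |I|^{p-1}\int_I J_\varphi^p$ cube by cube, and sums each generation exactly to get $2^{-kn(p-1)}|Q|^{p-1}\int_Q J_\varphi^p$. After summing the geometric series, it applies the reverse H\"older inequality \eqref{eq-RHn} once, on $Q$ itself, to bound $|Q|^{p-1}\int_Q J_\varphi^p$ by $C|\varphi(Q)|^p$.

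You instead work with the measure $E\mapsto|\varphi(E)|$. You use its exact additivity over $\cd_k(Q)$ together with $\sum_I a_I^p\le(\max_I a_I)^{p-1}\sum_I a_I$, and control the maximum by the uniform decay on the right of \eqref{eq-varphi-weight}.

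Both proofs rest on the same input: the decay exponent $\beta_2$ in \eqref{eq-varphi-weight} is, up to constants, $1-1/r_\varphi$, obtained from reverse H\"older via H\"older. What each buys:
\begin{itemize}
\item Your version needs no case split in $p$ and gives the explicit constant $C^{p-1}(1-2^{-n\beta_2(p-1)})^{-1}$. It uses only that $|\varphi(\cdot)|$ is a Borel measure with uniform power decay relative to Lebesgue measure on subcubes, so it never mentions $J_\varphi$ at all.
\item The paper's version isolates the weight-free inequality $\sum_{I\in\cd(Q)}|\varphi(I)|^p\le(1-2^{-n(p-1)})^{-1}|Q|^{p-1}\int_Q J_\varphi^p$, valid for every locally integrable density. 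The $A_\infty$ hypothesis then enters only through a single reverse H\"older step on $Q$. A similar H\"older-plus-generation-summation device reappears in Case 2 of Lemma \ref{lem-QS-est2}.
\end{itemize}

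On the null-boundary point you raise: it needs nothing beyond \eqref{eq-varphi-weight} itself, applied with $S$ a null set. It disappears entirely for half-open cubes, since $\varphi$ is a bijection and $\cd_k(Q)$ then partitions $Q$ exactly. In particular, the justification should come from absolute continuity in measure of $\varphi$, not from $J_\varphi\in L^1_{\rm loc}$.
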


\begin{proof}
Let $r_{\varphi}$ be the reverse H\"older exponent of $J_{\varphi}$.
Since $I\subset Q$, we have $|\varphi(I)|\le |\varphi(Q)|$.
If $p>r_\varphi$, then
$$
\sum_{I\in\mathcal D(Q)} \left(\frac{|\varphi(I)|}{|\varphi(Q)|}\right)^p
\le
\sum_{I\in\mathcal D(Q)} \left(\frac{|\varphi(I)|}{|\varphi(Q)|}\right)^{r_\varphi}.
$$

Thus it suffices to prove the estimate \eqref{eq-sum-varphiI}  for $1<p\le r_{\varphi}$.
By the H\"older inequality, we have
$$
|\varphi(I)|^p
=\left(\int_I  J_{\varphi}(x)\,dx\right)^p
\le |I|^{p-1}\,\left(\int_I  J_{\varphi}(x)^p\,dx\right).
$$
From this, it follows that
    \begin{align*}
 \sum_{I\in\cd(Q)}  |\varphi(I)|^p
 &=  \sum_{k=0}^\infty \sum_{I\in\cd_k(Q)}  |\varphi(I)|^p\\
&\le \sum_{k=0}^\infty     \sum_{I\in\cd_k(Q)}  |I|^{p-1}\,
\left(\int_I  J_{\varphi}(x)^p\,dx\right)\\
&= \sum_{k=0}^\infty  2^{-kn(p-1)} |Q|^{p-1}
 \left(\sum_{I\in\cd_k(Q)} \int_I  J_{\varphi}(x)^p\,dx\right)\\
 &= \sum_{k=0}^\infty  2^{-kn({p-1})} |Q|^{p-1}
 \left(\int_Q  J_{\varphi}(x)^{p}\,dx\right)\\
 &=\frac1{1-2^{-n(p-1)}}|Q|^{p}
 \left(\fint_Q  J_{\varphi}(x)^p\,dx\right).
    \end{align*}
Applying the reverse H\"older inequality \eqref{eq-RHn}  and the fact $p\le r_{\varphi}$, we obtain
\begin{align*}
|Q|^{p}
 \left(\fint_Q  J_{\varphi}(x)^p\,dx\right)
 &\le |Q|^{p}
 \left(\fint_Q  J_{\varphi}(x)^{r_\varphi}\,dx\right)^{\frac p{r_\varphi}}\\
 &
 \le C |Q|^p
 \left(\fint_Q  J_{\varphi}(x)\,dx\right)^{p}\\
 &
 =C |\varphi(Q)|^p.
\end{align*}
Then, the desired estimate in \eqref{eq-sum-varphiI} is an immediate consequence of the last two formulae.
\end{proof}

The following lemma precisely quantifies the spatial localization of the pull-back geometry under the quasisymmetric mapping.

\begin{lemma}\label{lem-QS-est0}
Let \(\varphi :\ \mathbb R^n \to \mathbb R^n\) be $\eta$-quasisymmetric.
Suppose that $\tau\in(1,\infty)$ and $\varepsilon\in(0,\infty)$. Then, there exists a positive constant $C=C(n,\tau, \varepsilon, \eta, [J_{\varphi}]_{ A_\infty(\mathbb R^n)})$ such that for any cubes $Q\subset\rn$ and $J\subset\rn$,
\begin{align}\label{eq-upward-sum}
\sum_{\gfz{I\in\mathcal D(Q)}{E_\tau(I)\supset J}}
\left(\frac{|\varphi(J)|}{|I|}\right)^{\varepsilon}
\le C,
\end{align}
where
\begin{align}\label{eq-EtauI}
E_\tau(I):=B\Bigl(\varphi^{-1}(c_I),\ \tau\operatorname{diam}\varphi^{-1}(I)\Bigr),
\end{align}
with $c_I$ being the center of the cube $I$.
\end{lemma}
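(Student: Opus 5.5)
The plan is to show that the cubes $I\in\mathcal D(Q)$ with $E_\tau(I)\supset J$ must have $\varphi^{-1}$-image comparable to or larger than $J$ in diameter. They therefore form a ``chain'' whose sizes grow geometrically, and the sum in \eqref{eq-upward-sum} is dominated by a geometric series. Note that only cubes with $\ell(I)\gtrsim$ a fixed scale can contribute. I read $|\varphi(J)|$ as being compared with $|I|$; since $J$ lives in the domain of $\varphi$ and $I$ in the target, the natural comparison is $\operatorname{diam}\varphi(J)$ versus $\ell(I)$.

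\textbf{Step 1 (inclusion to diameter comparison).} If $J\subset E_\tau(I)$, then $\varphi(J)\subset\varphi(E_\tau(I))$. By Proposition~\ref{prop-dia-doubl}(ii) applied to $\varphi^{-1}(I)\subset E_\tau(I)$, whose diameters are comparable (ratio at most $2\tau$), we get $\operatorname{diam}\varphi(E_\tau(I))\le 2\eta(2\tau)\operatorname{diam} I\simeq \ell(I)$. Hence $\operatorname{diam}\varphi(J)\le C(\eta,\tau,n)\ell(I)$. By \eqref{eq3-ball-varphi}, $|\varphi(J)|\simeq(\operatorname{diam}\varphi(J))^n$, so $|\varphi(J)|/|I|\lesssim 1$ for every contributing $I$.

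\textbf{Step 2 (counting per scale).} Fix a generation $k$ and count the $I\in\mathcal D_k(Q)$ with $E_\tau(I)\supset J$. Each such $I$ has $\varphi(E_\tau(I))\supset\varphi(J)\ni\varphi(x_J)$, where $x_J$ is a point of $J$. Moreover $\varphi(E_\tau(I))\subset B(c_I, C\ell(I))$ by Step 1 together with $c_I\in\varphi(E_\tau(I))$. So $c_I$ lies within $C2^{-k}\ell(Q)$ of the fixed point $\varphi(x_J)$. Since the centers of cubes in $\mathcal D_k(Q)$ are $2^{-k}\ell(Q)$-separated, at most $C(n,\eta,\tau)$ cubes per generation contribute.

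\textbf{Step 3 (geometric summation).} Let $k_0$ be the largest $k$ for which some cube of $\mathcal D_k(Q)$ contributes. Step 1 gives $(\operatorname{diam}\varphi(J))^n\lesssim 2^{-k_0 n}|Q|$, so for contributing $I\in\mathcal D_k(Q)$ with $k\le k_0$,
$$
\frac{|\varphi(J)|}{|I|}\lesssim \frac{2^{-k_0n}|Q|}{2^{-kn}|Q|}=2^{-(k_0-k)n}.
$$
Summing over $k\le k_0$ with at most $C$ terms per generation gives $\sum_{k\le k_0}C\,2^{-(k_0-k)n\varepsilon}\le C(1-2^{-n\varepsilon})^{-1}$, which proves \eqref{eq-upward-sum}.

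\textbf{Main obstacle.} The main point to verify carefully is the first part of Step 1: the quasisymmetric bound $\operatorname{diam}\varphi(E_\tau(I))\lesssim\ell(I)$ for the enlarged ball around $\varphi^{-1}(c_I)$. I would obtain it from the three-point condition \eqref{eq-QSM}, using a point of $\varphi^{-1}(I)$ at distance $\gtrsim\operatorname{diam}\varphi^{-1}(I)$ from $\varphi^{-1}(c_I)$ (as in the proof of Lemma~\ref{lem-varphiQI}). The $A_\infty$ dependence enters only through the volume–diameter comparison \eqref{eq3-ball-varphi} (quasisymmetry suffices there). The remaining steps are routine counting.
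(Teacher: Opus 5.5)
Your proposal is correct and follows essentially the same route as the paper: you bound $\operatorname{diam}\varphi(E_\tau(I))\le 2\eta(2\tau)\operatorname{diam} I$ via \eqref{eq-diamEF}, show that at most boundedly many cubes of each generation contribute because their centers $c_I$ lie near a fixed point of $\varphi(J)$, and finish with a geometric sum. The only cosmetic difference is that you normalize by the largest contributing generation $k_0$, whereas the paper caps the generations by $2^k\le M_{J,Q}=2\eta(2\tau)\operatorname{diam}Q/\operatorname{diam}\varphi(J)$ and sums $2^{kn\varepsilon}$ up to that bound.
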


\begin{proof}
Write
\begin{align}\label{eq1-upward-sum}
\sum_{\gfz{I\in\mathcal D(Q)}{E_\tau(I)\supset J}}
\left(\frac{|\varphi(J)|}{|I|}\right)^{\varepsilon}
&=\sum_{k=0}^\infty\sum_{\gfz{I\in\mathcal D_k(Q)}{E_\tau(I)\supset J}}
\left(\frac{|\varphi(J)|}{2^{-kn}|Q|}\right)^{\varepsilon}\\
&= \left(\sum_{k=0}^\infty 2^{kn\varepsilon}
\#\left(\left\{I\in\cd_k(Q):\ E_\tau(I)\supset J\right\}\right)\right)
\left(\frac{|\varphi(J)|}{|Q|}\right)^{\varepsilon}. \notag
\end{align}

Fix $k\in\mathbb Z_+$ and $I\in\mathcal D_k(Q)$ such that $E_\tau(I)\supset J$. By the definition of $E_\tau(I)$ in \eqref{eq-EtauI}, we have
$$
\frac{\operatorname{diam}\varphi^{-1}(I)}{\operatorname{diam}E_\tau(I)} = \frac1{2\tau}\in(0,1).
$$
Applying the quasisymmetry control estimate \eqref{eq-diamEF}, we obtain
\begin{align}\label{eq-add-k}
 \frac1{2\eta(2\tau)}\le \frac{\operatorname{diam} I}
 {
  \operatorname{diam}\varphi(E_\tau(I))
  }\le \eta( \tau^{-1}).
\end{align}
Since $\operatorname{diam} I=2^{-k}  \operatorname{diam} Q$ and
$\operatorname{diam} J\le \operatorname{diam}(E_\tau(I))$, it follows that
$$
\frac1{2\eta(2\tau)}\le \frac{ 2^{-k} \operatorname{diam} Q}
 {
  \operatorname{diam}\varphi(J)
  }.
$$
In other words,  $k$ must satisfy
\begin{align}\label{eq-k-range}
2^k\le 2\eta(2\tau) \frac{
  \operatorname{diam} Q
  }{\operatorname{diam} \varphi(J)}=:M_{J,Q}.
\end{align}

Still fix $k\in\zz_+$.
Let $\{I_i\}_i$ be the family of dyadic cubes in $\cd_k(Q)$ such that
 $E_\tau(I_i)$ contains $J$.
Note that each $E_\tau(I_i)$ is a ball centered at $c_i:=\varphi^{-1}(c_{I_i})$ with radius
$r_i:=\tau\operatorname{diam}(\varphi^{-1}(I_i))$, where
$c_{I_i}$ denotes the center of the cube $I_i$.
By \eqref{eq-ball-varphi}, we have
$$\varphi(E_\tau(I_i))\subset B\Bigl(\varphi(c_i),\, \operatorname{diam} \varphi(E_\tau(I_i))\Bigr).$$
Observe that $\varphi(c_i)=c_{I_i}$ and, by \eqref{eq-add-k} and $I_i\in\cd_k(Q)$,
$$\operatorname{diam} \varphi(E_\tau(I_i))\le 2\eta(2\tau) \operatorname{diam} I_i=2^{-k+1}\eta(2\tau) \operatorname{diam} Q.$$
From  these discussions, it follows that
$$
\varphi(J)\subset \varphi(E_\tau(I_i))\subset B\left(c_{I_i},\ 2^{-k+1}\eta(2\tau) \operatorname{diam} Q\right).
$$
Denote by   $c_J$ the center of $J$. Then, we obtain
$$
|c_{I_i}-\varphi(c_J)|\le 2^{-k+1}\eta(2\tau) \operatorname{diam} Q.
$$
Consequently, every dyadic cube $I_i\in\mathcal D_k(Q)$ satisfying $E_\tau(I_i)\supset J$ must be contained in the larger ball
$$
B\left(\varphi(c_J),\ 2^{-k+2}\eta(2\tau)\operatorname{diam} Q\right).
$$
However, such cubes $\{I_i\}_i$ are mutually disjoint, each having side length $2^{-k}\ell(Q)$,
and all of them are contained in a single ball of radius $2^{-k+2}\eta(2\tau)\ell(Q)$.
Thus, the number of the cubes $\{I_i\}_i$ is bounded by a constant depending only on $n$, $\tau$, and $\eta$. So, we conclude that
\begin{align}\label{eq-k-number}
\#\left(\left\{I\in\cd_k(Q):\ E_\tau(I)\supset J\right\}\right)\le N(n,\tau,\eta).
\end{align}

Finally, substituting the bounds \eqref{eq-k-range} and \eqref{eq-k-number} into \eqref{eq1-upward-sum} yields
\begin{align*}
\sum_{\gfz{I\in\mathcal D(Q)}{E_\tau(I)\supset J}}
\left(\frac{|\varphi(J)|}{|I|}\right)^{\varepsilon}
&\le N(n,\tau,\eta) \sum_{\{k\in \zz_+:\  2^k\le M_{J,Q}\}} 2^{kn\varepsilon}
\left(\frac{|\varphi(J)|}{|Q|}\right)^{\varepsilon}\\
&\simeq \left(M_{J,Q}\right)^{n\varepsilon} \left(\frac{|\varphi(J)|}{|Q|}\right)^{\varepsilon}\\
&\simeq 1,
\end{align*}
where in the final step we used the definition of $M_{J,Q}$ and \eqref{eq3-ball-varphi}. This completes the proof of \eqref{eq-upward-sum}.
\end{proof}

\begin{remark}
Lemma \ref{lem-QS-est0} can be interpreted as a Carleson-type estimate for the dyadic tree $\mathcal D(Q)$.
Indeed,
define an atomic measure $\mu$ on $\mathcal D(Q)$ by setting $\mu(\{I\}):=|I|^{-\varepsilon}$
for each $I\in\mathcal D(Q)$.
With this definition,  Lemma \ref{lem-QS-est0}  asserts the \emph{Carleson condition}
$$
\mu\bigl(\{I\in\mathcal D(Q): B(\varphi^{-1}(c_I), \tau\operatorname{diam}\varphi^{-1}(I))\supset J\}\bigr)
\le C\,|\varphi(J)|^{-\varepsilon}
$$
for every cube $J\subset\mathbb R^n$, where the constant $C$ is independent of  $Q$ and $J$.
\end{remark}

The following technical lemma gives a scale-localized summation estimate under distortion, which is essential for dealing with scale-separation issues.

\begin{lemma}\label{lem-QS-est2}
  Let
  \(\varphi :\ \mathbb R^n \to \mathbb R^n\) be  $\eta$-quasisymmetric,
  with the additional assumption that \(J_{\varphi} \in A_\infty(\mathbb R)\) when \(n = 1\).
   Suppose that
  $$\gamma\in(0,1)\cup (1,\infty)\quad \text{and}\quad \beta>(1-\gamma)_+.$$
Let $\tau\in(1,\infty)$. Then there exists a positive constant \(C\), depending only on
\(\tau, n, \beta, \gamma, \eta, [J_\varphi]_{A_\infty(\mathbb R^n)}\),
such that for any \(R\in\mathcal D(\mathbb R^n)\) and \(I\in\mathcal D(\mathbb R^n)\),
\begin{align}\label{eq-sum-varphi-IR}
\sum_{\gfz{J\in\mathcal D(R)}{J\subset E_\tau(I)}}
\left(\frac{|J|}{|R|}\right)^\beta
\left(\frac{|\varphi(J)|}{|I|}\right)^\gamma
\le
C \min\left\{
\left(\frac{|\varphi^{-1}(I)|}{|R|}\right)^\beta,\
\left(\frac{|\varphi(R)|}{|I|}\right)^\gamma
\right\},
\end{align}
where \(E_\tau(I)\) is the ball defined in \eqref{eq-EtauI}.
\end{lemma}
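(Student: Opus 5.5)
The plan is to sort the cubes $J$ by generation. For $k\in\mathbb Z_+$, let $\mathcal F_k$ be the set of $J\in\mathcal D_k(R)$ with $J\subset E_\tau(I)$, so that $|J|=2^{-kn}|R|$. The left side of \eqref{eq-sum-varphi-IR} is then
$$
\sum_{k\ge 0}2^{-kn\beta}\,|I|^{-\gamma}\sum_{J\in\mathcal F_k}|\varphi(J)|^\gamma .
$$
First I record two geometric facts about the ball $E:=E_\tau(I)$. By \eqref{eq3-ball-varphi} applied to $\varphi^{-1}$, and since $E$ is a $\tau$-dilate of a ball comparable to $\varphi^{-1}(I)$, we have $|E|\simeq |\varphi^{-1}(I)|$. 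By \eqref{eq-diamEF} (the argument giving \eqref{eq-add-k}) together with \eqref{eq3-ball-varphi}, we have $|\varphi(E)|\simeq |I|$, with constants depending on $n,\tau,\eta$.

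The cubes in $\mathcal F_k$ are pairwise disjoint and lie in $E\cap R$. This gives two bounds:
\begin{align*}
\#\mathcal F_k&\le \min\{2^{kn},\ |E|\,2^{kn}/|R|\},\\
\sum_{J\in\mathcal F_k}|\varphi(J)|&\le |\varphi(E\cap R)|\le \min\{|\varphi(R)|,\ C|I|\}.
\end{align*}

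\emph{Case $\gamma\in(0,1)$, so that $\beta>1-\gamma$.} Hölder's inequality (concavity of $t\mapsto t^\gamma$) gives
$$
\sum_{J\in\mathcal F_k}|\varphi(J)|^\gamma\le (\#\mathcal F_k)^{1-\gamma}\Big(\sum_{J\in\mathcal F_k}|\varphi(J)|\Big)^\gamma .
$$
For the bound $(|\varphi(R)|/|I|)^\gamma$, I use $\#\mathcal F_k\le 2^{kn}$ and $\sum_J|\varphi(J)|\le|\varphi(R)|$. The total is then at most $(|\varphi(R)|/|I|)^\gamma\sum_k 2^{-kn(\beta-1+\gamma)}$, which converges because $\beta>1-\gamma$.

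For the bound $(|\varphi^{-1}(I)|/|R|)^\beta$, I use $\sum_J|\varphi(J)|\lesssim|I|$ and split at the generation $k_0$ where $2^{-k_0n}|R|\simeq |E|$.
\begin{enumerate}[label=\textup{(\alph*)}]
\item For $k\ge k_0$, the count $\#\mathcal F_k\lesssim 2^{(k-k_0)n}$ gives a sum of order $\sum_{k\ge k_0}2^{-kn\beta}2^{(k-k_0)n(1-\gamma)}\simeq 2^{-k_0n\beta}\simeq (|\varphi^{-1}(I)|/|R|)^\beta$.
\item For $k<k_0$, the family $\mathcal F_k$ is empty, since $J\subset E$ forces $|J|\le|E|$.
\item If $|E|\gtrsim|R|$, take $k_0=0$; then the sum is $O(1)$, and $1\lesssim(|\varphi^{-1}(I)|/|R|)^\beta$ because $\beta>0$.
\end{enumerate}

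\emph{Case $\gamma>1$, so that only $\beta>0$ is assumed.} For the bound $(|\varphi(R)|/|I|)^\gamma$, I discard $(|J|/|R|)^\beta\le1$ and invoke Lemma~\ref{lem-QS-est1}, which gives $\sum_{J\in\mathcal D(R)}(|\varphi(J)|/|\varphi(R)|)^\gamma\le C$. For the other bound, I use $|\varphi(J)|\le|\varphi(E)|\lesssim|I|$, so that
$$
\sum_{J\in\mathcal F_k}(|\varphi(J)|/|I|)^\gamma\lesssim \sum_{J\in\mathcal F_k}|\varphi(J)|/|I|\lesssim 1 .
$$
Combined with $\mathcal F_k=\emptyset$ for $k<k_0$, this yields $\sum_{k\ge k_0}2^{-kn\beta}\lesssim 2^{-k_0n\beta}\simeq(|\varphi^{-1}(I)|/|R|)^\beta$.

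The main care will be needed in the comparabilities $|E|\simeq|\varphi^{-1}(I)|$ and $|\varphi(E)|\simeq|I|$, and in checking that the count of generation-$k$ cubes inside $E$ is really $\lesssim |E|/|J|$ uniformly. This needs the dyadic cubes to be genuinely contained in $E$, which is the hypothesis, so it should be routine. The $A_\infty$ assumption enters only through Lemma~\ref{lem-QS-est1} and the doubling in Proposition~\ref{prop-varphi-weight}.
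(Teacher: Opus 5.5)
Your proof is correct. It organizes the estimate differently from the paper.

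The paper covers $E_\tau(I)$ by at most $2^n$ standard dyadic cubes $Q_i$ with side length comparable to $\operatorname{diam}\varphi^{-1}(I)$, so that $|Q_i|\simeq|\varphi^{-1}(I)|$ and $|\varphi(Q_i)|\simeq|I|$. It then runs each estimate over the whole trees $\mathcal D(R)$ or $\mathcal D(Q_i)$, using Lemma \ref{lem-sum-cube} and Lemma \ref{lem-QS-est1}. You instead slice by generation. You bound each slice by counting the disjoint cubes in $E\cap R$ and using disjointness of their images, then sum a geometric series from the first generation that fits inside $E$.

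For $\gamma\in(0,1)$ the two arguments are the same Hölder idea. The paper applies Hölder once globally with weights $|J|^{\beta+\gamma}$; you apply it per generation with counting weights.

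The real difference is the bound by $(|\varphi^{-1}(I)|/|R|)^\beta$ when $\gamma>1$.
\begin{itemize}
\item The paper pulls out $\sup_J(|J|/|R|)^\beta\lesssim(|\varphi^{-1}(I)|/|R|)^\beta$. It must then sum $(|\varphi(J)|/|I|)^\gamma$ over all generations inside each $Q_i$, which costs Lemma \ref{lem-QS-est1} and hence the reverse Hölder inequality.
\item You keep the factor $2^{-kn\beta}$ and linearize using $|\varphi(J)|\le|\varphi(E)|\lesssim|I|$. Summability then comes from $\beta>0$ alone.
\end{itemize}
So in your version the $A_\infty$ hypothesis enters only through Lemma \ref{lem-QS-est1}, for the bound $(|\varphi(R)|/|I|)^\gamma$ when $\gamma>1$. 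Everything else uses only quasisymmetry, via $|E|\simeq|\varphi^{-1}(I)|$ and $|\varphi(E)|\lesssim|I|$, plus disjointness of the images $\varphi(J)$. The doubling property you cite is not actually needed.

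A small tidying: define $k_0$ as the least nonnegative integer with $2^{-k_0n}|R|\le|E|$. Then $\mathcal F_k=\emptyset$ for $k<k_0$ and $\#\mathcal F_k\le 2^{(k-k_0+1)n}$ for every $k$, so your separate case (c) is absorbed into (a).
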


\begin{proof}
Choose \(k_0\in\zz\) such that
$2^{-k_0-1}\le 2\tau\operatorname{diam}\varphi^{-1}(I)< 2^{-k_0}.$
Note that the set \(E_\tau(I)\) can be covered by finitely many dyadic cubes \(Q_1,\dots, Q_N\) in \(\cd_{k_0}(\rn)\), where \(N\le 2^n\).
Using the equivalence
\begin{align}\label{eq-mIR00}
\operatorname{diam}\varphi^{-1}(I)\simeq \operatorname{diam}(Q_i),
\end{align}
 we first apply \eqref{eq-diamEF} and then \eqref{eq3-ball-varphi}, thereby obtaining
$$
\operatorname{diam}(I) \simeq  \operatorname{diam} \varphi(Q_i)
$$
and
\begin{align}\label{eq-mIR0}
|I|\simeq |\varphi(Q_i)|.
\end{align}
Using \eqref{eq-mIR00} and \eqref{eq-mIR0}, we now prove \eqref{eq-sum-varphi-IR}
by treating the cases \(\gamma\in(1,\infty)\) and \(\gamma\in(0,1)\) separately.

\medskip

{\bf Case 1: \(\gamma\in(1,\infty)\) and \(\beta\in(0,\infty)\).\,}
Since \(\beta\in(0,\infty)\) and  $|J|\le |R|$ for all $J\in\cd(R)$,
it follows that
\begin{align}\label{eq1-n=1}
 \sum_{\gfz{J\in\cd(R)}{J\subset E_\tau(I)}}
  \left(\frac{|J|}{|R|}\right)^{\beta}
  \left(\frac{|\varphi(J)|}{|I|}\right)^{\gamma}
  \le \sum_{J\in\cd(R)}
  \left(\frac{|\varphi(J)|}{|I|}\right)^{\gamma}
  \ls
  \left(\frac{|\varphi(R)|}{|I|}\right)^{\gamma},
  \end{align}
where the last step uses Lemma \ref{lem-QS-est1} and $\gamma\in(1,\infty)$.

Next, for any $J\subset E_\tau(I)$, applying \eqref{eq3-ball-varphi} gives
$$
|J|\le | E_\tau(I)| \simeq \left[\operatorname{diam}\varphi^{-1}(I)\right]^n \simeq
|\varphi^{-1}(I)|.
$$
Consequently,
\begin{align*}
 \sum_{\gfz{J\in\cd(R)}{J\subset E_\tau(I)}}
  \left(\frac{|J|}{|R|}\right)^{\beta}
  \left(\frac{|\varphi(J)|}{|I|}\right)^{\gamma}
  \ls \left(\frac{|\varphi^{-1}(I)|}{|R|}\right)^{\beta}
  \sum_{\gfz{J\in\cd(R)}{J\subset E_\tau(I)}}
  \left(\frac{|\varphi(J)|}{|I|}\right)^{\gamma}.
  \end{align*}
For $J\subset E_\tau(I)$, we have $J\subset \cup_{i=1}^N Q_i$.
This, along with \eqref{eq-mIR0}, Lemma \ref{lem-QS-est1} and \(\gamma\in(1,\infty)\), yields
\begin{align*}
 \sum_{\gfz{J\in\cd(R)}{J\subset E_\tau(I)}}
  \left(\frac{|\varphi(J)|}{|I|}\right)^{\gamma}
  &\le \sum_{i=1}^N \sum_{\gfz{J\in\cd(R)}{J\subset Q_i}}
  \left(\frac{|\varphi(J)|}{|I|}\right)^{\gamma}
  \simeq \sum_{i=1}^N \sum_{J\in\cd(Q_i)}
  \left(\frac{|\varphi(J)|}{|\varphi(Q_i)|}\right)^{\gamma}\ls 1.
\end{align*}
 Combining the last two estimates gives
 \begin{align}\label{eq-n=1}
 \sum_{\gfz{J\in\cd(R)}{J\subset E_\tau(I)}}
  \left(\frac{|J|}{|R|}\right)^{\beta}
  \left(\frac{|\varphi(J)|}{|I|}\right)^{\gamma}
  \ls \left(\frac{|\varphi^{-1}(I)|}{|R|}\right)^{\beta}.
  \end{align}
It follows from  \eqref{eq1-n=1} and \eqref{eq-n=1} that \eqref{eq-sum-varphi-IR}
holds for the case \(\gamma\in(1,\infty)\) and \(\beta\in(0,\infty)\).

\medskip

{\bf Case 2: \(\gamma\in(0, 1)\) and \(\beta\in(1-\gamma,\infty)\).\,}
By \(\gamma\in(0, 1)\) and the H\"older inequality, we have
\begin{align}\label{eq-JphiJ}
  \sum_{\gfz{J\in\cd(R)}{J\subset E_\tau(I)}}
  |J|^{\beta }
 |\varphi(J)|^{\gamma}
  &=  \sum_{\gfz{J\in\cd(R)}{J\subset E_\tau(I)}}
 |J|^{\beta+\gamma}
  \left(\fint_J J_\varphi(x)\,dx\right)^{\gamma}\\
  &\le \left( \sum_{\gfz{J\in\cd(R)}{J\subset E_\tau(I)}}
 |J|^{\beta+\gamma}\right)^{1-\gamma}
 \left( \sum_{\gfz{J\in\cd(R)}{J\subset E_\tau(I)}}
 |J|^{\beta+\gamma}
\fint_J J_\varphi(x)\,dx\right)^{\gamma}. \notag
\end{align}

On the one hand, applying  Lemma \ref{lem-sum-cube} with the exponent
$p=\beta+\gamma\in(1,\infty)$, we obtain
\begin{align*}
\sum_{J\in\cd(R)}
 |J|^{\beta+\gamma}
 \simeq |R|^{\beta+\gamma}.
\end{align*}
On the other hand,
\begin{align*}
 \sum_{\gfz{J\in\cd(R)}{J\subset E_\tau(I)}}
 |J|^{\beta+\gamma}
\fint_J J_\varphi(x)\,dx
&\le \sum_{k=0}^\infty \sum_{J\in\cd_k(R)}
 |J|^{\beta+\gamma-1}
\int_J J_\varphi(x)\,dx\\
&= \sum_{k=0}^\infty (2^{-kn}|R|)^{\beta+\gamma-1}
\left( \sum_{J\in\cd_k(R)}
\int_J J_\varphi(x)\,dx\right)\\
&\le \sum_{k=0}^\infty (2^{-kn}|R|)^{\beta+\gamma-1}
\left(\int_{R} J_\varphi(x)\,dx\right)\\
&\simeq |R|^{\beta+\gamma-1}
|\varphi(R)|.
\end{align*}
Inserting the last two estimates into  \eqref{eq-JphiJ} yields
\begin{align*}
 \sum_{\gfz{J\in\cd(R)}{J\subset E_\tau(I)}}
  |J|^{\beta }
 |\varphi(J)|^{\gamma}
    &\ls |R|^{\beta }
 |\varphi(R)|^{\gamma}
\end{align*}
and, hence,
\begin{align}\label{eq1-n>=2}
  \sum_{\gfz{J\in\cd(R)}{J\subset E_\tau(I)}}
  \left(\frac{|J|}{|R|}\right)^{\beta }
  \left(\frac{|\varphi(J)|}{|I|}\right)^{\gamma}
  \ls \left(\frac{|\varphi(R)|}{|I|}\right)^{\gamma}.
  \end{align}

  Recall that $E_\tau(I)$ can be covered by finitely many dyadic cubes $Q_1,\dots, Q_N$ in $\cd_{k_0}(\rn)$ and, moreover, from \eqref{eq-mIR00} and \eqref{eq3-ball-varphi} it follows that
$$
|Q_i| \simeq |\varphi^{-1}(I)|
$$
for each $i=1,\dots, N$.
Consequently, applying Lemma \ref{lem-sum-cube} again gives
\begin{align*}
  \sum_{\gfz{J\in\cd(R)}{J\subset E_\tau(I)}}
 |J|^{\beta+\gamma}
 \le \sum_{i=1}^N \sum_{J\in\cd(Q_i)}
 |J|^{\beta+\gamma}
 \simeq \sum_{i=1}^N |Q_i|^{\beta+\gamma}
 \simeq |\varphi^{-1}(I)|^{\beta+\gamma}.
\end{align*}
Meanwhile, we have
\begin{align*}
 \sum_{\gfz{J\in\cd(R)}{J\subset E_\tau(I)}}
 |J|^{\beta+\gamma}
\fint_J J_\varphi(x)\,dx
&\le\sum_{i=1}^N  \sum_{\gfz{J\in\cd(R)}{J\subset Q_i}}
 |J|^{\beta+\gamma-1}
\int_J J_\varphi(x)\,dx\\
&\le \sum_{i=1}^N\sum_{k=0}^\infty \sum_{J\in\cd_k(Q_i)}
 |J|^{\beta+\gamma-1}
\int_J J_\varphi(x)\,dx\\
&= \sum_{i=1}^N\sum_{k=0}^\infty (2^{-kn}|Q_i|)^{\beta+\gamma-1}
\left( \sum_{J\in\cd_k(Q_i)}
\int_J J_\varphi(x)\,dx\right)\\
&=\sum_{i=1}^N\sum_{k=0}^\infty
(2^{-kn}|Q_i|)^{\beta+\gamma-1} \left(
\int_{Q_i} J_\varphi(x)\,dx\right)\\
&\simeq \sum_{i=1}^N |Q_i|^{\beta+\gamma-1} |\varphi(Q_i)|\\
&\simeq |\varphi^{-1}(I)|^{\beta+\gamma-1} |I|,
\end{align*}
where  the last step is due to \eqref{eq-mIR0}.
Using the last two estimates, we now continue \eqref{eq-JphiJ} as follows:
\begin{align*}
 \sum_{\gfz{J\in\cd(R)}{J\subset E_\tau(I)}}
  |J|^{\beta }
 |\varphi(J)|^{\gamma}
    &\ls |\varphi^{-1}(I)|^{\beta }
 |I|^{\gamma}.
\end{align*}
Consequently, we obtain
\begin{align}\label{eq2-n>=2}
  \sum_{\gfz{J\in\cd(R)}{J\subset E_\tau(I)}}
  \left(\frac{|J|}{|R|}\right)^{\beta }
  \left(\frac{|\varphi(J)|}{|I|}\right)^{\gamma}
  \ls \left(\frac{|\varphi^{-1}(I)|}{|R|}\right)^{\beta }.
\end{align}
Combining \eqref{eq1-n>=2} and \eqref{eq2-n>=2} yields \eqref{eq-sum-varphi-IR}  in the case \(\gamma\in(0, 1)\) and \(\beta\in(1-\gamma,\infty)\).
\end{proof}

\subsection{Wavelet characterizations of $\mathcal Q_\alpha(\rn)$}\label{ss5.2}

Let $\mathcal D(\rn)$ denote the family of standard dyadic cubes on $\rn$ (see \eqref{eq-dyadic-rn}), and set
$$
E_\ast:=\{0,1\}^n\setminus\{(\underbrace{0,\ldots,0}_{n\text{ times}})\}.
$$
 According to \cite{Daubechies1988CPAM} or \cite[Sections~3.8 and 3.9]{Meyer1992book},
there exists a family of \emph{wavelets} $\{\psi_I^{\nu}:\ I\in\mathcal D(\rn),\,\nu\in E_\ast\}$ satisfying the following properties:
\begin{enumerate}[label=(P\arabic*), ref=(P\arabic*)]
\item \label{P1} The family $\{\psi_I^{\nu}:\ I\in\mathcal D(\rn),\,\nu\in E_\ast \}$ forms an orthonormal basis of $L^2(\rn)$;

\item \label{P2} There exists a constant $m_\ast\in [1,\infty)$ such that for every $I\in \mathcal D(\rn)$ and $\nu\in E_\ast $,
$$
\operatorname{supp} \psi^\nu_I\subset m_\ast I,
$$
where $m_\ast I$ denotes the $m_\ast $-dilation of $I$ about its center;

\item \label{P3} There exists a constant $C\in(0,\infty)$ such that for all $I\in \mathcal D(\rn)$, $\nu\in E_\ast $ and $x\in\rn$,
$$
|\psi_I^\nu(x)|+\ell(I)\,|\nabla \psi^\nu_I(x)|\le C |I|^{-\frac 12};
$$

\item \label{P4} For any $I\in \mathcal D(\rn)$ and $\nu\in E_\ast$,
$$
\int_{\rn}\psi_I^\nu(x)\,dx=0.
$$

\end{enumerate}
Since the family $\{\psi_I^{\nu}:\ I\in\mathcal D(\rn),\,\nu\in E_\ast\}$ forms an orthonormal basis of $L^2(\rn)$, every function $f\in L^2(\rn)$ enjoys the following \emph{wavelet expansion}
\begin{align}\label{eq-crf}
f=\sum_{I\in \mathcal D(\rn)}\sum_{\nu\in E_\ast} \langle f,\,\psi_I^\nu \rangle \psi_I^\nu
\quad\,\text{in}\ \, L^2(\rn),
\end{align}
where the bracket $\langle \cdot,\cdot \rangle$ denotes the inner product in $L^2(\rn)$; that is, for any $f,g\in L^2(\rn)$,
$$
\langle f,\, g\rangle=\int_{\mathbb R^n} f(x)\overline{g(x)}\,dx.
$$

Now, we recall the wavelet characterizations of  $\mathcal Q_\alpha(\rn)$
established in \cite[Theorem 6.2]{EssenJansonPengXiao2000IUMJ}.
For any dyadic cube $Q\in \mathcal D(\rn)$ and any sequence
$s=\{s_I^\nu:\ I\in\mathcal D(Q),\,\nu\in E_\ast\}$, define
\begin{align}\label{eq-wavelets-T}
T_{s,\,\alpha}(Q) := \frac1{|Q|}\sum_{I\in\mathcal D(Q)}\sum_{\nu\in E_\ast} \left( \frac{|Q|}{|I|} \right)^{\frac{2\alpha}{n}} |s_{I}^\nu|^2.
\end{align}

\begin{proposition}[\cite{EssenJansonPengXiao2000IUMJ}]\label{prop-waveletsQ}
Let $\alpha\in(0,1)$. If $f\in \mathcal Q_{\alpha}(\mathbb R^n)$, then the sequence of its wavelet coefficients
$$
s_I^\nu:= \langle f,\psi_I^\nu\rangle=\int_{\mathbb R^n} f(x)\overline{\psi_I^\nu(x)}\,dx
$$
satisfies
\begin{equation}\label{eq:T-finite}
\sup_{Q\in\mathcal D(\rn)} T_{s,\,\alpha}(Q)<\infty.
\end{equation}
Conversely, every sequence $s=\{s_I^\nu:\ I\in\mathcal D(\rn),\,\nu\in E_\ast\}$ satisfying \eqref{eq:T-finite} is the sequence of wavelet coefficients of a unique (modulo constants) $f\in
\mathcal Q_{\alpha}(\mathbb R^n)$; moreover,
$$
\|f\|_{\mathcal Q_{\alpha}(\mathbb R^n)}^2
\simeq \sup_{Q\in\mathcal D(\rn)} T_{s,\,\alpha}(Q),
$$
with implicit constants depending only on $\alpha$ and $n$.
\end{proposition}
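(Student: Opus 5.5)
The plan is to prove the two directions separately. For both we reduce to estimates localized at the scale of a fixed dyadic cube $Q$, using the support, size, smoothness and cancellation properties \ref{P2}--\ref{P4} together with the mean oscillation characterization in Proposition \ref{prop-EJPX}. The restriction $\alpha\in(0,1)$ will be used exactly twice: once in the $h$-integral for the fine scales, and once in the integrability of $|x-y|^{2-n-2\alpha}$ for the coarse scales.

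\emph{Direction $f\in\mathcal Q_\alpha\Rightarrow\sup_Q T_{s,\alpha}(Q)<\infty$.}
\begin{enumerate}[label=\textup{(\alph*)}]
\item By the cancellation \ref{P4}, we may write $s_I^\nu=\int_{m_\ast I}(f-f_{m_\ast I})\overline{\psi_I^\nu}$.
\item By Cauchy--Schwarz and \ref{P3}, this gives $|s_I^\nu|^2\lesssim |I|\,a_{m_\ast I}(f)$.
\item Inserting this into \eqref{eq-wavelets-T} yields
$$
T_{s,\alpha}(Q)\lesssim \sum_{I\in\mathcal D(Q)}\Big(\frac{|I|}{|Q|}\Big)^{1-\frac{2\alpha}{n}}a_{m_\ast I}(f).
$$
\item The cubes $m_\ast I$ do not form the dyadic tree of a single cube. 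However, taking an integer $M\ge m_\ast$, each $m_\ast I$ with $I\in\mathcal D_k(Q)$ is contained in a cube $\widetilde I$ of comparable size. Here $\widetilde I$ is a union of at most $(2M)^n$ cubes of $\mathcal D_k(Q')$, for one of boundedly many translates $Q'$ of $MQ$, with bounded overlap at each scale. Since $a_E(f)\le \frac{|E'|}{|E|}a_{E'}(f)$ for $E\subset E'$, the sum is dominated by finitely many sums of the form \eqref{eq-MeanOsi} over cubes of size comparable to $Q$.
\item Proposition \ref{prop-EJPX} then gives the bound $\lesssim\|f\|_{\mathcal Q_\alpha}^2$, uniformly in $Q$.
\end{enumerate}

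\emph{Converse direction: construction of $f$.} Given $s$ with $A:=\sup_Q T_{s,\alpha}(Q)<\infty$, we define $f$ modulo constants by
$$
f=\sum_{\ell(I)\le1}\sum_\nu s_I^\nu\psi_I^\nu+\sum_{\ell(I)>1}\sum_\nu s_I^\nu\big(\psi_I^\nu-\psi_I^\nu(0)\big).
\]
Note that $T_{s,\alpha}(I)\le A$ gives $|s_I^\nu|\le A^{1/2}|I|^{1/2}$. Combined with \ref{P3}, this makes the second series converge locally uniformly. Indeed, on a fixed ball only $O(1)$ cubes per coarse scale contribute, each with gradient $\lesssim \ell(I)^{-1}$. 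The first series converges in $L^2_{\loc}$ by orthonormality \ref{P1} and the same packing bound. Uniqueness modulo constants follows from the first direction: the coefficients of a $\mathcal Q_\alpha$ function are well defined by \ref{P4}, and a function with all coefficients zero is constant.

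\emph{Converse direction: estimate of the norm.} Fix a dyadic $Q$ and estimate $|Q|^{\frac{2\alpha}{n}-1}\int_Q\int_Q\frac{|f(x)-f(y)|^2}{|x-y|^{n+2\alpha}}$. We split $f=g_1+g_2+(\text{terms vanishing on }Q)$, where $g_1$ collects the $I$ with $\ell(I)\le\ell(Q)$ and $m_\ast I\cap Q\ne\emptyset$, and $g_2$ collects the $I$ with $\ell(I)>\ell(Q)$ and $m_\ast I\cap Q\neq\emptyset$.
\begin{enumerate}[label=\textup{(\alph*)}]
\setcounter{enumi}{5}
\item For the fine part, bound the double integral by $\int_{|h|\lesssim\ell(Q)}\|\Delta_h g_1\|_{L^2}^2|h|^{-n-2\alpha}\,dh$.
\item For each $h$, prove the almost-orthogonal estimate
$$
\|\Delta_h g_1\|_2^2\lesssim\sum_I\min\{1,|h|/\ell(I)\}^2|s_I|^2.
$$
Split at $\ell(I)\le|h|$ (use orthonormality of the $\psi_I$ and of their translates) and $\ell(I)>|h|$ (write $\Delta_h\psi_I=\int_0^1 h\cdot\nabla\psi_I(\cdot+th)\,dt$ and use a Cotlar/Schur-type almost-orthogonality of the $\ell(I)\nabla\psi_I$ coming from \ref{P3}).
\item Integrating in $h$ gives $\int\min\{1,|h|/\ell\}^2|h|^{-n-2\alpha}dh\simeq\ell^{-2\alpha}$, because $0<\alpha<1$.
\item After normalization, the fine part is therefore bounded by a sum of $T_{s,\alpha}$ over the boundedly many dyadic cubes of size $\ell(Q)$ meeting $m_\ast Q$, hence by $\lesssim A$.
\end{enumerate}

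\emph{Coarse part.} By \ref{P3} and $|s_I|\le A^{1/2}|I|^{1/2}$,
$$
|g_2(x)-g_2(y)|\lesssim|x-y|\sum_{j\ge1}(2^j\ell(Q))^{-1}A^{1/2}\lesssim A^{1/2}|x-y|/\ell(Q).
$$
Hence the normalized integral is at most $A\,\ell(Q)^{-2}|Q|^{\frac{2\alpha}n-1}\int_Q\int_Q|x-y|^{2-n-2\alpha}\lesssim A$, again using $\alpha<1$.

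Combining the fine and coarse parts gives $\|f\|_{\mathcal Q_\alpha}^2\lesssim A$. Together with the first direction this yields the claimed equivalence.

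I expect the main obstacle to be the almost-orthogonality estimate for $\Delta_h g_1$ in step (g), because the gradients of the wavelets are not themselves orthogonal. If the Schur argument becomes messy, I would instead first show $\int_Q|g_1-c|^2\lesssim A|Q|$ and then use the $\mathcal Q_\alpha$ characterization via $\Psi_{\alpha,2}$ (Proposition \ref{prop-Q-equiChar}), controlling each local $L^2$-oscillation at scale $2^{-k}r$ by the coefficients at finer and coarser scales separately.
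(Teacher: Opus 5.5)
The paper does not prove this proposition; it is quoted from \cite[Theorem~6.2]{EssenJansonPengXiao2000IUMJ}. Your argument therefore has to stand on its own. Steps (a)--(c), the renormalized construction of $f$ and its convergence, and the coarse-part estimate are correct. The step you flagged yourself is where the main line fails.

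\textbf{Step (g).} A Schur/Cotlar bound for $\{\ell(I)\nabla\psi_I^\nu\}$ needs off-diagonal decay $|\langle \ell(I)\partial_i\psi_I,\ \ell(J)\partial_i\psi_J\rangle|\lesssim(\ell(J)/\ell(I))^{n/2+\delta}$ with $\delta>0$, for nearby $J$ with $\ell(J)\le\ell(I)$. That decay comes from pairing the cancellation of $\partial_i\psi_J$ against a modulus of continuity of $\partial_i\psi_I$. But \ref{P3} only bounds $\nabla\psi_I$ in size, so you get $\delta=0$; each scale then contributes $O(1)$ to the Schur sums, and they diverge. Letting $h\to0$ in (g) shows you are really claiming the endpoint bound $\|\nabla g_1\|_2^2\lesssim\sum_I\ell(I)^{-2}|s_I|^2$, which \ref{P1}--\ref{P4} do not give you.

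There are two fixes. You can take $C^{1,\epsilon}$ wavelets, which is harmless since Daubechies wavelets can be as smooth as you like. Or you can drop the pointwise-in-$h$ claim and argue scale by scale, as in your fallback:
\begin{itemize}
\item Set $G_j:=\sum_{\ell(I)=2^{-j}}s_I\psi_I$ over the index set of $g_1$, and $\sigma_j^2:=\sum_{\ell(I)=2^{-j}}|s_I|^2$.
\item Orthonormality, \ref{P3} and the bounded overlap of $\{m_\ast I\}_{\ell(I)=2^{-j}}$ give $\|\Delta_hG_j\|_2\lesssim\min\{1,2^j|h|\}\sigma_j$.
\item Apply the triangle inequality in $j$, then Young's inequality on $\mathbb Z$ with kernel $\min\{1,2^{-m}\}2^{m\alpha}$. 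This kernel is summable exactly when $0<\alpha<1$.
\item The result is $\int_{\rn}\|\Delta_hg_1\|_2^2|h|^{-n-2\alpha}\,dh\lesssim\sum_j2^{2j\alpha}\sigma_j^2$, which is all that (h)--(i) need.
\end{itemize}

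\textbf{Step (d).} If $\widetilde I$ is a union of several cubes of $\mathcal D_k(Q')$, then $a_{\widetilde I}(f)$ is not a term of any sum in \eqref{eq-MeanOsi}. Nor is it bounded by such terms: two adjacent cubes of $\mathcal D_k(Q')$ can have $Q'$ as their smallest common ancestor. Boundedly many translates of one relative grid cannot place every $m_\ast I$ inside a single comparable cube at all scales; that requires the one-third trick.

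It is simpler to use Proposition \ref{prop-Q-equiChar} with $q=2$. Let $r:=\sqrt n\,(m_\ast+1)\,\ell(Q)$. For $I\in\mathcal D_k(Q)$ and $x\in I$ you have $m_\ast I\subset B(x,2^{-k}r)$, hence $a_{m_\ast I}(f)\lesssim\inf_{c}\fint_{B(x,2^{-k}r)}|f-c|^2$. Average over $x\in I$ and sum over $I\in\mathcal D_k(Q)$ and $k\ge0$. This bounds the sum in (c) by a constant multiple of $\Psi_{\alpha,2}(f,B(c_Q,r))$, which is $\lesssim\|f\|_{\mathcal Q_\alpha(\rn)}^2$ by \eqref{eq2-KXZZ}.

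\textbf{Uniqueness.} The claim that a $\mathcal Q_\alpha$ function with all wavelet coefficients zero is constant does not follow from the first direction. It does follow from three facts: $\mathcal Q_\alpha(\rn)\subset\mathrm{BMO}(\rn)$, $H^1$--$\mathrm{BMO}$ duality, and density of finite wavelet combinations in $H^1(\rn)$ (standard for compactly supported wavelets of this regularity).

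Your converse norm estimate also treats only dyadic $Q$. This is minor: the fine/coarse splitting works verbatim for arbitrary cubes.
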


\subsection{Construction of $\mathcal Q_\alpha(\rn)$-functions
and norm estimates for their compositions with $\varphi$}\label{ss5.3}

The main aim of this section is the following wavelet-based construction of a $\mathcal Q_\alpha(\mathbb R^n)$ function, which will be crucial in the proof of Theorem \ref{thm-trace-gauge}.

\begin{theorem}\label{thm-upperf}
   Let \(\alpha \in (0, \,\min\{1,\, \frac n2\})\), $\beta_\alpha:=\frac{n-2\alpha}{n}$ and \(\varphi :\ \mathbb R^n \to \mathbb R^n\) be $\eta$-quasisymmetric, with the additional assumption that \(J_\varphi,\,J_{\varphi^{-1}} \in A_\infty(\mathbb R)\) when \(n = 1\).
  Fix $Q_0\in\mathcal D(\rn)$ and assume that $\{d_I\}_{I\in\mathcal D(Q_0)}$ satisfies the $(\beta_\alpha,\,\varphi)$-packing condition on $Q_0$.
For any $N\in\nn$, define the truncation
\begin{align}\label{eq-fN-dI}
f_N:=\sum_{\gfz{I\in\mathcal D(Q_0)}{\ell(I)\ge 2^{-N}\ell(Q_0)}}\sum_{\nu\in E_\ast} \sqrt{|I|\,d_I}\, \psi_I^\nu.
\end{align}
Then,  there exists a positive constant $C=C(\eta, n,\alpha, [J_{\varphi^{-1}}]_{A_\infty(\rn)})$
such that
 \begin{align}\label{eq-fNinQ}
\sum_{\gfz{I\in\mathcal D(Q_0)}{\ell(I)\ge 2^{-N}\ell(Q_0)}} d_I \left(\frac{|I|}{|Q_0|}\right)^{1-\frac{2\alpha}{n}}
\le C \|f_N\|_{\mathcal Q_\alpha(\rn)}^2
\end{align}
and
   \begin{align}\label{eq-fvarphi}
   \|f_N\circ\varphi\|_{\mathcal Q_\alpha(\rn)}\le C.
   \end{align}
 In particular, the constant \(C\) is  independent of \(N\), \(Q_0\) and the sequence \(\{d_I\}_{I\in\mathcal D(Q_0)}\).
\end{theorem}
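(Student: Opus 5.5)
The estimate \eqref{eq-fNinQ} should be immediate from Proposition \ref{prop-waveletsQ}. The wavelet coefficients of $f_N$ are $s_I^\nu=\sqrt{|I|d_I}$ for $I\in\mathcal D(Q_0)$ with $\ell(I)\ge 2^{-N}\ell(Q_0)$, and $0$ otherwise. Testing $T_{s,\alpha}$ on $Q=Q_0$ gives
$$
T_{s,\alpha}(Q_0)=\frac{\#E_\ast}{|Q_0|}\sum_{I}\Big(\frac{|Q_0|}{|I|}\Big)^{\frac{2\alpha}{n}}|I|\,d_I=(2^n-1)\sum_I d_I\Big(\frac{|I|}{|Q_0|}\right)^{1-\frac{2\alpha}{n}},
$$
and this is bounded by $C\|f_N\|^2_{\mathcal Q_\alpha}$.

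The real work is \eqref{eq-fvarphi}. I would estimate $\|f_N\circ\varphi\|_{\mathcal Q_\alpha}$ through the mean-oscillation form \eqref{eq-MeanOsi}. Equivalently, one could apply Theorem \ref{thm2-varphi-EJPX} with $\varphi$ replaced by $\varphi^{-1}$, or use Proposition \ref{prop-Q-equiChar} directly. It then suffices to bound, uniformly in cubes $R$ (dyadic by a standard reduction),
$$
\sum_{J\in\mathcal D(R)}\Big(\frac{|J|}{|R|}\Big)^{\beta_\alpha}a_J(f_N\circ\varphi).
$$
Write $g=f_N\circ\varphi=\sum_I\sum_\nu\sqrt{|I|d_I}\,\psi_I^\nu\circ\varphi$. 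By \ref{P2}, the function $\psi_I^\nu\circ\varphi$ is supported in $\varphi^{-1}(m_\ast I)\subset E_\tau(I)$ for a suitable $\tau=\tau(m_\ast,\eta,n)$. For a fixed $J$, split the cubes $I$ into two classes.

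\textbf{Small $I$.} Here $E_\tau(I)$ is comparable to or smaller than $J$ (roughly $|\varphi^{-1}(I)|\lesssim|J|$). For these I bound the oscillation crudely by the $L^2$ average. By \ref{P3} each term has size $\lesssim d_I^{1/2}$ on its support, and the supports at a fixed scale have bounded overlap. So the contribution is $\lesssim \fint_J\sum_{I}d_I\mathbf 1_{E_\tau(I)}$, using almost-orthogonality across scales, or Cauchy--Schwarz with a geometric weight.

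\textbf{Large $I$.} Here $E_\tau(I)\supset J$ essentially. For these I use smoothness, via quasisymmetric Hölder continuity of $\varphi$: the oscillation of $\psi_I^\nu\circ\varphi$ on $J$ is $\lesssim |I|^{-1/2}(\operatorname{diam}\varphi(J)/\ell(I))^{\theta}$ with $\theta\le1$. This gives a contribution $\lesssim\big(\sum_I d_I^{1/2}(|\varphi(J)|/|I|)^{\varepsilon}\big)^2$. Cauchy--Schwarz, together with the Carleson-type Lemma \ref{lem-QS-est0}, reduces this to $\sum_{I:E_\tau(I)\supset J} d_I(|\varphi(J)|/|I|)^{\varepsilon}$.

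Next I insert both pieces into the $R$-sum and interchange the order of summation so that the sum runs over $I$. In the small-$I$ part, for fixed $I$ the sum over $J\subset E_\tau(I)$ is controlled by Lemma \ref{lem-QS-est2} with $\gamma>1$, giving the factor $\min\{(|\varphi^{-1}(I)|/|R|)^{\beta_\alpha},\,(|\varphi(R)|/|I|)^{\gamma}\}$. In the large-$I$ part, I would take $\gamma=\varepsilon<1$ and require $\beta_\alpha>1-\varepsilon$; I will choose $\varepsilon$ close to $1$ to make this hold. The $J$-sum in that part is also controlled by Lemma \ref{lem-QS-est2}.

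In both parts the result is $\sum_I d_I\,\min\{(|\varphi^{-1}(I)|/|R|)^{\beta_\alpha},(|\varphi(R)|/|I|)^{\gamma}\}$, which must be shown to be $O(1)$. To do this, split the dyadic $I$ according to whether $|I|\le|\varphi(R)|$ or $|I|>|\varphi(R)|$.
\begin{itemize}
\item For $|I|\le|\varphi(R)|$ with $I$ meeting $\varphi(R)$, the cubes $I$ lie in boundedly many dyadic ancestors $I^\ast$ with $|I^\ast|\simeq|\varphi(R)|$. Since $|\varphi^{-1}(I^\ast)|\simeq|R|$, the first factor reduces the sum to the packing condition $\sum_{I\subset I^\ast}d_I(|\varphi^{-1}(I)|/|\varphi^{-1}(I^\ast)|)^{\beta_\alpha}\le1$. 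If $I^\ast\not\subset Q_0$, I instead apply the packing condition on $Q_0$ or its subcubes.
\item For $|I|>|\varphi(R)|$, only boundedly many cubes per scale can matter, each has $d_I\le1$ by Remark \ref{rem-packing}, and the factor $(|\varphi(R)|/|I|)^\gamma$ is geometric in the scale, so this part is summable.
\end{itemize}

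I expect the main obstacle to be the large-$I$ (low-frequency) term. There the exponent requirement $\beta>(1-\gamma)_+$ in Lemma \ref{lem-QS-est2} must be matched against the Hölder exponent obtainable for $\psi_I\circ\varphi$, and the packing is stated in $\varphi^{-1}$-volumes rather than Euclidean ones. If the exponent matching fails, my fallback is to use the $A_\infty$ bounds \eqref{eq-varphi-weight} to convert diameter ratios into volume ratios with exponents adjusted to fit.
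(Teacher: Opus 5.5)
You are following the paper's scheme, with $a_J(f_N\circ\varphi)$ in place of $|J|^{-1}|\langle f_N\circ\varphi,\psi_J^\lambda\rangle|^2$ from Lemma~\ref{lem-coefN}. The scheme is: a size bound for small $I$, a gradient bound for large $I$, Cauchy--Schwarz against the Carleson sum of Lemma~\ref{lem-QS-est0}, Fubini, Lemma~\ref{lem-QS-est2}, and then the packing condition with $d_I\le 1$. However, the exponent bookkeeping in the large-$I$ term, which you rightly flag as the crux, does not close as written. By \ref{P3}, the oscillation of $\psi_I^\nu\circ\varphi$ on $J$ is $\lesssim |I|^{-1/2}\operatorname{diam}\varphi(J)/\ell(I)\simeq |I|^{-1/2}x_I^{1/n}$, where $x_I:=|\varphi(J)|/|I|$. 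So your $\varepsilon=\theta/n\le 1/n$ is fixed by the first-order smoothness of $\psi_I$, and it cannot be chosen close to $1$ when $n\ge 2$. With your split $(\sum_I\sqrt{d_I}\,x_I^{\varepsilon})^2\le(\sum_I d_I\,x_I^{\varepsilon})(\sum_I x_I^{\varepsilon})$ you end with $\gamma=\varepsilon\le 1/n$. The hypothesis $\beta_\alpha>1-\gamma$ of Lemma~\ref{lem-QS-est2} then becomes $2\alpha<n\gamma\le 1$. Hence the argument fails for $n\ge 2$ and $\alpha\in[1/2,1)$, and this is not an artifact of the lemma. Already for $\varphi=\mathrm{id}$, the quantity $\sum_{J\in\mathcal D(R),\,J\subset E_\tau(I)}(|J|/|R|)^{\beta_\alpha}(|J|/|I|)^{\gamma}$ that you reduce to has contribution $\simeq 2^{kn(1-\beta_\alpha-\gamma)}(|I|/|R|)^{\beta_\alpha}$ from the cubes $J$ with $\ell(J)=2^{-k}\ell(I)$. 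This is not summable in $k$ when $\beta_\alpha+\gamma\le 1$. Your fallback via \eqref{eq-varphi-weight} cannot help, because those $A_\infty$ comparisons only lower exponents.

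The repair is to exploit the square. Write $x_I^{1/n}=x_I^{(1-\epsilon)/n}x_I^{\epsilon/n}$ and use $(\sum_I\sqrt{d_I}\,x_I^{1/n})^2\le(\sum_I d_I\,x_I^{(2-2\epsilon)/n})(\sum_I x_I^{2\epsilon/n})$, bounding the second factor by Lemma~\ref{lem-QS-est0}. Then $\gamma=(2-2\epsilon)/n$, and $\beta_\alpha>(1-\gamma)_+$ holds whenever $\epsilon<1-\alpha$; for $n=1$, take $\epsilon<1/2$ so that $\gamma>1$. This is exactly the paper's choice \eqref{eq-ez-range}.

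Two smaller points also need correcting. First, in the small-$I$ part, the $J$ paired with a fixed $I$ are those with $\kappa J\supset\varphi^{-1}(I)$, not those with $J\subset E_\tau(I)$. The $J$-sum there is an upward geometric series, handled by Lemma~\ref{lem-QS-est0} with $\varphi$ and $\varphi^{-1}$ interchanged, as in the paper's treatment of ${\rm Z}_1$; it is not an application of Lemma~\ref{lem-QS-est2}. It converges only if any loss $\delta$ from a weighted Cauchy--Schwarz satisfies $\delta<2\alpha/n$. Second, the supremum in \eqref{eq-MeanOsi} cannot in general be restricted to standard dyadic cubes. For example, $\mathbf 1_{(0,\infty)}\in\mathcal Q_\alpha(\mathbb R)$ for $\alpha\in(0,1/2)$, yet $a_J(\mathbf 1_{(0,\infty)})=0$ for every standard dyadic interval $J$. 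So in the mean-oscillation route you must handle arbitrary cubes $R$, and re-check Lemma~\ref{lem-QS-est2}, which is stated for dyadic $R$. The paper avoids this by using Proposition~\ref{prop-waveletsQ}, whose supremum genuinely runs over $\mathcal D(\mathbb R^n)$.
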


In order to prove Theorem \ref{thm-upperf}, we start with the following remark and then present two lemmas.

\begin{remark}\label{rem-0extension}
Fix a dyadic cube $Q_0\in\mathcal D(\mathbb R^n)$ and suppose that $\{d_I\}_{I\in\mathcal D(Q_0)}$ satisfies the $(\beta_\alpha,\,\varphi)$-packing condition on $Q_0$. By \eqref{eq-packing}, this means
$$
\sup_{R\in\mathcal D(Q_0)}\sum_{I\in\mathcal D(R)} d_I\left(\frac{|\varphi^{-1}(I)|}{|\varphi^{-1}(R)|}\right)^{\beta_\alpha} \le 1.
$$
Note that for $R\in\mathcal D(Q_0)$, the condition $I\in\mathcal D(R)$ is equivalent to $I\in\mathcal D(Q_0)$ and $I\subset R$.
We extend the sequence $\{d_I\}_{I\in\mathcal D(Q_0)}$ to a sequence indexed by $\mathcal D(\mathbb R^n)$ via setting
$$d_I:=0\quad \text{as}\quad I\notin\mathcal D(Q_0).$$
It is straightforward to verify that this zero-extended sequence satisfies the same $(\beta_\alpha,\,\varphi)$-packing condition on every dyadic cube $Q \in \mathcal D(\mathbb R^n)$.
Indeed, since any two dyadic cubes are either disjoint or nested, there are only the following three cases: $Q \cap Q_0 = \emptyset$, $Q \subset Q_0$, or $Q_0 \subset Q$.
\begin{enumerate}[label=\textup{(\roman*)}]
  \item  If $Q \cap Q_0 = \emptyset$, then for any $R \in \mathcal D(Q)$ and $I \in \mathcal D(R)$, we have $d_I = 0$; hence all terms in the $(\beta_\alpha,\,\varphi)$-packing condition vanish.

  \item If $Q \subset Q_0$, then for any $R \in \mathcal D(Q)$, the assumption on $Q_0$ implies that
      $$
      \sum_{I \in \mathcal D(R)} d_I \left(\frac{|\varphi^{-1}(I)|}{|\varphi^{-1}(R)|}\right)^{\beta_\alpha} \le 1.
      $$

  \item Now suppose $Q_0 \subset Q$. For any $R \in \mathcal D(Q)$ with $R \cap Q_0 = \emptyset$, case (i) shows that all terms in the $(\beta_\alpha,\,\varphi)$-packing condition vanish. For any $R \in \mathcal D(Q)$ with $R \subset Q_0$, the desired estimate follows immediately from the assumption on $Q_0$, just as in case (ii).
         For those dyadic cubes $R \in \mathcal D(Q)$ such that $R \supset Q_0$, we have $|\varphi^{-1}(R)| \ge |\varphi^{-1}(Q_0)|$ and, hence,
    \begin{align*}\sum_{I\in\cd(R)} d_I\left(\frac{|\varphi^{-1}(I)|}{|\varphi^{-1}(R)|}\right)^{\beta_\alpha}
    &= \sum_{I\in\cd(R)\cap \cd(Q_0)} d_I\left(\frac{|\varphi^{-1}(I)|}{|\varphi^{-1}(R)|}\right)^{\beta_\alpha}\\
    &
    \le
    \sum_{I\in\cd(Q_0)} d_I\left(\frac{|\varphi^{-1}(I)|}{|\varphi^{-1}(Q_0)|}\right)^{\beta_\alpha}\\
    &
    \le
    1,
    \end{align*}
which consequently yields the packing condition on $Q$.
\end{enumerate}
\end{remark}

The following off-diagonal estimate for wavelets, which is adapted to the quasisymmetric  mapping $\varphi$, will be used in the next subsection.

\begin{lemma}\label{lem-off-diag}
Let $\varphi:\ \rn\to\rn$ be $\eta$-quasisymmetric,  with the additional assumption that \(J_{\varphi},\, J_{\varphi^{-1}} \in A_\infty(\mathbb R)\) when \(n = 1\).
Then, there exists a positive constant $C$, such that for any $I,J\in\cd(\rn)$
and $\nu,\lambda\in E_\ast$,
\begin{align}\label{eq-off-diag}
\left|\langle \psi_I^\nu \circ\varphi,\ \psi_J^\lambda\rangle \right|
&\le C\left(\frac{|J|}{|I|}\right)^\frac 12 \ \min\left\{
\frac{|\varphi^{-1}(I)|}{|J|},\ \ \left(\frac{|\varphi(J)|}{|I|}\right)^\frac1n
\right\}.
\end{align}
\end{lemma}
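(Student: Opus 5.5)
The plan is to prove the two bounds in \eqref{eq-off-diag} separately and take the minimum. Both come from the wavelet properties \ref{P2}--\ref{P4}, applied to the composed function $g:=\psi_I^\nu\circ\varphi$. The support of $g$ is $\varphi^{-1}(\operatorname{supp}\psi_I^\nu)\subset\varphi^{-1}(m_\ast I)$, and by doubling (Proposition \ref{prop-varphi-weight}(iv)) $|\varphi^{-1}(m_\ast I)|\lesssim|\varphi^{-1}(I)|$.

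\textbf{First bound} $\bigl(\frac{|J|}{|I|}\bigr)^{1/2}\frac{|\varphi^{-1}(I)|}{|J|}$. This is a size estimate. We have $|\langle g,\psi_J^\lambda\rangle|\le\|g\|_{L^\infty}\|\psi_J^\lambda\|_{L^\infty}\,|\operatorname{supp} g|$. Using \ref{P3}, this is at most $C|I|^{-1/2}|J|^{-1/2}|\varphi^{-1}(I)|$, which is exactly the first quantity. No cancellation is needed.

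\textbf{Second bound} $\bigl(\frac{|J|}{|I|}\bigr)^{1/2}\bigl(\frac{|\varphi(J)|}{|I|}\bigr)^{1/n}$. Here I use the vanishing moment \ref{P4} of $\psi_J^\lambda$ together with the oscillation of $g$ on the support $m_\ast J$:
$$
|\langle g,\psi_J^\lambda\rangle|=\Bigl|\int_{m_\ast J}\bigl(g(x)-g(c_J)\bigr)\overline{\psi_J^\lambda(x)}\,dx\Bigr|\le C|J|^{1/2}\sup_{x\in m_\ast J}|g(x)-g(c_J)|.
$$
By the gradient bound in \ref{P3}, $\psi_I^\nu$ is Lipschitz with constant $C|I|^{-1/2}\ell(I)^{-1}$, so
$$
|g(x)-g(c_J)|\le C|I|^{-1/2}\ell(I)^{-1}\operatorname{diam}\varphi(m_\ast J)\lesssim |I|^{-1/2}\ell(I)^{-1}|\varphi(J)|^{1/n}.
$$
The last step uses \eqref{eq-diamEF} to compare $\operatorname{diam}\varphi(m_\ast J)$ with $\operatorname{diam}\varphi(J)$, and then \eqref{eq3-ball-varphi}. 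Since $\ell(I)=|I|^{1/n}$, the product is $C|J|^{1/2}|I|^{-1/2}(|\varphi(J)|/|I|)^{1/n}$, which is the second quantity. I also keep the trivial bound $|g|\le C|I|^{-1/2}$, so that $|g(x)-g(c_J)|\lesssim |I|^{-1/2}\min\{1,(|\varphi(J)|/|I|)^{1/n}\}$; the Lipschitz bound alone suffices for the statement as written.

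The main subtlety is the justification of the gradient bound. $\psi_I^\nu$ is smooth, so $|\psi_I^\nu(u)-\psi_I^\nu(v)|\le\|\nabla\psi_I^\nu\|_\infty|u-v|$ holds for any points $u,v$, with no differentiability of $\varphi$ required. This is why the argument only needs $\operatorname{diam}\varphi(m_\ast J)$ and the quasisymmetric diameter control. The constants depend only on $n$, $\eta$, $m_\ast$, the wavelet constants, and the doubling constants of $J_\varphi$ and $J_{\varphi^{-1}}$; in the case $n=1$ these doubling constants come from the assumed $A_\infty(\mathbb R)$ conditions. Taking the minimum of the two estimates gives \eqref{eq-off-diag}.
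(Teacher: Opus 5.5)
Your proposal is correct and follows essentially the paper's proof. The first bound is the same size estimate over $\varphi^{-1}(m_\ast I)$ followed by doubling. The second bound uses, as the paper does, the vanishing moment of $\psi_J^\lambda$, the Lipschitz bound from \ref{P3}, and $\operatorname{diam}\varphi(m_\ast J)\lesssim|\varphi(J)|^{1/n}$; the only cosmetic difference is that you compare $\operatorname{diam}\varphi(m_\ast J)$ with $\operatorname{diam}\varphi(J)$ directly via \eqref{eq-diamEF} rather than through the doubling property of Proposition~\ref{prop-varphi-weight}(iv).
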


\begin{proof}
On the one hand, by \ref{P2} and \ref{P3}, we have
\begin{align}\label{eq1-off-diag}
 \left|\langle \psi_I^\nu \circ\varphi,\ \psi_J^\lambda\rangle \right|
 &\le  \int_{\varphi^{-1}(m_\ast I)} |\psi_I^\nu(\varphi(x))|\, |\psi_J^\lambda(x) |\,dx\\
 &\ls\frac{|\varphi^{-1}(m_\ast I)|}{\sqrt{|I||J|}}\notag \\
 &\ls \frac{|\varphi^{-1}(I)|}{\sqrt{|I||J|}}, \notag
\end{align}
where in the last step we used the doubling property of $J_{\varphi^{-1}}$ from Proposition \ref{prop-varphi-weight}.

On the other hand, denote by $c_J$ the center of the dyadic cube $J$. Moreover, by \eqref{eq3-ball-varphi} and Proposition \ref{prop-varphi-weight}(iv), we have
$$
\left(\frac{|\varphi(J)|}{|I|}\right)^{\frac 1 n} \simeq \frac{\operatorname{diam}(\varphi(J))}{\ell(I)}
\simeq \frac{\operatorname{diam}(\varphi(m_\ast J))}{\ell(I)}.
$$
From these facts, together with \ref{P4} and \ref{P3}, it follows that
\begin{align}\label{eq2-off-diag}
 \left|\langle \psi_I^\nu \circ\varphi,\ \psi_J^\lambda\rangle \right|
 &=\left|\int_\rn
 \left(\psi_I^\nu\big(\varphi(x)\big)-\psi_I^\nu\big(\varphi(c_J)\big)\right)
 \, \overline{\psi_J^\lambda(x)}\, dx\right|\\
 &\le  \int_{m_\ast J} \|\nabla \psi_I^\nu\|_{L^\infty(\rn)} |\varphi(x)-\varphi(c_J)|\, |\psi_J^\lambda(x) |\,dx \notag\\
 &\ls\frac{\operatorname{diam}(\varphi(m_\ast J))}{\ell(I)}\sqrt{\frac {|J|}{|I|}} \notag\\
 &\ls \left(\frac{|\varphi(J)|}{|I|}\right)^\frac1n \sqrt{\frac {|J|}{|I|}}. \notag
\end{align}
Combining \eqref{eq1-off-diag} and \eqref{eq2-off-diag} yields the desired estimate in \eqref{eq-off-diag}.
\end{proof}

\begin{lemma}\label{lem-coefN}
Let all the assumptions be as in Theorem \ref{thm-upperf}. Suppose that
\begin{align}\label{eq-ez-range}
0<\varepsilon <\min\left\{\frac 12, \ 1-\alpha,\  \frac{2\alpha}{n}\right\}.
\end{align}
Then, there exist sufficiently large positive constants $\kappa,\tau$ and $C$,
depending only on $n$, $\alpha$, $\eta$, $\varepsilon$, $[J_\varphi]_{A_\infty(\rn)}$ and $ [J_{\varphi^{-1}}]_{A_\infty(\rn)}$, such that
for any $N\in\nn$, $J\in\cd(\rn)$ and $\lambda\in E_\ast$,
\begin{align}\label{eq-coefN}
\left| \langle f_N\circ\varphi,\, \psi_J^\lambda\rangle \right|^2
\le C|J|
\left(
\sum_{\gfz{I\in\mathcal D_{\le N}(Q_0)}{I\subset \varphi(\kappa J)}} d_I \left(\frac{|\varphi^{-1}(I)|}{|J|}\right)^{1-\varepsilon}
+\sum_{\gfz{I\in\mathcal D_{\le N}(Q_0)}{E_\tau(I)\supset J}}
d_I \left(\frac{|\varphi(J)|}{|I|}\right)^{\frac {2-2\varepsilon} n} \right),
\end{align}
where $E_\tau(I)$ is the $\varphi^{-1}$-ball defined  in \eqref{eq-EtauI}, namely,
\begin{align*}
E_\tau(I):=B\Bigl(\varphi^{-1}(c_I),\ \tau\operatorname{diam}\varphi^{-1}(I)\Bigr),
\end{align*}
with $c_I$ being the center of the cube $I$.
\end{lemma}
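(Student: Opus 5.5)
We expand the coefficient by linearity. Writing $\mathcal D_{\le N}(Q_0)$ for the dyadic subcubes of $Q_0$ with $\ell(I)\ge 2^{-N}\ell(Q_0)$, we have
$$
\langle f_N\circ\varphi,\psi_J^\lambda\rangle=\sum_{I\in\mathcal D_{\le N}(Q_0)}\sum_{\nu\in E_\ast}\sqrt{|I|\,d_I}\,\langle\psi_I^\nu\circ\varphi,\psi_J^\lambda\rangle .
$$
Only those $I$ with $\varphi^{-1}(m_\ast I)\cap m_\ast J\neq\emptyset$ contribute. We split these cubes into two families according to the relative size of $\varphi^{-1}(I)$ and $J$.

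\emph{Family $\mathcal S$ (small preimages).} This family consists of the $I$ with $\operatorname{diam}\varphi^{-1}(I)\le c\,\ell(J)$ for a small constant $c$. By the support condition, every such $I$ lies in $\varphi(\kappa J)$ once $\kappa$ is chosen large depending on $m_\ast$, $\eta$ and $n$; this uses \eqref{eq-cube-varphi} and the doubling property. For these cubes we apply the first branch of Lemma \ref{lem-off-diag}, which gives a contribution of size $|I|^{1/2}d_I^{1/2}(|J|/|I|)^{1/2}|\varphi^{-1}(I)|/|J|$. The sum over $I$ is then handled by Cauchy--Schwarz with the splitting
$$
\Bigl(\sum_{I\in\mathcal S} d_I\Bigl(\tfrac{|\varphi^{-1}(I)|}{|J|}\Bigr)^{1-\varepsilon}\Bigr)\cdot\Bigl(\sum_{I\in\mathcal S}\Bigl(\tfrac{|\varphi^{-1}(I)|}{|J|}\Bigr)^{1+\varepsilon}\Bigr),
$$
multiplied by $|J|$. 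The second factor should be bounded by a constant. To see this, apply Lemma \ref{lem-QS-est1} with $\varphi^{-1}$ in place of $\varphi$ and exponent $p=1+\varepsilon>1$, working inside the boundedly many dyadic cubes of size about $\ell(\varphi(\kappa J))$ that cover $\varphi(\kappa J)$. Here $|\varphi^{-1}(\text{such a cube})|\simeq|J|$ by quasisymmetry and the $A_\infty$ property of $J_{\varphi^{-1}}$. This produces the first sum in \eqref{eq-coefN}.

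\emph{Family $\mathcal L$ (large preimages).} This family consists of the $I$ with $\operatorname{diam}\varphi^{-1}(I)>c\,\ell(J)$. Since $m_\ast J$ meets $\varphi^{-1}(m_\ast I)$, and the latter lies in a ball around $\varphi^{-1}(c_I)$ of radius comparable to $\operatorname{diam}\varphi^{-1}(I)$ by \eqref{eq-diamEF}, we get $J\subset E_\tau(I)$ once $\tau$ is large. For these cubes we use the second branch of Lemma \ref{lem-off-diag}, which gives the term $d_I^{1/2}|J|^{1/2}(|\varphi(J)|/|I|)^{1/n}$. Cauchy--Schwarz is applied with the splitting
$$
\Bigl(\tfrac{|\varphi(J)|}{|I|}\Bigr)^{1/n}=\Bigl(\tfrac{|\varphi(J)|}{|I|}\Bigr)^{\frac{1-\varepsilon}{n}}\cdot\Bigl(\tfrac{|\varphi(J)|}{|I|}\Bigr)^{\frac{\varepsilon}{n}} ,
$$
and the factor $\sum_{E_\tau(I)\supset J}(|\varphi(J)|/|I|)^{2\varepsilon/n}$ is bounded uniformly by Lemma \ref{lem-QS-est0} with exponent $2\varepsilon/n$. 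This yields the second sum in \eqref{eq-coefN}.

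The finite sum over $\nu\in E_\ast$ costs only a constant, and $(a+b)^2\le 2(a^2+b^2)$ combines the two families.

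The main obstacle is the geometric bookkeeping in this dichotomy. We must check that the cubes with small preimage genuinely sit inside $\varphi(\kappa J)$, and that the uniform bound for the $(1+\varepsilon)$-power sum holds for cubes of $\mathcal D(Q_0)$ that are not aligned with $J$. We would treat the latter as in Step 4 of Lemma \ref{lem-dyadic-packing}, by comparing against a bounded number of standard dyadic cubes of comparable size, using doubling for $\varphi^{-1}$-images. The constraint \eqref{eq-ez-range} is not really needed here beyond $\varepsilon>0$. It presumably enters later, when these bounds are summed using Lemma \ref{lem-QS-est2}.
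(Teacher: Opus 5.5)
Your proposal is correct and follows essentially the same argument as the paper. It uses the same support-based dichotomy: your threshold $\operatorname{diam}\varphi^{-1}(I)\le c\,\ell(J)$ is equivalent, via \eqref{eq3-ball-varphi}, to the paper's $|\varphi^{-1}(I)|\le|J|$. It then applies the two branches of Lemma \ref{lem-off-diag} and the same Cauchy--Schwarz splittings, closed by Lemma \ref{lem-QS-est1} for $\varphi^{-1}$ with $p=1+\varepsilon$ and by Lemma \ref{lem-QS-est0} with exponent $2\varepsilon/n$.

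Your alignment worry is unnecessary. Each $I\in\mathcal D(Q_0)$ with $I\subset\varphi(\kappa J)$ is a standard dyadic cube of side less than $2^{-k_0}$, so it already lies in one of the covering cubes $Q_i\in\mathcal D_{k_0}(\mathbb R^n)$. You are also right that \eqref{eq-ez-range} is only needed later, in the proof of Theorem \ref{thm-upperf}.
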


\begin{proof}
  For any $N\in\nn$, set
\begin{align}\label{eq-cdN}
\cd_{\le N}(Q_0):=\left\{I\in\mathcal D(Q_0):\ \ell(I)\ge 2^{-N}\ell(Q_0)\right\} =\bigcup_{k=0}^N \cd_k(Q_0).
\end{align}
 Since (see \eqref{eq-fN-dI})
$$
f_N=\sum_{I\in\mathcal D_{\le N}(Q_0)}\sum_{\nu\in E_\ast} \sqrt{|I|\,d_I}\, \psi_I^\nu,$$
it follows that
\begin{align}\label{eq-fNpsi}
\langle f_N \circ\varphi,\, \psi_J^\lambda\rangle
=\sum_{I\in\mathcal D_{\le N}(Q_0)}\sum_{\nu\in E_\ast} \sqrt{|I|\,d_I}\, \langle \psi_I^\nu  \circ\varphi,\, \psi_J^\lambda\rangle.
\end{align}
By the support conditions of wavelets in \ref{P2}, we observe that the bracket \(\langle \psi_I^\nu,\, \psi_J^\lambda\rangle\) is nonzero only if
$$
\left(\varphi^{-1}(m_\ast I)\right)\cap (m_\ast J)\neq \emptyset.
$$
In view of \eqref{eq-ball-varphi} and \eqref{eq3-ball-varphi},
one of the following two cases must occur:
\begin{enumerate}
  \item[\rm (a)] If \(|\varphi^{-1}(I)| \le |J|\), then there exist constants \(\lambda\in(1,\infty)\) and \(\kappa\in(1,\infty)\) such that
    $$
    \operatorname{diam}\varphi^{-1}(I)\le \lambda\,\ell(J)
    \quad\text{and}\quad
    \varphi^{-1}(I)\subset \kappa J.
    $$
    Note that \(\varphi^{-1}(I)\subset \kappa J\) if and only if \(I\subset \varphi(\kappa J)\).

  \item[\rm (b)] If \(|\varphi^{-1}(I)| > |J|\), then there exist constants \(\gamma\in(1,\infty)\) and \(\tau\in(1,\infty)\) such that
    $$
    \ell(J)\le \gamma\,\operatorname{diam}\varphi^{-1}(I)
    $$
    and
    $$
    J\subset  B\left(\varphi^{-1}(c_I),\ \tau \operatorname{diam}\varphi^{-1}(I)\right)
    =E_\tau(I).
    $$
\end{enumerate}
Applying these facts and Lemma \ref{lem-off-diag} to \eqref{eq-fNpsi}, we obtain
\begin{align}\label{eq-fNpsi-sum}
|\langle f_N \circ\varphi,\ \psi_J^\lambda\rangle|
&\lesssim \sum_{\gfz{I\in\mathcal D_{\le N}(Q_0)}{I\subset \varphi(\kappa J)}}
\sqrt{|J|\,d_I}\, \left(\frac{|\varphi^{-1}(I)|}{|J|}\right)
+
\sum_{\gfz{I\in\mathcal D_{\le N}(Q_0)}{E_\tau(I)\supset J}}
\sqrt{|J|\,d_I} \,\left(\frac{|\varphi(J)|}{|I|}\right)^{\frac 1n}\\
&=: {\rm I}+{\rm II},\notag
\end{align}
where the first sum follows from case (a), and the second from case (b).
We next turn to estimating the squares of the terms \({\rm I}\) and \({\rm II}\), respectively.

For term ${\rm I}$, by the H\"older inequality, we write
\begin{align*}
  {\rm I}^2
  &= |J|
  \left(\sum_{\gfz{I\in\mathcal D_{\le N}(Q_0)}{I\subset \varphi(\kappa J)}}
\sqrt{d_I}\,\left(\frac{|\varphi^{-1}(I)|}{|J|}\right)^{\varepsilon}\, \left(\frac{|\varphi^{-1}(I)|}{|J|}\right)^{1-\varepsilon}
\right)^2\\
&\le |J|
\left(\sum_{\gfz{I\in\mathcal D_{\le N}(Q_0)}{I\subset \varphi(\kappa J)}} d_I \left(\frac{|\varphi^{-1}(I)|}{|J|}\right)^{1-\varepsilon}\right)
\left(
\sum_{\gfz{I\in\mathcal D_{\le N}(Q_0)}{I\subset \varphi(\kappa J)}}
\left(\frac{|\varphi^{-1}(I)|}{|J|}\right)^{2\varepsilon}
\left(\frac{|\varphi^{-1}(I)|}{|J|}\right)^{1-\varepsilon}\right)\\
&= |J|
\left(\sum_{\gfz{I\in\mathcal D_{\le N}(Q_0)}{I\subset \varphi(\kappa J)}} d_I \left(\frac{|\varphi^{-1}(I)|}{|J|}\right)^{1-\varepsilon}\right)
\left(
\sum_{\gfz{I\in\mathcal D_{\le N}(Q_0)}{I\subset \varphi(\kappa J)}}
\left(\frac{|\varphi^{-1}(I)|}{|J|}\right)^{1+\varepsilon}\right).
\end{align*}
For the second term on the right-hand side, choose \(k_0\in\zz\) such that
$$
2^{-k_0-1}\le \operatorname{diam}\varphi(\kappa J)< 2^{-k_0}.
$$
Then the set \(\varphi(\kappa J)\) can be covered by finitely many dyadic cubes \(Q_1,\dots, Q_N\) in \(\cd_{k_0}(\rn)\), with \(N\le 2^n\). By \eqref{eq-diamEF} and the fact that
$$
\operatorname{diam}\varphi(\kappa J) \simeq 2^{-k_0}\simeq \operatorname{diam} Q_i,
$$
we deduce that
$$
\operatorname{diam} J \simeq \operatorname{diam} (\kappa J)\simeq  \operatorname{diam}\varphi^{-1}(Q_i)
$$
and, hence, by \eqref{eq3-ball-varphi},
$$
|J| \simeq |\varphi^{-1}(Q_i)|.
$$
Combining these facts with Lemma \ref{lem-QS-est1} yields
\begin{align*}
  \sum_{\gfz{I\in\mathcal D_{\le N}(Q_0)}{I\subset \varphi(\kappa J)}}
\left(\frac{|\varphi^{-1}(I)|}{|J|}\right)^{1+\varepsilon}
\le \sum_{i=1}^N \sum_{I\in\mathcal D(Q_i)}
\left(\frac{|\varphi^{-1}(I)|}{|J|}\right)^{1+\varepsilon}
\lesssim \sum_{i=1}^N \left(\frac{|\varphi^{-1}(Q_i)|}{|J|}\right)^{1+\varepsilon} \simeq 1.
\end{align*}
Therefore, we conclude that
\begin{align}\label{eq-termI}
  {\rm I}^2
  \ls |J|
\sum_{\gfz{I\in\mathcal D_{\le N}(Q_0)}{I\subset \varphi(\kappa J)}} d_I \left(\frac{|\varphi^{-1}(I)|}{|J|}\right)^{1-\varepsilon}.
\end{align}

Now, we turn to term ${\rm II}$. Again, applying the H\"older inequality gives
\begin{align*}
  {\rm II}^2 &
  =|J|\left(\sum_{\gfz{I\in\mathcal D_{\le N}(Q_0)}{E_\tau(I)\supset J}}
\sqrt{\,d_I} \,\left(\frac{|\varphi(J)|}{|I|}\right)^{\frac 1n}\right)^2 \\
  &=|J|\left(\sum_{\gfz{I\in\mathcal D_{\le N}(Q_0)}{E_\tau(I)\supset J}}
\sqrt{\,d_I} \left(\frac{|\varphi(J)|}{|I|}\right)^{\frac {1-\varepsilon} n} \,\left(\frac{|\varphi(J)|}{|I|}\right)^{\frac \varepsilon n}\right)^2 \\
&\le |J|\left(\sum_{\gfz{I\in\mathcal D_{\le N}(Q_0)}{E_\tau(I)\supset J}}
d_I \left(\frac{|\varphi(J)|}{|I|}\right)^{\frac {2-2\varepsilon} n} \right)
\left(
\sum_{\gfz{I\in\mathcal D_{\le N}(Q_0)}{E_\tau(I)\supset J}}
\left(\frac{|\varphi(J)|}{|I|}\right)^{\frac {2\varepsilon} n}
\right).
\end{align*}
Applying Lemma \ref{lem-QS-est0} yields
$$
\sum_{\gfz{I\in\mathcal D_{\le N}(Q_0)}{E_\tau(I)\supset J}}
\left(\frac{|\varphi(J)|}{|I|}\right)^{\frac {2\varepsilon} n}
\le \sum_{\gfz{I\in\mathcal D(Q_0)}{E_\tau(I)\supset J}}
\left(\frac{|\varphi(J)|}{|I|}\right)^{\frac {2\varepsilon} n} \ls 1.
$$
Thus, we obtain
\begin{align}\label{eq-termII}
  {\rm II}^2 \ls |J|\sum_{\gfz{I\in\mathcal D_{\le N}(Q_0)}{E_\tau(I)\supset J}}
d_I \left(\frac{|\varphi(J)|}{|I|}\right)^{\frac {2-2\varepsilon} n}.
\end{align}

Finally, taking the square of both sides in \eqref{eq-fNpsi-sum}, and then applying the estimates \eqref{eq-termI} and \eqref{eq-termII}, we obtain \eqref{eq-coefN}.
\end{proof}

Now, applying Lemmas \ref{lem-off-diag} and \ref{lem-coefN}, together with Proposition \ref{prop-waveletsQ}, we show Theorem \ref{thm-upperf}.

\begin{proof}[Proof of Theorem \ref{thm-upperf}]
For any $N\in\nn$, let $\cd_{\le N}(Q_0)$ be as defined in \eqref{eq-cdN}.
To prove \eqref{eq-fNinQ}, using the orthogonality of the wavelets
$\{\psi_I^\nu:\ I\in\mathcal D(\rn),\,\nu\in E_\ast\}$, as stated in \ref{P1}, we obtain
\begin{align*}
 \langle f_N,\,\psi_{I}^{\nu}\rangle=
 \begin{cases}
  \sqrt{|I|\,d_{I}} \qquad & \text{if } I\in\mathcal D_{\le N}(Q_0)\ \, \text{and}\ \, \nu\in E_\ast;\\
  0 \qquad  & \text{otherwise}.
 \end{cases}
\end{align*}
Combining this with the first part of Proposition \ref{prop-waveletsQ}
and noting that $\# E_\ast =2^n-1$, we conclude that
\begin{align}\label{eq-lower}
(2^n-1)\sum_{I\in\mathcal D_{\le N}(Q_0)} d_I \left(\frac{|I|}{|Q_0|}\right)^{1-\frac{2\alpha}{n}}
&=
\frac1{|Q_0|}\sum_{I\in\mathcal D_{\le N}(Q_0)}\sum_{\nu\in E_\ast} \left( \frac{|I|}{|Q_0|} \right)^{-\frac{2\alpha}{n}} |\langle f_N,\,\psi_I^\nu\rangle|^2\\
&\ls \|f_N\|_{\mathcal Q_\alpha(\rn)}^2. \notag
\end{align}
This proves \eqref{eq-fNinQ}.

It remains to show \eqref{eq-fvarphi}. In view of \eqref{eq-wavelets-T}, for any dyadic cube $R\in\cd(\rn)$, we define
$$
T_{f_N\circ\varphi}(R):= \frac1{|R|}\sum_{J\in\mathcal D(R)}\sum_{\lambda\in E_\ast} \left( \frac{|J|}{|R|} \right)^{-\frac{2\alpha}{n}} |\langle f_N\circ\varphi,\ \psi_J^\lambda\rangle|^2.
$$
By the second part of Proposition \ref{prop-waveletsQ}, the proof of \eqref{eq-fvarphi} reduces to showing that for every  $R\in\cd(\rn)$,
   \begin{align}\label{eq1-fvarphi}
   T_{f_N\circ\varphi}(R) \ls 1.
   \end{align}
To prove \eqref{eq1-fvarphi}, we fix $R\in\cd(\rn)$. Then, applying \eqref{eq-coefN} and $\# E_\ast=2^n-1$ yields
\begin{align}\label{eq2-fvarphi}
T_{f_N\circ\varphi}(R)
&\ls\sum_{J\in\mathcal D(R)} \left( \frac{|J|}{|R|} \right)^{1-\frac{2\alpha}{n}} \left(\sum_{\gfz{I\in\mathcal D(Q_0)}{I\subset \varphi(\kappa J)}} d_I \left(\frac{|\varphi^{-1}(I)|}{|J|}\right)^{1-\varepsilon}\right)\\
&\qquad
+\sum_{J\in\mathcal D(R)} \left( \frac{|J|}{|R|} \right)^{1-\frac{2\alpha}{n}}
\left(\sum_{\gfz{I\in\mathcal D(Q_0)}{E_\tau(I)\supset J}}
d_I \left(\frac{|\varphi(J)|}{|I|}\right)^{\frac {2-2\varepsilon} n} \right)\notag\\
&=: {\rm Z}_1+{\rm Z}_2, \notag
\end{align}
where $\varepsilon\in(0,1/2)$ is a small number satisfying \eqref{eq-ez-range}. Thus, it remains to show that ${\rm Z}_i\ls 1$ for $i=1,2$.

To estimate ${\rm Z}_1$, applying the Fubini theorem gives
\begin{align*}
  {\rm Z}_1 &=  \sum_{\gfz{I\in\mathcal D(Q_0)}{I\subset \varphi(\kappa R)}}
  d_I \left(\frac{|\varphi^{-1}(I)|}{|R|} \right)^{1-\frac{2\alpha}{n}}
  \left(\sum_{\gfz{J\in\mathcal D(R)}{\varphi(\kappa J)\supset I}} \left(\frac{|\varphi^{-1}(I)|}{|J|}\right)^{\frac{2\alpha}{n}-\varepsilon}\right).
\end{align*}
By \eqref{eq-ball-varphi} and \eqref{eq-diamEF}, we easily see that $$\varphi(\kappa J)\subset B\Big(\varphi(c_J), \  \gamma \operatorname{diam}(\varphi(J))\Big)$$
for some positive constant $ \gamma=\gamma(n,\kappa,\eta)$.
Applying this, together with the fact that $\frac{2\alpha}{n}-\varepsilon >0$ (see \eqref{eq-ez-range}),
and using \eqref{eq-upward-sum} (with $\varphi^{-1}$ replaced by $\varphi$), we get
$$
\sum_{\gfz{J\in\mathcal D(R)}{\varphi(\kappa J)\supset I}} \left(\frac{|\varphi^{-1}(I)|}{|J|}\right)^{\frac{2\alpha}{n}-\varepsilon}
\ls 1.
$$
Consequently, we have
\begin{align}\label{eq-Z11}
  {\rm Z}_1 \ls   \sum_{\gfz{I\in\mathcal D(Q_0)}{I\subset \varphi(\kappa R)}}
  d_I \left(\frac{|\varphi^{-1}(I)|}{|R|} \right)^{1-\frac{2\alpha}{n}}.
\end{align}
Following the same arguments as in the proof of \eqref{eq-termI},
we cover $\varphi(\kappa R)$ by finitely many dyadic cubes \(Q_1,\dots, Q_N\) with \(N\le 2^n\) such that
$$
\operatorname{diam}\varphi(\kappa R) \simeq  \operatorname{diam} Q_i
$$
and
$$
|R| \simeq |\varphi^{-1}(Q_i)|.
$$
Since the zero-extended sequence $\{d_I\}_{I\in\cd(\rn)}$ satisfies the $(\beta_\alpha,\,\varphi)$-packing condition on every dyadic cube $Q\in\cd(\rn)$
(see Remark \ref{rem-0extension}), it follows that
 \begin{align}\label{eq-Z1}
  {\rm Z}_1
  &\ls  \sum_{i=1}^N \sum_{\gfz{I\in\mathcal D(Q_0)}{I\subset Q_i}}
  d_I \left(\frac{|\varphi^{-1}(I)|}{|R|} \right)^{1-\frac{2\alpha}{n}}\\
  &\simeq \sum_{i=1}^N \sum_{\gfz{I\in\mathcal D(Q_0)}{I\subset Q_i}}
  d_I \left(\frac{|\varphi^{-1}(I)|}{|\varphi^{-1}(Q_i)|} \right)^{1-\frac{2\alpha}{n}}\notag\\
  &\ls 1. \notag
\end{align}

Next, we estimate ${\rm Z}_2$.  Applying the Fubini theorem, we write
\begin{align}\label{eq-Z20}
{\rm Z}_2 & =\sum_{I\in\cd(Q_0)}
d_I  \left(
\sum_{\gfz{J\in\mathcal D(R)}{J\subset E_\tau(I)}} \left( \frac{|J|}{|R|} \right)^{1-\frac{2\alpha}{n}}
\left(\frac{|\varphi(J)|}{|I|}\right)^{\frac {2-2\varepsilon} n}
\right).
\end{align}
Note  that all the cubes $Q_0, I, J$ in \eqref{eq-Z20} belong to $\mathcal D(\rn)$.
For any \(I\in\mathcal D(Q_0)\),
the inner sum over \(J\) in \eqref{eq-Z20} is nonzero only if there exists some \(J\in\mathcal D(R)\) with \(J\subset E_\tau(I)\);
otherwise, \(I\) contributes nothing.
Thus, the summation over \(I\) in \eqref{eq-Z20} is restricted to those \(I\) for which
\begin{align*}
R\cap E_\tau(I)=R\cap B\Bigl(\varphi^{-1}(c_I),\
\tau\operatorname{diam}\varphi^{-1}(I)\Bigr)\neq\emptyset.
\end{align*}
We then estimate the inner sum over \(J\) by applying Lemma \ref{lem-QS-est2} with
with $\beta=1-\frac{2\alpha}{n}$ and $\gamma=\frac {2-2\varepsilon} n$.
Clearly, \(\beta\in(0,1)\).
From the choice of \(\varepsilon\) in \eqref{eq-ez-range},
it follows that $\gamma>1$ when $n=1$, and $\gamma\in(1-\beta,1)$ when
 $n\ge 2$.
Thus Lemma \ref{lem-QS-est2} is applicable for such \(\beta\) and \(\gamma\).
Consequently, we obtain
\begin{align}\label{eq-Z21}
{\rm Z}_2 \ls \sum_{\gfz{I\in\cd(\rn)}{R\cap E_\tau(I) \neq\emptyset}}
d_I
\min\left\{ \left(\frac{|\varphi^{-1}(I)|}{|R|}\right)^{1-\frac{2\alpha}{n}},\ \
   \left(\frac{|\varphi(R)|}{|I|}\right)^{\frac{2-2\varepsilon}n}
  \right\}.
  \end{align}
For $I\in\cd(\rn)$ satisfying $R\cap E_\tau(I) \neq\emptyset$, we examine the cases \(|\varphi^{-1}(I)|\le |R|\) and \(|\varphi^{-1}(I)|> |R|\) separately.

\begin{itemize}
  \item {\bf The case $R\cap E_\tau(I) \neq\emptyset$ and $|\varphi^{-1}(I)|\le |R|$.}
  In this case, applying \eqref{eq3-ball-varphi} gives
$$\operatorname{diam} \varphi^{-1}(I) \ls \ell(R).$$
Note that $E_\tau(I)$ is ball of diameter $2\tau\operatorname{diam} \varphi^{-1}(I)$.
This, together with $R\cap E_\tau(I) \neq\emptyset$, implies that
$$
E_\tau(I)\subset \lambda_1 R
$$
for some large constant $\lambda_1 \in(1,\infty)$.
  We may assume that  \(\tau>1\), so \(\varphi^{-1}(I)\subset E_\tau(I)\) by \eqref{eq-cube-varphi}. Thus,
\begin{align}\label{eq-case1}
\varphi^{-1}(I)\subset E_\tau(I)\subset \lambda_1 R.
\end{align}

  \item {\bf The case $R\cap E_\tau(I) \neq\emptyset$ and $|\varphi^{-1}(I)|> |R|$.}
    In this case, in view of \eqref{eq3-ball-varphi}, we have
$$\operatorname{diam} \varphi^{-1}(I) \gs \ell(R).$$
Again, using the fact that $R\cap E_\tau(I)\neq\emptyset$, we can find a constant $C_\ast\in(0,\infty)$ satisfying
\begin{align}\label{eq-R-inclu1}
R\subset C_\ast E_\tau(I)=B\Bigl(\varphi^{-1}(c_I),\ C_\ast \tau \operatorname{diam}\varphi^{-1}(I)\Bigr).
\end{align}
Let \(c_{\eta,n}\) be the constant determined in \eqref{eq-cube-varphi}. Choose $\lambda_2\in(1,\infty)$  sufficiently large so that
$$
\frac 2{\lambda_2}<\eta^{-1}\left(\frac{c_{\eta,n}}{C_\ast \tau}\right).
$$
For such $\lambda_2$, applying  \eqref{eq-diamEF} and the monotonicity of \(\eta\) yields
$$
\frac{\operatorname{diam} \varphi^{-1}(I)}{\operatorname{diam} \varphi^{-1}(\lambda_2I)} \le \eta\left(\frac 2{\lambda_2}\right)<\frac{c_{\eta,n}}{C_\ast \tau}.
$$
Combining this with \eqref{eq-R-inclu1} and \eqref{eq-ball-varphi} gives
\begin{align}\label{eq-R-inclu2}
R\subset  B\Bigl(\varphi^{-1}(c_I),\ c_{\eta, n} \operatorname{diam}\varphi^{-1}(\lambda_2I)\Bigr)
\subset \varphi^{-1}(\lambda_2I).
\end{align}
\end{itemize}
Based on the above two cases, we now  the summation in \eqref{eq-Z21} accordingly and write
\begin{align*}
{\rm Z}_2
&\ls  \sum_{\gfz{I\in\cd(\rn),\,R\cap E_\tau(I) \neq\emptyset}{|\varphi^{-1}(I)|\le |R|}}
d_I  \left(\frac{|\varphi^{-1}(I)|}{|R|}\right)^{1-\frac{2\alpha}{n}}
+\sum_{\gfz{I\in\cd(\rn),\,R\cap E_\tau(I) \neq\emptyset}{|\varphi^{-1}(I)|> |R|}}
d_I
   \left(\frac{|\varphi(R)|}{|I|}\right)^{\frac{2-2\varepsilon}n}
\\
&=:{\rm Z}_{21}+{\rm Z}_{22}. \notag
\end{align*}
Similar to the treatment of the term ${\rm Z}_1$ in \eqref{eq-Z11}, applying \eqref{eq-case1} yields
$$
{\rm Z}_{21}
\le \sum_{\gfz{I\in\cd(\rn)}{I\subset\varphi(\lambda_1 R)}}
d_I\left( \frac{|\varphi^{-1}(I)|}{|R|} \right)^{1-\frac{2\alpha}{n}}
\ls 1.
$$
For the term ${\rm Z}_{22}$, using the fact $d_I\le 1$ from Remark \ref{rem-packing}, together with \eqref{eq-R-inclu2},  we obtain
$$
{\rm Z}_{22}
\le \sum_{\gfz{I\in\cd(\rn)}{R\subset \varphi^{-1}(\lambda_2I)}}
\left(\frac{|\varphi(R)|}{|I|}\right)^{\frac{2-2\varepsilon}n}
= \sum_{k\in\zz} \sum_{\gfz{I\in\cd_k(\rn)}{R\subset \varphi^{-1}(\lambda_2I)}}
\left(\frac{|\varphi(R)|}{2^{-kn}}\right)^{\frac{2-2\varepsilon}n}.
$$
By the same argument as in the proof of \eqref{eq-k-number}, we have
\begin{align*}
\#\left(\left\{I\in\cd_k(\rn):\ \varphi^{-1}(\lambda_2I)\supset R\right\}\right)\le N(n,\lambda_2,\eta).
\end{align*}
For \(I\in\mathcal D_k(\mathbb R^n)\), the condition \(R\subset \varphi^{-1}(\lambda_2I)\) is equivalent to \(\varphi(R)\subset \lambda_2I\), which implies
$$|\varphi(R)|\le (\lambda_2 2^{-k})^n.$$
From these facts, it follows that
\begin{align*}
  {\rm Z}_{22}
  \le N(n,\lambda_2,\eta) \sum_{\{k\in\zz:\  2^{kn}\le \lambda_2^n /|\varphi(R)|\}}
\left(\frac{|\varphi(R)|}{2^{-kn}}\right)^{\frac{2-2\varepsilon}n}
\ls 1.
\end{align*}
Therefore, combining the estimates for \(Z_{21}\) and \(Z_{22}\) yields
\begin{align}\label{eq-Z2}
{\rm Z}_2 \ls 1.
\end{align}

Finally, inserting \eqref{eq-Z1} and \eqref{eq-Z2} into \eqref{eq2-fvarphi} gives
$T_{f_N\circ\varphi}(R) \lesssim 1.$
This proves \eqref{eq1-fvarphi}, and hence completes the proof of \eqref{eq-fvarphi}.
\end{proof}

\subsection{Proof of the necessity part of Theorem \ref{thm-trace-gauge}}\label{ss5.4}

\begin{proof}[Proof of Theorem \ref{thm-trace-gauge}: the necessity part]
For simplicity, denote
$$
\|\mathcal C_\varphi\|:=\|\mathcal C_\varphi\|_{\mathcal Q_\alpha(\mathbb R^n)\to \mathcal Q_\alpha(\mathbb R^n)}.
$$
By the assumption of Theorem \ref{thm-trace-gauge}, for every $g\in\mathcal Q_\alpha(\rn)$, we have
\begin{align}\label{eq-bdd}
\|g\circ\varphi^{-1}\|_{\mathcal Q_\alpha(\rn)}\le \|\mathcal C_\varphi\|\, \|g\|_{\mathcal Q_\alpha(\rn)}.
\end{align}

Fix a dyadic cube $Q_0\in\mathcal D(\rn)$ and suppose that $\{d_I\}_{I\in\mathcal D(Q_0)}$
satisfies the $(\beta_\alpha,\,\varphi)$-packing condition on $Q_0$,
where $\beta_\alpha:=(n-2\alpha)/n$.
Associated to this sequence, define the function $f_N$ as in \eqref{eq-fN-dI}.
Since each $f_N$ in \eqref{eq-fN-dI} is a finite linear combination of compactly supported smooth functions, we have $f_N\in \mathcal Q_\alpha(\rn)$.

Now, we apply Theorem \ref{thm-upperf} to the function
$g:=f_N\circ\varphi.$
On the one hand, applying \eqref{eq-fvarphi} yields
$$
\|g\|_{\mathcal Q_\alpha(\rn)}=\|f_N\circ\varphi\|_{\mathcal Q_\alpha(\rn)}\ls 1.
$$
On the other hand, applying \eqref{eq-fNinQ}  yields
$$
\|g\circ\varphi^{-1}\|_{\mathcal Q_\alpha(\rn)}^2
=\|f_N\|_{\mathcal Q_\alpha(\rn)}^2
\gs \sum_{\gfz{I\in\mathcal D(Q_0)}{\ell(I)\ge 2^{-N}\ell(Q_0)}}
d_I \left(\frac{|I|}{|Q_0|}\right)^{1-\frac{2\alpha}{n}}.
$$
In the last two estimates, the implicit constants are independent of $Q_0$, $N$,
and the sequence $\{d_I\}_{I\in\mathcal D(Q_0)}$.
Combining these two estimates with \eqref{eq-bdd}, we obtain
\begin{align}\label{eq1-bdd}
\sum_{\gfz{I\in\mathcal D(Q_0)}{\ell(I)\ge 2^{-N}\ell(Q_0)}}
d_I \left(\frac{|I|}{|Q_0|}\right)^{1-\frac{2\alpha}{n}}
\lesssim \|\mathcal C_\varphi\|^2.
\end{align}
Recall that $\beta_\alpha=(n-2\alpha)/{n}$.
Letting $N\to\infty$ in \eqref{eq1-bdd} and taking the supremum over
all sequences $\{d_I\}_{I\in\mathcal D(Q_0)}$ satisfying the $(\beta_\alpha,\,\varphi)$-packing condition on $Q_0$, we deduce$$
\|\varphi\|_{\mathscr G_{1-\frac{2\alpha}n}(Q_0)} \ls \|\mathcal C_\varphi\|^2.
$$
This, together with Theorem \ref{thm-dyadic-gauge}, yields
\begin{align}\label{eq-operNorm>gauge}
\|\varphi\|_{\mathscr G_{1-\frac{2\alpha}n}}
&\simeq \sup_{Q_0\in\mathcal D(\rn)}\|\varphi\|_{\mathscr G_{1-\frac{2\alpha}n}(Q_0)}
\lesssim \|\mathcal C_\varphi\|^2.
\end{align}
Thus, we complete the proof of the necessity part of Theorem \ref{thm-trace-gauge}.
\end{proof}

\begin{remark}
  From \eqref{eq-operNorm<gauge} and \eqref{eq-operNorm>gauge}, it follows  the equivalence
  in \eqref{eq-norm-gauge}.
\end{remark}

\section{Applications}\label{sec6}

\subsection{Composition stability of finite capacity gauge}\label{ss6.1}

As a consequence of Theorem \ref{thm-trace-gauge}, under suitable conditions, the finiteness of the $\beta$-capacity gauge $\|\cdot\|_{\mathscr G_\beta}$ is closed under composition.

\begin{corollary}\label{cor1-gauge}
Let $\beta\in(1-\frac 2n,\, 1)$ for $n\ge 2$, or $\beta\in(0,1)$ for $n=1$. Assume that $\varphi$ and $\psi$ are quasisymmetric mappings on $\rn$, with the additional assumption that $J_{\varphi}, J_\psi, J_{\varphi^{-1}}, J_{\psi^{-1}}\in A_\infty(\mathbb R)$ when $n=1$. If
$\|\varphi\|_{\mathscr G_\beta}<\infty$ and $\|\psi\|_{\mathscr G_\beta}<\infty$,
then
$
\|\varphi\circ\psi\|_{\mathscr G_\beta}<\infty.
$
\end{corollary}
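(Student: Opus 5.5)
The plan is to pass through the operator characterization in Theorem \ref{thm-trace-gauge} rather than work directly with the tree capacity. First I translate the exponent: set $\alpha:=\frac n2(1-\beta)$. If $n\ge 2$ and $\beta\in(1-\frac2n,1)$, then $\alpha\in(0,1)=(0,\min\{1,n/2\})$. If $n=1$ and $\beta\in(0,1)$, then $\alpha\in(0,\frac12)=(0,\min\{1,n/2\})$. In both cases $\beta=1-\frac{2\alpha}{n}$ and $\alpha$ is in the admissible range of Theorem \ref{thm-trace-gauge}.

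Next I check that $\varphi\circ\psi$ satisfies the hypotheses of that theorem. It is quasisymmetric, since the composition of $\eta_1$- and $\eta_2$-quasisymmetric maps is $\eta_1\circ\eta_2$-quasisymmetric. When $n=1$, I also need $J_{\varphi\circ\psi}$ and $J_{(\varphi\circ\psi)^{-1}}=J_{\psi^{-1}\circ\varphi^{-1}}$ to lie in $A_\infty(\mathbb R)$. For increasing homeomorphisms of $\mathbb R$, $J_h\in A_\infty$ is equivalent to the measure $h'\,dx$ being $A_\infty$-related to Lebesgue measure, i.e.\ to the condition \eqref{eq-Afzweight} on intervals. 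This relation is transitive, and it is preserved under pushforward by a map whose derivative is $A_\infty$. Concretely, \eqref{eq-varphi-weight} applied to $\psi$ and then to $\varphi$, with intervals mapping to intervals, gives
$$\frac{|\varphi(\psi(S))|}{|\varphi(\psi(Q))|}\lesssim\Big(\frac{|\psi(S)|}{|\psi(Q)|}\Big)^{\beta_2}\lesssim\Big(\frac{|S|}{|Q|}\Big)^{\beta_2\beta_2'}.$$
This is the characterization \eqref{eq-Afzweight} for $J_{\varphi\circ\psi}$, and the same argument works for the inverse. Doing this cleanly, including the fact that $J_{\varphi\circ\psi}=(J_\varphi\circ\psi)J_\psi$ a.e.\ is locally integrable, is the one point I expect to need care. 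If it is cumbersome, I would simply note that the $n=1$ composition of $A_\infty$-homeomorphisms is a classical fact, stated for instance in the Jones BMO theory.

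The main step is then immediate. By the sufficiency part of Theorem \ref{thm-trace-gauge}, finiteness of $\|\varphi\|_{\mathscr G_\beta}$ and $\|\psi\|_{\mathscr G_\beta}$ gives that $\mathcal C_\varphi$ and $\mathcal C_\psi$ are bounded on $\mathcal Q_\alpha(\mathbb R^n)$. Since
$$\mathcal C_{\varphi\circ\psi}f=f\circ\psi^{-1}\circ\varphi^{-1}=\mathcal C_\varphi(\mathcal C_\psi f),$$
we get $\mathcal C_{\varphi\circ\psi}=\mathcal C_\varphi\mathcal C_\psi$, which is bounded with
$$\|\mathcal C_{\varphi\circ\psi}\|\le\|\mathcal C_\varphi\|\,\|\mathcal C_\psi\|.$$
Finally, the necessity part of Theorem \ref{thm-trace-gauge}, applied to $\varphi\circ\psi$, yields $\|\varphi\circ\psi\|_{\mathscr G_\beta}<\infty$. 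Using \eqref{eq-norm-gauge} it also gives the quantitative bound
$$\|\varphi\circ\psi\|_{\mathscr G_\beta}\lesssim\|\varphi\|_{\mathscr G_\beta}\,\|\psi\|_{\mathscr G_\beta},$$
with constants depending on $n,\beta$, the distortion functions and the $A_\infty$ constants.
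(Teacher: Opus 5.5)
Your proposal is correct and follows essentially the same route as the paper. Like the paper, you check that $\varphi\circ\psi$ is quasisymmetric, and for $n=1$ that the $A_\infty$ ratio estimate composes with exponent equal to the product of the two exponents. You then combine $\mathcal C_{\varphi\circ\psi}=\mathcal C_\varphi\mathcal C_\psi$ with both directions of Theorem \ref{thm-trace-gauge}. Your extra quantitative bound $\|\varphi\circ\psi\|_{\mathscr G_\beta}\lesssim\|\varphi\|_{\mathscr G_\beta}\|\psi\|_{\mathscr G_\beta}$, which the paper does not state, also follows correctly from \eqref{eq-norm-gauge}.
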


\begin{proof}
Observe that the composition of two quasisymmetric mappings remains quasisymmetric (see, for  example, \cite[Proposition~10.6]{Heinonen2001book}). Consequently, \(\varphi\circ\psi\) is quasisymmetric on \(\mathbb R^n\).

Let \(n=1\). Recall that \(A_\infty(\mathbb R)\) weights are characterized by the estimate in \eqref{eq-Apweight}.
Since \(J_\varphi, J_\psi\in A_\infty(\mathbb R)\), it follows from \eqref{eq-Apweight} that there exist constants $C_\varphi,\,C_\psi\in(0,\infty)$
and $\delta_\varphi,\,\delta_{\psi}\in(0,1)$ such that,
for any interval \(I\subset\rr\) and any measurable  set \(S\subset I\),
$$
\frac{|\varphi(S)|}{|\varphi(I)|} \le C_\varphi \left(\frac{|S|}{|I|}\right)^{\delta_{\varphi}}
\quad\, \text{and}\quad \,
\frac{|\psi(S)|}{|\psi(I)|} \le C_\psi \left(\frac{|S|}{|I|}\right)^{\delta_{\psi}}.
$$
Consequently, for any interval \(I\subset\mathbb R\) and any measurable set \(S\subset I\),
\begin{align*}
 \frac{|\varphi\circ\psi(S)|}{|\varphi\circ\psi(I)|} \le C_\varphi
 \left(\frac{|\psi(S)|}{|\psi(I)|}\right)^{\delta_{\varphi}}
 \le C_\varphi C_\psi^{\delta_\varphi}  \left(\frac{|S|}{|I|}\right)^{\delta_{\psi}\delta_{\varphi}}.
\end{align*}
Thus \(J_{\varphi\circ\psi}\) satisfies the estimate in \eqref{eq-Apweight}, and hence \(J_{\varphi\circ\psi}\in A_\infty(\mathbb R)\).
The same argument, applied to \(\varphi^{-1}\) and \(\psi^{-1}\) under the additional assumptions \(J_{\varphi^{-1}},\, J_{\psi^{-1}}\in A_\infty(\mathbb R)\) when \(n=1\), yields \(J_{(\varphi\circ\psi)^{-1}}\in A_\infty(\mathbb R)\).

Now, by the assumption on \(\beta\), there exists some \(\alpha\in(0,\min\{1,n/2\})\) such that \(\beta=1-\frac{2\alpha}{n}\). Applying Theorem \ref{thm-trace-gauge} yields that both \(\mathcal C_\varphi\) and \(\mathcal C_\psi\) are bounded on \(\mathcal Q_\alpha(\mathbb R^n)\). Since
$$
\mathcal C_{\varphi\circ\psi}(f)
= f\circ (\varphi\circ\psi)^{-1}
= f\circ \psi^{-1}\circ \varphi^{-1}
= \mathcal C_\varphi(f\circ \psi^{-1})
= (\mathcal C_\varphi \circ \mathcal C_\psi)(f),
$$
it follows that \(\mathcal C_{\varphi\circ\psi}\) is also bounded on \(\mathcal Q_\alpha(\mathbb R^n)\). Applying Theorem \ref{thm-trace-gauge} again, we obtain
$$
\|\varphi\circ\psi\|_{\mathscr G_{1-\frac{2\alpha}{n}}}<\infty,
$$
as desired.
\end{proof}

\subsection{Invariance of removability under finite capacity gauge}\label{ss6.2}

\begin{definition}\label{def-capQ}
Let \(\alpha \in (0,\,\min\{1,\,n/2\})\). For any precompact open set $\Omega\subset\rn$ and any compact set $K\subset\Omega$, define
\begin{align}\label{def-cap-compact}
    \operatorname{Cap}_{\mathcal Q_\alpha}(K;\,\Omega)
    =
    \inf\left\{\|f\|_{\mathcal  Q_\alpha(\rn)}^2:\ f\in C_c(\Omega),\ f\ge 1 \text{ on } K\right\}.
\end{align}
    Further, for any open set $O\Subset\Omega$,  define
\begin{align*}
\operatorname{Cap}_{\mathcal Q_\alpha}(O;\,\Omega)
:= \sup_{\text{compact}\,K\subset O}  \operatorname{Cap}_{\mathcal Q_\alpha}(K;\,\Omega).
\end{align*}
Moreover, for an arbitrary set $E\Subset\Omega$, define
\begin{align}\label{def-cap-general}
\operatorname{Cap}_{\mathcal Q_\alpha}(E;\,\Omega)
:= \inf_{\gfz{\text{open}\,O\supset E}{O\Subset\Omega}} \operatorname{Cap}_{\mathcal Q_\alpha}(O;\,\Omega).
\end{align}
\end{definition}

A standard argument shows that, for a compact set \(K\subset\mathbb R^n\), the definition in \eqref{def-cap-general} coincides with the original definition in \eqref{def-cap-compact}.

\begin{definition}\label{def-removable}
  Let \(\alpha \in (0,\,\min\{1,\,n/2\})\) and  $\Omega\subset\rn$ be a precompact open set. A subset $E\Subset\Omega$ is called
  \emph{$\mathcal Q_\alpha$-removable in $\Omega$} if
  $\operatorname{Cap}_{\mathcal Q_\alpha}(E;\,\Omega)=0.$
\end{definition}

The following theorem is a geometric application of Theorem \ref{thm-trace-gauge}, which proves that finiteness of the capacity gauge suffices for transferring removability from Euclidean sets to their distorted images.

\begin{theorem}\label{thm-remov}
  Let \(\alpha \in (0,\,\min\{1,\,n/2\})\) and \(\varphi:\ \mathbb R^n \to \mathbb R^n\)
 be  $\eta$-quasisymmetric,
 with the additional assumption that \(J_\varphi, J_{\varphi^{-1}} \in A_\infty(\mathbb R)\)
 when \(n = 1\).
 Then, for any precompact open set $\Omega\subset\rn$ and any subset $E\Subset\Omega$,
\begin{align}\label{eq-cap-gauge}
\operatorname{Cap}_{\mathcal  Q_\alpha}(\varphi(E);\,\varphi(\Omega)) \le C \|\varphi\|_{\mathscr G_{1-\frac{2\alpha}{n}}}\, \operatorname{Cap}_{\mathcal Q_\alpha}(E;\,\Omega),
\end{align}
where \(C=C(n,\alpha,\eta, [J_{\varphi}]_{A_\infty(\rn)}, [J_{\varphi^{-1}}]_{A_\infty(\rn)})\) is a positive constant.
 Consequently, if
 $\|\varphi\|_{\mathscr G_{1-\frac{2\alpha}{n}}}<\infty,$
 then
 \begin{align}\label{eq-cap-revom}
 \text{\(E\) is $\mathcal Q_\alpha$-removable in $\Omega$}
 \ \ \Rightarrow\ \
 \text{\(\varphi(E)\) is $\mathcal Q_\alpha$-removable in $\varphi(\Omega)$}.
 \end{align}
\end{theorem}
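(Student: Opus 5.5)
The plan is to transfer admissible test functions through $\mathcal C_\varphi$, combining the sufficiency estimate \eqref{eq-operNorm<gauge} with the three-step structure of Definition~\ref{def-capQ}: compact sets first, then open sets, then arbitrary sets. We may assume $\|\varphi\|_{\mathscr G_{1-\frac{2\alpha}{n}}}<\infty$ and $\operatorname{Cap}_{\mathcal Q_\alpha}(E;\Omega)<\infty$, since otherwise \eqref{eq-cap-gauge} is trivial.

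\textbf{Step 1 (compact sets).} Let $K\subset\Omega$ be compact. Take any $f\in C_c(\Omega)$ with $f\ge 1$ on $K$, and set $g:=f\circ\varphi^{-1}=\mathcal C_\varphi(f)$. Since $\varphi$ is a homeomorphism, $g$ is continuous and $\operatorname{supp} g=\varphi(\operatorname{supp} f)$ is a compact subset of $\varphi(\Omega)$. Hence $g\in C_c(\varphi(\Omega))$ and $g\ge1$ on the compact set $\varphi(K)$. By \eqref{eq-operNorm<gauge},
\[
\|g\|_{\mathcal Q_\alpha(\mathbb R^n)}^2\le C\|\varphi\|_{\mathscr G_{1-\frac{2\alpha}{n}}}\|f\|_{\mathcal Q_\alpha(\mathbb R^n)}^2 .
\]
Taking the infimum over all such $f$ gives \eqref{eq-cap-gauge} for $K$. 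Note that $\varphi(\Omega)$ is a precompact open set, because $\varphi$ maps compact sets to compact sets.

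\textbf{Step 2 (open sets).} Let $O\Subset\Omega$ be open. Then $\varphi(O)$ is open and $\varphi(O)\Subset\varphi(\Omega)$. Every compact $K'\subset\varphi(O)$ has the form $\varphi(K)$ with $K:=\varphi^{-1}(K')$ compact and $K\subset O$. Applying Step 1 to each such $K$ and taking the supremum gives the inequality for $O$.

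\textbf{Step 3 (arbitrary sets).} Let $E\Subset\Omega$. The map $O\mapsto\varphi(O)$ is a bijection from the open sets $O$ with $E\subset O\Subset\Omega$ onto the open sets $O'$ with $\varphi(E)\subset O'\Subset\varphi(\Omega)$. Therefore the infimum in \eqref{def-cap-general} on the image side is at most $C\|\varphi\|_{\mathscr G_{1-\frac{2\alpha}{n}}}$ times the infimum on the original side. The removability statement \eqref{eq-cap-revom} then follows directly from \eqref{eq-cap-gauge}.

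\textbf{Main obstacle.} The main point requiring care is that \eqref{eq-operNorm<gauge} is stated for $f\in\mathcal Q_\alpha(\mathbb R^n)$. Every $f\in C_c(\Omega)$ with finite $\mathcal Q_\alpha$ norm belongs to this space, and functions with infinite norm do not affect the infimum. Beyond this, one only needs the topological facts that $\varphi$ preserves compactness, openness, and precompactness; these are routine.
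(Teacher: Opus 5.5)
Your proposal is correct and follows the paper's proof. Like the paper, you push each admissible $f\in C_c(\Omega)$ forward to $f\circ\varphi^{-1}\in C_c(\varphi(\Omega))$, apply the sufficiency bound \eqref{eq-operNorm<gauge}, and take infima; your Steps~2--3 just spell out the compact-to-open-to-arbitrary passage that the paper compresses into ``follows from the definition of capacity,'' and the only loose end, shared with the paper, is that calling the case $\|\varphi\|_{\mathscr G_{1-\frac{2\alpha}{n}}}=\infty$ trivial tacitly uses the convention $\infty\cdot 0=\infty$ when $\operatorname{Cap}_{\mathcal Q_\alpha}(E;\Omega)=0$.
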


\begin{proof}
Since \(\varphi\) is continuous on \(\mathbb R^n\), it maps the precompact open set \(\Omega\)
to the precompact open set \(\varphi(\Omega)\). Thus, the capacity
$\operatorname{Cap}_{\mathcal  Q_\alpha}(\varphi(E);\,\varphi(\Omega))$ is well-defined.

The implication in \eqref{eq-cap-revom} follows immediately from \eqref{eq-cap-gauge}.
To prove \eqref{eq-cap-gauge}, we may as well assume  that
$\|\varphi\|_{\mathscr G_{1-\frac{2\alpha}{n}}}<\infty;$
otherwise the right-hand side of \eqref{eq-cap-gauge} is infinite and the estimate is trivial.

By Definition \ref{def-capQ}, it suffices to show \eqref{eq-cap-gauge} for compact \(E\Subset\Omega\). Let \(f\in C_c(\Omega)\) with \(f\ge 1\) on \(K\).   Set
$$
h:=f\circ\varphi^{-1}.
$$
Since \(\varphi\) is a homeomorphism on $\rn$, it follows that $\varphi(\Omega)$ is a precompact open set, $\varphi(E)$ is a compact set, and $\varphi(E)\Subset\varphi(\Omega)$. Moreover, we have \(h\in C_c(\mathbb R^n)\) with \(h\ge 1\) on \(\varphi(E)\)
and
$$
\supp h = \varphi(\supp f)\subset \varphi(\Omega).
$$
This, together with Theorem \ref{thm-trace-gauge}, yields
$$
\operatorname{Cap}_{\mathcal Q_\alpha}(\varphi(E);\,\varphi(\Omega))
\le \|h\|_{\mathcal Q_\alpha(\rn)}^2
= \|f\circ\varphi^{-1}\|_{\mathcal Q_\alpha(\rn)}^2
\ls
\|\varphi\|_{\mathscr G_{1-\frac{2\alpha}{n}}} \,\|f\|_{\mathcal Q_\alpha(\rn)}^2.
$$
Taking the infimum over all \(f\in C_c(\Omega)\) with \(f\ge 1\) on \(E\), we conclude that
$$
\operatorname{Cap}_{\mathcal Q_\alpha}(\varphi(E);\,\varphi(\Omega))
\ls
\|\varphi\|_{\mathscr G_{1-\frac{2\alpha}{n}}} \, \operatorname{Cap}_{\mathcal Q_\alpha}(E;\,\Omega).
$$
Consequently, \eqref{eq-cap-gauge} holds for compact sets; the general case follows from the definition of capacity.
\end{proof}

\subsection{Applications to transport equations}\label{ss6.3}

In this subsection, we always let $n\ge 2$ and $b:\ [0,T]\times\mathbb R^n\to\mathbb R^n$ be a vector field.
Consider the transport equation
\begin{equation}\label{eq-transport-main}
\begin{cases}
\partial_t u(t,x) + b(t,x)\cdot \nabla_x u(t,x) = 0,\\
u(0,x)=u_0(x).
\end{cases}
\end{equation}
A function $u\in L^1(0,T;L^1_{\mathrm{loc}}(\mathbb R^n))$ is called a
\emph{weak solution} to \eqref{eq-transport-main} if
\begin{equation*}
\int_0^T\int_{\mathbb R^n} u\bigl(\partial_t\phi+\operatorname{div}(b\phi)\bigr)\,dx\,dt
+ \int_{\mathbb R^n} u_0(x)\phi(0,x)\,dx = 0
\end{equation*}
for every test function $\phi\in C_c^\infty([0,T)\times\mathbb R^n)$.

For initial data $u_0\in \mathrm{BMO}(\mathbb R^n)$,
under suitable regularity assumptions on $b$ (see Remark \ref{rem-flow} below),
the weak solution $u$ to \eqref{eq-transport-main}
admits the \emph{characteristic representation}
\begin{equation}\label{eq-sol}
u(t,x)=u_0\bigl(\Phi_t^{-1}(x)\bigr),
\end{equation}
where $\Phi_t$ is the flow generated by $b$, i.e.,
\begin{equation}\label{eq-flow-ode}
\begin{cases}
\partial_t \Phi_t(x)=b(t,\Phi_t(x)),\\
\Phi_0(x)=x.
\end{cases}
\end{equation}

\begin{definition}\label{def-QSflow}
A family of mappings $\{\Phi_t\}_{t\in[0,T]}$ with $\Phi_t:\ \mathbb R^n\to\mathbb R^n$
is called a \emph{quasisymmetric flow} generated by $b$ if it satisfies \eqref{eq-flow-ode} for a.e. $t\in[0,T]$ and $x\in\mathbb R^n$,
and if each $\Phi_t$ is quasisymmetric.
\end{definition}

\begin{remark}\label{rem-flow}
Definition \ref{def-QSflow} is natural in view of the known regularity theory for transport equations.
For a time-independent vector field $v:\ \mathbb R^n\to\mathbb R^n$, Reimann \cite{Reimann1976IM} introduced the condition
$$
\|v\|_Q := \sup_{\gfz{x\in\mathbb R^n}{|a|=|b|\neq 0}}
\left|
\frac{\bigl(a,\, v(x+a)-v(x)\bigr)}{|a|^2}
-
\frac{\bigl(b,\, v(x+b)-v(x)\bigr)}{|b|^2}
\right|
< \infty,
$$
which is now commonly referred to as \emph{condition $Q$}.
According to \cite[Theorem~1]{Reimann1976IM}, this condition is sufficient
to ensure that the flow generated by $v$
is quasisymmetric.
Further, by \cite[Theorem~3]{Reimann1976IM},
when the distributional derivatives of $v$
are locally integrable,
condition $Q$ is equivalent to the growth condition $v(x)=O(|x|\log |x|)$,
together with the essential boundedness of the \emph{anticonformal part} $Sv$, where
$$
S v:=\frac12\bigl(Dv+(Dv)^t\bigr)-\frac{\operatorname{div} v}{n}I_{n\times n}.
$$
As a time-dependent analogue of Reimann's criterion for $n\ge2$,
the flow $\{\Phi_t\}_{t\in[0,T]}$ generated by the vector field $b$ is quasisymmetric
if $b$ satisfies the following weaker assumptions (see \cite[Theorem~5]{ClopJiangMateuOrobitg2018ACV}):
\begin{align}\label{eq-reg}
\begin{cases}
b\in L^1(0,T;W^{1,1}_{\mathrm{loc}}(\mathbb R^n));\\
b(t,x)(1+|x|\log_+|x|)^{-1}\in L^1(0,T;L^\infty(\mathbb R^n));\\
S b\in L^1(0,T;L^\infty(\mathbb R^n)).
\end{cases}
\end{align}
Under the above regularity conditions,
the abstract framework of Definition \ref{def-QSflow} applies.
\end{remark}

\begin{theorem}\label{thm-transport-Qalpha}
Let $n\ge 2$ and $\alpha \in (0,\,1)$.
Suppose that  $u$
admits the characteristic representation \eqref{eq-sol}, with $\{\Phi_t\}_{t\in[0,T]}$
therein being a quasisymmetric flow generated by the vector field $b$. Assume that
\begin{equation}\label{eq-gauge-flow}
\|\Phi_t\|_{\mathscr G_{1 - \frac{2\alpha}{n}}}+
\|\Phi_t^{-1}\|_{\mathscr G_{1 - \frac{2\alpha}{n}}}  < \infty\quad\,
\text{for any}\ \, t\in[0,T].
\end{equation}
Then the following properties hold:
\begin{enumerate}[label=\textup{(\roman*)}]
\item \textbf{(Propagation of $\mathcal Q_\alpha$-regularity)}
For every initial datum $u_0\in\mathcal Q_\alpha(\mathbb R^n)$,
it holds that $u(t,\,\cdot)\in\mathcal Q_\alpha(\mathbb R^n)$ for all $t\in[0,T]$, with
the norm equivalence
\begin{equation}\label{eq-equiv-TP}
C^{-1} \left(\|\Phi_t^{-1}\|_{\mathscr G_{1 - \frac{2\alpha}{n}}}\right) ^{-\frac 12} \|u_0\|_{\mathcal Q_\alpha(\mathbb R^n)}
\le \|u(t,\,\cdot)\|_{\mathcal Q_\alpha(\mathbb R^n)}
\le C \left(\|\Phi_t\|_{\mathscr G_{1 - \frac{2\alpha}{n}}}\right) ^\frac 12 \|u_0\|_{\mathcal Q_\alpha(\mathbb R^n)}.
\end{equation}

\item \textbf{(Initial-data stability)}
If $u$ and $v$ are associated with initial data $u_0,v_0\in\mathcal Q_\alpha(\mathbb R^n)$,
respectively, then
\begin{align}\label{eq-stab-TP}
C^{-1} 
\left(\|\Phi_t^{-1}\|_{\mathscr G_{1 - \frac{2\alpha}{n}}}\right)^{-\frac 12}
\|u_0-v_0\|_{\mathcal Q_\alpha(\mathbb R^n)}.
& \le\|u(t,\,\cdot)-v(t,\,\cdot)\|_{\mathcal Q_\alpha(\mathbb R^n)}\\
&
\le C \left(\|\Phi_t\|_{\mathscr G_{1 - \frac{2\alpha}{n}}}\right) ^\frac 12
\|u_0-v_0\|_{\mathcal Q_\alpha(\mathbb R^n)}. \notag
\end{align}
\end{enumerate}
Here, the constants $C$ in \eqref{eq-equiv-TP} and \eqref{eq-stab-TP}
depend only on $n,\alpha$ and the distortion $\eta_t$ of $\Phi_t$.
If the family $\{\Phi_t\}_{t\in[0,T]}$
has a uniform distortion $\eta$ for all $t$,
then $C$ is also independent of $t$.
\end{theorem}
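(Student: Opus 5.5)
The plan is to reduce both parts to the quantitative bound \eqref{eq-norm-gauge} of Theorem \ref{thm-trace-gauge}, applied at each fixed time $t$ to the two quasisymmetric maps $\Phi_t$ and $\Phi_t^{-1}$. Since $n\ge2$, Proposition \ref{prop-varphi-weight} gives $J_{\Phi_t},J_{\Phi_t^{-1}}\in A_\infty(\mathbb R^n)$, with constants depending only on $n$ and $\eta_t$. Hence the $A_\infty$ dependence in the constants of Theorem \ref{thm-trace-gauge} reduces to dependence on $n,\alpha,\eta_t$. Moreover, $\alpha\in(0,1)=(0,\min\{1,n/2\})$, so that theorem applies directly.

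For the upper bound in \eqref{eq-equiv-TP}, I use the representation \eqref{eq-sol}, which says $u(t,\cdot)=u_0\circ\Phi_t^{-1}=\mathcal C_{\Phi_t}(u_0)$. The sufficiency estimate \eqref{eq-operNorm<gauge} then gives
$$\|u(t,\cdot)\|_{\mathcal Q_\alpha(\mathbb R^n)}^2\le C\|\Phi_t\|_{\mathscr G_{1-\frac{2\alpha}{n}}}\|u_0\|_{\mathcal Q_\alpha(\mathbb R^n)}^2,$$
which is finite by \eqref{eq-gauge-flow}. In particular $u(t,\cdot)\in\mathcal Q_\alpha(\mathbb R^n)$.

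For the lower bound, I write $u_0=u(t,\cdot)\circ\Phi_t=\mathcal C_{\Phi_t^{-1}}(u(t,\cdot))$, using $(\Phi_t^{-1})^{-1}=\Phi_t$. The map $\Phi_t^{-1}$ is $\eta_t'$-quasisymmetric by Proposition \ref{prop-dia-doubl}(i), with $\eta_t'$ determined by $\eta_t$. Applying \eqref{eq-operNorm<gauge} to it gives
$$\|u_0\|^2_{\mathcal Q_\alpha(\mathbb R^n)}\le C\|\Phi_t^{-1}\|_{\mathscr G_{1-\frac{2\alpha}{n}}}\|u(t,\cdot)\|^2_{\mathcal Q_\alpha(\mathbb R^n)}.$$
Taking square roots and rearranging yields the left inequality of \eqref{eq-equiv-TP}; the gauge is at least $1$ by Proposition \ref{prop-beta=0>1}(i), so dividing by it is harmless.

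Part (ii) follows by linearity. Since $\mathcal C_{\Phi_t}$ is linear, $u(t,\cdot)-v(t,\cdot)=(u_0-v_0)\circ\Phi_t^{-1}$, so I apply part (i) with initial datum $u_0-v_0\in\mathcal Q_\alpha(\mathbb R^n)$.

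The main point requiring care is the dependence of the constants. I must check that every implicit constant in Theorems \ref{thm2-varphi-EJPX} and \ref{thm1-varphi-EJPX}, used through \eqref{eq-operNorm<gauge}, depends only on $n,\alpha$, the distortion function, and the $A_\infty$ data. The $A_\infty$ data are in turn controlled by Gehring's reverse Hölder constants, which depend only on $n$ and $\eta_t$. Consequently, if $\eta_t\equiv\eta$ for all $t$, then $\eta_t'\equiv\eta'$ as well, and $C$ is independent of $t$. The remaining routine verification is that \eqref{eq-sol} holds pointwise a.e. for each fixed $t$, so that the identity between $u(t,\cdot)$ and the composition is an identity of $L^2_{\rm loc}$ functions.
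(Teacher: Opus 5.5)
Your proposal is correct and matches the paper's proof. The paper also writes $u(t,\cdot)=\mathcal C_{\Phi_t}(u_0)$ and $u_0=\mathcal C_{\Phi_t^{-1}}(u(t,\cdot))$, applies the gauge bound of Theorem \ref{thm-trace-gauge} (only its sufficiency half \eqref{eq-operNorm<gauge} is needed, as you note) to $\Phi_t$ and to its quasisymmetric inverse, and gets part (ii) from linearity of the composition operator; your remarks on how the constants depend on $n$ and $\eta_t$ (via Gehring's reverse H\"older inequality for $n\ge 2$ and $\eta_t'$ being determined by $\eta_t$) simply spell out what the paper leaves implicit.
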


\begin{proof}
Fix $t\in[0,T]$ and $u_t(\cdot):=u(t,\cdot)$. To prove(i), by \eqref{eq-sol}, we have
$$
u_t=u_0\circ \Phi_t^{-1} = \mathcal C_{\Phi_t}(u_0)
$$
and
$$
u_0= u_t\circ \Phi_t = \mathcal C_{\Phi_t^{-1}}\left(u_t\right).
$$
Since $\Phi_t:\mathbb R^n\to\mathbb R^n$ is quasisymmetric, so is its inverse $\Phi_t^{-1}$.
Then, it follows from \eqref{eq-gauge-flow}  and
Theorem \ref{thm-trace-gauge} that \eqref{eq-equiv-TP} holds,
with implicit constants depend only on \(n,\alpha\) and the distortion  $\eta_t$ of each $\Phi_t$.

For (ii), the linearity of the composition operator gives
$$
u_t-v_t = (u_0-v_0)\circ \Phi_t^{-1} = \mathcal C_{\Phi_t}(u_0-v_0)
$$
and
$$
u_0-v_0 = (u_t-v_t)\circ \Phi_t = \mathcal C_{\Phi_t^{-1}}(u_t-v_t).
$$
As in the proof of (i), the above identities,
together with Theorem \ref{thm-trace-gauge}
applied to $u_0-v_0$,
yield the desired equivalence \eqref{eq-stab-TP}.
\end{proof}

Theorem~\ref{thm-transport-Qalpha} is stated within the abstract framework
that the vector field $b$ generates a quasisymmetric flow
with finite capacity gauge.
We now provide sufficient conditions on $b$ that ensure these assumptions.

\begin{corollary}\label{cor-Jacobian-A1}
Let $n\ge 2$ and $\alpha\in(0,1)$.
Suppose that the vector field $b$ satisfies \eqref{eq-reg}
and $\operatorname{div} b \in L^1(0,T;L^\infty(\mathbb R^n))$.
Then, for any initial datum $u_0\in\mathcal Q_\alpha(\mathbb R^n)$, the equation \eqref{eq-transport-main} has a unique weak solution,
which is given by \eqref{eq-sol} and
\begin{equation}\label{eq-ut-est}
\|u(t,\,\cdot)\|_{\mathcal Q_\alpha(\mathbb R^n)}
\simeq\|u_0\|_{\mathcal Q_\alpha(\mathbb R^n)}.
\end{equation}
Moreover, if $u$ and $v$ are two solutions with initial data $u_0,v_0\in\mathcal Q_\alpha(\mathbb R^n)$, respectively, then
\begin{equation}\label{eq-stab-TP-cor}
\|u(t,\cdot)-v(t,\cdot)\|_{\mathcal Q_\alpha(\mathbb R^n)}
\simeq \|u_0-v_0\|_{\mathcal Q_\alpha(\mathbb R^n)}.
\end{equation}
The implicit constants in \eqref{eq-ut-est} and \eqref{eq-stab-TP-cor} depend only on $n$, $\alpha$,
$\|S b\|_{L^1(0,T;L^\infty(\mathbb R^n))}$,
and $\|\operatorname{div} b\|_{L^1(0,T;L^\infty(\mathbb R^n))}$.
\end{corollary}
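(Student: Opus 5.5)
The plan is to reduce Corollary~\ref{cor-Jacobian-A1} to Theorem~\ref{thm-transport-Qalpha}. This requires three ingredients for the flow $\{\Phi_t\}$ generated by $b$: it is quasisymmetric with distortion uniform in $t$; both $\Phi_t$ and $\Phi_t^{-1}$ have capacity gauge $\mathscr G_{1-\frac{2\alpha}{n}}$ bounded uniformly in $t$; and the weak solution exists, is unique, and is given by the characteristic representation \eqref{eq-sol}.

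\textbf{Quasisymmetry and uniqueness.} By Remark~\ref{rem-flow} and \cite[Theorem~5]{ClopJiangMateuOrobitg2018ACV}, the assumptions \eqref{eq-reg} imply that $\Phi_t$ is quasisymmetric. Its distortion $\eta$ depends only on $n$ and $\|Sb\|_{L^1(0,T;L^\infty)}$, hence is uniform in $t\in[0,T]$. For existence and uniqueness, I would use the DiPerna--Lions/Ambrosio theory. Its hypotheses hold here: $b\in L^1W^{1,1}_{\rm loc}$, $b$ has the stated growth bound, and $\operatorname{div} b\in L^1L^\infty$. This theory yields that the unique (renormalized, hence weak) solution is $u_0\circ\Phi_t^{-1}$. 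Since $u_0\in\mathcal Q_\alpha\subset \mathrm{BMO}\subset L^1_{\rm loc}$, I would work with $\mathrm{BMO}$ data as in the setting of Clop--Jiang--Mateu--Orobitg. This is the step whose citation needs the most care, since the growth of $u_0$ at infinity is only logarithmic, so the DiPerna--Lions theory is not directly applicable as stated.

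\textbf{Jacobian bounds.} Liouville's formula along characteristics gives
$$J_{\Phi_t}(x)=\exp\Bigl(\int_0^t \operatorname{div} b(s,\Phi_s(x))\,ds\Bigr),$$
so
$$e^{-M}\le J_{\Phi_t}\le e^{M}, \qquad M:=\|\operatorname{div} b\|_{L^1(0,T;L^\infty)}.$$
The same two-sided bound holds for $J_{\Phi_t^{-1}}$. Consequently, for every measurable set $E$,
$$e^{-M}|E|\le |\Phi_t^{\pm1}(E)|\le e^{M}|E|.$$
To make this rigorous, I would justify Liouville's formula for Sobolev flows by approximation, or use directly the known bounded-compression property of regular Lagrangian flows with bounded divergence.

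\textbf{Gauge bounds.} The volume comparability gives, for $J\subset I$,
$$\frac{|\Phi_t^{-1}(J)|}{|\Phi_t^{-1}(I)|}\ge e^{-2M}\frac{|J|}{|I|}.$$
Since $\beta=1-\frac{2\alpha}{n}\ge0$, Proposition~\ref{prop-vol-comp} applies with $L=e^{2M}$ and yields
$$\|\Phi_t\|_{\mathscr G_\beta}\le e^{2M\beta}.$$
The same argument applied to $\Phi_t^{-1}$ gives the same bound for $\|\Phi_t^{-1}\|_{\mathscr G_\beta}$. Alternatively, $J_{\Phi_t^{\pm1}}\in A_1$ with constant at most $e^{2M}$, and Proposition~\ref{prop-A1toGauge} gives the same conclusion.

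\textbf{Conclusion.} With \eqref{eq-gauge-flow} now verified uniformly in $t$ and the distortion uniform, Theorem~\ref{thm-transport-Qalpha} yields \eqref{eq-equiv-TP} and \eqref{eq-stab-TP}, with constants depending only on $n$, $\alpha$, $\|Sb\|$ and $M$. These are exactly \eqref{eq-ut-est} and \eqref{eq-stab-TP-cor}.
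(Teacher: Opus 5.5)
Your proposal is correct and follows essentially the same route as the paper. Both arguments take quasiconformality with uniform distortion from Theorem~5 of Clop--Jiang--Mateu--Orobitg, use the Liouville formula to get $e^{-D}\le J_{\Phi_t^{\pm1}}\le e^{D}$, deduce a uniform bound on $\|\Phi_t^{\pm1}\|_{\mathscr G_{1-\frac{2\alpha}{n}}}$, and then apply Theorem~\ref{thm-transport-Qalpha}. The differences are minor: the paper bounds the gauges through the $A_1$ route of Proposition~\ref{prop-A1toGauge}, which is your stated alternative, while your primary route through Proposition~\ref{prop-vol-comp} gives the slightly cleaner bound $e^{2D(1-\frac{2\alpha}{n})}$ without the factor $(1-2^{-n(1-\frac{2\alpha}{n})})^{-1}$; and for existence and uniqueness the paper follows Xiao's transport-equation argument rather than invoking DiPerna--Lions theory together with BMO well-posedness.
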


\begin{proof}
Given a vector field \(b\) satisfying \eqref{eq-reg},
it was proved in \cite[Theorem~5]{ClopJiangMateuOrobitg2018ACV}
that there exists a unique flow \(\{\Phi_t\}_{t\in[0,T]}\)
of uniformly \(K\)-quasiconformal mappings satisfying \eqref{eq-flow-ode}, with
$$
K = \exp\left(2(n-1)\|S b\|_{L^1(0,T;L^\infty(\mathbb R^n))}\right).
$$
Consequently, the family \(\{\Phi_t\}_{t\in[0,T]}\) has a
uniform distortion function \(\eta\), depending only on \(K\) and \(n\), for all \(t\in[0,T]\).

Since $b$ satisfies \eqref{eq-reg} and $\operatorname{div} b\in L^1(0,T;L^\infty(\mathbb R^n))$,
the DiPerna--Lions theory (see \cite[Section~III]{DiPernaLions1989IM}) guarantees
that $\{\Phi_t\}_{t\in[0,T]}$ is a regular Lagrangian flow.
In this framework, the Liouville equation
$$
\partial_t J_{\Phi_t}(x) = (\operatorname{div} b)(t,\Phi_t(x))\,J_{\Phi_t}(x)
$$
holds for a.e. $x\in\mathbb R^n$ and $t\in(0,T)$. Solving this ODE with the initial condition \(J_{\Phi_0}(x)=1\) gives
$$
J_{\Phi_t}(x) = \exp\left( \int_0^t (\operatorname{div} b)(s,\Phi_s(x))\,ds \right).
$$

Thus, upon setting
$$
D:=\|\operatorname{div} b \|_{L^1(0,T;L^\infty(\mathbb R^n))}
=\int_0^T \|(\operatorname{div} b)(s,\cdot)\|_{L^\infty(\mathbb R^n)}\,ds,
$$
we obtain
\begin{equation}\label{eq-JPhit}
e^{-D} \le J_{\Phi_t}(x) \le e^{D}
\quad\, \text{for  a.e. }\, x\in\mathbb R^n.
\end{equation}
For the inverse flow $\Phi_t^{-1}$, setting $y=\Phi_t(x)$
and using the identity
$J_{\Phi_t^{-1}}(y) = (J_{\Phi_t}(x))^{-1}$
gives
\begin{equation}\label{eq-JPhit-inver}
e^{-D} \le J_{\Phi_t^{-1}}(y) \le e^{D}
\quad\, \text{for  a.e. }\, x\in\mathbb R^n.
\end{equation}
Next, for any cube $Q\subset\mathbb R^n$,
the two-sided bounds in \eqref{eq-JPhit} imply
$$
\frac{1}{|Q|}\int_Q J_{\Phi_t}(x)\,dx \le e^{D}
\le e^{2D} \cdot e^{-D}
\le e^{2D} \operatorname*{ess\,inf}_{x\in Q} J_{\Phi_t}(x).
$$

Thus $J_{\Phi_t}$ satisfies the $A_1(\mathbb R^n)$ condition with constant
$$
[J_{\Phi_t}]_{A_1(\mathbb R^n)}\le e^{2D}.
$$
The same argument applies to $J_{\Phi_t^{-1}}$ by using \eqref{eq-JPhit-inver}.
Consequently, applying \eqref{eq-A1gauge} gives
\begin{align}\label{eq-flow-uniform-gauge}
\sup_{t\in[0,T]} \left(\|\Phi_t\|_{\mathscr G_{1 - \frac{2\alpha}{n}}}
+ \|\Phi_t^{-1}\|_{\mathscr G_{1 - \frac{2\alpha}{n}}}\right)
\le C_{n,\alpha} e^{2D(1 - \frac{2\alpha}{n})}.
\end{align}
In particular, the capacity gauge condition \eqref{eq-gauge-flow} holds uniformly for all $t\in[0,T]$.

Since $J_{\Phi_t}$ and $J_{\Phi_t^{-1}}$
belong to $A_1(\mathbb R^n)$, we follow the arguments in the proof of
\cite[Theorem~1.1(ii)]{Xiao2019JDE} to obtain that \eqref{eq-transport-main}
has a unique weak solution admitting the characteristic representation \eqref{eq-sol}.
Thus, all the assumptions in Theorem \ref{thm-transport-Qalpha} are satisfied.
Applying Theorem \ref{thm-transport-Qalpha} yields the two-sided uniform estimates
\eqref{eq-ut-est} and \eqref{eq-stab-TP-cor}.
\end{proof}

\begin{remark}
The restriction \(n\ge 2\) in Corollary \ref{cor-Jacobian-A1}
comes from \cite[Theorem~5]{ClopJiangMateuOrobitg2018ACV}.
The existence and uniqueness for initial data
in \(\mathcal Q_\alpha(\mathbb R^n)\) were
obtained in \cite{Xiao2019JDE} for all \(n\in\nn\),
but under the stronger Lipschitz assumption
$$\|b\|_{L^1(0,T;\mathrm{Lip}(\mathbb R^n))}<\infty.$$
Assuming the slightly weaker conditions \eqref{eq-reg} and
\(\operatorname{div} b\in L^1(0,T;L^\infty(\mathbb R^n))\),
Corollary \ref{cor-Jacobian-A1} establishes the uniform finite capacity gauge
condition \eqref{eq-gauge-flow} (see \eqref{eq-flow-uniform-gauge})
and then deduces the explicit quantitative estimates \eqref{eq-ut-est}--\eqref{eq-stab-TP-cor}.
It would be interesting to seek even weaker
conditions ensuring \eqref{eq-gauge-flow}
beyond the range of \(A_1(\mathbb R^n)\) weights of the Jacobian of the flow;
however, we do not pursue this here.
\end{remark}

\addcontentsline{toc}{section}{References}

%\bibliographystyle{amsplain}
%\bibliography{ref-RieszPotential}

\bigskip

\noindent Liguang Liu

\smallskip

\noindent School of Mathematics,
Renmin University of China,
Beijing 100872, People's Republic of China

\smallskip

\noindent{\it E-mail:} \texttt{liuliguang@ruc.edu.cn}

\bigskip

\noindent Jie Xiao
\smallskip

\noindent Department of Mathematics and Statistics,
Memorial University, St. John's,
NL A1C 5S7, Canada

\smallskip

\noindent{\it E-mail:} \texttt{jxiao@mun.ca}

\printindex

\end{document}